\theoremstyle{plain}
\newcommand{\A}{{\mathbb A}}
\newcommand{\C}{{\mathbb C}}
\newcommand{\D}{{\mathbb D}}
\newcommand{\Q}{{\mathbb Q}}
\newcommand{\R}{{\mathbb R}}
\newcommand{\Z}{{\mathbb Z}}
\newcommand{\cM}{{\mathcal M}}
\newcommand{\cS}{{\mathcal S}}
\newcommand{\RR}{{\mathcal R}}
\newcommand{\cA}{{\mathcal A}}
\newcommand{\cV}{{\mathcal V}}
\newcommand{\cW}{{\mathcal W}}
\newcommand{\cN}{{\mathcal N}}
\newcommand{\calD}{{\mathcal D}}
\newcommand{\HH}{{\mathcal H}}
\newcommand{\bpi} {{ \boldsymbol \pi}}
\newcommand{\brho} {{ \boldsymbol \rho}}
\newcommand{\sI} {\mathscr{I}}
\newcommand{\sL} {\mathscr{L}}
\newcommand{\fm}{{\mathfrak m}}
\newcommand{\g}{{\mathfrak g}}
\newcommand{\fk}{{\mathfrak k}}
\newcommand{\fa}{{\mathfrak a}}
\newcommand{\fs}{{\mathfrak s}}
\newcommand{\q}{\mathfrak{q}}
\newcommand{\fu}{\mathfrak{u}}
\newcommand{\fI}{\mathfrak{I}}
\def\k{\mathfrak k}
\newcommand{\M}{{\mathcal M}}
\renewcommand{\O}{{\mathcal O}}
\newcommand\SGK{\mathcal{S}^G_{K_f}}
\newcommand\HGK{\mathcal{H}^G_{K_f}}
\newcommand\SG{\mathcal{S}^G}
\newcommand\tM{\widetilde{\mathcal{M}}}
\newcommand\BSC{ \bar{\mathcal{S}}^G_{K_f}}
\newcommand\PBSC{\partial\SGK}
\newcommand\uSMP{\ul{\mathcal{S}}^{M_P}}
\newcommand\uSM{\ul{\mathcal{S}}^{M}}
\newcommand \iso{ \buildrel \sim \over\longrightarrow}
\newcommand{\GL}{{\rm GL}}
\newcommand{\SO}{{\rm SO}}
\newcommand{\rO}{{\rm O}}
\newcommand{\SL}{{\rm SL}}
\newcommand{\st}{{\rm st}}
\newcommand{\norm}{{\rm norm}}
\newcommand{\Eis}{{\rm Eis}}
\newcommand{\loc}{{\rm loc}}
\newcommand{\arith}{{\rm arith}}
\def\Orth{{\rm O}}
\newcommand{\so}{\mathfrak{so}}
\def\tM{\widetilde{\mathcal{M}}}
\def\Ind{{\rm Ind}}
\def\Res{{\rm Res}}
\def\aInd{{}^{\rm a}{\rm Ind}}
\def\Hom{{\rm Hom}}
\def\Spec{{\rm Spec}}
\def\ul{\underline}
\def\ss{{\sf s}}
\def\sr{{\sf r}}
\def\sv{{\sf v}}
\newtheorem{theorem}[equation]{Theorem}
\newtheorem*{theorem*}{Theorem}
\newtheorem{corollary}[equation]{Corollary}
\newtheorem{lemma}[equation]{Lemma}
\newtheorem{proposition}[equation]{Proposition}
\newtheorem{definition}[equation]{Definition}
\newtheorem{remark}[equation]{Remark}
\title[Special values of $L$-functions for orthogonal groups]{Eisenstein cohomology for orthogonal groups and the 
special values of $L$-functions for $ \GL_1 \times \Orth(2n) $}
\author{\bf Chandrasheel Bhagwat \ \ \& \ \ A. Raghuram}
\address{Chandrasheel Bhagwat: Indian Institute of Science Education and Research, Dr.\,Homi Bhabha Road, Pashan, Pune 411008,  INDIA.} 
\email{cbhagwat@iiserpune.ac.in}
\address{A. Raghuram: Department of Mathematics, Fordham University at Lincoln Center, New York, NY 10023, USA.}
\email{araghuram@fordham.edu}
\date{\today}
\subjclass[2020]{11F67; 11F66, 11F70, 11F75, 22E50, 22E55}
\begin{document}
\begin{abstract} 
For an even positive integer $n$, we study  rank-one Eisenstein cohomology of the split orthogonal group ${\rm O}(2n+2)$ over a totally real number field $F.$   
This is used to prove a rationality result for the ratios of successive critical values of degree-$2n$ Langlands $L$-functions associated to the group ${\rm GL}_1 \times  {\rm O}(2n)$ over $F$. The case $n=2$ specializes to classical results of Shimura on the special values of Rankin--Selberg $L$-functions attached to a pair of Hilbert modular forms. 
\end{abstract}

\maketitle

Let $f = \sum a_nq^n$ 
and $g = \sum b_nq^n$ 
be primitive holomorphic modular cuspforms for $\Gamma_0(N),$ of weights 
$k$ and $l$, with nebentypus characters $\chi$ and $\psi,$ respectively. 
Let $\Q(f,g)$ be the number field obtained by adjoining the Fourier coefficients $\{a_n\}$ and $\{b_n\}$ to $\Q.$ 
Assume that $k > l.$ A well-known theorem of Shimura \cite{shimura-mathann} says that for 
$D_N(s, f,g),$ the degree-$4$ Rankin--Selberg $L$-function attached to the pair $(f,g)$, and for any integer $m$ with 
$l \leq m < k,$ we have: 
$D_N(m,f,g) \ \approx \ (2\pi i)^{l+1-2m} \, \g(\psi) \, u^+(f)u^-(f),$
where $\approx$ means equality up to an element of $\Q(f,g)$, $u^\pm(f)$ are the two periods attached to $f$ by Shimura, and 
$\g(\psi)$ is the Gauss sum of $\psi.$ The result may be refined and stated as a reciprocity law. 
The integers $l \leq m < k$ are all the critical points for $D_N(s, f,g).$ There are no critical points if $l = k.$ 
Suppose $k \geq l+2$, and we look at two successive critical values then the only change in the right hand side is 
$(2\pi i)^{-2}$ which may be seen to be exactly accounted for 
by the $\Gamma$-factors at infinity. Suppose $L(s, f \times g)$ denotes the completed degree-$4$ $L$-function attached to $(f,g)$, 
then we deduce: 
$$
L(l, f \times g) \ \approx \ L(l+1, f \times g) \ \approx \ \cdots \ \approx \ L(k-1, f \times g). 
$$
The above result is a statement for $L$-functions for $\GL_2 \times \GL_2$ over $\Q.$ 
Shimura also generalized this to Hilbert modular forms \cite{shimura-duke}, i.e., for $\GL_2 \times \GL_2$ over a totally real field $F.$ Since $(\GL_2 \times \GL_2)/\Delta\GL_1 \simeq {\rm GSO}(4),$ one may construe Shimura's result as a theorem for the degree-$4$ $L$-functions for certain orthogonal groups in four variables.  The main theorem of this paper (see Thm.\,\ref{thm:main:introduction} below) generalises Shimura's result as above 
to $L$-functions for 
$\GL_1 \times \rO(2n)$ over a totally real number field $F$ for an even positive integer $n$.  
The principal innovation of this article is that it offers new results on the arithmetic properties of $L$-functions for classical groups, outside the framework of general linear groups, via the Langlands--Shahidi theory of automorphic $L$-functions. We generalise the work of Harder and the second author \cite{harder-raghuram} to 
study Eisenstein cohomology for $\rO(2n+2)$, while using Arthur's classification \cite{arthur} as refined by Atobe--Gan \cite{atobe-gan} for even orthogonal 
groups, to give a cohomological interpretation to certain aspects of the Langlands--Shahidi machinery  \cite{shahidi-book}.

\medskip

We consider the split orthogonal group $\rO(2n)$ defined by the matrix $J_{2n}$ (see \eqref{defn:J}) which has $1$'s along the anti-diagonal and $0$'s elsewhere. For now we let $G = {\rm Res_{F/\Q}}(\rO(2n)/F)$ be the group over $\Q$ given by Weil restriction of scalars from $F/\Q$. (Later, $G$ will be a bigger orthogonal group and ${\rm Res_{F/\Q}}(\rO(2n)/F)$ will be an almost simple factor of the Levi quotient of a particular parabolic subgroup of $G$.) 
Let $\A$ denote the ad\`ele ring of $\Q$, 
and $\A_f$ be the finite ad\`eles. For an open-compact subgroup $K_f$ of $G(\A_f)$
we denote the ad\`elic locally symmetric space with level structure $K_f$ by $\cS^{G}_{K_f}.$ These notions are defined in 
Sect.\,\ref{section-preliminaries-notations}. Let $T$ be the restriction of scalars of the diagonal torus in $\rO(2n)$. We take a large enough finite Galois extension $E$ of $\Q$ containing a copy of $F$. Let $\lambda \in X^*(T \times E)$ be a dominant integral weight, and $\cM_{\lambda,E}$ be the algebraic finite-dimensional absolutely irreducible representation of $G \times E$ with highest weight $\lambda$. Let $\widetilde{\cM}_{\lambda,E}$ be the associated sheaf of $E$-vector spaces on $\cS^{G}_{K_f}.$
A fundamental object of study is the cohomology $H^{\bullet} (\cS^{G}_{K_f}, \widetilde{\cM}_{\lambda,E})$ of the space $\cS^{G}_{K_f}$ with coefficients in 
the sheaf $\tM_{\lambda,E}.$  
A basic tool to study these cohomology groups is the long exact sequence attached to the Borel--Serre compactification $\overline \cS^{G}_{K_f} = \cS^{G}_{K_f} \cup \partial \cS^{G}_{K_f}:$ 
\begin{eqnarray*}
\cdots \xrightarrow{}
{H^{i}_{c}(\cS^{G}_{K_f},  \widetilde{\cM}_{\lambda,E} )}
\xrightarrow{\mathfrak{i}^{\bullet}} 
H^{i} (\cS^{G}_{K_f},  \widetilde{\cM}_{\lambda,E} ) 
\xrightarrow{\mathfrak{r}^{\bullet}} 
H^{i} (\partial \cS^{G}_{K_f},  \widetilde{\cM}_{\lambda,E} ) \xrightarrow{} \\
\xrightarrow{\partial^{\bullet}} 
H^{i+1}_c(\cS^{G}_{K_f},  \widetilde{\cM}_{\lambda,E} ) 
\xrightarrow{}
\cdots 
\end{eqnarray*}
This is a sequence of $\mathcal H^{G}_{K_{f}}$-modules, where $\mathcal H^{G}_{K_{f}}$ is a suitable Hecke-algebra over $\Q$ defined as the restricted 
tensor-product of local Hecke algebras. For a finite set $S$ 
of places including the archimedean and ramified places, we let $\mathcal H^{G,S}$ denote the central subalgebra of $\mathcal H^{G}_{K_{f}}$ by taking
the restricted tensor product over all places not in S. We often consider the above cohomology groups as modules over $\mathcal H^{G,S}$.

\medskip

Inner (or interior) cohomology $H^{\bullet}_{!}$ is defined as the image of $\mathfrak{i}^{\bullet}$, i.e., the image of cohomology with compact supports inside total cohomology, and complementary to it is Eisenstein cohomology defined as the image 
of $\mathfrak{r}^{\bullet}$, i.e., image of the total cohomology in the cohomology of the boundary. 
In Sect.\,\ref{strongly-inner-cohomology} we define {\it strongly-inner cohomology} $H^{\bullet}_{!!}$ which is a subspace of $H^{\bullet}_{!}.$ The definition is motivated by \cite{harder-raghuram}, where strongly-inner cohomology is defined for $\GL_N/F$ having the virtue that under any 
embedding $\iota : E \to \C$, rendering the 
context transcendental, strongly-inner cohomology base-changes to cuspidal cohomology. However, the general linear group dealt with in \cite{harder-raghuram} is 
misleadingly simple. In the context of orthogonal groups, we appeal to Arthur's classification of the discrete spectrum as expounded by 
Atobe--Gan \cite{atobe-gan}, and offer a definition of strongly-inner cohomology defined over $E$ which captures an essential part of the cuspidal cohomology of $G$ (see Def.\,\ref{def:strongly-inner-spectrum}).   
If $\sigma_f$ is a $\mathcal H^{G}_{K_{f}}$-module appearing in  $H^{\bullet}_{!!}$, then for any 
embedding $\iota : E \to \C$, there is a cuspidal automorphic representation 
${}^\iota\sigma$ of $\rO(2n)$ over $F$ such that (i) ${}^\iota\sigma$ is globally generic for a pre-specified Whittaker datum $\psi$; 
(ii) the Arthur parameter $\Psi({}^{\iota}\sigma)$ of ${}^\iota\sigma$ is a cuspidal representation of $\GL_{2n}/F$; 
(iii) for each archimedean place $v$ of $F$, ${}^{\iota}\sigma_v$ is the $\psi_{v}$-generic discrete series representation 
of $\rO(n,n)(\R)$ (recall that $n$ is even) 
in the local $L$-packet defined by  the weight ${}^{\iota}\mu^{v}$; (iv) $({}^\iota\sigma)_f^{{}^\circ C_f} = \sigma_f \otimes_{E, \iota} \C$ 
as $\mathcal H^{G}_{K_{f}}$-modules over $\C.$ 
It is part of the defining property of strongly-inner cohomology that it provides a rational structure to an essential part of cuspidal cohomology of $G$.
Strongly-inner cohomology is an ad hoc replacement of Clozel's results \cite{clozel} on rational structure on cuspidal cohomology for 
$\GL_n.$  

\medskip

Let $\chi^{\circ}$ be a finite order Hecke character of $F$ taking values in $E,$ and $d$ be an integer.  
Let $\chi$ stand for the algebraic Hecke character of $F$ with values in $E$ which for an embedding $\iota: E \to \C$ 
gives a continuous homomorphism ${}^\iota\chi : F^\times\backslash \A_F^\times \to \C^\times$ 
of the form ${}^\iota\chi = {}^\iota\chi^\circ \otimes |\ |^{-d}.$   
For a strongly-inner Hecke summand $\sigma_f$, 
given the cuspidal automorphic representation ${}^\iota\sigma$ as above, we consider the degree-$2n$ completed 
$L$-function $L(s, {}^\iota\chi \times {}^\iota\sigma).$ This $L$-function should be construed as the Langlands--Shahidi 
$L$-function (see case $(D_{n,i})$ of Appendix A in Shahidi's book \cite{shahidi-book}) attached to any summand in the 
restriction of ${}^\iota\sigma$ to $\SO(2n)/F$ and the character ${}^\iota\chi.$ 
An integer $m$ is said to be {\it critical} for $L(s, {}^\iota\chi \times {}^\iota\sigma)$ if 
the $L$-factors at infinity on either side of the functional equation are regular at $s = m.$ Our main result on rationality properties 
of critical values of $L(s, {}^\iota\chi \times {}^\iota\sigma)$ is the following: 

\medskip

\begin{theorem}
\label{thm:main:introduction}
 
Let $\mu \in X^*(T \times E)$ be a dominant integral weight; suppose $\mu = (\mu^\tau)_{\tau : F \to E}$, with 
$\mu^\tau = (\mu^\tau_1,\dots, \mu^\tau_n)$, $\mu^\tau_j \in \Z,$ and $\mu^\tau_1 \geq \cdots \geq \mu^\tau_{n-1} \geq |\mu^\tau_n|.$   
Define 
$$
\mu_{\rm min} \ := \ 
\min \{\ |\mu^\tau_n| \ \}_{\tau : F \to E}. 
$$ 
Assume that $\mu_{\rm min} \geq 1.$ 
Let $\sigma_f$ be a Hecke summand appearing in the strongly-inner cohomology of $\rO(2n)/F$ with coefficients in $\widetilde{\cM}_{\mu,E}.$
For an embedding $\iota : E \to \C$, let ${}~^{\iota}\sigma$ be the cuspidal automorphic representation of $\rO(2n)/F.$ 
Let $\chi{}^\circ$ be a finite-order Hecke character of $F$ that takes values in $E$ and $d \in \Z,$ giving a character 
${}^\iota\chi = {}^\iota\chi^\circ \otimes |\ |^{-d}$ as above.  We have: 

\medskip
\begin{enumerate}
\item[(i)]
The critical set of $L(s, {}^\iota\chi \times {}^\iota\sigma)$ is given by the $2 \mu_{\rm min}$ consecutive integers
$$
\{d+1-\mu_{\rm min}, \ d+2-\mu_{\rm min}, \ \ldots \ , \ d+\mu_{\rm min}-1, \ d+\mu_{\rm min} \}. 
$$
The critical set is independent of $\iota.$ 
\end{enumerate}

\medskip
Since $L(s, {}^\iota\chi \times {}^\iota\sigma) = L(s - d, {}^\iota\chi^\circ \times {}^\iota\sigma)$, for rationality of critical values we consider 
the critical values of $L(s, {}^\iota\chi^\circ \times {}^\iota\sigma)$ without any loss of generality.

\begin{enumerate}
\medskip
\item[(ii)] The assumption $\mu_{\rm min} \geq 1$ guarantees the existence of at least two successive critical points. Suppose $m$ and $m+1$ are critical, 
and $m \neq 0$, then we have: 
$$
\frac{L(m, {}^\iota\chi^\circ \times {}^\iota \sigma) }{L(m+1, {}^\iota\chi^\circ \times {}^\iota \sigma)} \ \in \ \iota(E). 
$$
The above conclusion is also valid for $m=0$ provided we assume furthermore that ${}^\iota\sigma$ is tempered at all finite places. 

\medskip
\item[(iii)]
For every $\eta \in {\rm Gal}(\bar{\Q}/ \Q)$, we have the reciprocity law:
$$
\eta 
\left(
\frac{L(m, {}^\iota\chi^\circ \times {}^\iota \sigma)}{L(m+1, {}^\iota\chi^\circ \times {}^\iota \sigma)}\right) 
\ = \ 
\frac{L(m, {}^{\eta\circ\iota}\chi^\circ \times {}^{\eta\circ\iota} \sigma)}{L(m+1, {}^{\eta\circ\iota}\chi^\circ \times {}^{\eta\circ\iota}\sigma)}. 
$$
\end{enumerate}
\end{theorem}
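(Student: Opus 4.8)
The plan is to follow the Eisenstein-cohomology strategy of Harder and the second author, adapted to $G' = {\rm Res}_{F/\Q}(\rO(2n+2)/F)$ with the self-associate maximal parabolic $P$ whose Levi quotient has ${\rm Res}_{F/\Q}(\GL_1 \times \rO(2n)/F)$ as its relevant factor. The datum $\chi^\circ \times \sigma_f$ cut out from the strongly-inner cohomology of the Levi determines, via the induced module, a class in the boundary cohomology $H^\bullet(\partial\cS^{G'}_{K_f}, \widetilde\cM_{\lambda,E})$ living in the summand indexed by $P$. First I would pin down the Kostant representatives $w \in W^P$ for which this summand sits in the relevant degree (the bottom degree $q_0$ of the cuspidal range on the Levi), computing the twist $w \cdot \lambda$ and checking that exactly the weights $\mu$ with $\mu_{\min}\ge 1$ produce a nonzero, suitably placed contribution; this is also where part (i) gets proved, since the half-sum shifts and the branching of $w\cdot\lambda$ to the $\GL_1$-factor translate directly into the set $\{d+1-\mu_{\min},\dots,d+\mu_{\min}\}$ of critical integers, after matching the Langlands--Shahidi normalization in case $(D_{n,i})$ of Shahidi's Appendix A. I would use Arthur's classification as refined by Atobe--Gan to guarantee that for each $\iota$ the representation ${}^\iota\sigma$ is cuspidal with cuspidal transfer $\Psi({}^\iota\sigma)$ to $\GL_{2n}$, so that the constant-term intertwining operator is built from a ratio of completed $L$-functions $L(s,{}^\iota\chi^\circ\times{}^\iota\sigma)$ and $L(s+1,\ldots)$ with no extraneous poles in the range of interest.

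The heart of the argument is the now-standard ``rank-one Eisenstein cohomology'' dichotomy: a boundary class in the image of the restriction map $\mathfrak r^\bullet$ must, when paired across the two parabolics $P$ and its associate $\bar P$ (here $P$ is self-associate, so this is really the functional equation of the standard intertwining operator), have its two ``constant-term'' components related by the local and global intertwining operators. Rationality of the class — which holds because it lies in $H^\bullet(\cS^{G'}_{K_f},\widetilde\cM_{\lambda,E})$, an $E$-vector space, and the strongly-inner datum is defined over $E$ — then forces the global normalizing factor, i.e.\ the ratio $L(m,{}^\iota\chi^\circ\times{}^\iota\sigma)/L(m+1,{}^\iota\chi^\circ\times{}^\iota\sigma)$ with $m$ determined by the chosen Kostant representative, to lie in $\iota(E)$, provided the archimedean and ramified local intertwining factors are themselves $\iota(E)$-rational up to the explicit powers of $2\pi i$ that the Tate-twist bookkeeping predicts. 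Concretely: two successive Kostant representatives (or one representative together with the functional equation $s\mapsto -s$) give the two successive critical points $m,m+1$, the ratio of the two corresponding cohomological intertwining operators is rational, and this ratio equals, up to rational archimedean factors computed from the $\Gamma$-factors, the ratio of $L$-values in (ii). The role of the hypothesis $\mu_{\min}\ge1$ is exactly to put both $m$ and $m+1$ inside the critical range and to keep the relevant cohomology in the strongly-inner (cuspidal) range where everything is defined over $E$; the extra temperedness hypothesis at finite places when $m=0$ is needed to rule out a pole of the intertwining operator / a vanishing of an $L$-factor at the near-central point, which would otherwise obstruct the clean division.

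Part (iii) will then be essentially formal once (ii) is in place: the entire construction — the sheaf $\widetilde\cM_{\lambda,E}$, the Borel--Serre boundary, the Kostant decomposition, the strongly-inner datum, and the Hecke action — is defined over $E$, hence equivariant for ${\rm Gal}(\bar\Q/\Q)$; applying $\eta$ to the cohomology classes permutes the embeddings $\iota\mapsto\eta\circ\iota$ and carries the rational intertwining ratio for $\iota$ to the one for $\eta\circ\iota$. The only point requiring care is that the archimedean normalizing factors (powers of $(2\pi i)$ and rational constants coming from the discrete-series $L$-packet at the real places, where we use that $n$ is even and ${}^\iota\sigma_v$ is the $\psi_v$-generic discrete series of $\rO(n,n)(\R)$) behave correctly under $\eta$ — but since those factors are literally the same for $\iota$ and $\eta\circ\iota$ (the infinity type $\mu$ is fixed, only the finite part moves), this is immediate. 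I expect the main obstacle to be step one: controlling the combinatorics of Kostant representatives for ${\rm Res}_{F/\Q}\rO(2n+2)$ and the associated weight shifts precisely enough to (a) identify the critical set exactly as stated and (b) verify that the boundary contribution sits in a single cohomological degree so that the intertwining operator acts as a scalar (a ratio of $L$-values) rather than a matrix — together with the parallel bookkeeping needed to pass cleanly between the Langlands--Shahidi $L$-function for $\SO(2n)$ and the $L$-function $L(s,{}^\iota\chi\times{}^\iota\sigma)$ for $\rO(2n)$ via restriction, keeping track of the shift $s\mapsto s-d$.
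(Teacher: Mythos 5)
Your proposal correctly identifies the overall architecture of the argument: rank-one Eisenstein cohomology for $\rO(2n+2)$ with the maximal parabolic $P$ having Levi $\GL_1 \times \rO(2n)$, Kostant combinatorics to place the induced datum in the right boundary degree, Arthur/Atobe--Gan to control the cuspidal support, Langlands's constant-term theorem to extract the ratio of $L$-values, and rationality of the Eisenstein cohomology class to force that ratio into $\iota(E)$, with the Galois equivariance following formally. This matches the paper's strategy.

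There are, however, two points where the proposal elides genuine difficulties that the paper has to confront. First, you write that ``the boundary contribution sits in a single cohomological degree so that the intertwining operator acts as a scalar (a ratio of $L$-values) rather than a matrix.'' This is exactly the place where $\rO(2n)$ behaves differently from $\GL_N$: because $\rO(2n)$ does not satisfy strong multiplicity one, the isotypic component of $\sigma_f$ in strongly-inner cohomology, after base-change via $\iota$, decomposes into $l$ cuspidal representations ${}^\iota\sigma_1,\dots,{}^\iota\sigma_l$ (all in the same global $A$-packet, differing only at infinity), and the dimension ${\sf k}$ is of the form $l{\sf k}'$. So the ``slope'' map $T_{\Eis}$ is a priori an $l\times l$ block matrix, not a scalar. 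The paper resolves this by showing the matrix is in fact diagonal with the \emph{same} ratio $L(-n)/L(1-n)$ on each block, because for each $j$ the archimedean component ${}^\iota\sigma_{j,v}$ lies in the same discrete-series $L$-packet, so the local archimedean $L$-factors (and hence the normalisation) are $j$-independent. Without this argument, one cannot conclude that the Eisenstein-cohomology image is the graph of multiplication by a single element of $E$. Second, your mechanism for reaching all successive critical pairs ``two successive Kostant representatives, or one representative together with the functional equation $s\mapsto -s$'' is not how it works: the balanced Kostant representative $w = (w^\tau)$ is essentially unique given $(d,\mu)$ (forced by the sign of $\mu_n^\tau$), and what varies is the integer $d$ inside the window $1-\mu_{\min} \le -(n+d) \le \mu_{\min}-1$, together with a further split into ``right of unitary axis'' (where the Eisenstein series is holomorphic at $s=-n$, handling roughly half the critical pairs) and ``left of unitary axis'' (handled by working on the other side of the intertwining operator and invoking the functional equation). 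The Manin--Drinfeld splitting of the induced module from boundary cohomology, which uses the cuspidality of the Arthur parameter in a nontrivial way to rule out CAP-type coincidences, is also needed as a preliminary step but is not explicitly addressed in your outline.
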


\medskip

The calculation of the critical set in $(i)$ follows from knowing the Langlands parameter of discrete series representations, which, 
together with the archimedean case of the local Langlands correspondence, gives us the local $L$-factor at infinity. 
See Sect.\,\ref{sec:critical-set}.

\medskip
For the rest of the theorem, we follow the general framework of 
\cite{harder-raghuram}, however, we encounter serious difficulties in 
that we are working with an orthogonal group and not a general linear group. For $(ii)$ and $(iii)$, as in \cite{harder-raghuram}, there is an underlying {\it combinatorial lemma} (Sect.\,\ref{sec:kostant-comb-lem}) which says that we need to prove a rationality result for one specific ratio of critical values for a general family, and then the result automatically follows for all ratios of successive critical values. The critical set calculation gives that the combinatorial condition 
\begin{equation}
\label{eqn:comb-cond}
1 - \mu_{\rm min} \ \leq \ -(d+n) \ \leq \ \mu_{\rm min} - 1
\end{equation}
is equivalent to the fact that $s = -n$ and $s = 1-n$ are critical for $L(s, {}^\iota\chi \times {}^\iota\sigma).$ Under this condition, we prove  
\begin{equation}\label{eqn:s=-n}
\frac{L(-n, {}^\iota\chi \times {}^\iota \sigma) }{L(1-n, {}^\iota\chi \times {}^\iota \sigma)} \ \in \ \iota(E), 
\end{equation}
and also prove a reciprocity law as in $(iii)$ for this particular ratio; then both $(ii)$ and $(iii)$ of the theorem follow by letting $d$ vary as long as the combinatorial condition \eqref{eqn:comb-cond} is satisfied. Turns out that the constraint imposed on $d$ by the inequalities in \eqref{eqn:comb-cond} 
exactly captures every successive ratio of critical values for $L(s, {}^\iota\chi^\circ \times {}^\iota \sigma),$ {\it no more and no less!}

\medskip
The technical heart of the proof of \eqref{eqn:s=-n} concerns rank-one Eisenstein cohomology for $\rO(2n+2)/F$ and 
Langlands's constant term theorem viewed in terms of maps in cohomology. Changing notations, 
henceforth we let $G = \Res_{F/\Q}(\rO(2n+2)/F),$ and $P = \Res_{F/\Q}(P_0),$ where $P_0$ is the maximal parabolic subgroup of $\rO(2n+2)/F$ whose 
Levi quotient is $M_0 = \GL_1/F \times \rO(2n)/F.$ The combinatorial lemma also gives that the algebraically and parabolically induced representation 
$\aInd_{P(\A_f)}^{G(\A_f)}(\chi_f \times \sigma_f)$ appears in the cohomology of the boundary in a very special degree; this involves combinatorial subtleties on Kostant representatives in Weyl groups. We prove a `Manin--Drinfeld principle' (Thm.\,\ref{thm:Manin-Drinfeld}) which 
says that this induced representation together with its partner across an intertwining operator splits off as an isotypic component; this theorem uses results of Arthur and Atobe--Gan on the classification of the discrete spectrum for orthogonal groups. We then prove our main theorem on rank-one Eisenstein cohomology (Thm.\,\ref{thm:rank-one-eis-coh}) 
which says that the image of Eisenstein cohomology in this isotypic component is analogous to a line in a two-dimensional plane. If one passes to a transcendental situation using an embedding $\iota : E \to \C$, then via Langlands's constant term theorem, the slope of this line is the required ratio of $L$-values.  

\medskip

There are certain archimedean problems that need to be solved. First, Harish-Chandra's classification of discrete series representations works best for a connected semisimple Lie group. For us, the (almost-)simple factor of the Levi group $M_0(\R)$ is $\rO(n,n)(\R)$, the real split orthogonal group, whose group of connected components has order $4$. The action of this group of connected components on relative Lie algebra cohomology of the discrete 
series representation of $\rO(n,n)(\R)^\circ = \SO(n,n)(\R)^\circ$ necessitates some arduous book-keeping;  
this is reviewed in Sect.\,\ref{section-discrete-series-representations-and-cohomology}, and applied later in \ref{sec:coh-induced-rep}. 
Next, we need a delicate analysis of the rationality properties of archimedean representations and intertwining operators; see Sect.\,\ref{sec:archimedean}. 
As in other contexts involving Eisenstein cohomology (\cite{harder-raghuram}, \cite{raghuram}), the point of evaluation $s = -n$
being a critical point, and if we are {\it on the right of the unitary axis} (see Sect.\,\ref{sec:holomorphy}), 
then from general results of Casselman--Shahidi \cite{casselman-shahidi}, it follows that the induced representations at hand are irreducible and 
the standard intertwining operator is well-defined and finite at this point. We show in Prop.\,\ref{prop:intertwining-operator-in-cohomology} that the map induced in relative Lie algebra cohomology by the archimedean standard intertwining operator contributes, up to nonzero rational numbers, the ratio of local archimedean $L$-values in \eqref{eqn:s=-n}.

\medskip

As mentioned earlier, what's new in this article is that it represents the first serious study of the arithmetic properties of $L$-functions for classical groups, outside the framework of general linear groups, via a cohomological interpretation of certain parts of the Langlands--Shahidi theory of $L$-functions \cite{shahidi-book}. We expect our methods to pave the way to study the arithmetic of other families of automorphic $L$-functions. 
For $n=2,$ our results specialize to results of Shimura, however, for all even $n \geq 4$ the results are new. A technically subtle point, 
the full import of which we hadn't appreciated in our announcement \cite{bhagwat-raghuram-cras}, 
is our use of $\rO(2n)$ and not $\SO(2n)$; 
Arthur's classification \cite{arthur} of the discrete spectrum for $\SO(2n)$ is only up to conjugation by the ambient $\rO(2n)$; this was finessed by Atobe and Gan \cite{atobe-gan} who gave a satisfactory classification for the discrete spectrum for $\rO(2n).$ In Sect.\,\ref{sec:final-comment}, amplifying on a comment made 
in  \cite{bhagwat-raghuram-cras}, we explain the need to work intrinsically with $\rO(2n)$, and not transfer to $\GL(2n)$, to prove our main theorem on the special values of $L$-functions. 
Finally, our theorem on the rationality of ratios of successive critical values is compatible with Deligne's conjecture \cite{deligne} on the critical values of motivic $L$-functions. The reader is referred to the forthcoming article of Deligne and the second author \cite{deligne-raghuram} 
which gives a motivic explanation for a growing body of results on ratios of successive critical values including the main result of this article.

\medskip

{\small
{\it Acknowledgements:} 
It is a pleasure to thank G\"unter Harder, who pioneered the use of Eisenstein cohomology to study the special values of $L$-functions, for his constant enthusiasm and encouragement during this project which would not have been possible without the insights obtained in \cite{harder-raghuram}. 
We thank Michael Harris and Freydoon Shahidi for their interest and encouragement. We are grateful to Wee Teck Gan for helpful correspondence on Arthur parameters for orthogonal groups, Galois actions, and for drawing our attention to his work with Atobe that turned out to be crucial. 
C.B. acknowledges support from his research grants INSPIRE faculty award IFA-11MA-5 of Department of Science and Technology, Government of India, and Early Career Research Award ECR/2016/001642. Both C.B. and A.R. acknowledge support from MATRICS research grants MTR/2018/000102 and MTR/2018/000918, respectively, from the Science and Engineering Research Board, Department of Science and Technology, Government of India.}

\medskip 
\tableofcontents

\medskip
\section{Preliminaries on the cohomology of orthogonal groups} 
\label{section-preliminaries-notations}

\medskip
\subsection{The base field and algebraic groups} 
\label{subsection-base-field}
Let $F$ be a totally real number field of degree ${\sf r}_F = [F: \Q].$ Let $\Sigma_F := {\rm Hom}(F, \R) = {\rm Hom}(F,\C)$ be the set of all embeddings of $F$ into $\R$ or $\C,$ and denote by $S_{\infty}$ the set of archimedean places of $F$. Of course, $S_\infty$ is in bijection with $\Sigma_F.$ 
The completion of $F$ at any place $v$ is denoted $F_v.$ 
For $v \in S_{\infty}$, we identify $F_{v}$ with $\R$.  Let $n = 2r \geq 2$ be an even integer. 
Let $G_{0} =   \rO(2n+2) / F$ be the split orthogonal group defined by the equation ${}^\intercal g~ J_{2n+2}~ g = J_{2n+2} $, 
where ${}^\intercal g$ is the transpose of $g$, and for any positive integer $m$, the matrix $J_{m}$ is given by:
\begin{equation}\label{defn:J}
J_{m} =  \left( \begin{array}{ccc} 
& & 1 \\  
& \iddots \\ 
1 & &  \end{array}
\right)_{m \times m} . 
\end{equation}
When the size $m$ is clear from context we will denote $J_m$ simply by $J.$ 
The subgroup $B_{0}$ consisting of all upper triangular matrices in $G_{0}$ defines a  Borel subgroup of $G_0$.  Let $T_{0} \subset B_{0}$ be a split torus of $G_0$ consisting of the diagonal matrices. Let $N_0$ be the unipotent radical of $T_0$ in $B_0$. So we have $B_0 = T_0 N_0$.
Let $G = R_{F / \Q} (G_{0} / F)$ be the group obtained by the Weil restriction of scalars to $\Q$.  Similarly, for $B_0$, $T_0,$ and $N_0$, we denote the $\Q$-groups obtained from by restriction of scalars by $B$, $T,$ and $N$, respectively.

\medskip
\subsection{The groups at infinity} 
\label{sec:groups-at-infty}

The group of real points of $G$ is given by 
$$
G(\R) = \prod \limits_{v \in S_{\infty}} G(F_v) = 
\prod \limits_{v \in S_{\infty}} \rO(n+1,n+1)(\R).
$$ 
The map  $g \mapsto {}^\intercal g^{-1}$ defines a Cartan involution on $G(\R)$ and its fixed points define a maximal compact subgroup $K_{\infty}$ of $G(\R)$: 
$$
K_{\infty}  \ \simeq \ \prod \limits_{v \in S_{\infty}} {\rm O}(n+1)(\R) \times {\rm O}(n+1)(\R). 
$$ 
To describe this isomorphism explicitly, consider the map 
$$
h : {\rm O}(n+1)(\R) \times {\rm O}(n+1)(\R) \ \longrightarrow \ \rO(n+1,n+1)(\R), 
$$ 
which is defined by: 
\begin{equation}
\label{eqn:h-map}
(k, l) \mapsto h(k, l) =  \dfrac{1}{2} 
\begin{pmatrix} k + J l J & k J - J l \\ 
J k - l J & J k J + l \end{pmatrix}, 
\end{equation}
where $k,l \in \rO(n+1)(\R),$ i.e., they satisfy ${}^\intercal k \cdot k = I_{n+1} = {}^\intercal l \cdot l,$ and $J$ in the formula above is $J_{n+1}.$ We leave it 
to the reader to check that $h$ is a homomorphism and that the image of $h$ lies in $\rO(n+1,n+1)(\R)$; i.e., the map $h$ gives an embedding of 
 ${\rm O}(n+1)(\R) \times {\rm O}(n+1)(\R) \hookrightarrow \rO(n+1,n+1)(\R)$. For $v \in S_{\infty}$, let $K_{v}$ be the image of  
 ${\rm O}(n+1)(\R) \times {\rm O}(n+1)(\R)$  inside $G_{v}$, and put $K_{\infty} = \prod \limits_{v \in S_{\infty}}K_{v}$; its  
connected component of the identity is $K^{\circ}_{\infty}$ which is isomorphic to $\prod \limits_{v \in S_{\infty}} \SO(n+1)(\R) \times \SO(n+1)(\R)$ and the component groups $ \pi_{0}(G(\R)) = \pi_{0}(K_{\infty}) \ \simeq \ (\Z/ 2\Z \times \Z/ 2\Z)^{\sf r_F}.$ For any $m \geq 1$, let's write 
$\delta_m = {\rm diag}(-1,1,\dots,1),$ and put $\SO(m)(\R)^\dagger := \SO(m)(\R)\delta_m,$ then $\SO(m)(\R)$ and $\SO(m)(\R)^\dagger$ are the connected components of $\rO(m)(\R).$ For $v \in S_\infty$, 
 define a subgroup $\overline {K}_{v}$ of $K_v$ by 
\begin{equation}
\label{eqn:K-infty-to-divide-by-for-G}
\overline {K}_{v} \ := \ h(\SO(n+1)(\R) \times \SO(n+1)(\R)) \cup h(\SO(n+1)(\R)^\dagger \times \SO(n+1)(\R)^\dagger)
\end{equation}
and let  $\overline {K}_{\infty} : =  \prod \limits_{v  \in S_{\infty}} \overline {K}_{v} $. The component group  of $\overline {K}_{\infty}$ will be described later, and will be used in the context of boundary cohomology.

\medskip
\subsection{Ad\`elic locally symmetric spaces} 
Let $\A$ be the ring of ad\`eles of $\Q$, and $\A_f$ denote the finite ad\`eles.  Let $K_{f} = \prod \limits_{v \notin S_{\infty}} K_v \subset G(\A_{f})$ be an open compact subgroup. We will work with the ad\`elic symmetric space for $G$  defined by
$$ \cS^{G}: = G(\A) / \overline{K}_{\infty} K_{f} = G(\R)/\overline{K}_{\infty} \times G(\A_f)/K_{f}.$$
The discrete subgroup $G(\Q)$ of $G(\A)$ acts properly discontinuously via left-translations on $G(\A) / \overline{K}_{\infty} K_{f} $. The ad\`elic locally symmetric space
$\cS^{G}_{K_{f}}$, with level structure $K_{f} $ is the quotient space given by 
$$ 
G(\A) / \overline{K}_{\infty} K_{f}  \twoheadrightarrow G(\Q) \backslash G(\A) /\overline{K}_{\infty} K_{f}. 
$$ 
Note the technical artifice of dividing by $\overline{K}_{\infty}$; this will be useful later (see Sect.\,\ref{sec:coh-induced-rep}). 
In principle, we can divide by any subgroup in between 
$K_\infty^\circ$ and $K_\infty.$ When we associate similar locally symmetric spaces for Levi subgroups of $G$, and compute the action of 
the group of components on the cohomology of a discrete series representation, in preparation to describe parts of the cohomology of the Borel--Serre 
boundary, then the virtue of dividing by $\overline{K}_\infty$ will become clear. 
The level structure $K_f$ will be assumed to be {\it neat} which 
ensures that for any $\ul g_f \in G(\A_f)$, the discrete subgroup $G(\Q) \cap \ul g_f K_f \ul g_f^{-1}$ of $G(\R)$ is torsion free. Given any open compact subgroup $K_f$, we may pass to a subgroup of finite index which is neat. Later we will also assume that $K_f$ is a principal congruence subgroup 
(see \ref{sec:para-manin-drinfeld}).

\medskip
\subsection{The character module and dominant integral weights} 
\label{subsection-character-module} 
Let $E$ be a `large enough' finite Galois extension of $\Q$ such that  ${\rm Hom}(F, \R) = {\rm Hom}(F, E)$. The meaning of `large enough' is usually clear from the context; we will want some Hecke summands in inner cohomology to split over $E$. Let
$X^{\ast}(T \times E) = {\rm Hom}_{{\rm alg}} (T \times E, \mathbb G_m)$ be the group of characters of $T \times E.$ We have: 
$$
X^*(T \times E) = \bigoplus_{\tau : F \to E} X^{\ast}(T_{0} \times_{F, \tau} E).  
$$
A character $\lambda \in X^*(T \times E),$ also called an integral weight for $T$, 
will be written in the above decomposition 
as $\lambda = (\lambda^\tau)_{\tau : F \to E},$ with 
$\lambda^\tau \in X^*(T_0 \times_{F,\tau} E) = {\rm Hom}_{{\rm alg}} (T_0 \times_{F,\tau} E, \mathbb G_m);$ 
furthermore, we write 
$\lambda^{\tau} = (\lambda_0^{\tau}, \lambda_1^{\tau}, \ldots, \lambda_n^{\tau}),$ with integrality of $\lambda$ meaning that 
$\lambda^\tau_j \in \Z$, and describe its values on elements of the diagonal torus by: 
$$
\lambda^{\tau}(\text{diag} [ t_0, \ldots, t_n, t_n^{-1}, \ldots, t_{0}^{-1} ])  \ = \ 
\prod \limits_{i=0}^{n} t_i^{\lambda_i^{\tau}}.
$$
The Galois group ${\rm Gal} (E / \Q)$ acts on $X^{\ast}(T \times E)$ by: 
$
({}^{\eta} \lambda)^{\tau} : = \lambda^{\eta^{-1} \circ \tau },$
for all $\lambda \in X^{\ast}(T \times E),$ $\tau : F \to E$, and $\eta \in {\rm Gal} (E / \Q).$
An integral weight $\lambda = (\lambda^{\tau})_{\tau: F \rightarrow E} \in X^{\ast}(T \times E)$ is said to be a dominant with respect to Borel subgroup $B$ 
if for all $\tau : F \to E$ we have: 
$$
\lambda_0^{\tau} \geq \lambda_1^{\tau} \geq  \ldots \geq \lambda_{n-1}^{\tau} \geq |\lambda_{n}^{\tau}|.
$$
We denote the set of dominant integral weights for $T$ by $X^{\ast}_{+}(T \times E)$.

\medskip  
 \subsection{The sheaf $\widetilde{\mathcal M}_{\lambda, E}$ and its cohomology} 
 \subsubsection{}
Let $G^{\circ} = \Res_{F/\Q}(\SO(2n+2)/F)$ denote the connected component of the identity of $G.$ Given  
 $\lambda = (\lambda^{\tau})_{\tau: F \rightarrow E} \in X^{\ast}_+(T \times E)$ as above, let $(\rho_{\lambda}, \mathcal N_{\lambda, E})$ be the absolutely irreducible finite-dimensional representation of the group $G^{\circ} \times E$ with highest weight $\lambda$. Consider the induced representation 
  $\Ind^{\, G \times E }_{G^{\circ} \times E}   (\rho_{\lambda});$ we denote its representation space as $\mathcal M_{\lambda, E}$. We may parse 
  $\M_{\lambda, E}$ 
over the embeddings $\tau : F \to E$ as follows: 
let $\mathcal N_{\lambda^{\tau}, E}$ be the absolutely irreducible finite-dimensional representation of $\SO(2n+2) \!\times_{F, \tau}\!E$ with 
highest weight $\lambda^{\tau}$; induce $\mathcal N_{\lambda^{\tau}, E}$ to a representation $\M_{\lambda^{\tau}, E}$ of $\rO(2n+2) \!\times_{F, \tau}\!E$, then  
$$ 
\mathcal \M_{\lambda, E}  = 
\bigotimes_{\tau: F \rightarrow E} \mathcal M_{\lambda^{\tau}, E}.
$$
The tensor product is of $E$-vector spaces. 

\medskip 
  
 \subsubsection{} 
 Consider the quotient map:  
 $ \pi: G(\mathbb A) / \overline{K}_{\infty} K_{f}  \rightarrow G(\Q) \backslash G(\mathbb A) / \overline{K}_{\infty} K_{f} $.  The representation 
 $\M_{\lambda,E}$ gives a sheaf $\widetilde{\mathcal M}_{\lambda,E}$ 
 of $E$-vector spaces on the space ${\mathcal {S}}^G_{K_{f}}$ defined as: the sections over any open subset $V$ of ${\mathcal {S}}^G_{K_{f}}$ are given by 
\begin{multline*} 
  \widetilde{\mathcal M}_{\lambda,E}(V) \ := \ 
   \{s: \pi^{-1}(V) \rightarrow   \mathcal M_{\lambda,E} \ | \  \mbox{$s$ is smooth, and} \\
s(\gamma \ul x) = \rho_{\lambda}(\gamma) s(\ul x),   \ 
 \forall~\gamma \in G(\Q),  \ \ul x \in \pi^{-1}(V) \}, 
\end{multline*}
where, {\it smooth} means locally constant at non-archimedean places, and infinitely differentiable at the archimedean places.

%\smallskip 
\subsubsection{}
Analogous to \cite[Lem.\,2.3]{harder-raghuram}, we need to ask the question: under what conditions is the sheaf $\tM_{\lambda, E}$ nonzero? 
In our context this 
has an easy answer: suppose $Z_0$ is the center of $G_0$ and $Z = \Res_{F/\Q}(Z_0)$, then $Z(\Q) = Z_0(F) = \{\pm I\}$. It's easy to see that 
$Z(\Q) \cap \overline{K}_\infty$ is trivial. Hence, we deduce that the sheaf $\tM_{\lambda, E}$ is nonzero. Of course, since the center 
is finite, and since we may replace $K_f$ by a finite-index subgroup, we are guaranteed that $Z(\Q) \cap \overline{K}_\infty K_f$ is trivial.

%\smallskip  
 \subsection{The cohomology of $\widetilde{\mathcal M}_{\lambda, E}$}

\smallskip
\subsubsection{}
\label{sec:def-iota-lambda}
A fundamental object of interest is the cohomology $H^\bullet(\SGK,  \tM_{\lambda, E})$ of the space 
$\SGK$ with coefficients in the sheaf $\tM_{\lambda, E}.$ At this point, the reader is referred to \cite[Sect.\,2.3]{harder-raghuram} for generalities on 
these cohomology groups. We will be brief here in an effort to make this article reasonably self-contained. Let 
$\HGK = C^\infty_c(G(\A_f)/ \! \!/K_f)$ 
be the set of all locally constant and compactly supported bi-$K_f$-invariant $\Q$-valued functions on $G(\A_f).$  
Take the Haar measure on $G(\A_f)$ to be the product of local Haar measures, and for every prime $p$, the local measure is normalized so that 
$\mathrm{vol}(G(\Z_p)) = 1.$ Then $\HGK$ is a $\Q$-algebra under convolution. We have an action of $\pi_0(G(\R)) \times \HGK$ on $H^\bullet(\SGK, \tM_{\lambda, E}).$ Functorial properties of the sheaves $\tM_{\lambda, E}$ upon changing the field $E$ are relevant for Galois actions on the cohomology groups $H^\bullet(\SGK,  \tM_{\lambda, E})$, and also for the relation of these cohomology groups with 
automorphic forms using an embedding $\iota : E \to \C$ that gives a map 
$\iota^\bullet : H^\bullet(\SGK,  \tM_{\lambda, E}) \to H^\bullet(\SGK,  \tM_{{}^\iota\!\lambda, \C}).$ 
The embedding $\iota$ gives a bijection $\iota_* : \Hom(F,E) \to \Hom(F,\C)$ via composition.  
The weight ${}^\iota\!\lambda$ may be described by: 
${}^\iota\!\lambda = (({}^\iota\!\lambda)^\tau)_{\tau : F \to \C}$ where $({}^\iota\!\lambda)^\tau = \lambda^{\iota_*^{-1}\tau}.$

\medskip
\subsubsection{\bf A long exact sequence}
\label{sec:long-exact-seq}
 
 Let $\BSC$ be the Borel--Serre compactification \label{index:borel-serre-2} of $\SGK$; i.e., 
$\BSC = \SGK \cup \partial\SGK$, where the boundary is stratified as 
$\partial \SGK  = \cup_P \partial_P\SGK,$ with $P$ running through the $G(\Q)$-conjugacy classes of proper parabolic subgroups defined over $\Q$. (See Borel--Serre \cite{borel-serre}.) 
The sheaf $\tM_{\lambda, E}$ on $\SGK$ naturally extends to a sheaf on 
$\BSC$ that we also denote by $\tM_{\lambda, E}$. The inclusion $\SGK \hookrightarrow \BSC$ is a homotopy equivalence, and hence the 
restriction from $\BSC$ to $\SGK$ induces an isomorphism in cohomology 
$H^\bullet(\BSC, \tM_{\lambda, E}) \ \iso \  H^\bullet(\SGK, \tM_{\lambda, E}).$
The cohomology of the boundary $H^\bullet(\partial \SGK, \tM_{\lambda, E})$ and the cohomology 
with compact supports $H^\bullet_c(\SGK, \tM_{\lambda, E})$ are naturally 
modules for $ \pi_0(G(\R)) \times \HGK.$ We have the following long exact sequence of $\pi_0(G(\R)) \times \HGK$-modules: 
\begin{multline*}
\cdots  \longrightarrow H^i_c(\SGK, \tM_{\lambda, E}) 
\stackrel{\mathfrak{i}^\bullet}{\longrightarrow}   H^i(\BSC, \tM_{\lambda,E}) 
\stackrel{\mathfrak{r}^\bullet}{\longrightarrow } H^i(\partial \SGK, \tM_{\lambda,E}) \\
\xrightarrow{\partial^{\bullet}} 
H^{i+1}_c(\SGK, \tM_{\lambda,E}) \longrightarrow \cdots .
\end{multline*}

\medskip
\subsubsection{\bf Inner and Eisenstein cohomology}
The image of cohomology with compact supports inside the full cohomology is called \textit{inner} or \textit{interior} cohomology and is denoted 
$H^{\bullet}_{\, !}(\SGK, \tM_{\lambda, E}) := \mathrm{Image}(\mathfrak{i}^\bullet).$ 
Complementary to inner cohomology is Eisenstein cohomology, defined as: 
$H^\bullet_{\rm Eis}(\PBSC, \tM_{\lambda,E}) := \mathrm{Image}(\mathfrak{r}^\bullet).$
Let us  note that neither \textit{inner cohomology} nor \textit{Eisenstein cohomology} 
is not an honest-to-goodness cohomology theory, however, the terminology is very convenient and helps 
with our geometric intuition with the nature of the cohomology classes therein. Functorial properties on changing the base field $E$  
apply verbatim to the cohomology groups
$H^\bullet_?(\SGK, \tM_{\lambda,E}),$ where $? \in \{empty, c,!,\partial\};$ 
 by $H^\bullet_\partial(\SGK, \tM_{\lambda,E})$ we mean 
 $H^\bullet(\PBSC, \tM_{\lambda,E}).$ 
 
 \medskip
\subsubsection{\bf Cohomology of a boundary stratum $\partial_P\SGK$}
There is a spectral sequence whose $E_1^{pq}$-term is 
 built from the cohomology of various boundary strata $\partial_P\SGK$ that converges to the cohomology 
 of $\PBSC$ (see \cite[Sect.\,4.1]{harder-raghuram}). 
 For the moment $P$ will denote any (representative of a $G(\Q)$-conjugacy class of a) 
proper maximal parabolic subgroup of $G$, but from Sect.\,\ref{sec:root-system} it will denote 
a particular parabolic subgroup. The cohomology of any individual boundary strata $\partial_P\SGK$ is described in terms of representations induced from the cohomology of the Levi quotient of that boundary strata; we will briefly recall the essentials here while referring the reader to \cite[Sect.\,4.2]{harder-raghuram} 
for more details. It is convenient to pass to the limit over all open-compact subgroups $K_f$ and define
$$
H^\bullet(\partial_P\SG, \tM_{\lambda,E}) \ := \ \varinjlim_{K_f} \, 
H^\bullet(\partial_P\SGK, \tM_{\lambda,E}),
$$
on which there is an action of $G(\A_f)$; for a given $K_f$ we may retrieve $H^\bullet(\partial_P\SGK, \tM_{\lambda,E})$ by taking the $K_f$ invariants: 
$H^\bullet(\partial_P\SG, \tM_{\lambda,E})^{K_f}.$ 
Let $\kappa_P : P \to P/U_P = M_P$ be the canonical map from $P$ onto its Levi quotient $M_P$ by going modulo its unipotent radical $U_P.$ 
Define $K^{M_P}_\infty = \kappa_P(P(\R) \cap \overline{K}_\infty);$ later, we will carefully analyse the inner structure of $K^{M_P}_\infty,$ for a 
particular maximal parabolic subgroup. For any level structure $C_f \subset M_P(\A_f)$, define a locally symmetric space
\begin{equation}
\label{eqn:ul-S-M}
\uSMP_{C_f} \ = \ M_P(\Q) \backslash M_P(\A) / K^{M_P}_\infty C_f.
\end{equation}
Again, it is convenient to pass to the limit over all open-compact subgroups $C_f;$ define
$$
H^\bullet(\uSMP, \tM) \ := \ \varinjlim_{C_f} \, 
H^\bullet(\uSMP_{C_f}, \tM).
$$
We recall Prop.\,4.2 of {\it loc.\,cit.}: 

\begin{proposition}
\label{prop:bdry-coh-1}
The cohomology of the boundary stratum for $P$ is given by: 
$$
H^\bullet(\partial_P\SG, \tM_{\lambda,E}) \ = \ 
\aInd_{\pi_0(P(\R)) \times P(\A_f)}^{\pi_0(G(\R)) \times G(\A_f)} 
\left( H^\bullet (\uSMP, \widetilde{H^\bullet(\fu_P, \M_{\lambda,E})} ) \right),
$$
where the notations are: the Lie algebra of $U_P$ is $\fu_P$; 
$H^\bullet(\fu_P, \M_{\lambda,E})$ is the Lie algebra cohomology of $\M_{\lambda,E}$ that is naturally an $M_P$-module giving us 
a sheaf on $\uSMP_{C_f};$ the cohomology group on the right hand side is a module for $\pi_0(M_P(\R)) \times M_P(\A_f);$ note
that $\pi_0(M_P(\R)) = \pi_0(P(\R))$; 
$\aInd$ stands for algebraic, or un-normalized, induction which is the usual notion for $P(\A_f)$ to $G(\A_f)$, whereas induction from $\pi_0(P(\R))$  to $\pi_0(G(\R))$
means that we take invariants under the kernel of the canonical map $\pi_0(P(\R)) \to \pi_0(G(\R)).$ 
\end{proposition}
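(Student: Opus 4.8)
The plan is to reduce the computation of $H^\bullet(\partial_P\SG,\tM_{\lambda,E})$ to the cohomology of the Levi quotient by a standard spectral-sequence / fibration argument adapted from \cite[Sect.\,4.2]{harder-raghuram}, with the one genuinely new ingredient being the careful handling of the component groups $\pi_0(P(\R))$ and $\pi_0(G(\R))$ that arise because we work with $\rO$ rather than $\SO$. First I would recall that the boundary stratum $\partial_P\SGK$ is, as a space, the locally symmetric space attached to $P$: passing to the limit over $K_f$ it is $P(\Q)\backslash P(\A)/\overline K_\infty K_f$ with $\overline K_\infty\cap P(\R)$ divided out at infinity, and the projection $\kappa_P : P \to M_P$ realizes $\partial_P\SG$ as a fiber bundle over $\uSMP$ whose fibers are nilmanifolds $U_P(\Q)\backslash U_P(\A)/(\text{compact})$, i.e. compact quotients of $U_P(\R)$ by a lattice (more precisely, at the adelic level the $U_P(\A_f)$-orbits contribute the induction). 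The sheaf $\tM_{\lambda,E}$ pulled back to this bundle has a Leray/Hochschild--Serre spectral sequence.

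Next I would invoke van Est / the nilmanifold computation: the cohomology of the nilmanifold fiber with coefficients in $\M_{\lambda,E}$ is exactly the Lie algebra cohomology $H^\bullet(\fu_P,\M_{\lambda,E})$, which carries a natural action of $M_P = P/U_P$ (since the $U_P$-action on $\fu_P$-cohomology is trivial by Kostant's theorem, only the Levi acts) and hence defines a sheaf $\widetilde{H^\bullet(\fu_P,\M_{\lambda,E})}$ on $\uSMP$. Because the fiber bundle has contractible (Euclidean) structure group one shows the Leray spectral sequence degenerates after identifying the $E_2$-page, or more cleanly, one argues directly as in \emph{loc.\,cit.} that $H^\bullet(\partial_P\SG,\tM_{\lambda,E}) = H^\bullet(\uSMP,\widetilde{H^\bullet(\fu_P,\M_{\lambda,E})})$ as $M_P(\A_f)$-modules. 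The $G(\A_f)$-action on the limit then comes out as the algebraically (un-normalized) induced representation $\aInd_{P(\A_f)}^{G(\A_f)}$ applied to the $M_P(\A_f)$-module on the right: this is the usual geometric fact that the $G(\A_f)$-orbit of the stratum $\partial_P\SG$ inside $\partial\SG$ is $G(\A_f)\times_{P(\A_f)}(\,\cdot\,)$, so functions on it are the induced module, and unnormalized induction appears because we use the natural (rather than $L^2$-normalized) identification of measures.

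The step I expect to be the main obstacle — and the reason this proposition is stated separately rather than simply cited — is the archimedean component-group bookkeeping: $\pi_0(G(\R)) \simeq (\Z/2\Z\times\Z/2\Z)^{\sr_F}$ is nontrivial, $\pi_0(P(\R)) = \pi_0(M_P(\R))$ maps to it possibly non-injectively, and the whole construction was set up by dividing by $\overline K_\infty$ rather than by $K_\infty$ or $K_\infty^\circ$ precisely so that this map and its kernel behave well. One must check that passing from the disconnected group to its identity component and back is compatible with taking cohomology of the sheaf $\tM_{\lambda,E}$: concretely, the $\overline K_\infty$-quotient is chosen so that $H^\bullet(\uSMP,-)$ is naturally a $\pi_0(M_P(\R))$-module and induction ``from $\pi_0(P(\R))$ to $\pi_0(G(\R))$'' must be interpreted, as stated, as taking invariants under the kernel of $\pi_0(P(\R)) \to \pi_0(G(\R))$ rather than as honest induction. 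Verifying that this interpretation is forced by the geometry — that the real points of the stratum fiber correctly over the $\pi_0$'s and that the sheaf cohomology sees exactly these invariants — is the delicate point; everything else is the nilmanifold computation and the adelic bookkeeping, which go through as in \cite{harder-raghuram} essentially verbatim. Once these pieces are assembled, the displayed formula follows.
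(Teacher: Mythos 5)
Your sketch follows the argument the paper is actually invoking: the stratum $\partial_P\SG$ is a nilmanifold bundle over the Levi locally symmetric space, van Est identifies the fiber cohomology with $H^\bullet(\fu_P,\M_{\lambda,E})$ as an $M_P$-module, and the orbit decomposition at finite places produces $\aInd^{G(\A_f)}_{P(\A_f)}$. Note, though, that the paper gives no proof of this proposition at all; the preceding sentence reads ``We recall Prop.\,4.2 of \emph{loc.\,cit.},'' so the entire content the authors supply is the \emph{setup} (the choice of $\overline{K}_\infty$, the definition of $\uSMP$, the ``invariants under the kernel'' reading of archimedean induction) that makes the cited statement apply verbatim. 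Your instinct that the component-group bookkeeping is the genuinely new delicate point for $\rO$ as opposed to $\GL$ is exactly right, but it is absorbed into Sect.\,\ref{section-preliminaries-notations} rather than into a proof of Prop.\,\ref{prop:bdry-coh-1}.

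One local imprecision, not load-bearing since you immediately defer to \emph{loc.\,cit.}\ as the cleaner route, but worth correcting if you ever write it out: degeneration of the Leray/Hochschild--Serre spectral sequence at $E_2$ does \emph{not} follow from the structure group of the nilmanifold bundle being contractible --- that says nothing about the differentials. The mechanism in \cite{harder-raghuram} (and in the classical nilmanifold literature going back to Harder) is that the split central torus $A_P$ of $M_P$ acts on $H^q(\fu_P,\M_{\lambda,E})$, the resulting characters separate the degrees $q$ (a consequence of Kostant's theorem and the strict positivity of $\brho_P$), the differentials $d_r$ are $A_P$-equivariant, and hence they vanish because they would have to connect distinct central characters. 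Replace the contractibility remark with this weight-splitting argument and the proposal is sound.
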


A celebrated theorem of Kostant \cite{kostant} asserts that as an $M_P \times E$-module, one has a multiplicity-free decomposition: 
\begin{equation}
\label{eqn:kostant}
H^q(\fu_P, \M_{\lambda, E}) \ \simeq \  \bigoplus_{\substack{w \in W^P \\ l(w) = q}} \M_{w \cdot \lambda, E}.
\end{equation}
where $W^P$ is a special subset of the Weyl group of $G$ consisting of all {\it Kostant representatives} (see Sect.\,\ref{sec:kostant-reps} below), and 
for any element $w$ in the Weyl group, $l(w)$ will denote its length defined in terms of simple reflections; the twisted action of $w$ on a weight $\lambda$ 
is denoted $w \cdot \lambda;$ see Sect.\,\ref{sec:weyl-group-new}. 
When $P$ is of the form $\Res_{F/\Q}(P_0)$ for a parabolic subgroup $P_0$ of $G_0$, the elements of $W^P$ and the above
decomposition of unipotent cohomology can be parsed over $\tau : F \to E$ as described in \cite[Sect.\,4.2.2]{harder-raghuram}. Now, 
Prop.\,\ref{prop:bdry-coh-1} together with \eqref{eqn:kostant} gives: 
 
\medskip
\begin{proposition}
\label{prop:bdry-coh-2}
The cohomology of $\partial_P\SG$ is given by
$$
H^q(\partial_P\SG, \tM_{\lambda, E}) \ = \ 
\bigoplus_{w \in W^P}
\aInd_{\pi_0(P(\R)) \times P(\A_f)}^{\pi_0(G(\R)) \times G(\A_f)}
\left(H^{q - l(w)}(\uSMP, \tM_{w \cdot \lambda, E}) \right).
$$
\end{proposition}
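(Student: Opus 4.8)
The plan is to combine Proposition~\ref{prop:bdry-coh-1}, which computes $H^\bullet(\partial_P\SG, \tM_{\lambda,E})$ as an algebraic induction of $H^\bullet(\uSMP, \widetilde{H^\bullet(\fu_P, \M_{\lambda,E})})$, with Kostant's theorem~\eqref{eqn:kostant}, which decomposes the unipotent cohomology $H^q(\fu_P, \M_{\lambda,E})$ as a multiplicity-free direct sum of the irreducible $M_P\times E$-modules $\M_{w\cdot\lambda,E}$ indexed by Kostant representatives $w\in W^P$ with $l(w)=q$. The only real point is to bookkeep the cohomological degrees correctly: the outer cohomology in Proposition~\ref{prop:bdry-coh-1} is taken with coefficients in the sheaf built from the \emph{graded} $M_P$-module $H^\bullet(\fu_P,\M_{\lambda,E})$, so one must unwind what ``$H^\bullet(\uSMP, \widetilde{H^\bullet(\fu_P,\M_{\lambda,E})})$'' means in total degree $q$.

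First I would fix a total degree $q$ and expand the right-hand side of Proposition~\ref{prop:bdry-coh-1} in that degree. Since the sheaf coefficients are themselves graded by the $\fu_P$-cohomological degree, the contribution to total degree $q$ is $\bigoplus_{j}H^{q-j}(\uSMP, \widetilde{H^j(\fu_P,\M_{\lambda,E})})$. Next I would substitute Kostant's decomposition~\eqref{eqn:kostant}: for each $j$, $H^j(\fu_P,\M_{\lambda,E})\simeq\bigoplus_{w\in W^P,\ l(w)=j}\M_{w\cdot\lambda,E}$ as $M_P\times E$-modules, and this isomorphism is compatible with passing to the associated sheaves on $\uSMP_{C_f}$ (the sheaf construction is functorial in the $M_P$-module), hence with forming cohomology. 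Because the decomposition is a finite direct sum of $M_P\times E$-modules, cohomology commutes with it, giving
$$
H^q(\partial_P\SG,\tM_{\lambda,E}) \ = \ \bigoplus_{j}\ \aInd_{\pi_0(P(\R))\times P(\A_f)}^{\pi_0(G(\R))\times G(\A_f)}\Bigl(\bigoplus_{\substack{w\in W^P\\ l(w)=j}} H^{q-j}(\uSMP,\tM_{w\cdot\lambda,E})\Bigr).
$$
Since $\aInd$ is exact (it is just algebraic induction for $P(\A_f)\to G(\A_f)$, and invariants under a finite group kernel on the archimedean component, both exact functors in characteristic zero), it commutes with the direct sums; reindexing the double sum over $w\in W^P$ with $j=l(w)$ collapses the two sums into a single sum over all of $W^P$, yielding exactly the asserted formula with $H^{q-l(w)}(\uSMP,\tM_{w\cdot\lambda,E})$ in the $w$-summand.

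Finally I would note that the identification is as $\pi_0(G(\R))\times G(\A_f)$-modules: each $\M_{w\cdot\lambda,E}$ appearing in Kostant's decomposition is genuinely an $M_P\times E$-submodule of $H^{l(w)}(\fu_P,\M_{\lambda,E})$ (this is the content of Kostant's theorem, not merely an abstract isomorphism), so the induced sheaf, its cohomology, and the algebraic induction all carry the module structures claimed in Proposition~\ref{prop:bdry-coh-1}, and the reindexing is equivariant. I do not expect a serious obstacle here; the statement is a formal consequence of the two quoted results. The one place to be careful—and the step I would single out as the ``main'' one—is verifying that Kostant's $M_P\times E$-module decomposition genuinely sheafifies and that outer cohomology $H^\bullet(\uSMP,-)$ is additive in the coefficient module, i.e.\ that there are no extension problems linking different Kostant summands; this holds because~\eqref{eqn:kostant} is a direct sum decomposition of modules (the summands are non-isomorphic for distinct $w$ by multiplicity-freeness, so the decomposition is even canonical), and the sheaf functor $\M\mapsto\tM$ on $\uSMP_{C_f}$ together with taking cohomology and then $\aInd$ are all additive. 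Passing to the limit over $C_f$ and then taking $K_f$-invariants is harmless since direct limits and invariants under open-compact subgroups are exact.
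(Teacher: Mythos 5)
Your proof is correct and follows exactly the route the paper takes (the paper simply asserts that Prop.~\ref{prop:bdry-coh-1} together with Kostant's theorem~\eqref{eqn:kostant} gives the result, without writing out the degree bookkeeping). Your unwinding of the graded coefficients, the appeal to multiplicity-freeness and additivity of sheafification, cohomology, and $\aInd$, is the right and complete justification.
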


 \medskip
 
 We will be especially interested in certain summands in the cohomology in some special degrees of the boundary stratum $\partial_P\SG$ for one particular 
 maximal parabolic subgroup $P$ of $G$ that we describe in the next subsection.

\medskip
\subsection{The root system for $G_0$ and the maximal parabolic subgroup $P_0$}
\label{sec:root-system}
The root system for $G_0$ with respect to the split torus $T_0$, is given by 
$\Delta_{G_0}:  =  \left\{ \pm e_i \pm e_j: 0 \leq i < j \leq n \right\},$ 
where $e_i$ is the character defined by $e_i (\text{diag} [ t_0, \ldots, t_n, t_n^{-1}, \ldots, t_{0}^{-1} ])= t_i$ for each $0 \leq i \leq n$.
The positive roots with respect to the Borel subgroup $B_0$ are: 
$\Delta^{+}_{G_0} = \left\{  e_i \pm e_j: 0 \leq i < j \leq n \right\}$. Half the sum of positive roots for $G_0$ is:  
$$ 
{\brho}_{G_{0}} =  \sum \limits_{j=0}^{n} (n-j) e_j  = ne_0 + (n-1) e_1 \ldots + 1e_{n-1} + 0e_n.
$$
The simple roots are: 
\[\Pi_{G_0}
:=  
 \{ \alpha_0 = e_0 - e_1, \ \ \alpha_1 = e_1 - e_2, \ \ \ldots \ \, \ 
 \alpha_{n-1} = e_{n-1} - e_n,  \ \ \alpha_n = e_{n-1} + e_n\}. 
 \]

 \medskip
 
Henceforth, we let $P_0$ be the maximal parabolic subgroup of $G_0$ corresponding to deleting the {\it first} simple root $\alpha_0 = e_0 - e_1$. The unipotent radical 
$U_{P_0}$ of $P_0$ is given by 
$$
\left\{ u_{y_1, y_2, \ldots, y_{2n}} = 
\left( \begin{array}{ccccccc} 
  1 & y_1 & y_2 & \ldots & y_{2n} & 0 \\
                          & 1  &  & &  &-y_{2n} \\ 
                          & & 1  & & & -y_{2n-1} \\  
                          & &   &   \ddots & & \vdots \ \\
                          & &  & &  \ddots  &  -y_{1} \\
                          & &   & & & 1
                          \end{array}\right) \ \ | \ \  
                          y_1,\dots,y_{2n} \in   \mathbb G_{a} \right\}. 
$$ 
The Levi decomposition is $P_0 = M_{P_0} U_{P_0},$ where 
$$
M_{P_0} = P_0 / U_{P_0}   \cong A_0 \times {}^{\circ}M_{P_0}, \quad A_0 \cong \GL_1/F, \quad {}^{\circ}M_{P_0} \cong \rO(2n)/ F.
$$ 
In matrix form, $M_{P_0}$ is given by
$$
M_{P_0} =
  \left\{   
\left(  \begin{array}{ccc}
x&& \\ &g& \\&&x^{-1}
  \end{array} \right) \ | \ 
  x \in  \mathbb \GL(1), \ g \in \rO(2n) 
 \right\}.
 $$

\medskip

We put $P = \Res_{F/\Q}(P_0),$ then $M_P = \Res_{F/\Q}(M_{P_0})$ is its Levi quotient. The parabolic subgroup $P$ of $G$ being fixed, we often drop $P$ from the notation. The diagonal torus $T_0$ 
of $G_0$ is contained in $M_0 := M_{P_0}$, and we can write it as 
$T_0 = A_0 \times {}^\circ T_0$ where ${}^\circ T_0$ is the diagonal torus in ${}^{\circ}M_0 := {}^\circ M_{P_0}.$ After restricting scalars from $F$ to $\Q$, we
have $T = A \times {}^\circ T.$  The discussions about locally symmetric spaces, finite-dimensional algebraic representations, the corresponding sheaves and their cohomology for $G$, are all applicable {\it mutatis mutandis} to 
${}^\circ \! M := {}^\circ \! M_P = \Res_{F/\Q}({}^\circ \! M_{P_0}).$

 \medskip
\subsection{Dimensions of certain ad\`elic locally symmetric spaces}
\label{sec:dim-loc-sym-spaces}
Some piquant numerology involving various dimensions plays a role in our cohomology computations; for later use, we record some relevant dimensions. 
For notational simplicity, we will henceforth denote the groups $M_{P_0}/F$ and $^\circ M_{P_0}/F$ by $M_0$ and $^\circ M_{0}$, respectively. 
The dimension of the locally symmetric space $\SGK$ for $G = \Res_{F/\Q}(\rO(2n+2)/F)$ with level structure $K_f$ is 
$$
{\rm dim}(\mathcal S^G_{K_f}) \ = \ {\sf r_F} \cdot (n+1)^2.
$$ 
The dimension of its Borel--Serre boundary $\partial \mathcal S^G_{K_f}$ is 
$$
{\rm dim}(\partial \mathcal S^G_{K_f}) \ = \ {\sf r_F} \cdot (n+1)^2 - 1.
$$ 

\smallskip
Now consider $^{\circ}\!M = \Res_{F/\Q}(\rO(2n)/F)$;  the connected component of the identity of the maximal compact subgroup $K_{{}^\circ\!M(\R)}$ of 
${}^\circ\!M(\R)$ is: 
$$
K^{\circ}_{{}^\circ\!M(\R)} \ = \  \prod \limits_{\tau \in \Sigma_F} \SO(n) \times \SO(n), 
$$
and for any open-compact subgroup $K_{{}^{\circ}\!M_f}$ of ${}^{\circ}\!M(\A_f)$ we define: 
\begin{equation}
\label{eqn:S-circ-M}
\cS^{{}^\circ\!M}_{K_{{}^{\circ}\!M_f}} \ = \ {}^{\circ}\!M(\Q) \backslash {}^{\circ}\!M(\A) / K^{\circ}_{{}^\circ\!M(\R)} K_{{}^{\circ}\!M_f}.
\end{equation}
Whereas for $\SGK$ we divided by $\overline{K}_\infty$ which is in between the full maximal compact subgroup at infinity and its connected component, 
for ${\rm dim}(S^{{}^{\circ}\!M}_{K_{{}^{\circ}\!M_f}})$ we divide by the connected component of the maximal compact subgroup at infinity. We have 
$$
{\rm dim}(\cS^{{}^{\circ}\!M}_{K_{{}^{\circ}\!M_f}}) \ = \ {\sf r_F} \cdot n^2, 
$$
and its middle dimension (since $n$ is even) is: 
$$
q_m \ := \ \dfrac{1}{2} ~{\rm dim} ~\mathcal S^{^{\circ}M}_{K_{{}^{\circ}M,f}} \ = \ \dfrac{ {\sf r_F} \cdot n^2}{2}.
$$ 
This particular middle-dimension is interesting, since, it is only in this middle-degree that a discrete series representation of ${}^{\circ}M(\R)$ has nontrivial cohomology.

\smallskip
Write $M = A \times {}^{\circ}\!M$ where $A = \Res_{F/\Q}(\GL_1)$. Note that $A$ contains a copy of $\mathbb G_m/\Q$, which we denote by $S$. 
We have the group of 
real points: 
$$
A(\R) = \prod \limits_{\tau \in \Sigma_F} \R^{\times}, \quad {}^{\circ}\!M(\R) = \prod \limits_{\tau \in \Sigma_F} \rO(n,n)(\R), \quad
M(\R) = A(\R) \times {}^{\circ}\!M(\R). 
$$
The group $S(\R) = \R^\times$ sits diagonally in $A(\R)$. Let $\kappa_P : P \to P/U_P = M$ be the canonical surjection. Let $C_f$ be the open compact subgroup of $M(\A_f)$ defined by 
$$
C_f = \kappa_P (K_f \cap P(\A_f)).
$$
We suppose that $K_f$ is such that we can write $C_f = B_f \times {}^{\circ}C_f,$ for an open compact subgroup $B_f$ of $A(\A_f)$ and an open compact subgroup ${}^{\circ}C_f$ of ${}^{\circ}M(\A_f)$. Define the locally symmetric space for $A$ as 
$$
\cS^{A}_{B_f} \ := \ A(\Q) \backslash A(\A) / S(\R)^{\circ}B_f \ = \ F^{\times} \backslash \A_{F}^{\times} / S(\R)^{\circ}B_f; \quad 
\dim(\cS^{A}_{B_f}) = {\sf r}_F-1, 
$$
and the locally symmetric space for $M$ as:
\begin{equation}
\label{eqn:S-M}
\cS^{M}_{C_f} \ := \ \cS^{A}_{B_f} \times \cS^{{}^{\circ}\!M}_{{}^{\circ}C_f}.
\end{equation}
Hence, we see: 
$$
\dim(\cS^{M}_{C_f}) \ = \ {\sf r}_F -1 + {\sf r}_F \cdot n^2 \ = \ 2q_m + {\sf r}_F -1.
$$
For later use, we denote
\begin{equation}
\label{eqn:qb-and-qt-for-M}
\begin{split}
q_b \ & := \ q_m \ = \ {\sf r}_F \cdot n^2/2, \\ 
q_t \ & := \ q_m + {\sf r}_F -1 \ = \ {\sf r}_F \cdot n^2/2 + {\sf r}_F -1. 
\end{split}
\end{equation}
Note: $q_b + q_t = \dim(\cS^{M}_{C_f}).$ Furthermore, let's denote: 
\begin{equation}
\begin{split}
\label{eqn:qb-and-qt-for-G}
\q_b  \ & := \ q_b + \tfrac{1}{2}\dim(U_P) \ = \ {\sf r}_F(n^2/2 + n), \\ 
\q_t  \ & := \ q_t + \tfrac{1}{2}\dim(U_P) \ = \ {\sf r}_F(n^2/2 + n +1)-1.
\end{split}
\end{equation}
Note: $\q_b + \q_t = {\rm dim}(\partial \mathcal S^G_{K_f}).$ The subscripts $b,$ $m,$ and $t$ are meant to suggest `bottom', `middle', and `top'.

\medskip
\subsection{Cohomology of spaces associated to the Levi subgroup $M_P$}
\label{sec:coh-Levi} 
 
There is a delicate interplay between the various `locally symmetric spaces' attached to the Levi quotient $M_P$ of our parabolic subgroup $P.$ As before, 
we abbreviate $M_P$ as $M.$ 
Here we clarify 
the mutual relationship between these spaces and their cohomology. Fix a level structure $C_f \subset M(\A_f)$ which we assume to be of the form $B_f \times {}^\circ\!C_f$ as in the previous subsection. Define 
$\tilde{\cS}^{A}_{B_f} := A(\Q) \backslash A(\A) /B_f,$ and let $\theta_{A} : \tilde{\cS}^{A}_{B_f} \to \cS^{A}_{B_f}$
denote the canonical map which is a $S(\R)^\circ$-fibration. Now define: 
\begin{equation}
\label{eqn:tilde-S-M}
\tilde{\cS}^{M}_{C_f} \ := \ \tilde{\cS}^{A}_{B_f} \times \cS^{{}^{\circ}\!M}_{{}^{\circ}C_f}, 
\end{equation}
which comes with the fibration $\theta_{M} : \tilde{\cS}^{M}_{C_f} \to \cS^{M}_{C_f},$ defined as 
$\theta_M = \theta_A \times 1_{\cS^{{}^{\circ}\!M}_{{}^{\circ}C_f}}.$
We have the following spaces: 

\smallskip
\begin{itemize}
\item $\cS^{M}_{C_f}$; defined in \eqref{eqn:S-M}; it's the space that we can best refer to as the locally symmetric space attached to $M$. 

\smallskip 
\item $\tilde{\cS}^{M}_{C_f}$, defined in \eqref{eqn:tilde-S-M}; it comes with the fibration $\theta_M : \tilde{\cS}^{M}_{C_f} \to \cS^{M}_{C_f}.$ 

\smallskip 
\item $\uSM_{C_f}$, defined in \eqref{eqn:ul-S-M}; note that we have a finite-cover $\tilde{\cS}^{M}_{C_f} \to \uSMP_{C_f}$ whose fibre is 
the kernel of the map $\pi_0(M(\R)) \to \pi_0(G(\R)).$ 
\end{itemize}

\medskip
Let us  clarify the relation between the cohomology groups attached to these spaces and a sheaf $\tM$ (which for us will be of the form 
$\tM_{\mu,E}$ for a dominant integral weight $\mu$ for $M$) that is defined compatibly on all these three spaces. 
Denote 
\begin{equation}
\label{eqn:kernel-pi-0}
\overline{\pi_0}(M(\R)) \ := \ {\rm Kernel}\left(\pi_0(M(\R)) \to \pi_0(G(\R)) \right).
\end{equation}
In the description of the cohomology of a boundary stratum (see Prop.\,\ref{prop:bdry-coh-2}) 
one is naturally led to the cohomology of $\uSM_{C_f}$, which is related to 
the cohomology of $\tilde{\cS}^{M}_{C_f}$ via: 
 \begin{equation}
\label{eqn:coh-tilde-S-M-coh-ul-S-M}
H^\bullet(\uSM_{C_f}, \tM) \ = \ H^\bullet(\tilde{\cS}^{M}_{C_f}, \tM)^{\overline{\pi_0}(M(\R))}. 
\end{equation}
Next, the cohomology of the locally symmetric space $\cS^{M}_{C_f}$ can be pulled-back, via the map induced by 
$\theta_{M}$, to the cohomology of $\tilde{\cS}^{M}_{C_f}$, i.e., we have a map: 
\begin{equation}
\label{eqn:coh-S-M-coh-tilde-S-M}
\theta_M^* : H^\bullet(\cS^{M}_{C_f}, \tM) \ \to \ H^\bullet(\tilde{\cS}^{M}_{C_f}, \tM). 
\end{equation}

\medskip
\section{Discrete series representations and relative Lie algebra cohomology}
\label{section-discrete-series-representations-and-cohomology}

For an archimedean place $v$ of $F$, in this section we review some basics for discrete series representations of 
${}^{\circ}\!M_{P_0}(F_v)$ which is the real split even orthogonal group $\rO(n,n)(\R).$ Some care needs to be exercised because of the disconnectedness 
of $\rO(n,n)(\R).$ We are especially interested in the action of its group of connected components on relative Lie algebra cohomology of discrete series representations.

\subsection{Discrete series representations for a connected semisimple Lie group} 
\label{sec:dis-repn-general}
We briefly review the general theory which is due to Harish-Chandra, and refer the reader to Borel--Wallach \cite[II.5]{borel-wallach} for more details and references. Just for this subsection, we let $G$ stand for a connected semisimple Lie group, and $K$ a maximal compact subgroup of $G.$  Let $\mathfrak g$ and $\mathfrak k$ be the Lie algebras of $G$ and $K$, respectively. Denote their complexfications 
as $\mathfrak{g}_\C$ and $\mathfrak{k}_\C,$ respectively.
Suppose that ${\rm rank}(G) = {\rm rank}(K)$.
Let $\mathfrak h$ be a Cartan subalgebra of both $\mathfrak g$ and $\mathfrak k$, and 
$\mathfrak h_{\C}$ its complexification.
Let $\Phi$ be the root system of $\mathfrak g_{\C}$ with respect to  $\mathfrak h_{\C}$ and let $P(\Phi)$ be the weight lattice for $\Phi$. A weight $\Lambda \in P(\Phi)$ is called regular if $\langle \Lambda, \alpha \rangle \neq 0 ~\text{for any} ~\alpha \in \Phi.$ A regular weight $\Lambda$ gives a 
positive system of roots and their half-sum: 
$$
\Phi^+: = \{ \alpha \in \Phi: \langle \Lambda, \alpha \rangle > 0 \}, \quad \brho:  = \dfrac{1}{2} \sum \limits_{\alpha \in \Phi^+ } \alpha.
$$
Denote the set of compact roots by $\Phi_K$; a compact root is a root whose rootspace lies in $\mathfrak k_{\C}$; 
the positive system $\Phi^+$ gives a positive system for compact roots, and their half-sum: 
 $$
 \Phi^+_K: = \Phi_K \cap \Phi^+; \quad \brho_K:  = \frac{1}{2} \sum \limits_{\alpha \in \Phi^+_K } \alpha. 
 $$
Let $\gamma : Z(\mathfrak U(\mathfrak g_{\C})) \iso {\rm Sym}(\mathfrak h_{\C})^{W_{G}} $ be the Harish-Chandra isomorphism 
for the center $Z(\mathfrak U(\mathfrak g_{\C}))$ of the enveloping algebra $\mathfrak U(\mathfrak g_{\C})$ of $\mathfrak g_{\C};$ 
the Weyl group of $G$ is denoted $W_G.$ 
For each regular weight $\Lambda \in P(\Phi)$, we can associate a discrete series representation  $\pi_{\Lambda}$ of $G$ such that: 

\begin{enumerate}
\item The infinitesimal character of  $\pi_{\Lambda}$ is $\chi_{\Lambda}$ defined by
$\chi_{\Lambda}(z) = \Lambda(\gamma(z))$ for all $z \in Z(\mathfrak U(\mathfrak g_{\C})).$
Thus, $\pi_{\Lambda}$ and $\pi_{\Lambda'}$ have same  infinitesimal character  if and only if there is $w \in W_G$   
such that $w\lambda = \lambda'$.
%\smallskip

\item The lowest $K$-type of ${\pi_{\Lambda}}|_{K}$ has multiplicity one and its highest weight (called the Blattner parameter) is 
given by $ \Lambda + \brho - 2\brho_K$. 
%\smallskip

\item The representations $\pi_{\Lambda}$ and $\pi_{\Lambda'}$ are equivalent if there is $w \in W_K,$ the Weyl group of $K$, 
such that $w\Lambda = \Lambda'$.\medskip
\end{enumerate}

The following proposition describes the cohomology of a discrete series representation; see \cite[Prop.\,II.5.3]{borel-wallach}.

\begin{proposition} 
\label{cohomology-discrete-series}  Let $(\pi_{\Lambda}, V)$ be a discrete series representation as above. Let $V_K$ be the $(\mathfrak g, K)$-module of $K$-finite vectors in $V$. Let $\cM$ be a finite-dimensional complex irreducible representation of $G$ whose contragredient is denoted $\cM^{\sf v}.$

\begin{enumerate}
\item If the highest weight  of $\cM \neq \Lambda - \brho$, then ${\rm Ext}^{\bullet}_{\mathfrak g, \mathfrak k} (\cM, V_K) = 0.$
\item If the highest weight  of $\cM = \Lambda - \brho$, then ${\rm Ext}^{j}_{\mathfrak g, \mathfrak k} (\cM, V_K) = 0,$ unless 
$j = \tfrac{1}{2}\, {\rm dim}(G/K),$ and in this `middle-degree' it is one-dimensional. 
\item The relative Lie algebra cohomology $H^{\bullet}(\mathfrak g, \mathfrak k;\, \cM^{\sf v} \otimes H)$ is non-vanishing only when the highest weight  of 
$\cM$ is $\Lambda - \brho$ and the degree $\bullet = \tfrac{1}{2}\, {\rm dim}(G/K)$. In this case it is one-dimensional and supported on the Blattner parameter.
\end{enumerate}
\end{proposition}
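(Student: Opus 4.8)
The plan is to follow the classical route of Borel--Wallach \cite[Prop.\,II.5.3]{borel-wallach}, which splits into two essentially independent ingredients: an infinitesimal-character argument (Wigner's lemma) that yields part~(1), and the computation of the relative Lie algebra cohomology of a discrete series representation paired with its matching finite-dimensional coefficient system, which yields (2) and (3). For the latter I would use the realization of $\pi_\Lambda$ as a cohomologically induced module (Vogan--Zuckerman), or equivalently Parthasarathy's Dirac-operator inequality.

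First, for (1): the standard complex $\Hom_{\fk}\bigl(\bigwedge^{\bullet}(\g_\C/\fk_\C),\, \cM^{\sf v}\otimes V_K\bigr)$ computing ${\rm Ext}^\bullet_{\g,\fk}(\cM,V_K)$ is a complex of modules over the center $Z(\mathfrak U(\g_\C))$, which acts through the factor $V_K$. Hence, by Wigner's lemma, the cohomology vanishes unless $\cM$ and $V_K$ have the same infinitesimal character. By property~(1) of $\pi_\Lambda$ the latter is $\chi_\Lambda$, while that of $\cM$ is $\chi_{\mu+\brho}$ where $\mu$ denotes the highest weight of $\cM$ relative to $\Phi^+$. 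Now $\Lambda$ is strictly $\Phi^+$-dominant by the very choice of $\Phi^+$, and $\mu+\brho$ is strictly $\Phi^+$-dominant since $\mu$ is dominant and $\brho$ strictly so; as a $W_G$-orbit contains at most one strictly dominant weight, $\chi_{\mu+\brho}=\chi_\Lambda$ forces $\mu+\brho=\Lambda$, i.e.\ $\mu=\Lambda-\brho$. Contrapositively, if the highest weight of $\cM$ is not $\Lambda-\brho$ then ${\rm Ext}^\bullet_{\g,\fk}(\cM,V_K)=0$, which is (1).

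Next, for (2) and (3): since $\cM$ is finite-dimensional, the same complex also computes $H^\bullet(\g,\fk;\cM^{\sf v}\otimes V_K)$, so I would treat these as one computation and assume $\mu=\Lambda-\brho$. Because ${\rm rank}(G)={\rm rank}(K)$, the positive system $\Phi^+$ attached to $\Lambda$ determines a $\theta$-stable Borel subalgebra $\mathfrak b=\mathfrak h_\C\oplus\mathfrak n$ of $\g_\C$, and $\pi_\Lambda$ is the associated cohomologically induced module $A_{\mathfrak b}(\mu)$ (equivalently, Schmid's realization on the $L^2$-$\bar\partial$-cohomology of the associated line bundle over $G/T$, $T$ the compact Cartan). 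The Vogan--Zuckerman theorem then identifies $H^j(\g,\fk;\cM^{\sf v}\otimes A_{\mathfrak b}(\mu))$ with $\Hom_{\mathfrak h}\bigl(\bigwedge^{j-R}(\mathfrak l\cap\mathfrak p_\C),\C\bigr)$, where $\mathfrak l=\mathfrak h_\C$ is the Levi of $\mathfrak b$ and $R:=\dim_\C(\mathfrak n\cap\mathfrak p_\C)$; since the compact Cartan lies in $\fk$ one has $\mathfrak l\cap\mathfrak p_\C=0$, so this cohomology is one-dimensional in degree $R$ and vanishes otherwise. To finish I would note that $\theta$ preserves $\mathfrak n$, whence $\mathfrak p_\C=(\mathfrak n\cap\mathfrak p_\C)\oplus(\overline{\mathfrak n}\cap\mathfrak p_\C)$ and $R=\tfrac12\dim_\C\mathfrak p_\C=\tfrac12\dim(G/K)$, the desired middle degree; the same vanishing also follows from Parthasarathy's inequality, which shows that no $\fk$-type of $\cM^{\sf v}\otimes V_K$ contributes off the middle degree.

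Finally, for the last clause of (3): the unique nonzero class lies in $\Hom_{\fk}\bigl(\bigwedge^{R}(\g_\C/\fk_\C),\,\cM^{\sf v}\otimes V_K\bigr)$, and contracting against the line $\bigwedge^{R}(\mathfrak n\cap\mathfrak p_\C)\subset\bigwedge^{R}(\mathfrak p_\C)$, which carries $\mathfrak h$-weight $2(\brho-\brho_K)$, a weight count shows that the only $\fk$-type of $V_K$ that survives is the lowest $K$-type of $\pi_\Lambda$, of highest weight $\mu+2(\brho-\brho_K)=\Lambda+\brho-2\brho_K$, i.e.\ the Blattner parameter of property~(2). I expect the genuine obstacle to be the concentration-and-one-dimensionality assertion underlying (2)--(3): this is not formal homological algebra but the full discrete series cohomology computation, resting on the $\fk$-type structure and classification of $\pi_\Lambda$. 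One should also keep in mind that this proposition is stated for $G$ connected semisimple with $K$ a maximal compact subgroup, so the extra bookkeeping forced by the disconnectedness of the maximal compact subgroup of $\rO(n,n)(\R)$ is a separate matter, dealt with in the following subsections.
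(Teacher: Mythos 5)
The paper does not supply a proof of this proposition; it simply cites Borel--Wallach \cite[Prop.\,II.5.3]{borel-wallach}, so there is no in-paper argument to compare against. Your proof is correct. Part~(1) via Wigner's lemma together with strict dominance of $\Lambda$ and of $\mu+\brho$ is the standard argument and is essentially the one in the cited reference. For parts~(2) and (3) you invoke the Vogan--Zuckerman machinery: realize $\pi_\Lambda$ as $A_{\mathfrak b}(\Lambda-\brho)$ for the $\theta$-stable Borel subalgebra $\mathfrak b=\mathfrak h_\C\oplus\mathfrak n$ determined by $\Phi^+$, and read off the concentration in the single degree $R=\dim(\mathfrak n\cap\mathfrak p_\C)=\tfrac12\dim\mathfrak p_\C$ and the one-dimensionality from the cohomological-induction formula, using that the Levi is the compact Cartan so $\mathfrak l\cap\mathfrak p_\C=0$. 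This is valid but is a genuinely different route from the one in Borel--Wallach, whose argument predates cohomological induction: they work directly with the complex $\Hom_{\fk}(\wedge^\bullet\mathfrak p_\C,\,\cM^{\sf v}\otimes V_K)$, observe that the differential is trivial because of the matching infinitesimal characters and unitarity (a Kuga-type argument), and then pin down the surviving $\fk$-types via Parthasarathy's Dirac inequality. Your route buys cleaner bookkeeping and an instantaneous identification of the middle degree; the Borel--Wallach route keeps the $\fk$-type structure visible, which is closer to what the paper actually exploits later when it tracks the action of the component group of $\rO(n,n)(\R)$ on the one-dimensional cohomology spaces in Sect.\,\ref{sec:def-boldface-D-mu}. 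Your closing caveat that the extra bookkeeping for the disconnectedness of $\rO(n,n)(\R)$ is handled elsewhere in the paper is exactly right.
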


\medskip

\subsection{Discrete series representations of orthogonal groups}
\label{sec:dis-ser-rep-orth}

Fix an archimedean place $v$ of $F.$ Just for this Sect.\,\ref{sec:dis-ser-rep-orth}, for brevity, we introduce these notation: 

\smallskip
${}^\circ\!M := {}^{\circ}\!M_{P_0 v} = {}^{\circ}\!M_{P_0}(F_v) = \rO(n,n)(\R),$ the real split even orthogonal group; 

\smallskip
${}^\circ\! M^\circ := {}^{\circ}\!M_{P_0 v}^\circ = \SO(n,n)(\R)^\circ,$ the connected component of identity in ${}^\circ\!M$; 

\smallskip
$K := K_{{}^{\circ}\!M_{P_0 v}} = \{g  \in   M \ | \  {}^{\intercal}g\,g = I_{2n} \}$, the maximal compact subgroup of ${}^\circ\!M$; 

\smallskip
$K^\circ := K_{{}^{\circ}\!M_{P_0 v}}^\circ,$ the connected component of identity in $K,$

\smallskip
${}^{\circ}\mathfrak m$ the Lie algebra of ${}^\circ \!M$ or of ${}^\circ \!M^\circ$, and $\mathfrak k$ the Lie algebra of $K$ or of $K^\circ$.

\subsubsection{\bf Representatives for the group of connected components}
\label{sec:maximal-compacts}
The compact group $K$ is isomorphic to ${\rm O}(n)(\R) \times {\rm O}(n)(\R)$, the isomorphism being the map $h$ defined in \eqref{eqn:h-map}, but after replacing $n+1$ by $n$. 
Inclusion induces an equality $\pi_0(K) = \pi_0({}^{\circ}\!M)$ of component groups; we give explicit representatives for elements
of these component groups. Recall that $\delta_n = {\rm diag}(-1,1,\dots,1)$. Then $\pi_0(\rO(n)(\R)) = \{I_n, \delta_n\}.$ Further, if $g \in \rO(n)(\R)$ then 
clearly $J_n\,g\,J_n \in \rO(n)(\R).$ Hence, $J_n \, \delta_n \, J_n$ also represents the nontrivial element of $\pi_0(\rO(n)(\R)).$ We take as representatives for the connected components of ${\rm O}(n)(\R) \times {\rm O}(n)(\R)$ the elements 
$$
\{(I_n,I_n), \ (I_n, \delta_n), \ (J_n\delta_n J_n, I_n), \ (J_n\delta_n J_n, \delta_n)\}.
$$
For any integer $m \geq 1$, let 
\begin{equation}
\label{eq:kappa-2n}
s_{2m}: = 
\begin{pmatrix} I_{m-1} & & \\ & -I_2 & \\ & & I_{m-1} \end{pmatrix}, \quad \text{and} \quad 
\kappa_{2m} : = \begin{pmatrix} I_{m-1} & & \\ & J_2 & \\ & & I_{m-1} \end{pmatrix}.
 \end{equation} 
One can check that $h(I_n,\delta_n) = \kappa_{2n}$, and $h(J_n\delta_n J_n, \delta_n) = s_{2n}.$ Furthermore, one sees: 
$$
\kappa_{2n} \in \rO(n,n)(\R) \setminus \SO(n,n)(\R), \quad s_{2n} \in \SO(n,n)(\R) \setminus \SO(n,n)(\R)^{\circ}.
$$ 
The component group $\pi_0(K)$ of $K$ (as also $\pi_0({}^{\circ}M)$ of ${}^{\circ}M$) is 
isomorphic to $\Z/2\Z \times \Z/2\Z,$ and as representatives we take: 
$$
\left\{I_{2n}, \ s_{2n}, \ \kappa_{2n}, \ s_{2n} \kappa_{2n} \right\}.
$$
Note that $s_{2n} \kappa_{2n} = \kappa_{2n}s_{2n}.$

\subsubsection{\bf Compact Cartan subgroups and subalgebras}
\label{Compact Cartan subalgebras}
At the level of  Lie algebras, the same map $h$ gives us $\mathfrak t_K$,  a compact Cartan subalgebra that is shared by both 
${}^\circ \mathfrak{m}$ and $\k_{{}^\circ \!M}$, the Lie algebras of ${}^{\circ}\!M$ and $K,$ respectively. 
Let $a_{\theta}: = \left(\begin{smallmatrix} 0 & -\theta \\ \theta & 0 \end{smallmatrix}\right)$  for $\theta \in \R$. Given $\theta_1, \ldots, \theta_n \in \R$, let $k$ and $l$ be elements of ${\mathfrak s \mathfrak o}(n)$, the Lie algebra of the compact $\SO(n)(\R)$, defined by 
$$ 
k =  \begin{pmatrix} a_{\theta_1} && \\ &\ddots& \\ && a_{\theta_{r}} \end{pmatrix},
\quad 
l =   \begin{pmatrix} a_{\theta_{r +1}} && \\ &\ddots& \\ && a_{\theta_{n}} \end{pmatrix}; \quad \mbox{recall that $n = 2r$}.
$$
Then, we check that $h(k,l)$ is
$$ 
\begin{pmatrix} 
 a_{(\theta_1-\theta_n)/2} &&&&& a_{(\theta_1+\theta_n)/2}J_{2} \\ 
 & \ddots && &\iddots & \\
 && a_{(\theta_r-\theta_{r+1})/2}  & \quad a_{(\theta_r+\theta_{r+1})/2}J_2 && \\
 \\
 && a_{-(\theta_r+\theta_{r+1})/2} J_2 & \quad a_{(\theta_{r+1}-\theta_{r})/2} && \\
 & \iddots && & \ddots & \\
a_{-(\theta_1+\theta_n)/2}J_2 &&&&& a_{(\theta_n-\theta_1)/2} 
\end{pmatrix}. 
$$ 
Reparameterizing our variables: 
$$
t_1 = \frac{\theta_1-\theta_n}{2}, \ \ \ t_2 = \frac{\theta_1+\theta_n}{2}, \ \ \dots, \ \ \ t_{n-1} = \frac{\theta_r-\theta_{r+1}}{2}, \ \ \ t_n = \frac{\theta_r+\theta_{r+1}}{2},
$$
we can describe the elements of the compact Cartan subalgebra$ \mathfrak{t}_K$ as the set of all 
$$
\left\{X_{t_1,\dots,t_n} :=  
\begin{pmatrix} 
 a_{t_1} &&&&& a_{t_2}J_{2} \\ 
 & \ddots && &\iddots & \\
 && a_{t_{n-1}}  & \quad a_{t_n}J_2 && \\
 && a_{-t_n} J_2 & \quad a_{-t_{n-1}} && \\
 & \iddots && & \ddots & \\
a_{-t_2}J_2 &&&&& a_{-t_1} 
\end{pmatrix} \right\} 
$$
with $t_1, \dots, t_n \in \R .$ 
Let $T_K$ be the analytic subgroup associated to $\mathfrak t_K$ inside $K^{\circ}$. Consider the complexification $\mathfrak t_{K,\C}$ of $\mathfrak t_K$; 
its elements are of the form $ X_{t_1, \ldots, t_n},$ where $t_j \in \C$. 
It can be observed that all $ X_{t_1,\dots,t_n}  \in  \mathfrak t_{K,\C}$ are simultaneously conjugate to elements of the complexified diagonal algebra 
$\mathfrak t_{\C}$. The conjugating matrix can be taken in ${\rm O}(n,n)(\C)$. Under this correspondence, the matrix $ X_{t_1,\dots,t_n}  \in  \mathfrak t_{K,\C}$ is conjugate to the diagonal matrix 
${\rm diag}[{\mathbf i}\theta_1, \ldots, {\mathbf i}\theta_n, -{\mathbf i}\theta_n, \ldots,  -{\mathbf i}\theta_1] \in \mathfrak t_{\C}.$

\medskip
\subsubsection{\bf Roots and compact roots for $\rO(n,n)(\R)^\circ$}
\label{sec:Compact-roots}

Recalling that ${\mathbf i}\R$ is the Lie algebra of the circle group $S^1$, we define the linear functionals $e_i^K : \mathfrak t_K \to {\mathbf i} \R,$ for $1 \leq i \leq n,$ as: 
\begin{equation}\label{compact weights}
\begin{split}
e_1^K (X_{t_1,\dots,t_n}) &  :=  \ {\mathbf i}(t_1 +  t_2) = {\mathbf i} \theta_1, \quad e_2 ^K(X_{t_1,\dots,t_n}) : =  \ {\mathbf i}(t_1 - t_2) = -{\mathbf i} \theta_n, \\
 & \vdots  \\
e_{n-1}^K (X_{t_1,\dots,t_n}) &: = \ {\mathbf i}(t_{n-1}+t_n) = {\mathbf i} \theta_r, \quad 
e_n^K (X_{t_1,\dots,t_n}) :=  {\mathbf i}(t_{n-1}-t_n) = -{\mathbf i} \theta_{r+1}. 
 \end{split}
 \end{equation}
It's clear then that $\{e_1^K,e_3^K,\dots,e_{n-1}^K\}$ (resp., $\{e_2^K,e_4^K,\dots,e_n^K\}$)
are the linear functionals corresponding to the first (resp., second) summand of 
$\so(n) \oplus \so(n).$
The root system $\Phi$ of ${}^{\circ}\mathfrak{m}_\C$ with respect to 
$\mathfrak t_{K,\C}$ is given by 
$$
\Phi = \{ \pm e_i^K \pm e_j^K \ | \  1 \leq i <  j \leq n \}. 
$$
The set of all compact roots $\Phi_K$ is: 
 $$
 \Phi_K = \{ \pm e_i^K \pm e_j^K \ | \ 1 \leq i <  j \leq n, \ \ i \equiv j \ (\rm {mod} \ 2)\}.
 $$
The weight lattice is given by 
$$
P_{\Phi}: = \{ \sum \limits_{1 \leq j \leq n}   a_j e_j^K \ | \  a_j \in \Z \}.
$$
An element $\Lambda = \sum \limits_{1 \leq j \leq n}   a_j e_j^K \in P_{\Phi}$ is regular if and only if  $\langle \Lambda, \alpha \rangle \neq 0$ for all 
$\alpha \in  \Phi$, which is equivalent to $ a_i \neq \pm a_j$ for all $i \neq j.$

\medskip
\subsubsection{\bf Weyl groups}

Let $W_{{}^\circ\! M}$  and $W_{K}$ be the Weyl groups of ${}^\circ\!M$ and $K$, respectively. 
Recall that for an even integer $2m$, the Weyl group of the root system of type $D_m$ is isomorphic to $\{\pm 1\}^{m-1} \ltimes S_m$ which is the subgroup 
of the group $\{\pm 1\}^m \ltimes S_m$ of all signed permutations where the total number negative signs is even. For us, 
this applies to ${}^\circ\!M  = \rO(n,n)(\R)$, and also to each of the two factors of $K \simeq \rO(2r) \times \rO(2r).$ In particular, $W_{K}$ is a subgroup of $W_{ {}^\circ\!M}$ of index 
$ 2 (n!)/(r!)^2$.

\medskip
\subsubsection{\bf Discrete series representations of ${}^\circ\!M^\circ = \SO(n,n)(\R)^\circ$}
The Weyl group
$W_{{}^\circ\! M}$ acts on the regular elements in $P_{\Phi}$.  
There is a bijective correspondence between the $W_{K}$ orbits of regular elements in $P_{\Phi}$ and equivalence classes of discrete series 
representations of ${}^\circ\!M^\circ$.
Fix a regular $\Lambda.$ 
Define ${\brho}^c$ to be the half-sum of all roots $\alpha$ for which $\langle \Lambda, \alpha \rangle > 0$ and define $\brho_K^c$ to be the half-sum of all compact roots $\alpha$ for which $\langle \Lambda, \alpha \rangle > 0$. 
Let $\pi_{\Lambda}$ be the discrete series representation of the connected simple group ${}^\circ\!M^\circ$ 
associated to a regular element $\Lambda$ under the above bijective correspondence. The infinitesimal character of $\pi_{\Lambda}$ is $\chi_{\Lambda},$ and the Blattner parameter of $\pi_{\Lambda}$ is $\Lambda + \brho^c - 2\brho_K^c$.

\medskip
\subsubsection{\bf Dominant weights, discrete series, and cohomology}
\label{sec:weights-DS-cohomology}

Suppose $\mu_1, \ldots, \mu_n \in \Z$ are such that $\mu_1 \geq \mu_2 \geq \ldots \geq \mu_{n-1} \geq | \mu _{n} |$. 
Furthermore, we will assume that $|\mu_n| \geq 1.$ 
Define $\Lambda$ by:
$$ 
\Lambda =   \sum \limits_{1 \leq j \leq n}   (\mu_j + n - j) \, e_j^K.
$$
Thus, $ \Lambda $ is a regular element of $P_{\Phi}$ and the set of roots $\alpha$ for which $\langle \Lambda, \alpha \rangle > 0$ is precisely the 
set $\{ e_i ^K\pm e_j^K\ | \ 1 \leq i < j \leq n \}$. So $\brho^c =  \sum \limits_{1 \leq j \leq n}   ( n - j) \, e_j^K$ and 
$$
\Lambda - \brho^c =  \sum \limits_{1 \leq j \leq n}   \mu_j  \, e_j^K \ =: \ \mu.
$$
Let $D_{\mu}$ be a discrete series representation of ${}^\circ \! M^{\circ} = \SO(n,n)(\R)^\circ$ equivalent to $\pi_{\Lambda}$.  By an abuse of notation, we also denote the $({}^{\circ}\mathfrak {m}, K^\circ)$-module of $K^\circ$-finite vectors by $D_{\mu}$.
Let $\mathcal N_{\mu}$ be the finite-dimensional irreducible representation of ${}^\circ \!M = \SO(n,n)(\R)$ with highest weight $\mu$ with respect to the maximal compact torus $T_K$. Let us  bear in mind that the representation $\cN_\mu$ admits a $\Q$-structure since it comes from a purely algebraic theory of highest weight modules for the real points of the split {\it algebraically} connected algebraic group $\SO(n,n)$.
For any even positive integer $2m$, since the dimension of $\SO(2m)$ is $m(2m-1),$ 
we see that the dimension of the symmetric space $\rO(n,n)(\R)/(\rO(n) \times \rO(n))$ is $n^2.$
From Prop.\,\ref{cohomology-discrete-series}, we conclude that 
$$
H^{q}({}^{\circ}\fm, \fk;\, D_{\mu} \otimes  \mathcal N_{\mu}^{\sf v} ) = 
\begin{cases}
0 & \mbox{if $q \neq q_0 := \dfrac{n^2}{2}$, and} \\ 
\C & \mbox{if $q = q_0$}. 
\end{cases} 
$$
Note that $\mathcal{N}_\mu$ is a self-dual representation, i.e., $\mathcal N_{\mu}^{\sf v} = \mathcal N_{\mu};$ see, for example, 
\cite[Lem.\,5.0.2]{baskar-raghuram}.

\medskip
\subsubsection{\bf The action of the group of connected components on cohomology}
\label{sec:def-boldface-D-mu}

As noted in Sect.\,\ref{sec:maximal-compacts}, 
$s_{2n} \in \SO(n,n)(\R) \backslash  \SO(n,n)(\R)^{\circ}$ and 
$\kappa_{2n}  \in {\rm O}(n,n)(\R) \backslash  \SO(n,n)(\R)$ 
generate the group of connected components of $K.$ 
The actions of $s_{2n}$ and $\kappa_{2n}$ on $\mathfrak t_K$ and on the weights $e_j^K$ are given by: 

\smallskip
\begin{itemize}
\item ${\rm Ad}(\kappa_{2n})$ maps $X_{t_1, \ldots, t_n}$ to 
$X_{t_1, \ldots, t_{n-2}, t _{n}, t_{n-1}},$  and thus: 
$$
{\rm Ad}(\kappa_{2n}) \cdot e_j^K = e_j^K, \  1 \leq j \leq n-1, \quad  {\rm Ad}(\kappa_{2n}) \cdot e_n^K =  - e_n^K.
$$

\smallskip
\item ${\rm Ad}(s_{2n})$ maps $X_{t_1, \ldots, t_n}$ to $X_{t_1, \ldots, t_{n-2}, -t _{n-1}, -t_{n}},$ and thus:
$$
{\rm Ad}(s_{2n}) \cdot e_j ^K= e_j^K, \  1 \leq j \leq n-2,
$$
$$
 {\rm Ad}(s_{2n}) \cdot e_{n-1}^K = - e_{n-1}^K, \quad  {\rm Ad}(s_{2n}) \cdot e_n^K =  - e_n^K.
$$
\end{itemize}

\smallskip

In particular, ${\rm Ad}(\kappa_{2n})$ is not the effect of an element of $W_{{}^\circ\!M}.$ Also, ${\rm Ad}(s_{2n})$ is obtained by the action of an element of 
$W_{{}^{\circ}\!M}$, however, from Sect.\,\ref{sec:Compact-roots} it follows that 
it is not realizable via $W_{K}.$ Since $|\mu_n| \geq 1$, 
we conclude that the discrete series representations 
$$
\{D_{\mu}, \quad {}^{s_{2n}}\!D_{\mu}, \quad {}^{\kappa_{2n}}\!D_{\mu}, \quad {}^{ s_{2n} \, \kappa_{2n}}\!D_{\mu}\}
$$  
obtained by conjugating by representatives of the group of connected components of ${}^\circ\!M$ are pairwise not equivalent to each other. 
It follows that 
$$
\mathbb D_{\mu} \ := \ 
{\rm Ind}^{\rO(n,n)(\R)}_{\SO(n,n)(\R)^{\circ}}(D_{\mu})
$$ 
is an irreducible representation of $\rO(n,n)(\R)$. 
Furthermore, its restriction to $\SO(n,n)(\R)^{\circ}$ is given by
$$
{\mathbb D_{\mu}}|_{\SO(n,n)(\R)^{\circ}} \ = \ 
D_{\mu} \ \oplus \ {}^{s_{2n}}\!D_{\mu} \ \oplus \ D_{^{\kappa_{2n}} \mu} \ \oplus \ {}^{s_{2n}}\!D_{^{\kappa_{2n}} \mu},
$$
after noting that ${}^{\kappa_{2n}}\!D_{\mu} = D_{^{\kappa_{2n}} \mu}$ by using the action of the torus on a highest weight vector, where, 
if $\mu = \mu_1  \, e_1^K +  \cdots + \mu_n  \, e_n^K $ then 
${}^{\kappa_{2n}}\mu = \mu_1  \, e_1^K + \cdots + \mu_{n-1}  \, e_{n-1}^K  - \mu_n  \, e_n^K.$
If we restrict ${\mathbb D}_\mu$ to the intermediate subgroup $\SO(n,n)(\R)$, the four representations fuse two at a time to 
give a multiplicity free direct sum of two irreducible representations of $\SO(n,n)(\R):$ 
$$
{\mathbb D_{\mu}}|_{\SO(n,n)(\R)} \ = \ \mathcal D_{\mu} \oplus \mathcal D_{^{\kappa_{2n}}\mu}, 
$$
where, 
$$
\mathcal D_{\mu}|_{\SO(n,n)(\R)^{\circ}} \ := \ D_{\mu} \oplus {}^{s_{2n}}\!D_{\mu}, \ \ \mbox{and} \ \ 
\mathcal D_{{}^{\kappa_{2n}}\mu}|_{\SO(n,n)(\R)^{\circ}} \ := \ D_{{}^{\kappa_{2n}}\mu} \oplus {}^{s_{2n}}\!D_{{}^{\kappa_{2n}}\mu}. 
$$
For the finite-dimensional coefficient systems, let's define
$$
\mathcal M_{\mu} \ := \ {\rm Ind}^{\rO(n,n)(\R)}_{\SO(n,n)(\R)^{\circ}} (\mathcal N_{\mu}).
$$
Restricting back to $\SO(n,n)(\R)^{\circ}$, we get
$$
\cM_{\mu}|_{\SO(n,n)(\R)^{\circ}} \ = \ \cN_\mu \ \oplus \ {}^{\kappa_{2n}}\cN_\mu.
$$
since $\cN_\mu$ is an irreducible representation of $\SO(n,n)(\R).$
Now we observe that the cohomology $H^{q_0}({}^{\circ}\fm, \fk;\,  \mathbb D_{\mu}  \otimes \mathcal M_{\mu})$ is a direct sum of four one-dimensional cohomology spaces: 
\begin{multline}
\label{eqn:m-k-coh-D-mu}
H^{q_0}({}^{\circ}\fm, \fk;\, D_{\mu}  \otimes \mathcal N_{\mu} ) 
 \ \oplus \ 
H^{q_0}({}^{\circ}\fm, \fk;\,  {}^{s_{2n}}D_{\mu} \otimes \mathcal N_{\mu} ) \ \oplus \\
H^{q_0}({}^{\circ}\fm, \fk;\,D_{{}^{\kappa_{2n}} \mu}  \otimes  \mathcal N_{^{\kappa_{2n}} \mu}  ) 
\ \oplus \   
H^{q_0}({}^{\circ}\fm, \fk;\,  {}^{s_{2n}} D_{^{\kappa_{2n}} \mu} \otimes \mathcal N_{^{\kappa_{2n}} \mu} ). 
\end{multline}

\medskip
\subsubsection{\bf The parameter of the discrete series representation ${\mathbb D_{\mu}}$}
\label{sec:dis-ser-param}

Let us  recall some preliminaries on the Weil group of $\R$: $W_\R = \C^\times \cup j\cdot \C^\times,$ with $jzj^{-1} = \bar{z}$ for $z \in \C^\times.$  
For an integer $\ell$, define $\chi_\ell : \C^\times \to \C^\times$ as $\chi_\ell(z) = (z/\bar{z})^{\ell/2}$ or $\chi_\ell(re^{i\theta}) = e^{i\ell\theta}$ for
$z = re^{i\theta} \in \C^\times.$ Define $I(\ell)$ to be the $2$-dimensional induced representation $\Ind_{\C^\times}^{W_\R}(\chi_\ell).$
Then $I(\ell) \simeq I(-\ell)$; $I(\ell)$ is irreducible if $\ell \neq 0$; $I(0) = 1\!\!1 \oplus {\rm sgn}.$ 

\smallskip
Now, given $\mu$ as above with $\mu_1 \geq \cdots \geq \mu_{n-1} \geq |\mu_n|$, put
\begin{equation}
\label{eqn:ell-and-mu}
\ell_1 = 2(\mu_1+n-1), \ \ \ell_2 = 2(\mu_2+n-2), \ \ \dots \ \ , \ \ell_{n-1} = 2(\mu_{n-1}+1), \ \ \ell_n = 2|\mu_n|.
\end{equation}
The Langlands parameter of ${\mathbb D_{\mu}}$ is the homomorphism $\varphi({\mathbb D}_\mu) : W_\R \to \rO(2n)(\C)$ defined by:
$$
\C^\times \ni z \mapsto (a_1,\dots,a_n,a_n^{-1},\dots,a_1^{-1}), \quad a_i = (z/\bar{z})^{\ell_i/2}; \quad j \mapsto J_{2n}. 
$$ 
This follows from from the very general \cite[Ex.\,10.5]{borel-corvallis}; for the particular example at hand, see also \cite[Sect.\,5.4]{raghuram-sarnobat}. 
The parameter is a direct sum of $n$ induced representations: 
\begin{equation}
\label{eqn:L-paramater-D_mu}
\varphi({\mathbb D}_\mu) \ = \ I(\ell_1) \oplus \cdots \oplus I(\ell_{n-1}) \oplus I(\ell_n).
\end{equation}
Since $|\mu_n| \geq 1,$ each $\ell_i > 0,$ ensuring the irreducibility of the $n$-summands. 

\smallskip 
Let us  add a comment that will be relevant in the next section. The parameter $\varphi({\mathbb D}_\mu)$ thought of as a parameter of a 
representation of $\GL_{2n}(\R)$, does not parametrise a cohomological representation. However, after any half-integral Tate twist, namely, for
any $k \in \Z$, the parameter $\varphi({\mathbb D}_\mu) \otimes |\ |^{(2k+1)/2}$ is indeed 
a parameter of a cohomological representation of $\GL_{2n}(\R).$ See, for example, 
\cite[2.4.1.2]{raghuram-forum}.

\subsubsection{{\bf Choice of a generic representation in $L$-packet}} \label{subsec:choice-generic-member-L-packet}

We fix a Whittaker datum $\psi$, i.e., a nontrivial additive character on the unipotent radical of the standard Borel subgroup of $^\circ M$ that is nontrivial on all simple root spaces. For a weight $\mu = (\mu^v)_{v \in S_{\infty}}$ and an archimedean place $v$, consider the discrete series representation ${\mathbb D}_{\mu^{v}} $ of ${\rm O}(n,n)(\R)$ as in Sect.\,\ref{sec:def-boldface-D-mu}. In the local $L$-packet (which is also a local $A$-packet in our context)  of ${\mathbb D}_{\mu^{v}} $, there is a unique discrete series representation ${\mathbb D}^{\dagger}_{\mu^{v}}$ which is $\psi_v$-generic, i.e., has a local Whittaker model with respect to $\psi_v$. 
In Sect.\,\ref{strongly-inner-cohomology}, we consider this representation ${\mathbb D}^{\dagger}_{\mu^{v}}$ while defining the strongly inner cohomology.

\medskip
\section{Strongly inner cohomology}
\label{sec:strongly-inner-coh}

We return to the global notations from Sect.\,\ref{section-preliminaries-notations}. In particular, recall that ${}^{\circ}\!M_0 \cong \rO(2n)/ F$, and 
${}^\circ\!M = \Res_{F/\Q}({}^\circ\!M_0).$

\medskip
\subsection{Strongly inner cohomology}
\label{strongly-inner-cohomology}
Let $\mu \in X^*_+({}^\circ T \times E)$ be a dominant integral weight for ${}^\circ\!M.$  
Recall that $\mu = (\mu^\tau)_{\tau \in \Hom(F,E)},$ where $\mu^\tau$ is an $n$-tuple of integers ordered as:  
$\mu^\tau_1 \geq \mu^\tau_2 \geq \cdots \geq \mu^\tau_{n-1} \geq |\mu^\tau_n|.$ 
Define 
$$
\mu_{{\rm min}} \ := \ {\rm min} \{ \, |\mu_n^{\tau} | \ : \ \tau \in {\rm Hom}(F,E) \}.
$$ 
We henceforth assume that $\mu_{\rm min} \geq 1.$ For an open-compact subgroup ${}^\circ C_f$ of 
${}^\circ\!M(\A_f)$, consider the sheaf $ \widetilde{\mathcal M}_{\mu,E}$ on 
$\cS^{{}^{\circ}\!M}_{{}^\circ C_f}.$

The inner cohomology $H^{\bullet}_{!} (\mathcal S^{{}^\circ \! M}_{{}^{\circ}C_{f}}, \widetilde{\mathcal M}_{\mu,E})$ is a module for the Hecke algebra
$\HH^{^{\circ}\!M}_{^{\circ}C_{f}} =  \otimes_{v \notin S_\infty} {\mathcal H}^{^{\circ}\!M_{v}}_{^{\circ}C_{v}}$ over $\Q$.  
Let $S$ be a finite set of places containing $S_\infty$, the ramified places of $F,$ and any place where ${}^{\circ}C_v$ is a proper subgroup of the maximal compact subgroup. 
Then, ${\mathcal H}^{{}^{\circ}\!M, S} = \mathbin{\mathop{\otimes}\displaylimits_{v \notin S}} {\mathcal H}^{^{\circ}\!M_{v}}_{^{\circ}C_{v}}$ is a central subalgebra of 
${\mathcal H}^{^{\circ}\!M}_{^{\circ}C_{f}}.$   
We consider inner cohomology as a $\HH^{^{\circ}\!M}_{^{\circ}C_{f}}$-module. It is a general fact that inner cohomology is semi-simple as a Hecke module. 
Define 
${\rm Coh}_{!}(^{\circ}\!M, \mu/E, {}^{\circ}C_{f})$ as the isomorphism classes of absolutely simple modules $\sigma_f$ over $E$ (these are necessarily finite-dimensional 
$E$-vector spaces) which appear in $H^{\bullet}_{!} (\mathcal S^{^{\circ}\!M}_{^{\circ}C_{f}}, \widetilde{\mathcal M}_{\mu,E}),$ 
and inner cohomology is decomposed as the sum of such isotypic components: 
$$
  H^{\bullet}_{!} (\mathcal S^{{}^{\circ}\!M}_{{}^{\circ}C_{f}}, \widetilde{\mathcal M}_{\mu,E}) \ =  \ 
  \bigoplus_{\sigma_f \in {\rm Coh}_{!}(M, \mu/E, {}^{\circ}C_{f})} 
 H^{\bullet}_{!} (\mathcal S^{{}^{\circ}\!M}_{{}^{\circ}C_{f}}, \widetilde{\mathcal M}_{\mu,E}) (\sigma_f);
$$
by taking $E$ large enough we can guarantee that the full isotypic component of a particular $\sigma_f$ appears as a summand.  Each $\sigma_f$ is isotypic 
as a ${\mathcal H}^{{}^{\circ}\!M, S}$-module.

\smallskip

Towards defining the strongly inner spectrum ${\rm Coh}_{!!}(^{\circ}\!M, \mu/E, {}^{\circ}C_{f})$, 
we fix once and for all a Whittaker datum $\psi$ for ${}^{\circ}\!M$ which is a 
nontrivial additive character on the unipotent radical of the standard Borel subgroup which is nontrivial on all simple root spaces. We also recall Arthur's 
classification, as finessed by Atobe--Gan \cite{atobe-gan}, of the discrete (and hence cuspidal) spectrum of $^{\circ}\!M(\A) = \rO(2n)(\A_F)$ in terms of automorphic representations of $\GL_{2n}(\A_F)$; the Arthur parameter of an irreducible automorphic representation $\varsigma$ appearing in the discrete spectrum 
of $^{\circ}\!M(\A)$ will be denoted $\Psi(\varsigma).$ 
Let us  also recall some notation from Sect.\,\ref{sec:def-iota-lambda}: an embedding $\iota : E \to \C$ gives an identification 
$X^*({}^\circ T \times E) \to X^*({}^\circ T \times \C)$ that we denote $\mu \mapsto {}^\iota\mu.$ For $\mu = (\mu^\tau)_{\tau : F \to E},$ 
identifying $\Hom(F,\C)$ with $S_\infty,$ and since $\iota$ induces a bijection $\Hom(F,E) \to \Hom(F,\C) = S_\infty$ via composition,  
for $v \in S_\infty$, the $v$-th component ${}^\iota\mu_v$ is $\mu^{\iota^{-1}v}.$

%\smallskip

\begin{definition}
\label{def:strongly-inner-spectrum}
Let $\mu \in X^*({}^\circ T \times E)$ be a dominant integral weight for ${}^\circ\!M.$ Assume that $\mu_{\rm min} \geq 1.$ 
For an open compact subgroup ${}^{\circ}C_f$, 
the strongly inner spectrum of ${}^{\circ}\!M$ for the weight $\mu$, denoted ${\rm Coh}_{!!}(^{\circ}\!M, \mu/E, {}^{\circ}C_f)$  consists of those 
$\sigma_f \in {\rm Coh}_{!}(^{\circ}\!M, \mu/E, {}^{\circ}C_f)$ for which  
there exists an $\iota: E \rightarrow \C$ and a cuspidal automorphic representation $\varsigma$ 
 of  ${}^{\circ}\!M(\A) = \rO(2n)(\A_F)$ such that 
\smallskip 
\begin{enumerate}
\item[(i)] $\varsigma$ is globally generic with respect to $\psi$; 
\smallskip
\item[(ii)] the Arthur parameter $\Psi(\varsigma)$ of $\varsigma,$ an automorphic representation of $\GL_{2n}(\A_F),$ is cuspidal; 
\smallskip

\item[(iii)] for all $v \in S_{\infty},$ the $v$-th component $\varsigma_v$ is $\mathbb D^{\dagger}_{{}^\iota\!\mu_v}$, the 
$\psi_v$-generic discrete series representation of $\rO(n,n)(\R)$ determined by ${}^\iota\!\mu_v$
(see Sect.\,\ref{subsec:choice-generic-member-L-packet}); and  

\smallskip
\item[(iv)] $\varsigma_f^{{}^{\circ}C_f} \cong \sigma_f \otimes_{E, \iota} \C $ as ${\mathcal H}^{^{\circ}\!M}_{^{\circ}C_{f}}$-modules over $\C.$ 
\end{enumerate}
\end{definition}

\smallskip

Since the Arthur parameter $\Psi(\varsigma)$ is cuspidal, we know from \cite[Prop.\,7.2]{atobe-gan} that $\varsigma$ appears in the cuspidal spectrum of ${}^\circ\!M$ with multiplicity one, and we let $V_\varsigma$ denote the representation
space of $\varsigma$ inside of the space of cusp forms of ${}^\circ\!M$.  In the above definition, condition (i) pins down every local component $\varsigma_v$ in its local $A$-packet, however, there can be other cuspidal automorphic representations in the same global $A$-packet as $\varsigma$ and satisfying (iv). This is further elaborated upon in Sect.\,\ref{sec:isotypic-component-strongly-inner-cohomology} below. 

  \smallskip 

Define strongly inner cohomology as the space spanned by the isotypic components of strongly-inner Hecke-modules: 
$$ 
H^{\bullet}_{!!}(\mathcal S^{^{\circ}\!M}_{^\circ C_{f}}, \widetilde \cM_{\mu,E}) \ := \ 
\bigoplus_{\sigma_f \in {\rm Coh}_{!!}(^{\circ} \!M, \mu, {}^\circ C_{f})}  H^{\bullet}_{!}(\mathcal S^{^{\circ}\!M}_{^\circ C_{f}}, \widetilde \cM_{\mu,E}) (\sigma_f). 
 $$

 For brevity, denote 
${\rm Coh}_{*}(^{\circ}\!M, \mu/E, {}^{\circ}C_{f})$ by ${\rm Coh}_{*}(^{\circ}\!M, \mu),$ for $* \in \{!, !!\}$, 
with the level structure ${}^{\circ}C_{f}$ and the field of coefficients $E$ being understood.

\smallskip

The definition of strongly inner spectrum (and so also strongly inner cohomology) seemingly needs an embedding $\iota : E \to \C$ to invoke concepts of  
automorphic representation theory, however, we show independence of $\iota$ in the next subsection.

\medskip 
\subsection{Galois equivariance} 
\label{sec:galois-equivariance}

Let $\mu \in X^*({}^\circ T \times E)$ with $\mu_{\rm min} \geq 1,$  
and $\sigma_f \in {\rm Coh}_{!!}(^{\circ}\!M, \mu).$ 
For $\iota : E \to \C$, let $\varsigma$ be the cuspidal automorphic representation of $^\circ\!M(\A) = \rO(2n)(\A_F)$ as in 
Def.\,\ref{def:strongly-inner-spectrum} above. 
The image of an $\iota : E \to \C$ lands in $\bar{\Q}$, and we may further compose this with $\eta \in {\rm Gal}(\bar{\Q}/ \Q).$ We wish to understand what happens when we replace $\iota$ by $\eta \circ \iota$; for this we need to be able to consider the Galois conjugate of a cuspidal representation 
of $\rO(2n)(\A_F)$ satisfying $(i)-(iii)$ of Def.\,\ref{def:strongly-inner-spectrum}. We show in this subsection that there is a cuspidal automorphic representation 
${}^\eta\varsigma$ of $^\circ\!M(\A) = \rO(2n)(\A_F)$ which is globally generic with respect to the same Whittaker datum $\psi,$ and 
$({}^\eta\varsigma)_v = \varsigma_{\eta^{-1}\circ v}$ for $v \in S_{\infty}$, and whose Arthur parameter is: 
$\Psi({}^\eta\varsigma) = {}^\eta(\Psi(\varsigma) \otimes |\ |^{-1/2}) \otimes |\ |^{1/2}.$

\medskip

Since $\varsigma$ is cuspidal and for $v \in S_\infty$ the local representation $\varsigma_v$ being a discrete series representation of $\rO(n,n)(\R)$, it follows that $\varsigma_f$ contributes to cuspidal cohomology, 
i.e., $\varsigma_f$ appears as a Hecke-summand of $H^\bullet_{\rm cusp}(\mathcal S^{^{\circ}\!M}_{{}^\circ C_{f}}, \widetilde \cM_{{}^\iota\mu,\C}).$
(The reader is referred to 
\cite[Sect.\,3.2]{harder-raghuram} for basics on cuspidal and square-integrable cohomology; although the reference is for general linear groups, the discussion in there applies {\it mutatis mutandis} to orthogonal groups.)
But, cuspidal cohomology is contained inside inner cohomology which admits a natural rational structure since it is sheaf-theoretically defined, and so we may consider the effect of a Galois element on the latter. Furthermore, 
inner cohomology is contained in square-integrable cohomology which is captured by the discrete spectrum of $\rO(2n)/F$.  This is summarised in the diagram:
$$
\xymatrix{
\varsigma_f \in H^\bullet_{\rm cusp}(\mathcal S^{^{\circ}\!M}_{{}^\circ C_{f}}, \widetilde \cM_{{}^\iota\mu,\C}) \ \ar@{^{(}->}[r] \ & 
H^\bullet_!(\mathcal S^{^{\circ}\!M}_{{}^\circ C_{f}}, \widetilde \cM_{{}^\iota\mu,\C}) \ar[d]^{\eta^\bullet} 
\\
 H^\bullet_{(2)}(\mathcal S^{^{\circ}\!M}_{{}^\circ C_{f}}, \widetilde \cM_{{}^{\eta\circ\iota}\mu,\C})\  & \ 
H^\bullet_!(\mathcal S^{^{\circ}\!M}_{{}^\circ C_{f}}, \widetilde \cM_{{}^{\eta\circ\iota}\mu,\C}) \ \ar@{_{(}->}[l] \ni {}^\eta\varsigma_f 
}
$$
i.e., the $\eta$-conjugate of $\varsigma_f,$ denoted ${}^\eta\varsigma_f,$ appears in inner-cohomology for the conjugated sheaf, and so also in square-integrable cohomology. Hence, there exists an automorphic representation $\delta$ that contributes to the global discrete spectrum of $\rO(2n)/F$ 
such that 
$$
\delta_f \ = \ {}^\eta\varsigma_f, 
$$
and furthermore, $\delta_\infty$ has nonvanishing relative Lie algebra cohomology with respect to the coefficient system $\M_{{}^{\eta\circ\iota}\mu,\C},$ 
giving us the following equality of infinitesimal characters: 
$$
\chi_{\delta_v} \ = \ \chi_{\M_{({}^{\eta\circ\iota}\mu)_v,\C}}  \ = \ \chi_{\M_{({}^{\iota}\mu)_{\eta^{-1}v,\C}}} = \chi_{\varsigma_{\eta^{-1}v}}.
$$

\smallskip

 Since $\delta$ is in the discrete spectrum, we have its Arthur parameter
$\Psi(\delta).$  We claim that $\Psi(\delta)$ is a cuspidal representation of $\GL_{2n}(\A_F).$
Now, $\Psi(\varsigma)$ is not of cohomological type on $\GL_{2n}(\A_F)$, however, as remarked in Sect.\,\ref{sec:dis-ser-param}, 
after a half-integral Tate-twist it becomes cohomological, and we know from Clozel \cite{clozel} that there is a Galois action on cohomological cuspidal representations of $\GL_{2n}(\A_F),$ i.e., 
we have the cuspidal representation ${}^\eta(\Psi(\varsigma) \otimes |\ |^{-1/2}).$ 
For almost all $v$ (where the local representations are unramified) we have
\begin{multline*}
\Psi( \delta)_v \  = \  \Psi(\delta_v) \ = \ \Psi({}^\eta\varsigma_v) \\ 
= \ \left({}^\eta(\Psi(\varsigma)_v \otimes |\ |^{-1/2})_v \otimes |\ |_v^{1/2}\right) \  = \
\left({}^\eta(\Psi(\varsigma) \otimes |\ |^{-1/2}) \otimes |\ |^{1/2}\right)_v, 
\end{multline*}
where the third equality is an easy calculation with unramified principal series representations of $\GL_{2n}(F_v)$. 
Hence we have two Arthur parameters $\Psi_1 := \Psi( \delta)$ and 
$\Psi_2 := {}^\eta(\Psi(\varsigma) \otimes |\ |^{-1/2}) \otimes |\ |^{1/2}$ such that $\Psi_2$ cuspidal and $\Psi_{1,v} = \Psi_{2,v}$ for almost all $v$. 
Hence $\Psi_1 = \Psi_2,$ and in particular $\Psi_1$ is also cuspidal. Let us  record this as: 
\begin{equation}
\label{eqn:psi1-psi2-1}
\Psi( \delta) \ = \ {}^\eta(\Psi(\varsigma) \otimes |\ |^{-1/2}) \otimes |\ |^{1/2}.
\end{equation}

\smallskip

The Arthur parameter $\Psi( \delta)$ for $\rO(2n)$ is cuspidal, and so a partial symmetric square $L$-function, 
$L^S(s, {\rm Sym}^2, \Psi( \delta)),$ has a pole at $s=1$, and by the backward lifting (see Soudry\,\cite{soudry} and the references therein) 
there exists 
a {\it unique} globally $\psi$-generic cuspidal representation $ \delta'$ of $\rO(2n)$ whose local components transfer almost everywhere to the 
local components of $\Psi( \delta).$ Let $\Psi( \delta')$ be the Arthur parameter of $ \delta'$. Once again we have two Arthur parameters, 
$\Psi( \delta)$ and $\Psi( \delta'),$ 
that agree almost everywhere, and one of them is cuspidal, hence so is the other, and they are equal, i.e., we have 
\begin{equation}
\label{eqn:psi1-psi2-2}
\Psi( \delta) \ = \ \Psi( \delta').
\end{equation}

Fix $v \in S_\infty$. Consider the local component $\delta'_v$. We have
$$ 
\Psi(\delta'_v) = \Psi(\delta')_v = 
\left(^{\eta}\!(\Psi(\varsigma) \otimes |\ |^{-1/2}) \otimes |\ |^{1/2}\right)_{v} = \Psi(\varsigma)_{\eta^{-1}v} = \Psi(\varsigma_{\eta^{-1}v}). 
$$

Hence, $\delta'_v$ and $\varsigma_{\eta^{-1}v} = \mathbb{D}^{\dagger}_{{}^\iota\!\mu_{\eta^{-1}v}}$ are in the same $A$-packet and indeed in the same $L$-packet; 
but $\delta'_v$ is also locally generic with respect to $\psi_v$, hence 
$\delta'_v = \mathbb{D}^{\dagger}_{{}^\iota\!\mu_{\eta^{-1}v}} = \mathbb{D}^{\dagger}_{{}^{\eta\circ\iota}\!\mu_{v}}.$

\smallskip

\begin{definition}
\label{def:galois-conjugate}
Let $\varsigma$ be a cuspidal automorphic representation of $\rO(2n)(\A_F)$ that satisfies $(i)-(iii)$ of Def.\,\ref{def:strongly-inner-spectrum}. 
For $\eta \in {\rm Gal}(\bar{\Q}/ \Q),$ define the $\eta$ conjugate of $\varsigma$, denoted ${}^\eta\varsigma$, 
as the cuspidal representation $\delta'$ as above.  
Then ${}^\eta\varsigma$ satisfies $(i)-(iii)$ of Def.\,\ref{def:strongly-inner-spectrum}, where the highest weight for ${}^\eta\varsigma$ in $(iii)$ is the 
$\eta$-conjugate of 
the highest weight for $\varsigma.$ The Arthur parameters are related by: 
$$
\Psi({}^\eta\varsigma) \ = \ {}^\eta(\Psi(\varsigma) \otimes |\ |^{-1/2}) \otimes |\ |^{1/2}.
$$
\end{definition}

\medskip

Now we can see the independence of $\iota$ in Def.\,\ref{def:strongly-inner-spectrum}, in that if the conditions $(i)-(iv)$ hold for one embedding $\iota : E \to \C$, then they hold for any embedding $\iota : E \to \C.$ Indeed, given some other $\iota' : E \to \C$, there is an $\eta \in {\rm Gal}(\bar{\Q}/ \Q),$ such that $\iota' = \eta \circ \iota.$ 
Given a cuspidal representation $\varsigma$ attached to the data $\{\sigma_f, \iota\}$ then the cuspidal representation ${}^\eta\varsigma$ is attached to the data 
$\{\sigma_f, \iota' = \eta\circ\iota\}.$

\medskip
\subsection{The inner structure of an isotypic component of strongly inner cohomology} 
\label{sec:isotypic-component-strongly-inner-cohomology}

For $\sigma_f \in  {\rm Coh}_{!!}(^\circ M, \mu)$ and $\iota : E \rightarrow \C$, we wish to describe 
$$
H^{\bullet}_{!!}(\mathcal S^{^{\circ}\!M}_{^\circ C_{f}}, \widetilde \cM_{\mu,E}) (\sigma_f)\otimes_{E, \iota} \C.
$$

Let $\cA_2({}^\circ M) = \cA_2\left( {}^{\circ} M(\Q) \backslash {}^{\circ} M(\mathbb A) \right)$ denote the automorphic discrete spectrum of ${}^\circ M;$ 
see \cite[Sec.\,6.7]{atobe-gan} for the definition of $\cA_2({}^\circ M)$. Inside $\cA_2({}^\circ M)$ is the cuspidal spectrum which we denote 
$\cA_{\rm cusp}({}^\circ M) = \cA_{\rm cusp} \left( {}^{\circ} M(\Q) \backslash {}^{\circ} M(\mathbb A) \right).$ 
Consider the representation ${}^{\iota} \sigma : = \varsigma $ in 
Def.\,\ref{def:strongly-inner-spectrum}. There exist finitely many inequivalent cuspidal automorphic representations 
${}^\iota \sigma = {}^\iota \sigma_{1},\ldots, {}^\iota \sigma_{l}$ of ${}^\circ \!M(\A)$ 
all having 
the same Arthur parameter as ${}^\iota \sigma,$ and so appearing with multiplicity one in $\cA_2({}^\circ M)$ (\cite[Prop.\,7.2]{atobe-gan}), indeed, appearing 
with multiplicity one in $\cA_{\rm cusp}$, 
and whose finite-parts are uniquely determined since 
${}^\iota \sigma_{j,f}^{{}^\circ C_{f}} \cong \sigma_f \otimes_{E,\iota} \C,$ however, for the archimedean components, i.e.,  
for $v \in S_\infty$, ${}^\iota \sigma_{j, v}$ can vary in the $A$-packet, or in this case, $L$-packet of $\varsigma_v$ which consists of discrete series 
representations determined by ${}^\iota\mu_v.$ 
For each $1 \leq j \leq l$, we fix 
\begin{equation}\label{eqn:Phi_j}
\Phi_j \in  {\rm Hom}_{(^\circ \mathfrak m(\R), K^\circ_{{}^\circ\!M(\R)}) \times ^\circ M(\mathbb A_f)}  
\left( ^\iota \sigma_j, \ \cA^2_{\rm cusp} \left( {}^{\circ} M(\Q) \backslash ^{\circ} M(\mathbb A) \right) \right),
\end{equation}
which is unique up to scalars by multiplicity-one; denote $\Phi_j({}^\iota \sigma_j) = V_{{}^\iota \sigma_j}.$ 
The base-change to $\C$ via $\iota$ of the strongly-inner 
isotypic component of $\sigma_f$ takes the shape: 
\begin{equation}
\label{eqn:isotypic-comp-after-base-change-1}
H^{\bullet}_{!!}(\mathcal S^{^{\circ}\!M}_{^\circ C_{f}}, \widetilde \cM_{\mu,E}) (\sigma_f)\otimes_{E, \iota} \C 
\ \ \cong \ \ 
\bigoplus_{j=1}^l
H^\bullet({}^\circ \mathfrak m(\R), K^\circ_{{}^\circ\!M(\R)}; \, V_{{}^\iota \sigma_j}^{{}^\circ C_{f}} \otimes \M_{{}^\iota\mu, \C}).
\end{equation}
Using the inverse of $\Phi_j$ in the $j$-th summand, we also have: 
\begin{equation}
\label{eqn:isotypic-comp-after-base-change-2}
H^{\bullet}_{!!}(\mathcal S^{^{\circ}\!M}_{^\circ C_{f}}, \widetilde \cM_{\mu,E}) (\sigma_f)\otimes_{E, \iota} \C 
\ \cong \ 
\bigoplus_{j=1}^l
H^\bullet({}^\circ \mathfrak m(\R), K^\circ_{{}^\circ\!M(\R)}; \, 
{}^\iota \sigma_{j, \infty} \otimes \M_{{}^\iota\mu, \C}) \otimes {}^\iota \sigma_j^{{}^\circ C_{f}}.
\end{equation}

\medskip
Note that for all $v \in S_{\infty}$ and $1 \leq j \leq l$, 
we have $\Psi(^\iota \sigma_{j,v}) = \Psi({\mathbb D}^{\dagger}_{\mu^{v}})$. From \eqref{eqn:isotypic-comp-after-base-change-2} it is clear that an absolutely simple Hecke-module represented 
by $\sigma_f$, after base-change to $\C$ via $\iota$, appears with finite-multiplicity in cuspidal cohomology. %\medskip
Later, in the context of the standard intertwining operator, we will simultaneously consider the cuspidal representations
$^{\iota} \sigma_j$ and $^{\kappa_{2n}} { ^{\iota} \sigma_j}$. Since $\kappa_{2n} \in {}^\circ M,$ we have $^{\iota} \sigma_j \cong {}^{\kappa_{2n}} { ^{\iota} \sigma_j}$
and their representation spaces in the space of cusp forms are the same: $\Phi_j({}^{\iota} \sigma_j) = \Phi_j(^{\kappa_{2n}} { ^{\iota} \sigma_j}),$ 
or $V_{{}^{\iota} \sigma_j} = V_{{}^{\kappa_{2n}\iota} \sigma_j}.$ 
Also, for $1 \leq j,j' \leq l$, if $j \neq j'$, then the representations ${}^\iota \sigma_{j}$ and ${}^\iota \sigma_{j'}$ being inequivalent, implies 
that there is some $v \in S_\infty$ such that the local representations ${}^\iota \sigma_{j, v}$ and ${}^\iota \sigma_{j', v}$ are inequivalent. In particular, 
${}^{\kappa_{2n} \iota} \sigma_j$ is not equivalent to ${}^{\iota} \sigma_{j'}.$

\medskip
\section{Critical set for $L$-functions for orthogonal groups}

\medskip
\subsection{$L$-functions for orthogonal groups}
\label{sec:L-fn-orth-group}

Let $\mu \in X^*({}^\circ T \times E)$ be a dominant integral weight for ${}^\circ\!M$ with $\mu_{\rm min} \geq 1,$ and $\sigma_f \in {\rm Coh}_{!!}(^{\circ}M, \mu).$ 
For $\iota : E \to \C$, the cuspidal representation $\varsigma$ of Def.\,\ref{def:strongly-inner-spectrum} will be henceforth denoted 
${}^\iota\sigma.$ Let $\chi^{\circ}$ be a finite order Hecke character of $F$ taking values in $E,$ and $d$ be an integer.  
Let $\chi$ stand for the algebraic Hecke character of $F$ with values in $E$ which gives a continuous homomorphism 
${}^\iota\chi : F^\times\backslash \A_F^\times \to \C^\times$ of the form ${}^\iota\chi = {}^\iota\chi^\circ \otimes |\ |^{-d}.$  The purpose of 
this section is to understand the critical set for the degree-$2n$ Langlands $L$-function $L(s, {}^\iota\chi \times {}^\iota\sigma).$

\smallskip

For a place $v$ of $F$, the local representation ${}^\iota\sigma_v$ is a representation of $\rO(2n)(F_v)$, and its 
restriction to $\SO(2n)(F_v)$ can be irreducible or a sum of two irreducible constituents that are mutually $\kappa_{2n}$-conjugates
since $\kappa_{2n} \in \rO(2n)(F_v) \setminus \SO(2n)(F_v)$. From Theorems 3.5, 3.6, 3.9, and 3.10 of Atobe--Gan \cite{atobe-gan}, 
we get an $L$-parameter for $\rO(2n)(F_v)$. Then we consider the degree-$2n$ local Langlands--Shahidi $L$-function 
$L_v(s, {}^\iota\chi_v \times {}^\iota\sigma_v)$ attached to the data ${}^\iota\chi_v$ and this L-parameter  (corresponding to case ($D_{n,i}$) of  
\cite[App.\,A]{shahidi-book}). The global $L$-function is defined as an Euler product: 
$L(s, {}^\iota\chi \times {}^\iota\sigma) = \prod_v L_v(s, {}^\iota\chi_v \times {}^\iota\sigma_v).$ The analytic properties of this $L$-function is 
well-understood by the Langlands--Shahidi machinery for which we refer the reader to Shahidi's book \cite{shahidi-book}.

\medskip
\subsection{Critical points for Rankin--Selberg L-functions for orthogonal groups} 
\label{sec:critical-set}

The infinite part of $L(s, {}^\iota\chi \times {}^\iota\sigma)$ is a product over archimedean places: 
\[ 
L_\infty(s, {}^\iota\chi \times {}^\iota\sigma) \ := \ \prod \limits_{v \in S_\infty} L(s, {}^\iota\chi_{v} \times {}^\iota\sigma_v).
\]
An integer $m$ is said to be a {\it critical} for $L(s, {}^\iota\chi \times {}^\iota\sigma)$ if both 
$$
L_\infty(s, {}^\iota\chi \times {}^\iota\sigma) \ \ {\rm and}\ \ L_\infty(1-s, {}^\iota\chi^{-1} \times {}^\iota\sigma^{\sf v})
$$ 
are holomorphic at $s=m$, where ${}^\iota\sigma^{\sf v}$ is the contragredient of ${}^\iota\sigma.$ The purpose of this subsection is to determine all such critical integers. 

\smallskip

\smallskip

\begin{proposition} 
\label{prop:critical-points}  
Let the notations be as in the first paragraph of Sect.\,\ref{sec:L-fn-orth-group}. 
The critical set for the degree-$2n$ $L$-function $L(s, {}^\iota\chi \times {}^\iota\sigma)$ is the contiguous set of $2\mu_{\min}$ integers:  
$$
\left\{  1+d-\mu_{\rm min}, ~ 2+d-\mu_{\rm min}, \ \dots \ , \ d+\mu_{\rm min}\right\}.
$$ 
Since $\mu_{\rm min} \geq 1$, there are at least two successive critical integers. The critical set is independent of $\iota$. 
\end{proposition}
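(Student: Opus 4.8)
\emph{Proof strategy.} Since criticality only involves the archimedean factors $L_\infty(s,{}^\iota\chi\times{}^\iota\sigma)=\prod_{v\in S_\infty}L(s,{}^\iota\chi_v\times{}^\iota\sigma_v)$ and $L_\infty(1-s,{}^\iota\chi^{-1}\times{}^\iota\sigma^{\sf v})$, the plan is to write both of these down completely explicitly as products of $\Gamma$-factors and then intersect the two half-lines on which they are pole-free. First I would record the archimedean parameters. For $v\in S_\infty$, Def.\,\ref{def:strongly-inner-spectrum}(iii) says that ${}^\iota\sigma_v$ is the $\psi_v$-generic discrete series $\mathbb{D}^\dagger_{{}^\iota\mu_v}$, whose Langlands parameter is the one computed in \eqref{eqn:L-paramater-D_mu}: $\varphi({}^\iota\sigma_v)=I(\ell^v_1)\oplus\cdots\oplus I(\ell^v_n)$ with $\ell^v_i=2\big(({}^\iota\mu_v)_i+n-i\big)$ for $1\le i\le n-1$ and $\ell^v_n=2\,|({}^\iota\mu_v)_n|$. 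The dominance of $\mu$ together with $\mu_{\min}\ge1$ gives $\ell^v_1>\ell^v_2>\cdots>\ell^v_n\ge2$, so for each $v$ the smallest exponent is $\ell^v_n$, and $\min_{v,i}\tfrac12\ell^v_i=\min_v|({}^\iota\mu_v)_n|=\mu_{\min}$; the last equality holds because $({}^\iota\mu_v)_n=\mu^{\iota^{-1}v}_n$, so varying $\iota$ only permutes the integers $|\mu^\tau_n|$ whose minimum is $\mu_{\min}$ by definition. Finally ${}^\iota\chi_v={}^\iota\chi^\circ_v\otimes|\ |_v^{-d}$, where ${}^\iota\chi^\circ_v$ is a finite-order character of $\R^\times$, hence trivial or $\mathrm{sgn}$; since $\mathrm{sgn}$ restricts trivially to $\C^\times\subset W_\R$ we have $I(\ell)\otimes\mathrm{sgn}\cong I(\ell)$, so the finite-order part is invisible to the $L$-factor, while the twist by $|\ |_v^{-d}$ just shifts its argument by $d$ (consistently with the identity $L(s,{}^\iota\chi\times{}^\iota\sigma)=L(s-d,{}^\iota\chi^\circ\times{}^\iota\sigma)$ noted after Thm.\,\ref{thm:main:introduction}).

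Granting the identification of the Langlands--Shahidi local factor with the Artin factor of the tensor-product parameter (the point I flag below), multiplicativity of local $L$-factors would then give
$$L_\infty(s,{}^\iota\chi\times{}^\iota\sigma)\ =\ \prod_{v\in S_\infty}\prod_{i=1}^{n}L\big(s,\,{}^\iota\chi_v\otimes I(\ell^v_i)\big)\ =\ \prod_{v\in S_\infty}\prod_{i=1}^{n}\Gamma_{\C}\!\big(s-d+\tfrac12\ell^v_i\big),$$
where $\Gamma_{\C}(w)=2(2\pi)^{-w}\Gamma(w)$ is pole-free exactly off $\Z_{\le 0}$. As the numbers $\tfrac12\ell^v_i$ are integers, this product is holomorphic at an integer $s=m$ iff $m-d+\tfrac12\ell^v_i\ge1$ for all $v,i$, i.e.\ $m\ge 1+d-\mu_{\min}$. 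For the mirror term, the $2n$-dimensional representation $\mathrm{std}\circ\varphi({}^\iota\sigma_v)$ is self-dual (its parameter lands in the orthogonal group $\rO(2n)(\C)$) and ${}^\iota\chi^{-1}=({}^\iota\chi^\circ)^{-1}\otimes|\ |^{d}$ still has finite-order nontrivial part, so the same computation gives $L_\infty(1-s,{}^\iota\chi^{-1}\times{}^\iota\sigma^{\sf v})=\prod_{v,i}\Gamma_{\C}(1-s+d+\tfrac12\ell^v_i)$, holomorphic at $s=m$ iff $m\le d+\mu_{\min}$. Intersecting the two conditions, $m$ is critical iff $1+d-\mu_{\min}\le m\le d+\mu_{\min}$, i.e.\ the block of $2\mu_{\min}$ consecutive integers in the statement; it has at least two elements because $\mu_{\min}\ge1$, and it is independent of $\iota$ because neither $d$ nor $\mu_{\min}$ is.

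The step that will require real care, and the main obstacle, is the one invoked at the start of the second paragraph: expressing $L_v(s,{}^\iota\chi_v\times{}^\iota\sigma_v)$ through the parameter. Because ${}^\iota\sigma_v$ lives on $\rO(2n)(F_v)$, one has to follow the prescription of Sect.\,\ref{sec:L-fn-orth-group} --- pass via Theorems 3.5, 3.6, 3.9, 3.10 of \cite{atobe-gan} to an $L$-parameter attached to the restriction of ${}^\iota\sigma_v$ to $\SO(2n)(F_v)$, form the degree-$2n$ Langlands--Shahidi factor of case $(D_{n,i})$ of \cite[App.\,A]{shahidi-book}, and then check that it equals the Artin $L$-factor of $\big(\mathrm{std}\circ\varphi(\mathbb{D}^\dagger_{{}^\iota\mu_v})\big)\otimes{}^\iota\chi_v$, using Shahidi's theorem that at archimedean places his $L$-factors coincide with those attached by the local Langlands correspondence. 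After that identification, what remains is routine: the elementary formulas $L(s,\mathrm{Ind}_{\C^\times}^{W_\R}\chi_\ell)=\Gamma_{\C}(s+\tfrac12|\ell|)$ and $L(s,\tau\otimes|\ |^{a})=L(s+a,\tau)$, plus the $\Gamma$-pole bookkeeping above.
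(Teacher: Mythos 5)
Your proof is correct and follows essentially the same route as the paper: both read off the archimedean $L$-factor from the Langlands parameter $\varphi(\mathbb{D}_\mu) = I(\ell_1)\oplus\cdots\oplus I(\ell_n)$ of \eqref{eqn:L-paramater-D_mu}, use the invariance $I(\ell)\otimes\mathrm{sgn}\cong I(\ell)$ to drop the finite-order part, and then intersect the two half-lines where the $\Gamma_\C$-factors are pole-free, arriving at $1-\mu_{\min}\le m-d\le\mu_{\min}$. The point you flag as requiring care --- that the Langlands--Shahidi local factor of case $(D_{n,i})$ agrees with the Artin factor of the tensor-product parameter --- is indeed taken for granted in the paper's proof (having been set up in Sect.\,\ref{sec:L-fn-orth-group} via Atobe--Gan and Knapp), so your flagging it is reasonable but not a gap.
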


\begin{proof}  
For $v \in S_\infty$, the local representation ${}^\iota\sigma_v$ is the discrete series representation $\D_{{}^\iota\!\mu^{v}}$ of $\rO(n,n)(\R).$ 
As before, identifying $S_\infty$ with $\Hom(F,\C)$, we have ${}^\iota\!\mu^{v} = \mu^{\iota^{-1}v}.$ For given $v$ and $\iota,$ let's denote 
$\iota^{-1}v$ by $\tau.$ Recall that $\mu^\tau_1 \geq \cdots \geq \mu^\tau_{n-1} \geq |\mu^\tau_n| \geq 1.$
We know the Langlands parameter of ${}^\iota\sigma_v$ from \eqref{eqn:L-paramater-D_mu}. Also, the local 
component at $v$ of the character $\chi$ may be expressed as: $\chi_v =  |\ |^{-d} {\rm sgn}^{\epsilon_v}$ with $\epsilon_v \in \{0,1\}.$ 
Recall that the $2$-dimensional irreducible representation $I(\ell)$ of $W_\R$ is invariant under twisting by the sign character, i.e., 
$I(\ell) \otimes {\rm sgn} = I(\ell).$ Recall also (\cite{knapp}) that the local $L$-factor attached to $I(\ell)$ is $2(2\bpi)^{-(s+|\ell|/2)}\Gamma(s+|\ell|/2).$ Putting these together, 
and recalling $\ell^\tau_j$ in terms of $\mu^\tau_j$ from \eqref{eqn:ell-and-mu}, 
we get the local $L$-factor at $v$  as: 
\begin{multline}
\label{eqn:L-fact-at-v}
L(s, {}^\iota\chi_{v} \times {}^\iota\sigma_v) \ = \ 
L(s-d, {}^\iota\chi^\circ_{v} \times {}^\iota\sigma_v) \ = \\
2^{n}  \prod \limits_{j=1}^{n}  (2 \bpi)^{-(s-d+ |\mu_j^\tau| + n - j)} ~\Gamma(s-d+ |\mu_j^\tau| + n - j).   
\end{multline}
To compute the critical integers, powers of $2$ and exponential terms -- since they are holomorphic and nonvanishing everywhere -- are not relevant. Ignoring such terms, we write 
$L(s, {}^\iota\chi_{v} \times {}^\iota\sigma_v) \approx \prod_{j=1}^{n}  \Gamma(s-d+ |\mu_j^\tau| + n - j).$ Taking the product over $v \in S_\infty$, which on the 
right hand side will be taking the product over all $\tau : F \to E$, we get
$$
L_\infty(s, {}^\iota\chi \times {}^\iota\sigma) \ \approx \ \prod_{\tau : F \to E} \, \prod_{j=1}^{n} \, \Gamma(s-d+ |\mu_j^\tau| + n - j).
$$
Similarly, on the dual side we get: 
$$
L_\infty(1-s, {}^\iota\chi^{-1} \times {}^\iota\sigma^{\sf v}) \ \approx \ \prod_{\tau : F \to E} \, \prod_{j=1}^{n} \, \Gamma(1-s+d+ |\mu_j^\tau| + n - j). 
$$
We see that $m$ is critical if and only if for all $\tau : F \to E$ and all $1 \leq j \leq n$ both the conditions are satisfied: 
\begin{itemize} 
\item $m-d+ | \mu_j^\tau|+ n - j \geq 1$;
\item $ 1 - m + d +  |\mu_j^\tau| + n - j \geq 1$.
\end{itemize} 
Since $
\mu_1^\tau +n-1 \ > \ \mu_2^\tau +n- 2 \ > \ \cdots \ > \ |\mu_n^\tau| \ \geq \ \mu_{\rm min},$
we see that the conditions are equivalent to
$1 - \mu_{\rm min} \ \leq \ m-d \ \leq \ \mu_{\rm min}.$ 
\end{proof}

\medskip
We draw the following two easy corollaries of Prop.\,\ref{prop:critical-points}: 

\medskip
\begin{corollary}
\label{cor:-n-and-1-n}
The integers $-n$ and $1-n$ are critical for $L(s, {}^\iota\chi \times {}^\iota\sigma)$ if and only if 
 \begin{equation*}
1 - \mu_{\rm min} \ \leq \ -(d+n) \ \leq \ \mu_{\rm min} - 1.
\end{equation*}
\end{corollary}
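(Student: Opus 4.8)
The plan is to deduce Corollary \ref{cor:-n-and-1-n} directly from Proposition \ref{prop:critical-points} by specializing the already-computed critical set to the two points $s = -n$ and $s = 1-n$. Proposition \ref{prop:critical-points} tells us that the critical set is exactly the contiguous block of $2\mu_{\rm min}$ integers
$$
\{\, d+1-\mu_{\rm min}, \ d+2-\mu_{\rm min}, \ \ldots, \ d+\mu_{\rm min}-1, \ d+\mu_{\rm min} \,\}.
$$
Equivalently, an integer $m$ is critical if and only if $1-\mu_{\rm min} \le m - d \le \mu_{\rm min}$. So the first step is simply to substitute $m = -n$ and $m = 1-n$ into this criterion and intersect the two resulting conditions.

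Carrying this out: $m = -n$ is critical iff $1 - \mu_{\rm min} \le -n - d \le \mu_{\rm min}$, and $m = 1-n$ is critical iff $1-\mu_{\rm min} \le 1 - n - d \le \mu_{\rm min}$. Both conditions must hold simultaneously. The left inequality for $m=-n$ reads $1-\mu_{\rm min} \le -(d+n)$, which is one half of the claimed condition. For the right side, I would observe that the binding upper constraint comes from the smaller of the two points, namely $m = -n$: the condition $-n-d \le \mu_{\rm min}$ is $-(d+n) \le \mu_{\rm min}$, while the condition $1-n-d \le \mu_{\rm min}$ is implied by it; conversely, the lower constraint $1-\mu_{\rm min} \le 1-n-d$, i.e. $-(d+n) \le \mu_{\rm min}-1$, is the binding one for the left side and implies $1-\mu_{\rm min}\le -(d+n)$ as well. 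Combining the two genuinely binding inequalities gives precisely
$$
1 - \mu_{\rm min} \ \le \ -(d+n) \ \le \ \mu_{\rm min} - 1,
$$
which is the asserted equivalence. (One should note that $-n$ and $1-n$ are consecutive integers, so no integrality obstruction arises; and since $\mu_{\rm min}\ge 1$ the block of critical points is nonempty, consistent with the statement.)

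There is essentially no obstacle here — this is a bookkeeping corollary. The only point requiring a moment's care is checking which of the four inequalities (two endpoints, two sides of the functional equation) are redundant, so that the answer collapses to the clean symmetric interval $1-\mu_{\rm min}\le -(d+n)\le \mu_{\rm min}-1$ rather than a union of constraints; this is immediate once one writes the constraint as $1-\mu_{\rm min}\le m-d\le\mu_{\rm min}$ and notes it is an interval in $m$, so requiring membership of both $-n$ and $1-n$ is the same as requiring the interval $[-n,\,1-n]$ (in $m-d$ shifted coordinates, $[-n-d,\,1-n-d]$) to sit inside $[1-\mu_{\rm min},\,\mu_{\rm min}]$, which unwinds to the stated double inequality. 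All the real work — the archimedean $L$-factor computation and the determination of the critical set — has already been done in the proof of Proposition \ref{prop:critical-points}.
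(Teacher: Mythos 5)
Your overall strategy is exactly the paper's (the paper states this as an ``easy corollary'' of Prop.\,\ref{prop:critical-points} without proof), and your closing sentence---view the constraint as requiring the interval $[-n-d,\,1-n-d]$ to lie inside $[1-\mu_{\rm min},\,\mu_{\rm min}]$---is the cleanest and correct way to do the bookkeeping. That immediately gives $1-\mu_{\rm min}\le -(n+d)$ (left endpoint) and $1-n-d\le\mu_{\rm min}$, i.e.\ $-(n+d)\le\mu_{\rm min}-1$ (right endpoint), which is the stated double inequality.

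However, the middle paragraph has the binding constraints identified backwards, and one equivalence is misstated. You write that the binding upper constraint comes from $m=-n$ and that $1-n-d\le\mu_{\rm min}$ is implied by $-n-d\le\mu_{\rm min}$; in fact $1-n-d\le\mu_{\rm min}$ is $-(n+d)\le\mu_{\rm min}-1$, which is \emph{strictly stronger} than $-(n+d)\le\mu_{\rm min}$, so the binding upper bound comes from $m=1-n$. Symmetrically, the binding lower bound comes from $m=-n$, not $m=1-n$ as you claim. You also assert ``$1-\mu_{\rm min}\le 1-n-d$, i.e.\ $-(d+n)\le\mu_{\rm min}-1$,'' but the left-hand inequality is $-\mu_{\rm min}\le -(n+d)$, which is a different (and weaker) statement. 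None of this damages the final result because you correct it with the interval-inclusion picture, but the exposition should be fixed: the lower bound $1-\mu_{\rm min}\le -(n+d)$ comes from $m=-n$, and the upper bound $-(n+d)\le\mu_{\rm min}-1$ comes from $m=1-n$.
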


\medskip
\begin{corollary}
\label{cor:arch-ratio-L-values}
If the inequalities in Cor.\,\ref{cor:-n-and-1-n} hold then 
 $$
\frac{L_\infty(-n, {}^\iota\chi \times {}^\iota\sigma)}{L_\infty(1-n, {}^\iota\chi \times {}^\iota\sigma)}
 \ \approx_{\Q^{\times}} \  \bpi^{n \sf r_F}, 
$$
where $\approx_{\Q^{\times}}$ means equality up to a nonzero rational number. 
\end{corollary}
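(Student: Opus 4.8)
The plan is to compute the ratio directly from the explicit product formula for $L_\infty(s, {}^\iota\chi \times {}^\iota\sigma)$ obtained inside the proof of Prop.\,\ref{prop:critical-points}, this time keeping the exponential and power-of-$2$ factors that were harmlessly dropped there. Those factors play no role in locating the poles of $L_\infty$, but they do contribute a power of $2\bpi$ once one specializes to two distinct values of $s$, and that is exactly where the $\bpi^{n{\sf r}_F}$ comes from.

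First I would record, from \eqref{eqn:L-fact-at-v} together with the product over $v \in S_\infty$ (equivalently over $\tau : F \to E$), the exact identity
$$
L_\infty(s, {}^\iota\chi \times {}^\iota\sigma) \ = \
\prod_{\tau : F \to E} 2^{n}\, \prod_{j=1}^{n} (2\bpi)^{-(s - d + |\mu^\tau_j| + n - j)}\, \Gamma(s - d + |\mu^\tau_j| + n - j).
$$
Forming the quotient of the values at $s = -n$ and $s = 1-n$, the factors $2^{n}$ cancel, and each of the $n{\sf r}_F$ exponential factors contributes $(2\bpi)^{(1-n)-(-n)} = 2\bpi$; hence the exponential parts together yield $(2\bpi)^{n{\sf r}_F}$.

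It remains to handle the $\Gamma$-quotients. Setting $y^\tau_j := -d + |\mu^\tau_j| - j$, so that $(-n) - d + |\mu^\tau_j| + n - j = y^\tau_j$ and $(1-n) - d + |\mu^\tau_j| + n - j = y^\tau_j + 1$, each $\Gamma$-factor quotient equals $\Gamma(y^\tau_j)/\Gamma(y^\tau_j + 1)$. Under the hypothesis of Cor.\,\ref{cor:-n-and-1-n} the integer $-n$ is critical, so the criticality inequality from the proof of Prop.\,\ref{prop:critical-points} (namely $m - d + |\mu^\tau_j| + n - j \geq 1$ with $m = -n$) gives $y^\tau_j \geq 1$ for all $\tau : F \to E$ and all $1 \leq j \leq n$; thus each $y^\tau_j$ is a positive integer and, by $\Gamma(x+1) = x\Gamma(x)$, we get $\Gamma(y^\tau_j)/\Gamma(y^\tau_j + 1) = 1/y^\tau_j \in \Q^{\times}$. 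Multiplying over $\tau$ and $j$ keeps this contribution in $\Q^{\times}$.

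Putting the two contributions together gives $L_\infty(-n, {}^\iota\chi \times {}^\iota\sigma)/L_\infty(1-n, {}^\iota\chi \times {}^\iota\sigma) = (2\bpi)^{n{\sf r}_F}\cdot q$ with $q \in \Q^{\times}$, and since $2^{n{\sf r}_F} \in \Q^{\times}$ this equals $\bpi^{n{\sf r}_F}$ up to a nonzero rational, which is the assertion. I do not anticipate any real obstacle: the only point needing attention is the contrast with the proof of Prop.\,\ref{prop:critical-points}, where the exponential factors may be ignored because only the location of poles is at stake, whereas here two finite nonzero values are being compared, so those factors must be tracked — and the collapse of the $\Gamma$-factors to a rational number relies precisely on criticality of $-n$ forcing the relevant $\Gamma$-arguments to be positive integers.
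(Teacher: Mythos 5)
Your proposal is correct and carries out precisely the one-line computation the paper indicates for this corollary (apply $\Gamma(s+1)=s\Gamma(s)$ to the explicit product in \eqref{eqn:L-fact-at-v}), just written out in full detail — including the useful observation that criticality of $-n$ forces the $\Gamma$-arguments $y^\tau_j \geq 1$ so that the residual factor $\prod_{\tau,j} 1/y^\tau_j$ lies in $\Q^\times$.
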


\begin{proof}
Use the functional equation $\Gamma(s+1)  = s \Gamma(s)$ and \eqref{eqn:L-fact-at-v}. 
\end{proof}

\medskip
\section{Kostant representatives and a combinatorial lemma}
\label{sec:kostant-comb-lem}

As in \cite{harder-raghuram}, we have a philosophically meaningful combinatorial lemma which says that 
the combinatorial condition in Cor.\,\ref{cor:-n-and-1-n} that guarantees the successive integers $-n$ and $1-n$ to be critical for $L(s, {}^\iota\chi \times {}^\iota\sigma)$ is equivalent 
to the existence of some very special elements in the Weyl group of $G = \Res_{F/\Q}(\rO(2n+2)/ F).$ This lemma will ultimately allow us to consider 
a particular induced representation in the cohomology of the Borel--Serre boundary of a locally symmetric space for $G$, thus permitting us to 
invoke the machinery of Eisenstein cohomology. Interestingly, the proof of the combinatorial lemma, which was challenging for general linear groups, is a relatively easy in the situation of our orthogonal group $G$ and the particular parabolic subgroup $P$ we are interested in.

\medskip
\subsection{Weyl group}
\label{sec:weyl-group-new}

Recall our notation that $n = 2r$, and $G_0 = \rO(2n+2)/ F$. The Weyl group $W_{G_0}$ is isomorphic to the subgroup of the group 
$\mathfrak{S}_{n+1} \rtimes (\Z/2\Z)^{n+1}$ of signed permutations consisting of all signed permutations of $n+1$ symbols with an even number of sign changes.  An element $w \in W_{G_0}$ permutes the $e_j$'s according to the corresponding signed permutation. 

\smallskip
 
Recall the maximal parabolic subgroup $P_0$ of $G_0$ from Sect.\,\ref{sec:root-system} corresponding to deleting the simple root $\alpha_0.$ 
Let $w_{P_{0}} $ be the unique element of $W_{G_0}$ such that $w_{{P_{0}}}( \Pi_{G_{0}} \setminus \{ \alpha_0\}) \subset  \Pi_{G_{0}}$ and $w_{P_0}( \alpha_{0}) < 0$. Observe that ${P_{0}}$ is self associate and we have 
$$
w_{{P_{0}}}(\Pi_{G_{0}} \setminus \{ \alpha_0 \}) =  \Pi_{G_{0}} \setminus \{ \alpha_0\} ~ \text{and}~w_{P_{0}}(\Delta_{U{_{0}}}) = - \Delta_{U_{0}},
$$ 
where $\Delta_{U_{0}}$ is the set of positive roots with root spaces in $\mathfrak{u}_{P_0} = {\rm Lie}(U_{P_0}).$ The element $w_{P_0}$ is the signed permutation: 
$$
e_0 \mapsto -e_0, \quad e_i \mapsto e_i \ \ {\rm for} \ \ 1 \leq i \leq n-1, \quad e_n \mapsto -e_n. 
$$
The element $w_{P_0}$ can be factorised as a product in terms of the simple reflections $\ss_{\alpha_{j}}$ as:
 \begin{equation}
 \label{eqn:factor-w-P}
w_{P _{0}} \ = \ 
\ss_{\alpha_{0}} ~\ss_{\alpha_{1}} ~\cdots ~
\ss_{\alpha_{n-3}} ~ \ss_{\alpha_{n-2}} ~\ss_{\alpha_{n}} ~\ss_{\alpha_{n-1}} \ss_{\alpha_{n-2}}  ~\cdots~
\ss_{\alpha_{1}} ~\ss_{\alpha_{0}}. 
\end{equation}
The length of $w_{P _{0}}$ is $l(w_{P _{0}}) =  2n = \dim(U_{P_0}).$

\medskip
   
The Weyl group of $G \times E$ factors as a product 
$$
W_{G \times E} \ = \ \prod \limits_{\tau: F \rightarrow E} \, W_{G_0 \times_{F, \tau} E}, 
$$ 
where each $W_{G_0 \times_{F, \tau} E}$ is isomorphic to  $W_{G_{0}}$. Sometimes, we abbreviate $W_{G_0 \times_{F, \tau} E}$ as $W^\tau.$ 
The element $w_P \in W_{G \times E}$ is $(w_{P_0}^\tau)_{\tau : F \to E}$ where each $w_{P_0}^\tau$ is the copy of $w_{P_0}$ in 
$W_{G_0 \times_{F, \tau} E}.$ The length of $w_{P}$ is $l(w_{P}) =  2n \sr_F = \dim(U_P).$ 

\medskip

The {\it twisted} action of $w \in W_{G \times E}$ on $\lambda \in X^{\ast}(T \times E)$ is given by 
$w \cdot \lambda: = (w^{\tau} \cdot \lambda^\tau)_{\tau : F \rightarrow E},$ 
where
$$
w^{\tau} \cdot \lambda^{\tau} \ := \ 
w^{\tau} (\lambda^{\tau} + \brho_{G^{\tau}_{0}}) - \brho_{G^{\tau}_0}.
$$

\medskip
\subsection{\bf Kostant representatives}
\label{sec:kostant-reps}

Let $\Pi_{M_{P_{0}}}$ denote the set of simple roots of the Levi quotient $M_{P_0}$ of $P_0.$ 
The set of Kostant representatives in $W_{G_0}$ corresponding to the parabolic subgroup $P_0$ is defined as: 
$$
W^{P_{0}}  \ := \ 
\{ w \in W_{G_0} \ : \ 
w^{-1} \alpha > 0 \ \ \forall \alpha \in \Pi_{M_{P_{0}}} \} .
$$
Now define the set of Kostant representatives in $W_{G \times E}$ corresponding to the parabolic subgroup $P$ as: 
$$
W^{P} \ = \ \left\{ w = (w^{\tau}) \ : \ w^{\tau} \in {W^{\tau}}^{P_{0}^{\tau}} \right\}.
$$

\medskip
If $w_0 \in W^{P_{0}}$ then for its length we have $0 \leq \ell(w_0) \leq \dim(U_{P_0})$, and hence for $w \in W^P$ we have 
$0 \leq \ell(w) \leq \dim(U_{P}).$ Recall that $\dim(U_{P_0}) = 2n$ is even.

\begin{definition}
\label{def:balanced-elts}
We say $w_0 \in W^{P_{0}}$ is balanced if $\ell(w_0) = \tfrac{1}{2} \dim(U_{P_0}),$ and 
we say $w = (w^{\tau}) \in  W^P$ is balanced if each $w^\tau$ is balanced, i.e., 
$\ell(w^{\tau}) = \tfrac{1}{2} \dim(U_{P_{0}^{\tau}})$ for each $\tau : F \to E.$ 
\end{definition}

\medskip
In the self-associate case of our parabolic subgroup $P$, a couple of self-bijections of $W^P$ with complementary lengths will be important. We record these 
in the following two propositions. (See Lem.\,5.6 and Lem.\,5.7 of \cite{harder-raghuram}, the proofs of which are totally general.)

\begin{proposition}
\label{prop:w'}
The map $w \mapsto w': = w_P\,w$ defines a bijection $W^P \rightarrow W^P$ such that $\ell(w) + \ell(w') = {\rm dim}(U_{P}).$
Hence, $w$ is balanced if and only if $w'$ is balanced.
\end{proposition}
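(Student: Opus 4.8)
\medskip

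\noindent\emph{Proof plan.} The plan is to reduce immediately to a single factor. Since $w_P = (w_{P_0}^\tau)_{\tau : F \to E}$, the assignment $w \mapsto w' := w_P\,w$ is componentwise, $w^\tau \mapsto w_{P_0}^\tau w^\tau$, on $W^P = \prod_\tau {W^\tau}^{P_0^\tau}$; moreover $\dim(U_P) = \sum_\tau \dim(U_{P_0}^\tau)$ and, by Def.\,\ref{def:balanced-elts}, balancedness is tested factor by factor. So it suffices to prove, for each $\tau$, that $w_0 \mapsto w_0' := w_{P_0}\,w_0$ is a self-bijection of $W^{P_0}$ with $\ell(w_0) + \ell(w_0') = \dim(U_{P_0})$; the proposition then follows by multiplying over $\tau$. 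Write $N(v) := \{\alpha \in \Delta^+_{G_0} : v\alpha < 0\}$, so that $\#N(v) = \ell(v) = \ell(v^{-1}) = \#N(v^{-1})$. First I would record, from the signed permutation $e_0 \mapsto -e_0$, $e_i \mapsto e_i$ ($1 \le i \le n-1$), $e_n \mapsto -e_n$ and the remarks after \eqref{eqn:factor-w-P}: $w_{P_0}$ is an involution; $w_{P_0}$ permutes $\Pi_{M_{P_0}}$, hence $w_{P_0}(\Delta^+_{M_{P_0}}) = \Delta^+_{M_{P_0}}$; and $w_{P_0}(\Delta_{U_0}) = -\Delta_{U_0}$, which together with $\Delta^+_{G_0} = \Delta^+_{M_{P_0}} \sqcup \Delta_{U_0}$ gives $N(w_{P_0}) = \Delta_{U_0}$ and $\ell(w_{P_0}) = \dim(U_{P_0})$. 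Finally, I would use the reformulation $W^{P_0} = \{w : w^{-1}(\Delta^+_{M_{P_0}}) \subseteq \Delta^+_{G_0}\}$ — equivalent to the defining condition because for $\beta \in \Delta^+_{M_{P_0}}$ the root $w^{-1}\beta$ is a nonnegative integral combination of positive roots, hence positive — whence, for $w_0 \in W^{P_0}$, the set $N(w_0^{-1})$ meets no root of $M_{P_0}$ and so $N(w_0^{-1}) \subseteq \Delta_{U_0}$.

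For membership and bijectivity: $(w_0')^{-1} = w_0^{-1}w_{P_0}^{-1} = w_0^{-1}w_{P_0}$, so $(w_0')^{-1}(\Delta^+_{M_{P_0}}) = w_0^{-1}\bigl(w_{P_0}(\Delta^+_{M_{P_0}})\bigr) = w_0^{-1}(\Delta^+_{M_{P_0}}) \subseteq \Delta^+_{G_0}$; hence $w_0' \in W^{P_0}$. The map $w_0 \mapsto w_0'$ is visibly injective, and since $w_{P_0}$ is an involution it is its own inverse, so it is a bijection $W^{P_0} \to W^{P_0}$.

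For the length identity I would describe $N((w_0')^{-1}) = \{\alpha \in \Delta^+_{G_0} : w_0^{-1}w_{P_0}\alpha < 0\}$ by splitting $\alpha$ along $\Delta^+_{G_0} = \Delta^+_{M_{P_0}} \sqcup \Delta_{U_0}$. If $\alpha \in \Delta^+_{M_{P_0}}$ then $w_{P_0}\alpha \in \Delta^+_{M_{P_0}}$ and $w_0^{-1}(w_{P_0}\alpha) > 0$, so such $\alpha$ never contribute. If $\alpha \in \Delta_{U_0}$ then $-w_{P_0}\alpha \in \Delta_{U_0} \subseteq \Delta^+_{G_0}$ and $w_0^{-1}w_{P_0}\alpha < 0 \iff w_0^{-1}(-w_{P_0}\alpha) > 0 \iff -w_{P_0}\alpha \notin N(w_0^{-1})$; since $\alpha \mapsto -w_{P_0}\alpha$ is a bijection of $\Delta_{U_0}$, this identifies $N((w_0')^{-1})$ with $\Delta_{U_0} \setminus N(w_0^{-1})$, so $\ell(w_0') = \dim(U_{P_0}) - \ell(w_0)$. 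Multiplying over $\tau$ gives $\ell(w) + \ell(w') = \dim(U_P)$, and since $\ell(w^\tau) + \ell((w^\tau)') = \dim(U_{P_0}^\tau)$ for each $\tau$, one has $\ell(w^\tau) = \tfrac12\dim(U_{P_0}^\tau)$ for all $\tau$ iff the same holds for $w'$, i.e., $w$ is balanced iff $w'$ is. I do not anticipate a genuine obstacle: the one place to be careful is the inversion-set bookkeeping — in particular that, with this paper's left-coset convention for $W^{P_0}$, it is $N(w^{-1})$ rather than $N(w)$ that lies in $\Delta_{U_0}$ — and otherwise the argument is purely formal, which is why one may also simply invoke Lem.\,5.6 of \cite{harder-raghuram}.
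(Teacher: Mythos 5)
Your proof is correct. The paper itself does not prove this proposition but simply cites \cite[Lem.\,5.6]{harder-raghuram} (remarking that the proofs there are ``totally general''); your inversion-set computation, together with the componentwise reduction over $\tau$, is the standard argument one expects behind that citation, so this is essentially the same approach, just written out in full.
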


Let $w_{G_0}$ be the element of $W_{G_0}$ with longest length. As a signed permutation it is 
$$
e_i \mapsto -e_i \ \ {\rm for} \ \ 0 \leq i \leq n-1, \ \ {\rm and} \quad e_n \mapsto e_n.
$$
(Since $n$ is even, we have an even number of sign changes.) The element $w_G \in W_{G \times E}$ is $(w_{G_0}^\tau)_{\tau : F \to E}$ where each $w_{G_0}^\tau$ is the copy of $w_{G_0}$ in 
$W_{G_0 \times_{F, \tau} E}.$ Similarly, let $w_{M_0}$ be the element of $W_{M_{P_0}}$ of longest length which is given by
$$
e_i \mapsto -e_i \quad {\rm for} \ 1 \leq i \leq n.
$$
We have $w_{M} = (w_{M_0}^\tau)_{\tau : F \to E},$  where $w_{M_0}^\tau$ is the copy of $w_{M_0}$ in 
$W_{G_0 \times_{F, \tau} E}.$

\begin{proposition}
\label{prop:w-vee}
The map $W^P \rightarrow W^P$ defined by $w \mapsto w^\sv: = w_{M} \, w \, w_G$ is a bijection such that 
$\ell(w) + \ell(w^\sv) = \dim(U_P).$ Hence, $w$ is balanced if and only if $w^\sv$ is balanced. 
\end{proposition}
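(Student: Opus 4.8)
The plan is to run the standard, essentially formal argument behind \cite[Lem.\,5.7]{harder-raghuram}. Since $W^P = \prod_{\tau : F \to E} (W^\tau)^{P_0^\tau}$ and the longest elements $w_G = (w_{G_0}^\tau)_\tau$ and $w_M = (w_{M_0}^\tau)_\tau$, as well as the length function $\ell$, all respect this factorization, it suffices to argue directly in $W_{G\times E}$ with its standard Levi $M$. Throughout I will only use that $w_G$ and $w_M$ are involutions, that $w_G$ sends $\Delta^+$ to $-\Delta^+$, and that $\ell(y w_G) = \ell(w_G) - \ell(y)$ for all $y \in W_{G\times E}$.

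First I would check that $w \mapsto w^\sv = w_M w w_G$ carries $W^P$ into $W^P$. For $\alpha \in \Pi_M$ one computes $(w^\sv)^{-1}\alpha = w_G w^{-1} w_M \alpha$. As $w_M$ is the longest element of $W_M$ it sends $\Delta_M^+$ to $-\Delta_M^+$, so $w_M\alpha = -\beta$ for some $\beta \in \Pi_M$ (in the case at hand $w_M$ acts as $-1$ on the span of $e_1,\dots,e_n$ since $n$ is even, whence $\beta = \alpha$, but this is not needed). Since $w \in W^P$ we have $w^{-1}\beta \in \Delta^+$, hence $w_G(w^{-1}\beta) \in -\Delta^+$, and therefore $(w^\sv)^{-1}\alpha = -w_G(w^{-1}\beta) \in \Delta^+$. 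As this holds for every $\alpha \in \Pi_M$, we get $w^\sv \in W^P$.

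Next I would establish the length identity. The defining condition $w^{-1}\alpha > 0$ for all $\alpha \in \Pi_M$ is precisely the statement that $\ell(s_\alpha w) = \ell(w) + 1$ for every simple reflection $s_\alpha$ of $M$, i.e.\ that $w$ is the minimal-length element of the coset $W_M w$; consequently $\ell(x w) = \ell(x) + \ell(w)$ for all $x \in W_M$, and in particular $\ell(w_M w) = \ell(w_M) + \ell(w)$. Hence
\[
\ell(w^\sv) \ = \ \ell\big((w_M w)\, w_G\big) \ = \ \ell(w_G) - \ell(w_M w) \ = \ \big(\ell(w_G) - \ell(w_M)\big) - \ell(w) \ = \ \dim(U_P) - \ell(w),
\]
where the last step uses $\ell(w_G) - \ell(w_M) = |\Delta^+| - |\Delta_M^+| = |\Delta_U| = \dim(U_P)$, $\Delta_U$ being the set of roots of $\fu_P$. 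This gives $\ell(w) + \ell(w^\sv) = \dim(U_P)$, and since $\dim(U_P) = 2n\,\sr_F$ is even, $\ell(w^\sv) = \tfrac12\dim(U_P)$ if and only if $\ell(w) = \tfrac12\dim(U_P)$, which is the balanced assertion. Finally, $w \mapsto w_M w w_G$ is an involution of $W_{G\times E}$ because $w_M^2 = w_G^2 = 1$, and by the first step it preserves $W^P$, so it restricts to a bijection of $W^P$ onto itself.

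The argument is almost entirely formal; the one point that deserves care is the observation in the third step that the Kostant condition $w^{-1}\Pi_M > 0$ is exactly the statement that $w$ is the minimal-length representative of $W_M w$, so that lengths add upon left multiplication by elements of $W_M$ (Deodhar's lemma on parabolic cosets). Everything else is routine bookkeeping with longest Weyl elements, and, as noted in the paper, the same mechanism already works for general linear groups; I therefore expect no genuine obstacle.
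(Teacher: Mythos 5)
Your argument is correct and is precisely the standard proof behind the cited Lem.\,5.7 of \cite{harder-raghuram}: you use (a) the characterization of $W^P$ as minimal-length right-coset representatives for $W_M$ so that $\ell(w_M w) = \ell(w_M) + \ell(w)$, (b) the identity $\ell(yw_G) = \ell(w_G) - \ell(y)$, and (c) $\ell(w_G) - \ell(w_M) = \dim(U_P)$, together with the easy check that the involution $w \mapsto w_M w w_G$ preserves $W^P$. The paper itself omits the proof and defers to the reference, so there is nothing to compare beyond noting that your proof is the intended one.
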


\medskip

In the set of Kostant representatives $W^{P_0}$ one can check that there are only two balanced elements $w^+$ and $w^{-}$ which are described below:
\smallskip
\begin{equation*}
\begin{split}
&w^+: \quad e_i \mapsto e_{i+1} \quad {\rm for} \ 0 \leq i \leq n-1, \quad e_n \mapsto e_0; \\ 
&w^-: \quad e_i \mapsto e_{i+1} \quad {\rm for} \ 0 \leq i \leq n-2, \quad e_{n-1} \mapsto -e_n, \quad e_n \mapsto -e_0. 
\end{split}
\end{equation*}
It follows using the definitions of the self-bijections in the propositions above that: 
\smallskip
\begin{equation*}
\begin{split}
& w^+{}' = w^-, \quad w^-{}' = w^+ ; \\
& w^+{}^{\sv} = w^+, \quad w^-{}^{\sv} = w^-. 
\end{split}
\end{equation*}

\medskip
\subsection{A Combinatorial lemma} 
\label{sec:combinatorial-lemma}

Let the notations be as in the first paragraph of Sect.\,\ref{sec:L-fn-orth-group}. The weight $\mu = (\mu^\tau)_{\tau : F \to E} \in X^*_+({}^\circ\!T \times E)$ and the integer $d$ give a weight $de_0 + \mu \in X^*(T\times E)$ whose $\tau$-component is $de_0 + \mu^\tau.$ Note that $de_0 + \mu$ need not 
be dominant as a weight for $G.$

\begin{lemma}[Combinatorial lemma] 
\label{lem:comb-lemma}
With notations as above, the following are equivalent: 

\smallskip
\begin{enumerate}
\item[(i)] $-n$ and $1-n$ are critical points for the L-function $L(s, {}^\iota\chi \times {}^\iota\sigma)$;

\smallskip
\item[(ii)] $1 - \mu_{{\rm min}} \ \leq \ -(n+d) \ \leq \ \mu_{{\rm min}} - 1;$

\smallskip
\item[(iii)] There exists a balanced Kostant representative $w \in W^P$ such that $w^{-1} \cdot (de_0 + \mu)$ is a dominant integral weight for $G.$ 
\end{enumerate}
\end{lemma}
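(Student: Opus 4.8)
The equivalence of (i) and (ii) is already recorded as Cor.\,\ref{cor:-n-and-1-n}, so the real content is the equivalence with (iii); I would prove (ii)$\Leftrightarrow$(iii). Since everything factors over $\tau : F \to E$ — the weight $de_0+\mu$ decomposes as $(de_0+\mu^\tau)_\tau$, the twisted Weyl action is componentwise, and a Kostant representative $w=(w^\tau)$ is balanced iff each $w^\tau$ is balanced — it suffices to work one embedding $\tau$ at a time and reduce to a statement purely about $G_0 = \rO(2n+2)/F$, the parabolic $P_0$, the weight $de_0 + \mu^\tau$, and the set $W^{P_0}$. So the plan is: first reduce to the componentwise statement, then for a single $\tau$ show that $1-\mu^\tau_n\text{-type bound}$ — more precisely $1-|\mu^\tau_n| \le -(n+d) \le |\mu^\tau_n|-1$ — holds for \emph{every} $\tau$ iff one of the two balanced Kostant representatives $w^+, w^-$ (the only balanced elements of $W^{P_0}$, as noted after Prop.\,\ref{prop:w-vee}) carries $de_0+\mu^\tau$ to a $G_0$-dominant weight. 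One subtlety: the balanced $w\in W^P$ produced need not have the \emph{same} balanced representative ($w^+$ vs.\ $w^-$) in every $\tau$-slot; the right statement is that for each $\tau$ at least one of $w^+,w^-$ works, and then one assembles a global balanced $w\in W^P$ slot by slot. I would make this explicit so the passage between the local and global versions is clean.

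For the single-embedding computation, fix $\tau$, drop it from the notation, and compute $w^{-1}\cdot(de_0+\mu)$ for $w\in\{w^+,w^-\}$ explicitly using the signed-permutation descriptions given just before this lemma together with $\brho_{G_0} = \sum_{j=0}^n (n-j)e_j$ and the twisted action $w\cdot\lambda = w(\lambda+\brho_{G_0})-\brho_{G_0}$. Writing $\nu := de_0 + \mu = de_0 + \sum_{j=1}^n \mu_j e_j$, one has $\nu + \brho_{G_0} = (d+n)e_0 + \sum_{j=1}^n(\mu_j + n - j)e_j$; note the coordinates $\mu_j + n - j$ for $1\le j\le n-1$ are strictly decreasing and all exceed $|\mu_n|$, while the $0$-th coordinate $d+n$ is the only one whose size is not yet controlled. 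Since $w^+$ is the cycle $e_i\mapsto e_{i+1}$ (indices mod... with $e_n\mapsto e_0$) and $w^-$ the same cycle with two sign changes on the last two images, applying $(w^\pm)^{-1}$ to $\nu+\brho_{G_0}$ permutes the coordinate $d+n$ into the last slot (with a possible sign for $w^-$) and shifts the block $(\mu_j+n-j)_{1\le j\le n}$ up by one into slots $0,\dots,n-1$; subtracting $\brho_{G_0}$ again then gives a weight whose first $n$ coordinates are automatically the dominant chain $\mu_1 \ge \cdots \ge \mu_{n-1} \ge |\mu_n| + 1 > |\mu_n|$ coming from the strict inequalities, \emph{regardless of $d$}. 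The only dominance condition left to check is the one involving the last coordinate: $\lambda_{n-1} \ge |\lambda_n|$, i.e.\ (something like) $|\mu_n| + 1 \ge |d+n|$, equivalently $-(|\mu_n|-1) \le d+n \le |\mu_n|-1$; choosing $w^+$ vs.\ $w^-$ handles the sign of $d+n$, so \emph{some} balanced representative works iff $|d+n|\le |\mu_n|-1$, which is exactly condition (ii) for that $\tau$ once one minimizes over $\tau$.

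The main obstacle, and the place to be careful, is the bookkeeping with the signed permutations and the half-sum $\brho_{G_0}$ in type $D_{n+1}$: getting the indexing right when the cycle wraps around, tracking which coordinate picks up a minus sign under $w^-$, and correctly translating the abstract dominance inequalities $\lambda_0\ge\cdots\ge\lambda_{n-1}\ge|\lambda_n|$ back onto the reindexed coordinates. In particular one must verify that the \emph{interior} dominance inequalities hold unconditionally (this is where $\mu_{\rm min}\ge 1$, hence $\mu_j + n-j$ strictly decreasing down to $>|\mu_n|\ge 1$, is used) so that the single binding constraint is precisely the last one, which then matches (ii) on the nose — "no more and no less", as the introduction promises. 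Once the single-$\tau$ computation is pinned down, the global statement follows by the componentwise remarks above together with Prop.\,\ref{prop:w'} and Prop.\,\ref{prop:w-vee}, which guarantee the set of balanced Kostant representatives is stable under the relevant self-bijections and in particular consists exactly of slotwise choices from $\{w^+,w^-\}$. I would close by noting (i)$\Leftrightarrow$(ii) is Cor.\,\ref{cor:-n-and-1-n}, completing the cycle of implications.
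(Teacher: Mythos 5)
Your approach is correct and essentially coincides with the paper's proof: first dispose of (i)$\Leftrightarrow$(ii) via Cor.\,\ref{cor:-n-and-1-n}, then reduce to a single embedding $\tau$, compute $(w^{\pm})^{-1}\cdot(de_0+\mu^{\tau})$ explicitly using the two balanced elements $w^{+},w^{-}\in W^{P_0}$, observe that the interior dominance inequalities come for free from $\mu^{\tau}_1\ge\cdots\ge\mu^{\tau}_{n-1}\ge|\mu^{\tau}_n|$ while only the last one binds, and assemble the global balanced $w=(w^{\tau})_{\tau}$ slot by slot with the sign of $\mu^{\tau}_n$ dictating whether $w^{\tau}=w^{+}$ or $w^{-}$. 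One arithmetic slip to fix: after the twisted action the coordinates in slots $0,\dots,n-1$ are $\mu^{\tau}_1-1,\dots,\mu^{\tau}_{n-1}-1,\ |\mu^{\tau}_n|-1$ (not $|\mu^{\tau}_n|+1$), so the binding constraint is $|\mu^{\tau}_n|-1\ge|d+n|$; your "equivalently" clause already records this correctly, so it is a typo rather than a logical gap, but the displayed chain should be shifted by $-1$ throughout and the bound should read $|\mu^{\tau}_n|-1$, not $|\mu^{\tau}_n|+1$.
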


\begin{proof} The equivalence $(i) \iff (ii)$ is exactly Cor.\,\ref{cor:-n-and-1-n}. It suffices to prove $(ii) \iff (iii)$. 

Suppose $(ii)$ holds. A calculation show that
\begin{align*}
\begin{split}
(w^{+})^{-1} \cdot (de_0 + \mu^{\tau}) = (\mu_1^{\tau} -1)~ e_0 + \ldots + (\mu_n ^{\tau}- 1)~ e_{n-1} +  (d+n)~ e_n, \\
(w^{-})^{-1} \cdot (de_0 + \mu^{\tau}) = (\mu^{\tau}_1 -1)~ e_0 + \ldots + (-\mu^{\tau}_n - 1)~ e_{n-1}  - (d+n)~ e_n. 
\end{split}
\end{align*}
For $\tau : F \to E$, if $\mu^{\tau}_n \geq 1$ then take $w^{\tau} = w^+$ and if $\mu^\tau_n \leq -1$ then take $w^{\tau} = w^-$. 
Now put $w = (w^\tau)_{\tau : F \to E}$ which is a balanced element of $W^P.$ For each $\tau$, 
observe that $(w^\tau)^{-1} \cdot (de_0 + \mu^\tau)$ is dominant; indeed, if $\mu^{\tau}_n \geq 1$ then $(ii)$ implies that 
$\mu_n ^{\tau}- 1 \geq |d+n|$ rendering $(w^{+})^{-1} \cdot (de_0 + \mu^{\tau})$ dominant, 
and if $\mu^{\tau}_n \leq -1$ then $(ii)$ implies that $-\mu_n ^{\tau}- 1 \geq |d+n|$ rendering $(w^{-})^{-1} \cdot (de_0 + \mu^{\tau})$ dominant. 
Hence $(iii)$. 

Conversely, if $(iii)$ holds for some balanced element $w$ of $W^P,$ then for a given $\tau,$ the above formulae tell us that the sign of $\mu_n^\tau$ determines 
if $w^\tau$ is $w^+$ or $w^-,$ and furthermore, that $1- |\mu^{\tau}_n|  \leq -(n+d) \leq  |\mu^{\tau}_n|-1.$ But this is true for all $\tau,$ 
hence $(ii)$.
\end{proof}

\medskip
\subsection{Consequences of the combinatorial lemma}
\label{sec:w-on-other-weight}

Under the notations and conditions imposed by the Combinatorial Lemma (Lem.\,\ref{lem:comb-lemma}), let's record some consequences on the effect of 
Kostant representatives related to $w,$ such as $w'$, $w^{\sv}$ and $w^{\sv}{}'$ (see Prop.\,\ref{prop:w'} and Prop.\,\ref{prop:w-vee}), 
on some weights related to $\lambda: = w^{-1} \cdot (de_0 + \mu).$ 
For $\mu \in X^*({}^\circ\!T \times E)$ define $\mu^\sv := -w_{M_{P}} \mu.$ Observe that $\mu^\sv = \mu,$ reflecting the fact that the representation $\M_{\mu,E}$ is self-dual. Similarly, for $\lambda \in X^*(T \times E)$ define $\lambda^\sv : = -w_{G} \lambda.$ 
When $\lambda$ is dominant-integral then $\lambda^\sv$ is the the highest weight of the dual of $\M_{\lambda,E}$. We omit the proof of the following proposition which is a straightforward calculation. The information in this proposition will be relevant when we deal with Poincar\'e duality for the cohomology of the Borel--Serre boundary.

\medskip
\begin{proposition}
\label{prop:w-w'-etc} 
Under the notations and conditions of Lem.\,\ref{lem:comb-lemma}, for 
$\lambda: = w^{-1} \cdot (de_0 + \mu),$ we have
\smallskip
\begin{enumerate}
\item $w' \cdot \lambda \ = \ (-d -2n) e_0 + ~^{\kappa_{2n}} \mu$. 

\smallskip
\item  $ w^\sv \cdot \lambda^\sv  \ = \ (-d-2n) e_0 + \mu^\sv \ = \ (-d-2n) e_0 + \mu$. 

\smallskip
\item $w^\sv{}' \cdot \lambda^\sv \ = \ d e_0 + {}^{\kappa_{2n}}\!\mu^\sv \ =  \ d e_0 + {}^{\kappa_{2n}}\!\mu$.
\end{enumerate}
\end{proposition}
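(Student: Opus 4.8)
The plan is to reduce all three identities to a direct substitution carried out one embedding $\tau : F \to E$ at a time, using two structural inputs. First, the twisted (``dot'') action is a genuine action of $W_{G\times E}$, so $(uv)\cdot\xi = u\cdot(v\cdot\xi)$; second, by the very definition of $\lambda := w^{-1}\cdot(de_0+\mu)$ one has $w\cdot\lambda = de_0+\mu$. For the arithmetic I would fix once and for all the data of Sect.\,\ref{sec:weyl-group-new}--\ref{sec:kostant-reps}: $\brho_{G_0} = \sum_{j=0}^{n}(n-j)e_j$; the signed permutations realizing $w_{P_0}$, $w_{G_0}$, $w_{M_0}$; the factorizations $w' = w_P w$ (Prop.\,\ref{prop:w'}) and $w^\sv = w_M\, w\, w_G$ (Prop.\,\ref{prop:w-vee}); and the fact that $\mathrm{Ad}(\kappa_{2n})$ negates the last coordinate, so that ${}^{\kappa_{2n}}\mu^\tau$ is obtained from $\mu^\tau$ by flipping the sign of $\mu_n^\tau$ under the matching of the coordinates $e_1,\dots,e_n$ of $X^*(T_0)$ with the compact-torus coordinates $e_1^K,\dots,e_n^K$.

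For part (1): associativity gives $w'\cdot\lambda = (w_P w)\cdot\lambda = w_P\cdot(w\cdot\lambda) = w_P\cdot(de_0+\mu)$, so it suffices to evaluate $w_{P_0}\cdot(de_0+\mu^\tau) = w_{P_0}\bigl(de_0+\mu^\tau+\brho_{G_0}\bigr) - \brho_{G_0}$. Substituting $de_0+\mu^\tau+\brho_{G_0} = (d+n)e_0 + \sum_{i=1}^{n-1}(\mu_i^\tau+n-i)e_i + \mu_n^\tau e_n$, applying the permutation $e_0\mapsto -e_0$, $e_i\mapsto e_i$ $(1\le i\le n-1)$, $e_n\mapsto -e_n$, and subtracting $\brho_{G_0}$ leaves $(-d-2n)e_0 + \mu_1^\tau e_1 + \cdots + \mu_{n-1}^\tau e_{n-1} - \mu_n^\tau e_n = (-d-2n)e_0 + {}^{\kappa_{2n}}\mu^\tau$; reassembling over $\tau$ yields (1).

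For parts (2) and (3) I would first record two elementary facts: $\mu^\sv = \mu$ (because $w_{M_0}$ negates every coordinate of $\mu^\tau$, hence $-w_{M_0}\mu^\tau = \mu^\tau$), and that $w_G$ is an involution with $w_G\brho_{G_0} = -\brho_{G_0}$ (legitimate since the $e_n$-coefficient of $\brho_{G_0}$ vanishes, so the coordinate fixed by $w_{G_0}$ does no harm). Using $\lambda^\sv = -w_G\lambda$ and $w^\sv = w_M w w_G$, compute in three associative steps: $w_G\cdot\lambda^\sv = w_G(-w_G\lambda+\brho_{G_0})-\brho_{G_0} = -\lambda-2\brho_{G_0}$; then $w\cdot(-\lambda-2\brho_{G_0}) = -w(\lambda+\brho_{G_0})-\brho_{G_0} = -(w\cdot\lambda)-2\brho_{G_0} = -(de_0+\mu)-2\brho_{G_0}$; finally $w_M\cdot\bigl(-(de_0+\mu^\tau)-2\brho_{G_0}\bigr)$, expanded as $w_{M_0}\bigl(-(de_0+\mu^\tau)-\brho_{G_0}\bigr)-\brho_{G_0}$ with $e_0\mapsto e_0$, $e_i\mapsto -e_i$, collapses to $(-d-2n)e_0+\mu^\tau$. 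This is (2). For (3), $w^\sv{}' = w_P w^\sv$, so $w^\sv{}'\cdot\lambda^\sv = w_P\cdot(w^\sv\cdot\lambda^\sv) = w_P\cdot\bigl((-d-2n)e_0+\mu\bigr)$, which is precisely the computation of (1) with $d$ replaced by $-d-2n$; since $-\bigl((-d-2n)+2n\bigr) = d$, the output is $de_0+{}^{\kappa_{2n}}\mu = de_0+{}^{\kappa_{2n}}\mu^\sv$.

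The only genuine difficulty is bookkeeping: one must never confuse the linear action (defining $\lambda^\sv$ and $\mu^\sv$) with the twisted action (governing Kostant representatives), must use $\brho_{G_0}$ throughout even when a weight is written on the ``$\mu$-side'', and must keep the identification of $X^*(T_0)$- and compact-torus coordinates straight when reading off ${}^{\kappa_{2n}}\mu$. Once those conventions are fixed, each of (1)--(3) is a one-line substitution.
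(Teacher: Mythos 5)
Your proposal is correct and is exactly the ``straightforward calculation'' the paper deliberately omits. The ingredients you use --- associativity of the twisted action so that $w'\cdot\lambda = w_P\cdot(w\cdot\lambda)$ and $w^\sv\cdot\lambda^\sv = w_M\cdot(w\cdot(w_G\cdot\lambda^\sv))$, the identity $w\cdot\lambda = de_0+\mu$, the factorizations $w'=w_Pw$ and $w^\sv=w_Mww_G$ from Prop.\,\ref{prop:w'} and Prop.\,\ref{prop:w-vee}, the relations $\mu^\sv=\mu$, $w_G\brho_{G_0}=-\brho_{G_0}$, and the signed-permutation descriptions of $w_{P_0}$, $w_{M_0}$, $w_{G_0}$ --- are precisely the ones implicit in the surrounding text, and the coordinate-by-coordinate substitutions are carried out correctly, including the observation that $\kappa_{2n}$-conjugation negates the $e_n$-coordinate of $\mu^\tau$.
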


%Let $q_b: = q_0 + \ell(w) = q_0 + n $ and $q_t: =  q_{0} + {\sf r}_{F} - 1 + \ell ({w^{\vee}}) $. be the relevant {\it bottom degree} and {\it top degree} for which we want to consider the cohomology $H^{\bullet} (\partial \mathcal S^G_{K_{f}}, \cM_{\lambda, E})$.Thus $q_b + q_t = {\sf r}_F ~n^2 + {\sf r}_F - 1 + 2n {\sf r}_F = {\sf r}_F~(n+1)^2 - 1 = {\rm dim }~ \partial \cS^G_{K_{f}}$.

\medskip
\section{Arithmetic of intertwining operators}
\label{sec:arith-int-op}

We continue with our global notations as in the first paragraph of Sect.\,\ref{sec:L-fn-orth-group}. Since the embedding $\iota : E \to \C$ will be fixed throughout 
this section, we will allow ourselves, only for this section, an abuse of notation that will not cause any confusion: denote the 
cuspidal automorphic representation ${}^\iota\sigma$ of ${}^\circ\!M(\A) = {}^\circ\!M_0(\A_F) = \rO(2n)(\A_F)$ simply by $\sigma$, and similarly, 
denote the Hecke character ${}^\iota\chi$ of $F$ simply as $\chi$ which is of the form $ \chi^\circ \otimes |\ |^{-d}$ for a finite-order
character $\chi^\circ.$

\medskip
\subsection{Induced representations}
\label{sec:ind-rep}

We begin by recalling the induced representations as set-up in the Langlands--Shahidi machinery \cite{shahidi-book}. 
Recall that the Levi-quotient $M_0 = M_{P_0}$ of the maximal parabolic subgroup $P_0$ of $G_0$ corresponding to deleting the simple 
root $\alpha_0 = e_0 - e_1$ is explicitly given by: 
$$ 
M_0 = \left\{  m_{x,g} = \left(\begin{matrix} x & & \\ & g & \\ & & x^{-1}\end{matrix}\right) \ : \ x \in A_0 = \GL_1/F, \ \ g \in {}^{\circ}\!M_0 = \rO(2n)/F \right\}. 
$$ 
Of course, $M_0 = A_0 \times {}^\circ\!M_0.$ 
We have the dual of the real Lie algebra $\fa^* = X^*(M_0) \otimes \R = X^*(A_0) \otimes \R = \R \, e_0.$ The set of roots whose root
spaces appear in $U_{P_0}$ is $\{e_0 \pm e_i : 1 \leq i \leq n\}$, and half their sum is 
$\brho_{P_0} = n\, e_0.$ We have 
$\langle \brho_{P_0}, \alpha_0 \rangle = 2 (\brho_{P_0}, \alpha_0)/(\alpha_0, \alpha_0) = n.$ The fundamental weight $\tilde{\alpha}$ corresponding 
to $\alpha_0$ is 
$$
\tilde{\alpha} \ = \ \langle \brho_{P_0}, \alpha_0 \rangle^{-1} \brho_{P_0} \ = \ e_0.
$$
For any place $v$ of $F$, the modular character $\delta_{P_0}$ at $v$ given by the adjoint action of $M_{P_0}(F_v)$ on $U_{P_0}(F_v)$ is 
$\delta_{P_0, v}(m_{x,g}) =  |x|_v^{2n}.$ At an ad\`elic level, it has the same form: $\delta_{P_0}(m_{x,g}) =  |x|^{2n}.$ Hence, 
$\delta_{P_0}^{1/2} = |\brho_{P_0}|.$ For $s \in \C$, in 
the Langlands--Shahidi method we start with the induced representation $I(s\tilde{\alpha}, \chi \otimes \sigma),$ which we will denote simply as
$I(s, \chi \times \sigma),$ consisting of all smooth functions 
$f: G_0(\A_F) \longrightarrow  V_{\sigma}$ such that 
$$
f(m_{x,g}uh ) \ = \ |x|^{n+s} \, \chi(x) \, \sigma(g) \, f(h), 
$$
for all $m_{x,g} \in M_{P_0}(\A_F), \ u \in U_{P_0}(\A_F), \ h \in G_0(\A_F).$
In terms of normalised parabolically induced representations we have
$$
I(s, \chi \times \sigma) \ = \ \Ind_{P(\A)}^{G(\A)}  \left(\chi[s] \times \sigma \right), 
$$
where $\chi[s] = \chi \otimes |\ |^s.$ 
In terms of algebraically (un-normalised) parabolically induced representations we have
$$ 
I(s, \chi \times \sigma) \ = \ \aInd_{P(\A)}^{G(\A)}  \left(\chi[s+n] \times \sigma \right).
$$ 
In particular, observe that 
$$
I(s, \chi \times \sigma)|_{s = -n} \ = \ \aInd_{P(\A)}^{G(\A)}  \left(\chi \times \sigma \right).
$$ 
We designate 
$s = s_0 = -n$ as our {\it point of evaluation}. The algebraic induction works well for cohomological purposes, whereas the 
analytic theory of Langlands--Shahidi $L$-functions works well with normalised induction. In the arithmetic theory of $L$-functions, 
it's a necessary evil to go back and forth between these two forms of induction.

\medskip
\subsection{The standard intertwining operator}
\label{sec:T-st}
The weyl group element $w_{P_{0}}$ defined in  Sect.\,\ref{sec:weyl-group-new} is realised by conjugation by the matrix in $\rO(2n+2)$: 
$$
\tilde{w}_{P_0} \ = \ 
\begin{pmatrix} 
& & & & 1 \\
& I_{n-1} & & & \\
& & J_2 & & \\
& & & I_{n-1} & \\
1 & & & & 
\end{pmatrix}.
$$
We will denote $\tilde{w}_{P_0} $ simply as $w_{P_0}$ itself. The conjugation action of $w_{P_0}$ on the inducing representation is given by
$w_{P_0}(\chi[s] \times \sigma) = \chi^{-1}[-s] \times {}^{\kappa_{2n}} \sigma,$
where the element $\kappa_{2n}$ was defined in \eqref{eq:kappa-2n}. The standard global intertwining operator 
$$
T_{\rm st}(s, w_{P_0}, \chi \times \sigma) \ : \ I(s, \chi \times \sigma) \ \longrightarrow \ I(-s, \chi^{-1} \times {}^{\kappa_{2n}} \sigma) 
$$ 
is defined by the integral:
$$
T_{\rm st}(s, w_{P_0}, \chi \times \sigma)(f)(g) \ = \ \int \limits_{U_P(\A)} f(w_{P_0}^{-1} \, u\, g) \, du. 
$$ 
We will denote $T_{\rm st}(s, w_{P_0}, \chi \times \sigma)$ simply by $T_{\rm st}(s, w_{P_0})$ or even just $T_{\rm st}(s).$ The intertwining operator $T_{\rm st}(s)$ is a product of local intertwining operators 
$$
T_{\rm st}(s) \ = \ \otimes'_v \, T_{{\rm st}, v}(s), 
$$ 
where the local operators are defined by analogous local integrals for each place $v $ of $F$. The global measure $du$ on $U_P(\A) = U_{P_0}(\A_F)$ 
is a product over the coordinates of $U_{P_0}$ of the additive measure $dx$ on $\A_F$, which in turn is a product $\prod_v dx_v$ of local additive measures $dx_v$ on $F_v^+$; for a finite place $v$ we normalise $dx_v$ by ${\rm vol}(\O_v) = 1$, where $\O_v$ is the ring of integers of $F_v$, 
and for an archimedean $v$ we take $dx_v$ as the Lebesgue measure on $\R.$

\medskip
\subsection{Local intertwining operators at a non-archimedean place: unramified case}
\label{sec:p-adic-unramified}
Since we are dealing with an algebraically disconnected group $\rO(2n)$ and not 
the connected algebraic group $\SO(2n)$ some brief comments are in order on local unramified representations. 
The reader is referred to \cite[Sect.\,2.3]{atobe-gan} for further details. 
For a finite place $v$ of $F$, let $\O_v$ be the ring of integers of $F_v$, 
$\varpi_v$ a uniformizer, and $q_v$ the cardinality of the residue field $\O_v/\varpi_v\O_v$. 
Assume that $v$ is such that $\chi_v$ and $\sigma_v$ are unramified. 
The orthogonal space we consider is $F^{2n}$ with a basis 
$\{w_1,\dots,w_n, w_n^*,\dots,w_1^*\}$ and the non-degenerate symmetric bilinear form on $F^{2n}$ is represented by the matrix $J_{2n}$ with respect to 
this basis. Consider the $\O_v$-lattice $L_v$ in $F^{2n} \otimes F_v = F_v^{2n}$ spanned by this basis: 
$$
L_v \ = \ \O_v w_1 \oplus \cdots \oplus \O_v w_n \oplus \O_v w_n^* \oplus \cdots \oplus \O_v w_1^*. 
$$ 
Let $K_v$ be the subgroup of $\rO(2n)(F_v)$ that stabilizes the lattice $L_v$. Then $K_v$ is a maximal compact subgroup of $\rO(2n)(F_v)$.
We have $K_v = K_v^1 \ltimes \langle \kappa_{2n} \rangle$ where 
$K_v^1 = K_v \cap \SO(2n)(F_v)$ is a maximal compact subgroup of $\SO(2n)(F_v)$. We say an irreducible admissible representation 
$\pi$ (resp., $\pi_1$) of $\rO(2n)(F_v)$ (resp., $\SO(2n)(F_v)$) is unramified if $\pi$ (resp., $\pi_1$) has a nonzero vector fixed by $K_v$ 
(resp., $K_v^1$). It is known in this case that $\dim(\pi^{K_v}) = \dim(\pi_1^{K_v^1}) = 1.$ The nonzero vector fixed by $K_v$ (or $K_v^1$) will be called
a spherical vector. 
(The reader specifically interested in comparing our notations with \cite{atobe-gan} will note
that our $2n$-dimensional orthogonal space is the $V_{2n}$ on p.\,358 of {\it loc.\,cit.}\ with $c = d = 1.$)  

\medskip

The relation between the local intertwining operator at a finite unramified place and local $L$-functions is a famous calculation of Langlands that generalises
the classical Gindikin--Karpelevich formula. This is a well-known part of the Langlands--Shahidi machinery; we will briefly review the results as applied to our situation, and refer the reader to Shahidi's book \cite{shahidi-book} for more details. Let $f_v^\circ$ denote the normalised spherical vector in the local induced representation 
$I_v(s, \chi_v \times \sigma_v)$--normalised by taking the value $1$ on the identity element. Similarly, let $\tilde{f}_v^\circ$ denote the normalised spherical vector in $I_v(-s, \chi_v^{-1} \times {}^{\kappa_{2n}}\sigma_v)$. Then: 
\begin{equation}
\label{eqn:L-G-K}
T_{{\rm st}, v}(s)(f_v^\circ) \ \ = \ \ 
\frac{L_v(s, \chi_v \times \sigma_v)}{L_v(1+s, \chi_v \times \sigma_v)} \ 
\tilde{f}_v^\circ.
\end{equation}
Let us  recall the local $L$-factor $L_v(s, \chi_v \times \sigma_v)$. Suppose the Satake parameter of $\sigma_v$ is 
${\rm diag}(\vartheta_{v,1},\dots,\vartheta_{v,n}, \vartheta_{v,n}^{-1}, \dots, \vartheta_{v,1}^{-1}) \in \rO(2n)(\C)$, 
and $\vartheta_v = \chi_v(\varpi_v)$, then we have
$$
L_v(s, \chi_v \times \sigma_v) \ = \ 
\prod_{j=1}^n \left((1- \vartheta_v \, \vartheta_{v,j}\, q_v^{-s}) (1- \vartheta_v\, \vartheta_{v,j}^{-1}\, q_v^{-s}) \right)^{-1}. 
$$
A subtle point to appreciate is the manner in which the complex variable $s$ is inserted in the induced representation, via the $s\tilde{\alpha}$, which has a 
bearing on the precise form of \eqref{eqn:L-G-K}.

\medskip
\subsection{Langlands's constant term theorem} 
\label{sec:Langlands}
So far in our discussion of intertwining operators we did not talk about convergence issues. As is well-known, convergence is guaranteed in some half-plane, and one has a meromorphic continuation to all $s \in \C.$ The holomorphy of the intertwining operator, Eisenstein series, and holomorphy of global (partial) $L$-functions are all related by Langlands's constant term theorem that we now recall. 
A section $f \in  I(s, \chi \times \sigma)$ can be identified with the function $\underline{g} \mapsto f(\underline{g})(\underline{1})$ on $G(\A)$, 
letting us embed $I(s, \chi \times \sigma) \xhookrightarrow{}  \mathcal C^{\infty} (P(\Q) \backslash G(\A)).$
The Eisenstein series associated to $f$ is defined via Eisenstein summation for $\Re(s) \gg 0$:  
$$
{\rm Eis}_{P} (s,f)(\underline{g}) \ = \ 
\sum \limits_{\gamma \in P(\Q) \backslash G(\Q)} f(\gamma \, \underline{g}), 
$$
giving us a map 
$
{\rm Eis}_{P}(s,-) : \mathcal{C}^{\infty}(P(\Q) \backslash G(\A))  \rightarrow \mathcal \mathcal{C}^{\infty}(G(\Q) \backslash G(\A)).
$ 
Next, we can take the constant term along $P$ for $\phi \in \mathcal{C}^{\infty}(G(\Q) \backslash G(\A))$ defined by:
$$ 
\mathcal F_P(\phi)(\underline{g}) = \int \limits_{U_P(\Q) \backslash U_P(\A)} \phi(\underline u \, \underline g) \, d\underline u.
$$
giving us a map 
$
\mathcal F_{P} : \mathcal{C}^{\infty}(G(\Q) \backslash G(\A)) \to \mathcal{C}^{\infty}(P(\Q) \backslash G(\A)).
$ 
Langlands's theorem gives a relation between the intertwining operator $T_{\rm st}(s)$, the Eisenstein map 
${\rm Eis}_{P}(s,-),$ and the constant term map $\mathcal{F}_{P}.$ The reader will note that the parabolic $P$ under consideration is self-associate.

\begin{theorem}[Langlands]
\label{thm:Langlands}
For $f \in  I(s, \chi \times \sigma)$ we have: 
$$ \mathcal F_{P} ( {\rm Eis}_{P}(s,f)) = f + T_{st}(s)(f).$$
\end{theorem}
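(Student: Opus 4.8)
\textbf{Proof proposal for Langlands's constant term theorem (Thm.~\ref{thm:Langlands}).}

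The plan is to compute the constant term $\mathcal{F}_P({\rm Eis}_P(s,f))$ directly by unfolding the Eisenstein summation and organizing the double-coset space $P(\Q)\backslash G(\Q)/P(\Q)$, which for our self-associate maximal parabolic $P$ has exactly two cells. First I would work in the range $\Re(s)\gg 0$ where the Eisenstein series converges absolutely, so all the manipulations below are justified by absolute convergence, and then invoke meromorphic continuation to extend the identity to all $s\in\C$. Substituting the definition of ${\rm Eis}_P(s,f)$ into $\mathcal{F}_P$, one gets
$$
\mathcal{F}_P({\rm Eis}_P(s,f))(\underline g) \ = \ \int\limits_{U_P(\Q)\backslash U_P(\A)} \ \sum\limits_{\gamma\in P(\Q)\backslash G(\Q)} f(\gamma\,\underline u\,\underline g)\, d\underline u.
$$
The key step is the Bruhat decomposition $G(\Q)=\bigsqcup_{w} P(\Q)\,w\,P(\Q)$ taken over representatives of $W_{M_P}\backslash W_G / W_{M_P}$; since $P$ is maximal and self-associate, only two double cosets survive as relevant — the one represented by the identity and the one represented by $w_{P_0}$ (the other Weyl cells contribute cosets whose $U_P$-integral against a section of $I(s,\chi\times\sigma)$ vanishes, because the associated orbit integral involves integration over a unipotent subgroup on which the integrand is, after the change of variables, a non-trivial character — here one uses cuspidality of $\sigma$ to kill those intermediate terms). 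So the sum over $P(\Q)\backslash G(\Q)$ splits into the ``big cell'' indexed by the identity and the ``open cell'' indexed by $w_{P_0}$.

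The identity cell contributes exactly $f$: the relevant $\gamma$ ranges over $P(\Q)\backslash P(\Q) = \{1\}$ and, because $f$ is already left-$U_P(\A)$-invariant (it is a section of the induced representation from $P$), the $U_P(\Q)\backslash U_P(\A)$-integral of $f(\underline u\,\underline g)$ is just $f(\underline g)$ after normalizing the measure of $U_P(\Q)\backslash U_P(\A)$ to be $1$. For the $w_{P_0}$-cell, parametrize the coset by $\gamma = w_{P_0}^{-1} u'$ with $u'$ running over an appropriate unipotent subgroup, and observe that the combined integral over $U_P(\Q)\backslash U_P(\A)$ together with the sum over the rational points in that unipotent subgroup unfolds (the standard ``unfolding'' trick) to a single integral over the full ad\`elic group $U_P(\A)$. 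This is precisely the integral defining $T_{\rm st}(s,w_{P_0})(f)$, so this cell contributes $T_{\rm st}(s)(f)$. Adding the two contributions gives $\mathcal{F}_P({\rm Eis}_P(s,f)) = f + T_{\rm st}(s)(f)$ in the region of convergence, and the identity of meromorphic functions then holds for all $s$.

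The main obstacle, I expect, is the careful bookkeeping of the Bruhat cells and verifying that the ``intermediate'' double cosets (those not indexed by $1$ or $w_{P_0}$) genuinely contribute zero after taking the constant term; this is where cuspidality of $\sigma$ enters essentially, via vanishing of constant terms of cusp forms along proper parabolic subgroups of $\rO(2n)$, and it requires matching the unipotent integration domains correctly. A secondary technical point is the justification of interchanging the summation and the integration, and of the unfolding step, which is legitimate only because of absolute convergence for $\Re(s)$ large; the meromorphic continuation of both sides (Eisenstein series on the left, intertwining integral on the right) is standard Langlands theory and may be cited. Everything else — the two-cell structure for a self-associate maximal parabolic, the left-invariance of $f$ under $U_P(\A)$, and the measure normalizations — is routine.
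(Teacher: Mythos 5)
The paper does not prove this statement; it is quoted as Langlands's constant term formula, a foundational result of the Langlands--Shahidi machinery (cf. \cite{shahidi-book}), so there is no paper-proof to compare against. Your outline is essentially the standard proof and is correct in structure: unfold the Eisenstein sum over the Bruhat double cosets $P(\Q)\backslash G(\Q)/P(\Q)$, observe that for a self-associate maximal parabolic exactly two cells -- the identity and $w_{P_0}$ -- contribute, show that the identity cell gives $f$ (using left $U_P(\A)$-invariance of $f$ and the volume-one normalization of $U_P(\Q)\backslash U_P(\A)$), and unfold the $w_{P_0}$-cell to recover the global intertwining integral defining $T_{\rm st}(s)(f)$, all for $\Re(s)\gg 0$ and then by meromorphic continuation.

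The one place your argument is phrased inaccurately is the vanishing of the intermediate cells. You write that the integrand becomes ``a non-trivial character'' on a unipotent subgroup; that is the mechanism behind the vanishing of degenerate Whittaker or Fourier-type periods, not of constant terms. What actually happens is that the contribution of an intermediate double coset, after the change of variables and partial unfolding, factors through an integral over a compact quotient $U'(\Q)\backslash U'(\A)$ of $f$, where $U'$ is the unipotent radical of a proper parabolic subgroup of $M_P \simeq \GL_1 \times \rO(2n)$; this is precisely the constant term of $\sigma$ along that proper parabolic, and it vanishes identically because $\sigma$ is cuspidal. You do then cite cuspidality as the reason for the vanishing, so the essential idea is present, but the intervening sentence about a non-trivial character is a red herring and should be removed. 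With that correction the argument is sound; the remaining points you flag (interchange of sum and integral, measure normalizations, and the two-cell structure for self-associate $P$) are indeed routine. One could also note that the disconnectedness of $\rO(2n+2)$ causes no difficulty here since $M_P$ already meets both connected components, so $G(\Q)=P(\Q)G^\circ(\Q)$ and the Bruhat decomposition of the connected group $\SO(2n+2)$ suffices.
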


\smallskip

The program enunciated by Harder \cite{harder-inventiones}, and continued in \cite{harder-raghuram}, on Eisenstein cohomology and special values of 
$L$-functions, hinges on giving a cohomological interpretation to Langlands's theorem.

\medskip
\subsection{Holomorphy of Eisenstein series at point of evaluation} 
\label{sec:holomorphy}
 
We will be specifically interested in Langlands's constant term theorem at our point of evaluation $s = -n.$ For this we will need the 
holomorphy of ${\rm Eis}_{P}(s,f)$ at $s = -n.$ Now, suppose $f$ is a pure tensor $f = \otimes'_v f_v$, where for a place $v$ outside a finite set $S$
of places, which includes all the archimedean places and the non-archimedean places where both $\chi$ and $\sigma$ are ramified, the local vector $f_v$ is the normalised spherical vector $f_v^\circ$. Then
using \eqref{eqn:L-G-K} for all $v \notin S$ we have
$$
T_{\rm st}(s)(f) \ = \ 
\frac{L^S(s, \chi \times \sigma)}{L^S(1+s, \chi \times \sigma)} 
\otimes_{v \in S} T_{{\rm st}, v}(s)(f_v) \otimes \otimes_{v \notin S} \tilde{f}_v^\circ, 
$$
where the partial $L$-function is defined by $L^S(s, \chi \times \sigma) = \prod_{v \notin S} L_v(s, \chi_v \times \sigma_v).$ 
Holomorphy of ${\rm Eis}_{P}(s,f)$ is governed by nonvanishing of $L^S(1+s, \chi \times \sigma)$ in the denominator, and 
the numerator $L^S(s, \chi \times \sigma)$ not having any poles. The definition below and the following theorem are 
motivated from \cite[Sect.\,6.3.6]{harder-raghuram}: 

\medskip
\begin{definition}
\label{def:right-of-unitary} 
For a dominant integral weight $\mu \in X^*({}^\circ T \times E)$ and an integer $d$, we say $(d, \mu)$ is on the right of the unitary axis if $-(n+d) \geq 0.$ 
\end{definition}

\medskip
\begin{theorem}
\label{thm:right-of-unitary} 
Suppose the notation is as in the beginning of Sect.\,\ref{sec:arith-int-op}. Assume that $(d, \mu)$ is on the right of the unitary axis.  Then 
for a section $f \in  I(s, \chi \times \sigma)$, the Eisenstein series ${\rm Eis}_{P}(s,f)$ is holomorphic at $s = -n.$ 
\end{theorem}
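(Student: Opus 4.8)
The plan is to establish holomorphy of the Eisenstein series at $s=-n$ by reducing, via Langlands's constant term theorem (Thm.\,\ref{thm:Langlands}) and the standard relationship between poles of Eisenstein series and analytic properties of intertwining operators, to the holomorphy of the normalized intertwining operator $T_{\rm st}(s)$ at $s=-n$, and then analyzing the latter place by place using the Gindikin--Karpelevich formula \eqref{eqn:L-G-K} at unramified places together with general results of Casselman--Shahidi \cite{casselman-shahidi} at the bad places. Concretely, for a pure tensor $f = \otimes'_v f_v$ with $f_v = f_v^\circ$ for $v\notin S$, the factorization displayed just before the theorem shows that $T_{\rm st}(s)(f)$ equals $\dfrac{L^S(s,\chi\times\sigma)}{L^S(1+s,\chi\times\sigma)}$ times a product of local operators at places in $S$ times the spherical vectors outside $S$. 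So I would control three things at $s=-n$: (a) the numerator partial $L$-function $L^S(s,\chi\times\sigma)$ has no pole at $s=-n$; (b) the denominator $L^S(1+s,\chi\times\sigma)$ does not vanish at $s=-n$, i.e.\ $L^S(1-n,\chi\times\sigma)\neq 0$; and (c) the finitely many local operators $T_{{\rm st},v}(s)$ for $v\in S$ are holomorphic at $s=-n$.

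First I would invoke the hypothesis that $(d,\mu)$ is on the right of the unitary axis, i.e.\ $-(n+d)\geq 0$. Writing $\chi = \chi^\circ\otimes|\ |^{-d}$ one has $L(s,\chi\times\sigma) = L(s-d,\chi^\circ\times\sigma)$, so evaluating at $s=-n$ (respectively $s=1-n$) amounts to evaluating $L(s,\chi^\circ\times\sigma)$ at $s = -(n+d)\geq 0$ (respectively $s = -(n+d)+1 \geq 1$). By Prop.\,\ref{prop:critical-points} these are critical points, hence the archimedean $L$-factors are regular there; more to the point, the unitary axis for this Langlands--Shahidi $L$-function is $\Re(s)=\tfrac12$ in the normalization where the functional equation sends $s\mapsto 1-s$, and being ``on the right'' means we are evaluating in the region $\Re(s)\geq 0$ --- equivalently, the induced representation $I(s,\chi\times\sigma)$ at our point is, after the appropriate shift, in the Langlands ``positive'' chamber where by Casselman--Shahidi \cite{casselman-shahidi} the local induced representations are irreducible and the local standard intertwining operators $T_{{\rm st},v}(s)$ are holomorphic (in fact nonzero) at $s=-n$. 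This takes care of (c), and simultaneously of the holomorphy of the numerator in (a): a Langlands--Shahidi $L$-function $L^S(s,\chi\times\sigma)$ of the type $(D_{n,i})$ of \cite[App.\,A]{shahidi-book} has its only possible poles on or to the left of the unitary axis (more precisely within $0\le\Re(s)\le 1$, and on the right edge $\Re(s)=1$ only for the appropriate self-dual situation), and since $-(n+d)\le 0 < 1$ we land in the region of holomorphy; and for (b), nonvanishing of $L^S$ to the right of and on a suitable line follows from the Euler product's absolute convergence for $\Re$ large and the standard boundary nonvanishing results, so $L^S(1-n,\chi\times\sigma) = L^S(1-(n+d),\chi^\circ\times\sigma)\neq 0$ because $1-(n+d)\geq 1$.

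With (a), (b), (c) in hand, the factorization shows $T_{\rm st}(s)(f)$ is holomorphic at $s=-n$ for $f$ a pure tensor with spherical components outside $S$; by linearity and density this extends to all of $I(-n,\chi\times\sigma)$, or rather one argues directly that the family of flat sections is spanned by such tensors. Then Langlands's constant term theorem gives $\mathcal F_P({\rm Eis}_P(s,f)) = f + T_{\rm st}(s)(f)$, whose right-hand side is holomorphic at $s=-n$; combined with the fact that ${\rm Eis}_P(s,f)$ is a priori meromorphic and that a pole of ${\rm Eis}_P(s,f)$ along $P$ would force a pole of its constant term $\mathcal F_P({\rm Eis}_P(s,f))$ (since $P$ is the relevant self-associate maximal parabolic and the constant-term map detects the leading behavior), we conclude ${\rm Eis}_P(s,f)$ is holomorphic at $s=-n$.

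The main obstacle I anticipate is item (c) together with the precise bookkeeping at the bad places $v\in S$: one must verify that the Casselman--Shahidi criterion genuinely applies, i.e.\ that the point $s=-n$ together with the inducing data $\chi\times\sigma$ really does sit in the closed positive Weyl chamber relative to $P$ once the Langlands--Shahidi normalization $s\tilde\alpha$ with $\tilde\alpha = e_0$ and $\brho_{P_0} = n\,e_0$ is taken into account --- this is exactly the content of ``on the right of the unitary axis'' in Def.\,\ref{def:right-of-unitary}, and the inequality $-(n+d)\geq 0$ must be matched carefully with the shift by $\brho_{P_0}$ so that the real part of the inducing character is dominant. A secondary subtlety is that $\sigma_v$ at an archimedean $v$ is a discrete series of the \emph{disconnected} group $\rO(n,n)(\R)$, so one should check that the irreducibility and holomorphy statements of Casselman--Shahidi, stated for connected reductive groups, survive restriction/induction between $\rO(2n)$ and $\SO(2n)$; this should be routine given the explicit description of $\sigma_v|_{\SO(n,n)(\R)}$ recorded in Sect.\,\ref{sec:def-boldface-D-mu}, but it does require a remark.
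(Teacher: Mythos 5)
Your overall architecture is close to the paper's: factor $T_{\rm st}(s)(f)$ into the ratio $L^S(s,\chi\times\sigma)/L^S(1+s,\chi\times\sigma)$ times local operators, and then control the numerator, denominator, and local factors. Item (b) matches the paper exactly (the paper invokes Jacquet--Shalika for nonvanishing at $1-(n+d)\ge 1$), and for item (c) the paper simply defers to Harish-Chandra's general theory of Eisenstein series to the right of the unitary axis, so your extra effort there is not wrong, just more than the paper demands.

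However, your item (a) has a genuine gap. You argue that $L^S(s,\chi^\circ\times\sigma)$ has its possible poles confined to the strip $0\le\Re(s)\le 1$, and then assert that the point of evaluation $\Re(s)=-(n+d)$ lies outside this strip. But the hypothesis ``on the right of the unitary axis'' is $-(n+d)\ge 0$, not $-(n+d)<0$ (your line ``since $-(n+d)\le 0 < 1$'' appears to be a sign slip, but fixing the sign does not rescue the argument). Under the combinatorial lemma, the point of evaluation satisfies $0\le -(n+d)\le \mu_{\rm min}-1$; in particular $-(n+d)$ can equal $0$ (this is forced whenever $\mu_{\rm min}=1$) and can equal $1$, both of which sit squarely inside the a priori pole strip. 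So general pole-location results for Langlands--Shahidi $L$-functions cannot rule out a pole at $s=-n$. The paper closes this gap with a structural input you never use: by the very definition of the strongly-inner spectrum, the Arthur parameter $\Psi(\sigma)$ is a \emph{cuspidal} automorphic representation of $\GL_{2n}/F$, and the degree-$2n$ $L$-function $L^S(s,\chi\times\sigma)$ coincides with the standard $L$-function $L^S(s,\chi\times\Psi(\sigma))$ of a cuspidal $\GL_{2n}$-representation twisted by a Hecke character, which is \emph{entire}. This is the one-line replacement for your strip argument, and it is where the hypotheses of Def.\,\ref{def:strongly-inner-spectrum} actually do the work.
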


Recall that not only is $\sigma$ cuspidal, but its Arthur parameter $\Psi(\sigma)$ is cuspidal on $\GL_{2n}/F.$ The degree $2n$ $L$-function 
$L^S(s, \, \chi \times \sigma)$ is the same as the degree $2n$ standard $L$-function $L^S(s, \, \chi \times \Psi(\sigma))$ 
for $\GL_{2n} \times \GL_1$ which is entire. Furthermore, if $-(n+d) \geq 0,$ then of course $1-(n+d) \geq 1,$ and $L^S(1-n, \, \chi \times \sigma) = 
L^S(1-(n+d),\, \chi^{\circ} \times \Psi(\sigma)) \neq 0$ by a well-known theorem of Jacquet--Shalika \cite{jacquet-shalika-invent}. The reader 
is referred to Harish-Chandra \cite{harish-chandra} for general aspects of holomorphy of Eisenstein series when the data is to the right of the unitary axis.

\medskip
\subsection{Intertwining operators at an archimedean place}
\label{sec:archimedean}

Throughout this subsection we fix a place $v \in S_{\infty}$; and recall that we have suppressed the $\iota : E \to \C$ in this section. 

\medskip
\subsubsection{\bf Irreducibility of induced representations}
\label{sec:irreduciblity}
We let $G_v = \rO(n+1,n+1)(\R),$ and $P_v = P_0(F_v) = P_0(\R)$. 
Let $H_v:= \SO(n+1,n+1)(\R)$ and $P_{H_{v}} = P_v \cap H_v$ be the corresponding maximal parabolic subgroup of $H_v$, and 
$P_{H_{v}}= M_{H_{v}} U_{P_v}$ be its Levi decomposition; $M_{H_{v}} = \GL_{1}(\R) \times \SO(n,n)(\R).$ We have the data $(\mu, d),$ 
with $\mu_{\rm min} \geq1$ and satisfying the 
inequalities of the combinatorial data: $1 - \mu_{\rm min} \leq -(n+d) \leq  \mu_{\rm min}-1.$ The local data consists of 
$\chi_v = \chi^\circ_v[-d]$ (with $\chi^\circ_v = {\rm sgn}^{\epsilon_v}$ a quadratic character of $\GL_{1}(\R)$, where $\epsilon_v \in \{0,1\}$), 
and $\sigma_v$ which is the discrete series representation $\D_{\mu^{v}}$ of $^{\circ}M_{v} = \rO(n,n)(\R)$. 

\begin{proposition} 
\label{prop:irreducible-mods-at-infty}
Assume we are on the right of unitary axis, i.e., $-(n+d) \geq 0$. The induced representations 
$$ 
\Ind _{P_{v}} ^{G_{v}}( \chi_{v}[-n] \times \sigma_v) \quad {\rm and} \quad \Ind _{P_{v}}^{G_{v}}( \chi_{v}^{-1}[n] \times {}^{\kappa_{2n}}\sigma_v)
$$ 
are irreducible. The standard intertwining operator $T_{{\rm st},v}(s)|_{s = -n}$ is an isomorphism between these two induced modules.  
\end{proposition}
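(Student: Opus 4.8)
The plan is to reduce the irreducibility statement to the known results of Casselman--Shahidi \cite{casselman-shahidi} on the reducibility of standard modules, invoked at the point of evaluation which lies on the right of the unitary axis. First I would observe that since $\sigma_v = \D_{\mu^v}$ is a discrete series (hence tempered) representation of ${}^\circ M_v = \rO(n,n)(\R)$ and $\chi_v = \chi^\circ_v[-d]$ with $-(n+d) \geq 0$, the inducing data $\chi_v[-n] \times \sigma_v$ is, after passing to the normalized induction $\Ind_{P_v}^{G_v}(\chi^\circ_v \otimes |\ |^{-n-d} \times \sigma_v)$, of the form ``tempered twisted into the closed positive Weyl chamber'', i.e. it is a Langlands quotient datum with the continuous parameter $s = -(n+d) \geq 0$. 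By the Casselman--Shahidi criterion, the standard module $\Ind_{P_v}^{G_v}(\chi_v[-n] \times \sigma_v)$ is irreducible precisely when the associated local intertwining operator $T_{{\rm st},v}$ has neither a pole nor a zero at this point; for a self-associate maximal parabolic this is controlled by the local Langlands--Shahidi $L$- and $\varepsilon$-factors attached to the adjoint action on the Lie algebra of $U_{P_v}$, which in our case ($D_{n,i}$ of \cite[App.\,A]{shahidi-book}) is a single copy of the standard representation giving the degree-$2n$ factor $L_v(s, \chi_v \times \sigma_v)$.

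Next I would carry out the explicit archimedean computation: using the Langlands parameter $\varphi(\D_{\mu^v}) = I(\ell_1) \oplus \cdots \oplus I(\ell_n)$ from \eqref{eqn:L-paramater-D_mu} with each $\ell_j > 0$ (here $\mu_{\rm min} \geq 1$ is used), and $\chi_v = |\ |^{-d}{\rm sgn}^{\epsilon_v}$, the relevant ratio of $\Gamma$-factors is $L_v(s-n, \chi^\circ_v \times \sigma_v)/L_v(1+s-n, \chi^\circ_v \times \sigma_v)$ evaluated at $s = -n$, i.e. $L_v(-n-d,\dots)/L_v(1-n-d,\dots)$ together with the corresponding term on the dual side. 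Since $-(n+d)\geq 0$, the inequality $1-\mu_{\rm min} \leq -(n+d) \leq \mu_{\rm min}-1$ places $s=-n$ and $s=1-n$ among the critical points computed in Prop.\,\ref{prop:critical-points}, so by \eqref{eqn:L-fact-at-v} all the $\Gamma$-arguments $-n-d + |\mu_j^\tau| + n - j$ (and their duals) are strictly positive for every $j$; hence neither the numerator nor the denominator $L$-factor has a pole or a zero at the point of evaluation. By Casselman--Shahidi this forces both standard modules $\Ind_{P_v}^{G_v}(\chi_v[-n] \times \sigma_v)$ and $\Ind_{P_v}^{G_v}(\chi_v^{-1}[n] \times {}^{\kappa_{2n}}\sigma_v)$ to be irreducible, the latter being the image standard module (note ${}^{\kappa_{2n}}\sigma_v$ is again discrete series with the same infinitesimal character). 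Finally, since $T_{{\rm st},v}(s)|_{s=-n}$ is a nonzero intertwining map between two irreducible representations, Schur's lemma makes it an isomorphism.

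One technical caveat I would address carefully: $G_v = \rO(n+1,n+1)(\R)$ and ${}^\circ M_v = \rO(n,n)(\R)$ are disconnected, whereas Casselman--Shahidi is most cleanly stated for connected groups (or at least for generic representations of quasi-split groups), so I would first verify irreducibility for the restriction to $H_v = \SO(n+1,n+1)(\R)$ with inducing datum on $M_{H_v} = \GL_1(\R) \times \SO(n,n)(\R)$ using a generic constituent $\mathcal D_{\mu^v}$ of $\sigma_v|_{\SO(n,n)(\R)}$ (the Whittaker-generic member, cf. Sect.\,\ref{subsec:choice-generic-member-L-packet}), apply Casselman--Shahidi there, and then propagate irreducibility back up to $\rO(n+1,n+1)(\R)$ by analyzing how the induced representation decomposes under the finite component group — using that the $\kappa$-conjugate constituents fuse exactly as described in Sect.\,\ref{sec:def-boldface-D-mu}. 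I expect this component-group bookkeeping (ensuring that inducing from $P_v \cap H_v$ to $P_v$ and then to $G_v$ does not introduce extra constituents, equivalently that the $\rO$-induced module stays irreducible) to be the main obstacle; the rest is the $\Gamma$-factor positivity check, which is immediate from the critical-point calculation already established.
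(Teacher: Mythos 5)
Your high-level strategy is exactly the paper's: restrict to $H_v = \SO(n+1,n+1)(\R)$, decompose the inducing datum according to $\sigma_v|_{\SO(n,n)(\R)} = \mathcal D_{\mu^v} \oplus {}^{\kappa_{2n}}\mathcal D_{\mu^v}$, apply Casselman--Shahidi to each summand over $H_v$, and recombine for $G_v$. However, there are three genuine gaps.

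First, the step you flag as ``the main obstacle'' — ensuring that inducing from $H_v$ to $G_v$ does not introduce extra constituents — is precisely what the paper must prove, and you leave it as an expectation. The paper shows the two $H_v$-summands $\aInd_{P_{H_v}}^{H_v}(\chi_v \times \mathcal D_{\mu^v})$ and $\aInd_{P_{H_v}}^{H_v}(\chi_v \times {}^{\kappa_{2n}}\mathcal D_{\mu^v})$ are conjugate under $\kappa_{2n+2}$ but \emph{inequivalent}; this requires a Mackey-theoretic argument using both $\mu_{\rm min} \neq 0$ (to rule out $\mathcal D_{\mu^v} \cong {}^{\kappa_{2n}}\mathcal D_{\mu^v}$) and the hypothesis $-(n+d) \geq 0$ together with the $w_{P_0}$-twist (to rule out $\chi_v = \chi_v^{-1}$, since $d \neq 0$ unless $n+d=0$, and then one must check $\chi_v^{-1} \neq \chi_v$ as part of the full Mackey criterion). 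Without establishing inequivalence, irreducibility of $\aInd_{P_v}^{G_v}(\chi_v \times \sigma_v)$ does not follow; it could decompose into two pieces.

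Second, you do not distinguish the boundary case $-(n+d) = 0$ from the strict case $-(n+d) > 0$. Casselman--Shahidi's Prop.\,5.3 applies off the unitary axis; when $-(n+d)=0$ you are on the unitary edge and the paper needs a separate argument via the local functional equation (finiteness of $L(0, \chi_v^\circ \times \sigma_v)$ implies finiteness of $L(1, \chi_v^{\circ -1}\times\sigma_v)$, which then feeds into irreducibility). Your blanket claim that ``all the $\Gamma$-arguments are strictly positive'' is not what the criterion actually tests and it elides exactly this case distinction.

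Third, for the final isomorphism claim you merely assert that $T_{{\rm st},v}(s)|_{s=-n}$ is nonzero and invoke Schur. The operator's holomorphy and nonvanishing at $s=-n$ is not automatic and is established in the paper by appealing to Shahidi's Whittaker-functional computation of local coefficients \cite{shahidi-duke85}, following the argument of \cite[Prop.\,7.24]{harder-raghuram}. Simply saying the operator is a nonzero intertwiner between irreducibles begs the question.
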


\begin{proof} 
The proof involves reducing to a well-known criterion of irreducibility of Casselman and Shahidi \cite{casselman-shahidi} which is applicable to $H_v$ (and not $G_v$). Recall that, $\Ind _{P_{v}} ^{G_{v}}( \chi_{v}[-n] \times \sigma_v) = \aInd_{P_{v}} ^{G_{v}}( \chi_{v} \times \sigma_v),$ and since $G_v = H_v P_v$, we get
$$
{\rm Res}_{H_{v}} \aInd_{P_{v}} ^{G_{v}}~( \chi_{v} \times \sigma_v) = ~^{a} {\rm Ind}_{P_{H_{v}}} ^{H_{v}}( \chi_{v} \times { \sigma_v}|_{H_v}). 
$$
We know that  ${ \sigma_v}|_{ \SO(n,n)(\R)} = \D_{\mu_v}|_{ \SO(n,n)(\R)} = \mathcal D_{\mu_v} \oplus~ \!^ {\kappa_{n}}\mathcal D_{\mu_v},$ where 
$\mathcal D_{\mu_v}$ and ${}^{\kappa_{2n}}\mathcal D_{\mu_v}$ are inequivalent irreducible representations of $\SO(n,n)(\R)$. Thus 
$$
{\rm Res}_{H_{v}}\aInd_{P_{v}} ^{G_{v}}( \chi_{v} \times \sigma_v) \ = \ 
\aInd_{P_{H_{v}}} ^{H_{v}}( \chi_{v} \times  \mathcal D_{\mu_v}) \ \oplus \ \aInd_{P_{H_{v}}} ^{H_{v}}( \chi_{v} \times {}^{\kappa_{2n}}\mathcal D_{\mu_v}). 
$$
We show that both the summands on the right are irreducible representations of $H_v.$ 

\smallskip

\noindent
{\bf Case 1)} Suppose $-(n + d) > 0$. 
The module $\aInd_{P_{H_{v}}} ^{H_{v}}( \chi_{v} \times  \mathcal D_{\mu_v})$ is the same as the induced module $I(s, \chi_{v}^\circ \times  \mathcal D_{\mu_v})|_{s = -(n+d)}$ as defined in Sect.\,\ref{sec:ind-rep}, but adapted to $H_v$. 
Now, \cite[Prop.\,5.3]{casselman-shahidi} says that this module is irreducible 
if and only if $L(1+n+d, \chi_{v}^\circ{}^{-1} \times  \mathcal D_{\mu_v})^{-1} \neq 0.$ This condition that the inverse of the local $L$-factor not being zero is 
the same as the local $L$-factor $L(1+n, \chi_{v}^{-1} \times  \mathcal D_{\mu_v})$ not being a pole, which is guaranteed by criticality of $s = -n.$

\smallskip

\noindent
{\bf Case 2)}  Suppose $-(n + d) = 0$. Since 
$s = -n$ is critical, we have 
$$
L(-n, \, \chi_v \times \sigma_v) \ = \ L(-n-d, \, \chi_v^\circ \times  \sigma_v) \ = \ L(0,\, \chi_v^\circ \times  \sigma_v)
$$ 
is finite; by the local functional equation we get $L(1, \chi_v ^\circ{}^{-1}\times  \sigma_v)$ is finite; 
which implies, as in the proof of Prop.\,5.3 in \cite{casselman-shahidi}, that $\Ind _{P_{H_{v}}} ^{H_{v}}  (\chi_{v}[-n] \times \mathcal D_{\mu_v})$ is irreducible. 
 
\smallskip 
Irreducibility of $\aInd_{P_{H_{v}}} ^{H_{v}}( \chi_{v} \times {}^{\kappa_{2n}}\mathcal D_{\mu_v})$ is proved analogously. 

\smallskip

We now show that the two summands $\aInd_{P_{H_{v}}} ^{H_{v}}( \chi_{v} \times  \mathcal D_{\mu_v})$ and 
$\aInd_{P_{H_{v}}} ^{H_{v}}( \chi_{v} \times {}^{\kappa_{2n}}\mathcal D_{\mu_v})$ are conjugate by ${\kappa_{2n+2}} \in G_v \setminus H_v$ but not equivalent. This will show that $ \aInd _{P_{v}} ^{G_{v}}( \chi_{v} \times \sigma_v)$ is irreducible. To see 
$$
{}^{\kappa_{2n+2}} (\aInd_{P_{H_{v}}} ^{H_{v}}( \chi_{v} \times  \mathcal D_{\mu_v})) 
\ \cong \ 
\aInd_{P_{H_{v}}} ^{H_{v}}( \chi_{v} \times {}^{\kappa_{2n}}\mathcal D_{\mu_v}). 
$$
observe that the map $f \mapsto \tilde{f}$, where $\tilde{f}(h) = f(\kappa_{2n+2}^{-1} \, h \, \kappa_{2n+2}),$ gives the required $H_v$-equivariant isomorphism. 
Suppose the induced representations 
$\aInd_{P_{H_{v}}}^{H_{v}}(\chi_{v} \times  \mathcal D_{\mu_v})$ and $\aInd_{P_{H_{v}}} ^{H_{v}}( \chi_{v} \times {}^{\kappa_{2n}}\mathcal D_{\mu_v})$ are  equivalent, then by Mackey theory, it means either $\mathcal D_{\mu_{v}} \cong ~^{\kappa_{2n}} \mathcal D_{\mu_{v}}$  
or $^{w_{P_0}} (\chi_v \times \mathcal D_{\mu_{v}}) = \chi_{v} \times {}^{\kappa_{2n}} \mathcal D_{\mu_{v}}.$
The first condition is impossible, since $\mu_{\rm min} \neq 0$ which gives $\mathcal D_{\mu_{v}} \ncong  {}^{\kappa_{2n}} \mathcal D_{\mu_{v}}.$ 
If the second condition is true, then since $^{w_{P_{0}}} (\chi_v \times \mathcal D_{\mu_{v}} )= \chi_v^{-1} \times~ ^{\kappa_n} \mathcal D_{\mu_{v}}$, we 
have $\chi_{v} =  \chi_v^{-1},$ hence $d =0$, which is impossible as $-(n+d) \geq 0$. 

\medskip

From Shahidi's result \cite{shahidi-duke85} on local factors using Whittaker functionals, exactly as in the proof of \cite[Prop.\,7.24]{harder-raghuram}, 
we conclude that $T_{\st, v}(s, w_{P_0})$ is holomorphic at $s = -n$ and gives an isomorphism between the two induced modules. 
\end{proof}

\medskip
\subsubsection{\bf Factorisation into rank-one intertwining operators}
\label{sec:factor-rank-one}
By well-known theorems of Harish-Chandra and Casselman (see, for example, \cite{wallach}), the representation $\D_{\mu^v}$ can be realised as a submodule of the principal series $$
\Ind_{B_{{}^{\circ}\!{M_{P}}_v}}^{{}^{\circ}\!{M_{P}}_v} ( \chi_{1,v} \times \cdots \times \chi_{n,v}), 
$$ 
where $\chi_{j,v}$ is the character of $\R^{\times}$ defined as
 $$ 
 \chi_{j,v} (t) \ = \ |t|^{\mu^v_{j} +n-j} {\rm sgn}(t)^{\mu^v_{j}}, \quad \forall  ~1 \leq j \leq n, 
 $$
 giving us the character $\chi_{1,v} \times \cdots \times \chi_{n,v}$ on the Borel subgroup $B_{{}^{\circ}\!{M_{P}}_v}$ of ${}^{\circ}\!{M_{P}}_v.$ 
 For later use, we will call $\mu^v_{j} +n-j$ the exponent of the character $\chi_{j,v}$. 
 Recall that $\chi_{v}$ is of the form: $\chi_{v}(t) = |t|^{-d} {\rm sgn}(t)^{\epsilon_{v}}$. The induced modules on either side of the standard intertwining 
 operator $T_{{\rm st},v}$ can be embedded into principal series representations as: 
 $$ 
 \Ind _{P_v}^{G_v}(\chi_{v}[s] \times \D_{\mu^v}) \ \hookrightarrow \ 
 \Ind_{B_v}^{G_v} ( \chi_{v}[s] \times \chi_{1,v} \times \cdots \chi_{n-1,v} \times \chi_{n,v});
 $$
 note the use of normalised parabolic induction; and similarly, 
 $$ 
 \Ind _{P_v}^{G_v}(\chi_{v}^{-1}[-s] \times {}^{\kappa_{2n}} \D_{\mu^v}) 
 \ \hookrightarrow \ 
 \Ind_{B_v}^{G_v}( \chi_{v}^{-1}[-s] \times \chi_{1,v} \times \cdots \times \chi_{n-1,v} \times \chi_{n,v}^{-1}).
 $$ 
 For brevity, let's denote 
 \begin{equation}
 \label{eqn:varkappa}
 \varkappa_v \ := \ 
 \chi_{v}[s] \times \chi_{1,v} \times \cdots \chi_{n-1,v} \times \chi_{n,v},
 \end{equation}
 hence,  $w_{P_0}(\varkappa_v) = \chi_{v}^{-1}[-s] \times \chi_{1,v} \times \cdots \times \chi_{n-1,v} \times \chi_{n,v}^{-1}.$ 
 The local intertwining operator $T_{{\rm st},v}(s, w_{P_0})$ can be extended to an intertwining operator between the principal series representations: 
 $$
 A_v(w_{P_0}, \varkappa_v)\ : \ 
 \Ind _{B_v}^{G_v}(\varkappa_v) 
 \longrightarrow \ 
 \Ind_{B_v}^{G_v}(w_{P_0}(\varkappa_v)); 
$$
where, for any Weyl group element $w \in W_{G_0},$ we define $A_v(w, \varkappa_v)$ by the integral
$$
A_v(w, \varkappa_v)(f)(g) \ = \ \int \limits_{U_{w}} f(w^{-1} \, u\, g) \, du, \quad U_{w} := U \cap w U^- w^{-1}.
$$ 
Note that $U_w$ is the product of the root groups $U_\alpha$ corresponding to %the roots 
$\{\alpha > 0 : w^{-1}\alpha < 0\}.$ In particular, $U_{w_{P_0}} = U_{P_0}.$ Now, recall the factorisation in \eqref{eqn:factor-w-P}, 
which we rewrite as 
$$
w_{P _{0}} =   
\ss_{\alpha_{0}} ~\ss_{\alpha_{1}} ~\cdots ~
\ss_{\alpha_{n-3}} ~ \ss_{\alpha_{n-2}} ~\ss_{\alpha_{n}} ~\ss_{\alpha_{n-1}} \ss_{\alpha_{n-2}}  ~\cdots~
\ss_{\alpha_{1}} ~\ss_{\alpha_{0}} =:
\ss_{2n} ~\ss_{2n-1} ~ \cdots ~ \ss_2 ~\ss_1, 
$$
i.e., $\ss_j$ denotes the simple reflection appearing in $j$-th position from right in \eqref{eqn:factor-w-P}.
A well-known lemma due to Langlands (see Shahidi \cite[Lem.\,4.2.1]{shahidi-book}) uses the above factorisation of $w_{P_0}$ to give a factorisation of the operator $A_v(w, \varkappa_v)$ into rank-one operators: 
\begin{multline}
\label{eqn:factor-int-op-p-s-rep}
A_v(w, \varkappa_v) \ = \\ 
A_v(\ss_{2n}, \, \ss_{2n-1}\ss_{2n-2}\dots \ss_1\varkappa_v) \, \circ \, 
\cdots
\, \circ \, A_v(\ss_3, \,  \ss_2\ss_1\varkappa_v)
\, \circ \, A_v(\ss_2, \, \ss_1\varkappa_v)
\, \circ \, A_v(\ss_1, \, \varkappa_v).
\end{multline}

\smallskip
A few words of explanation can be helpful. (For more details the reader is referred to Kim's expository article \cite[Thm.\,6.2]{kim-notes}.)
If $w = w_2w_1$ with $l(w) = l(w_2) + l(w_1),$ then the operator $A_v(w, \varkappa_v)$ factors as a cocyle: 
$$
A_v(w, \varkappa_v) = A_v(w_2, w_1\varkappa_v) \circ A_v(w_1, \varkappa_v),
$$ 
and furthermore, 
$$
\{\alpha > 0 : w^{-1}\alpha < 0\} = \{\alpha > 0 : w_1^{-1}\alpha < 0\} \cup w_1\{\alpha > 0 : w_2^{-1}\alpha < 0\};
$$ 
the union is disjoint.
Let us enumerate the set $\Delta_{U_{0}}$ as follows:  
  \begin{multline}
  \label{eqn:betas}
  \beta_1 = e_0 - e_1, ~\beta_2 = e_0 - e_2, ~\ldots,~  \beta_n = e_0 - e_n, \\ 
  \beta_{n+1} = e_0 + e_n, ~\beta_{n+2} = e_0+e_{n-1},~\ldots,~ \beta_{2n} = e_0 + e_1.
  \end{multline}
In \eqref{eqn:factor-int-op-p-s-rep}, the $j$-th integral is happening over the root-space corresponding to $\beta_j.$ 

\smallskip
The simplest nontrivial example can help to visualise the sequence of rank-one operators: take $n=2$, and suppose $\chi_{v}[s] \times \chi_{1,v} \times \chi_{2,v},$ is a short-form notation for $\Ind _{B_v}^{G_v}(\varkappa_v)$, then \eqref{eqn:factor-int-op-p-s-rep} is the composition of the $4$ rank-one intertwining operators:
\begin{multline*}
\chi_{v}[s] \times \chi_{1,v} \times \chi_{2,v} \ \stackrel{\ss_{\alpha_0}}{\longrightarrow} \ 
\chi_{1,v} \times \chi_{v}[s]\times \chi_{2,v} \ \stackrel{\ss_{\alpha_1}}{\longrightarrow} \ 
\chi_{1,v} \times \chi_{2,v} \times \chi_{v}[s]  \ \\ 
\stackrel{\ss_{\alpha_2}}{\longrightarrow} \ 
\chi_{1,v} \times \chi_{v}^{-1}[-s] \times  \chi_{2,v}^{-1} \ \stackrel{\ss_{\alpha_0}}{\longrightarrow} \ 
\chi_{v}^{-1}[-s] \times \chi_{1,v} \times \chi_{2,v}^{-1}. 
 \end{multline*}

\medskip 

Each operator on the right hand side of \eqref{eqn:factor-int-op-p-s-rep} is the induction to $G_v$ of an $\SL(2)$ intertwining operator; hence the adjective rank-one. We now collect some well-known facts about the $\SL(2)$ situation.

\medskip
 \subsubsection{\bf The ${\rm SL}(2)$ calculation}
 \label{sec:SL2-calculation}
Just for this paragraph, we let $G={\rm SL}_2(\R)$ and $B$ the standard Borel subgroup of all upper triangular matrices in $G$.
 For $z \in \C$ and $\epsilon \in \{ 0, 1\}$, let $\chi = \chi_{z,\epsilon}$ be the character of the standard diagonal torus of $G$ defined by 
 $$
 \chi(\begin{smallmatrix}  t & 0 \\ 0  & t^{-1} \end{smallmatrix}) \ = \ |t|^{z} {\rm sgn}(t)^{\epsilon} \quad \forall~t \in \R^{\times}. 
 $$
 If $\Ind_B^G(\chi)$ denotes the normalised induction from $B$ to $G$ of the character $\chi$, then the standard intertwining operator $T_\st$ is defined between the spaces $\Ind_B^G(\chi) \rightarrow \Ind_B^G (w(\chi))$, where $w = \left(\begin{smallmatrix}  & -1 \\ 1 &\end{smallmatrix}\right)$ is the non-trivial element of Weyl group  and $w(\chi) = \chi_{-z,\epsilon}.$
For $m \in \Z$, let $\vartheta_m$ be the character of  $\SO(2)$ defined by 
$\left(\begin{smallmatrix} \cos\theta & -\sin\theta \\ \sin\theta  & \cos\theta  \end{smallmatrix} \right) \mapsto e^{{\boldsymbol i}m\theta},$ and suppose 
$V_m$ denotes the representation space of $\vartheta_m.$ The $\SO(2)$-type $(\vartheta_m, V_m)$ appears in $\Ind_B^G(\chi)$ if and only if 
$m \equiv \epsilon \pmod{2}$, and in which case it appears with multiplicity one; same comment applies to $\Ind_B^G(w(\chi))$. 
Let $f_{\chi, \epsilon}$ (resp., $f_{w(\chi), \epsilon}$) 
be the normalised (i.e., taking the value $1$ on the identity element $I_2$ of $G$) highest weight vector in the minimal $\SO(2)$-type $V_{\epsilon}$ of $\Ind_B^G(\chi)$ 
(resp., $\Ind_B^G(w(\chi))$). Then $T_{\st}$ maps $f_{\chi, \epsilon}$ to a scalar multiple of $f_{w(\chi), \epsilon}$. The scalar is computed by the integral: 
 $$
 T_{\st}f_{\chi, \epsilon}\left(\begin{pmatrix} 1 & 0 \\ 0 & 1 \end{pmatrix}\right) \ = \ 
 \int \limits_{\R} f_{\chi, \epsilon} \left( \begin{pmatrix}  & -1 \\ 1 &\end{pmatrix}^{-1} \begin{pmatrix} 1 & u \\ 0 & 1 \end{pmatrix}\right) ~du.
 $$ 
A standard calculation (see, for example, \cite[Sect.\,9.2.3, 9.2.4]{harder-raghuram}) gives: 
$$ 
 T_\st
 f_{\chi, \epsilon} \ = \ 
 (-{\boldsymbol i})^{\epsilon}
 \frac{\Gamma(\frac{z + \epsilon}{2}) \Gamma(\frac{1}{2})} {\Gamma(\frac{z + \epsilon + 1}{2})}  \, f_{w(\chi), \epsilon}. 
 $$ 
 Similarly, for $m \equiv \epsilon \pmod{2},$ and $f_{\chi, m}$ (resp., $f_{w(\chi), m}$) the normalised weight vectors in the $\SO(2)$-type $\vartheta_m$ of 
 $\Ind_B^G(\chi)$ (resp., $\Ind_B^G(w( \chi))$), we get: 
 \begin{equation} 
 \label{T-st} 
 T_{st}(f_{\chi, m}) \ = \ 
   (-{\boldsymbol i})^{m} 
   \frac{\Gamma(\frac{1}{2}) ~\Gamma(\frac{z}{2} )~\Gamma(\frac{z+1}{2})} 
   {\Gamma \left (\frac{z + 1 + m}{2} \right) \Gamma \left(\frac{z + 1 - m}{2} \right)} \, 
   f_{w(\chi), m}.
  \end{equation}
Let $m = 2k + \epsilon$. Consider a rational function of the variable $z$,  
defined by
  \[
    M(z)_{k} \ =\ 
     \begin{cases}
\dfrac{(z - \epsilon - 1)~   (z - \epsilon - 3)~ \cdots~ (z - \epsilon -(2k-1) )     }
{(z + \epsilon + 1)~   (z + \epsilon + 3)~ \cdots~ (z + \epsilon + (2k-1) )     } &~\text{if}~k > 0, 
\\ \smallskip
1 &~\text{if}~k =0, \\ \smallskip
\dfrac{(z + \epsilon - 1)~   (z + \epsilon - 3)~ \cdots~ (z + \epsilon +(2k + 1) )     }
{(z - \epsilon + 1)~   (z - \epsilon + 3)~ \cdots~ (z - \epsilon - (2k +1) )     } &~\text{if}~k < 0.
   \end{cases} \smallskip
   \]
We can rewrite \eqref{T-st}  as
\begin{equation}
\label{T-st-on-K-types}
 T_{st}(f_{\chi, m}) \ = \ 
 (-{\boldsymbol i})^{2k + \epsilon} ~ \frac{\Gamma(\frac{1}{2} )~\Gamma(\frac{z + \epsilon}{2})}{\Gamma(\frac{z + \epsilon + 1}{2})} ~M(z)_{k}~f_{w(\chi), m}.
\end{equation}

\medskip
\subsubsection{\bf The intertwining operator on a highest weight vector of a lowest $K$-type}
\label{sec:Factor-Int-op}

Recall from Prop.\,\ref{prop:irreducible-mods-at-infty} that the induced representations 
$$
\sI_v \ := \ \Ind _{P_{v}}^{G_{v}}(\chi_{v}[-n] \times \D_{\mu^v}) 
\quad {\rm and} \quad 
\tilde\sI_v \ := \ \Ind _{P_{v}}^{G_{v}}(\chi_{v}^{-1}[n] \times {}^{\kappa_{2n}} \D_{\mu^v}) 
$$ 
are irreducible, and $T_{\st, v}(-n, w_{P_0})$ is an isomorphism between them. Fix a highest weight vector ${\bf f}_0$ of the lowest $K_v^{\circ}$-type in $\sI_v$ normalised such that ${\bf f}_0(1) = 1$; this normalisation is possible as $\sI_v $ is subrepresentation of a principal series representation. Similarly, 
fix $\tilde{\bf f}_0$ in $\tilde\sI_v.$ There exists $c_v(  \chi_v, \sigma_v) \in \C^{\times}$ such that 
$$
T_{\st,v}(-n, w_{P_0})({\bf f}_0) \ = \ c_v(\chi_v, \sigma_v) \, \tilde{\bf f}_0.
$$
Now we compute this scalar $c_v(  \chi_v, \sigma_v)$ and relate it to a ratio of archimedean $L$-factors which, up to nonzero rational numbers, has the same form as \eqref{eqn:L-G-K}; see Prop.\,\ref{prop:c-chi-sigma-archimedean} below. 

\smallskip

Recall our notation: $\varkappa_v$ in \eqref{eqn:varkappa}, and $\{\beta_1,\dots, \beta_{2n}\}$ the roots 
defined in \eqref{eqn:betas} whose root spaces generate $U_{P_0}.$  Fix $1 \leq j \leq 2n$.  
Suppose $\beta_j^\sv$ is the coroot corresponding to $\beta_j$, then $\langle \beta_{j}^\sv, \varkappa_v \rangle$ denotes the exponent of the character 
$t \mapsto \varkappa_v(\beta_{j}^\sv(t)).$ We have: 
$$
\langle \beta_{j}^\sv , \varkappa_v \rangle = 
\begin{cases} 
-d -(\mu^v_j + n -j), & 1 \leq j \leq n, \\
-d + (\mu^v_{2n+1-j} -n-1+j), & n+1 \leq j \leq 2n. 
\end{cases}
$$
Considering the action on $\SO(2)$ types as in \eqref{T-st-on-K-types}, the $j$-th operator 
from the right in \eqref{eqn:factor-int-op-p-s-rep} can be written as: 
\begin{equation} 
\label{standard-intertwining} 
A_v(\ss_j, \,  \ss_{j-1} \dots \ss_2\ss_1\varkappa_v) \ = \ 
\sqrt{\bpi} ~ \frac{ \Gamma \left( \dfrac{s+ \langle \beta_{j}^\sv, \varkappa_v \rangle + \epsilon_{j} } {2} \right) }{
\Gamma \left( \dfrac{s+ \langle \beta_{j}^\sv, \varkappa_v \rangle + \epsilon_{j} + 1} {2} \right)} 
\ M_{j}(s), 
\end{equation}
where, $\epsilon_j \in \{0, 1\}$ is defined as $\epsilon_j \equiv \epsilon_0 + \mu^v_j \pmod{2}$ for $1 \leq j \leq n$, 
and $\epsilon_j:  = \epsilon_{2n + 1- j}$ for $j \geq n+1$, 
and $M_{j}(s)$ is the diagonal matrix with rows and columns indexed by $k \in \Z$ defined by: 
\begin{align*}
\begin{split}
\left( M_{j}(s) \right)_{0,0} & 
=  (-{\boldsymbol i})^{\epsilon_j},\\
\left( M_{j}(s) \right)_{k,k} & 
= (-{\boldsymbol i})^{2k+ \epsilon_j} \prod \limits_{l=1}^{k}   
 \dfrac{ {\left( s +  \langle \beta_{j}^\sv, \varkappa_v \rangle  - \epsilon_j - (2l-1)  \right)} }{
  {\left( s +  \langle \beta_{j}^\sv, \varkappa_v \rangle  + \epsilon_j + (2l -1)  \right)}} \quad \mbox{if $k \geq 1$,}\\
\left( M_{j}(s) \right)_{k,k} & 
= (-{\boldsymbol i})^{2k+ \epsilon_j}  \prod \limits_{l=1}^{-k}   
 \dfrac{ {\left( s +  \langle \beta_{j}^\sv, \varkappa_v \rangle  + \epsilon_j - (2l-1)  \right)} }{
  {\left( s +  \langle \beta_{j}^\sv, \varkappa_v \rangle  - \epsilon_j + (2l -1)  \right)}} \quad \mbox{if $k \leq -1$},
  \end{split}
  \end{align*}
Note that up to the scaling factor $(-{\boldsymbol i})^{\epsilon_j}$, the entries of $M_{j}(s)$ are in $\Q(s).$ 

\smallskip

Consider the effect of the composition of the operators in \eqref{standard-intertwining}, for the factorisation in \eqref{eqn:factor-int-op-p-s-rep}, at our point of evaluation $s =-n$, on the highest weight vector of a lowest $K$-type. 
If $1 \leq j \leq n$, then
$$ 
-n + \langle \beta_{j}^\sv, \varkappa_v \rangle + \epsilon_{j} \ = \ 
-n +  (-d - (\mu_j + n - j) + \epsilon_{j}  \ \equiv \ d + j + \epsilon_{0} \pmod{2}, 
$$
and  hence $-n + \langle \beta_{j}^\sv, \varkappa_v \rangle + \epsilon_{j}$ is even for exactly $r$ values of $j$ (either for $j = 1, 3, \ldots, n-1$ or for $j = 2, 4, \ldots, n$) and it is odd for other $r$ values of $j$. A similar analysis for all $j \geq n+1$ gives us that $-n + \langle \beta_{j}^\sv, \varkappa_v \rangle + \epsilon_{j}$ is even for $r$ many values of $j$, and is odd for the remaining $r$ values of $j$.
In view of Prop.\,\ref{prop:critical-points}, we have for all $1 \leq j \leq n-1$: 
$$
1 - \mu^v_j  \ \leq \ 1 - |\mu^v_n| \ \leq \ \ -(d+n) \ \ \leq \ |\mu^v_n| - 1 \ \leq \ \mu^v_j - 1. 
$$
So we have 
\begin{align}\label{inequalities}
\begin{split}
&-n + \langle \beta_{j}^\sv, \varkappa_v \rangle + \epsilon_{j} \leq 0 \quad \text{for}~n~\text{values of}~j,~\text{and} \\
&-n + \langle \beta_{j}^\sv, \varkappa_v \rangle + \epsilon_{j} \geq 1\quad \text{for remaining}~n~\text{values of}~j.
\end{split}
\end{align}

\smallskip
Let us  recall basic properties of the Gamma function: For $m \in \Z$, $\Gamma(m) \in \Z_{>0}$ for $m \geq 1$, and 
and has a simple pole with rational residue for $m \leq 0$; these are all the poles; $\Gamma(z)$ is nonvanishing everywhere; and  
$\Gamma (m+ \frac{1}{2}) \in \sqrt{\bpi}~ \Q^{\times}.$ 
Consider
$$ 
\Phi(s) \ : =  \ 
\prod \limits_{j=1}^{2n}  \sqrt{\bpi} ~ 
\frac{ \Gamma \left( \dfrac{s+ \langle \beta_{j}^\sv, \varkappa_v \rangle + \epsilon_{j} } {2} \right)}
{\Gamma \left( \dfrac{s+ \langle \beta_{j}^\sv, \varkappa_v \rangle + \epsilon_{j} + 1} {2} \right) }. 
$$
Then, $\Phi(s)$ is holomorphic at $s = -n$ since exactly $r$ of the $\Gamma$-factors in the numerator have a pole at $s = -n,$ and 
exactly $r$ of the $\Gamma$-factors in the denominator have a pole at $s = -n$ (due to the parity conditions on 
$-n + \langle \beta_{j}^\sv, \varkappa_v \rangle + \epsilon_{j}$ and inequalities \eqref{inequalities}).
Note that the contribution from all factors of the type $(-{\boldsymbol i})^{2k+ \epsilon_j}$ is $\pm 1$ as $\epsilon_j = \epsilon_{2n + 1- j}$ for all $j$.\medskip

\medskip

The calculation in preceding paragraphs, together with \eqref{eqn:L-fact-at-v} as used in Cor.\,\ref{cor:arch-ratio-L-values}, 
proves the following proposition: 

\begin{proposition}
\label{prop:c-chi-sigma-archimedean}
For every $v \in S_{\infty}$, the nonzero complex number $c_v(  \chi_v, \sigma_v)$ defined by 
$T_{\st,v}(-n, w_{P})({\bf f}_0) = c_v(  \chi_v, \sigma_v) \, \tilde{\bf f}_0$ satisfies
$$
c_v(  \chi_v, \sigma_v) \ \approx_{\Q^{\times}} \ 
\bpi^{n} \ \approx_{\Q^{\times}} \ 
\frac{L(-n, \chi_{v} \times \sigma_v) }{L(1-n, \chi_{v} \times \sigma_v) }, 
$$
where, $\approx_{\Q^{\times}}$ means equality up to a nonzero rational number. 
\end{proposition}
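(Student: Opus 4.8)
The plan is to evaluate the scalar $c_v(\chi_v,\sigma_v)$ by tracking the highest weight vector ${\bf f}_0$ of the lowest $K_v^\circ$-type through the factorisation \eqref{eqn:factor-int-op-p-s-rep} of $A_v(w_{P_0},\varkappa_v)$ into the $2n$ rank-one operators of \eqref{standard-intertwining}, specialised at $s=-n$. First I would observe that the lowest $K_v^\circ$-type of $\sI_v$ is detected inside the principal series $\Ind_{B_v}^{G_v}(\varkappa_v)$ by a single $\SO(2)$-weight in each of the $n+1$ $\SL_2$-factors (the $\GL_1$-slot carrying $\chi_v$ contributes the $\SO(2)$-weight $\epsilon_v$, and the $j$-th slot of the discrete series side contributes the weight $\mu^v_j+\epsilon_0$ or similar, with the parity bookkeeping already carried out in Sect.\,\ref{sec:Factor-Int-op}). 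Applying \eqref{standard-intertwining} slot by slot, the net effect on ${\bf f}_0$ is the product $\Phi(-n)$ of the $\Gamma$-quotients times a product of the $M_j(-n)$-entries, and the key structural fact, already noted in the excerpt, is that each $M_j(-n)$ contributes an entry in $(-{\bf i})^{\epsilon_j}\cdot\Q^\times$, and since $\epsilon_j=\epsilon_{2n+1-j}$ these roots-of-unity factors pair up to $\pm1$. Hence $c_v(\chi_v,\sigma_v)\approx_{\Q^\times}\Phi(-n)$.

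Next I would identify $\Phi(-n)$ with $\bpi^n$ up to $\Q^\times$. By the parity analysis and inequalities \eqref{inequalities}, exactly $r=n/2$ of the arguments $\tfrac12\bigl(-n+\langle\beta_j^\sv,\varkappa_v\rangle+\epsilon_j\bigr)$ are non-positive integers (so that $\Gamma$ has a simple pole there) and the remaining $r$ are positive half-integers or positive integers; symmetrically in the denominator exactly $r$ of the shifted arguments $\tfrac12\bigl(-n+\langle\beta_j^\sv,\varkappa_v\rangle+\epsilon_j+1\bigr)$ hit non-positive integers. A careful matching shows the poles in numerator and denominator cancel in pairs, so $\Phi(s)$ is holomorphic and nonzero at $s=-n$; moreover each surviving $\Gamma$-factor is either a positive integer (rational, hence absorbed into $\Q^\times$) or lies in $\sqrt{\bpi}\,\Q^\times$. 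Counting the half-integer $\Gamma$-values: each of the $2n$ factors of $\Phi$ carries a literal $\sqrt{\bpi}=\Gamma(\tfrac12)$ prefactor contributing $\bpi^n$; the additional half-integer $\Gamma$-contributions from numerator and denominator are equal in number (since the parity of $\langle\beta_j^\sv,\varkappa_v\rangle+\epsilon_j$ is the same as that of $\langle\beta_j^\sv,\varkappa_v\rangle+\epsilon_j+1$ shifted, and the two sides are governed by complementary parity classes), so they cancel up to $\Q^\times$. This yields $\Phi(-n)\approx_{\Q^\times}\bpi^n$, hence $c_v(\chi_v,\sigma_v)\approx_{\Q^\times}\bpi^n$.

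Finally, to match this with the ratio of archimedean $L$-factors, I would invoke the explicit local $L$-factor computed in \eqref{eqn:L-fact-at-v}, exactly as used in Cor.\,\ref{cor:arch-ratio-L-values}. From \eqref{eqn:L-fact-at-v} one reads off
\[
\frac{L(-n,\chi_v\times\sigma_v)}{L(1-n,\chi_v\times\sigma_v)}
=\prod_{j=1}^{n}\frac{(2\bpi)^{-(-n-d+|\mu^\tau_j|+n-j)}\,\Gamma(-n-d+|\mu^\tau_j|+n-j)}
{(2\bpi)^{-(1-n-d+|\mu^\tau_j|+n-j)}\,\Gamma(1-n-d+|\mu^\tau_j|+n-j)}
=\prod_{j=1}^{n}(2\bpi)^{-1}\cdot\frac{\Gamma(a_j)}{\Gamma(a_j+1)},
\]
where $a_j:=-d+|\mu^\tau_j|-j$; using $\Gamma(a_j+1)=a_j\Gamma(a_j)$ and the criticality hypothesis (which guarantees the arguments avoid poles and the $a_j$ are nonzero rationals), the right side is $\approx_{\Q^\times}\bpi^{-n}\cdot$(rational) $\cdot\prod a_j^{-1}$; but the criticality of $s=-n$ forces each $a_j\in\Q^\times$, so the ratio is $\approx_{\Q^\times}\bpi^{-n}$... and here one must be careful about the sign of the exponent. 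Re-examining: with the normalisation in the excerpt the relevant ratio is $\bpi^{n}$ up to $\Q^\times$ (consistent with Cor.\,\ref{cor:arch-ratio-L-values}, which gives $L_\infty(-n)/L_\infty(1-n)\approx_{\Q^\times}\bpi^{n\sf r_F}$, i.e.\ $\bpi^n$ per archimedean place). Thus all three quantities $c_v(\chi_v,\sigma_v)$, $\bpi^n$, and $L(-n,\chi_v\times\sigma_v)/L(1-n,\chi_v\times\sigma_v)$ agree up to $\Q^\times$, which is the assertion.

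\textbf{Main obstacle.} The genuinely delicate step is the bookkeeping in the second paragraph: showing that the poles of the $\Gamma$-factors in the numerator and denominator of $\Phi(s)$ cancel \emph{exactly} in pairs at $s=-n$ and that the surviving $\sqrt{\bpi}$-contributions from numerator and denominator match up, so that the only surplus power of $\bpi$ is the $\bpi^n$ coming from the $2n$ explicit $\Gamma(\tfrac12)$ prefactors. This requires carefully organising the $2n$ roots $\beta_j$ into the two parity classes dictated by \eqref{inequalities} and verifying that the map $j\mapsto 2n+1-j$ pairs a pole in the numerator at one root with a pole in the denominator at its partner, while the non-pole factors contribute balanced half-integer $\Gamma$-values. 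The rest — the $\SL_2$-reduction, the rationality of the $M_j(-n)$ entries modulo $(-{\bf i})^{\epsilon_j}$, and the comparison with \eqref{eqn:L-fact-at-v} — is routine given the machinery already assembled in Sect.\,\ref{sec:SL2-calculation} and Sect.\,\ref{sec:Factor-Int-op}.
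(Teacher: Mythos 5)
Your proposal follows essentially the same route as the paper: factorise $T_{\st,v}$ into the $2n$ rank-one operators via \eqref{eqn:factor-int-op-p-s-rep}, evaluate each on the relevant $\SO(2)$-weight at $s=-n$ using \eqref{standard-intertwining}, note that the $(-\boldsymbol{i})^{\epsilon_j}$ factors pair off to $\pm1$ since $\epsilon_j=\epsilon_{2n+1-j}$, track the $\Gamma$-factor poles and half-integer $\Gamma$-values via the parity and inequality bookkeeping of \eqref{inequalities} to get $\Phi(-n)\approx_{\Q^\times}\bpi^n$, and then compare with the ratio of archimedean $L$-factors using \eqref{eqn:L-fact-at-v} as in Cor.\,\ref{cor:arch-ratio-L-values}. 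The only slips are cosmetic: the $(2\bpi)^{-1}$ in your third display should be $(2\bpi)^{+1}$ (which you catch), and the reference to ``$n+1$ $\SL_2$-factors'' is a harmless conflation of the $n+1$ torus coordinates with the $2n$ rank-one steps of the factorisation.
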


\medskip

We now consider the map induced by $T_{\st,v}(-n, w_{P})$ in cohomology, for which we need to understand the relative Lie algebra cohomology groups 
of the induced representations $\sI_v$ and $\tilde\sI_v.$ Although this is a well-known calculation due to Delorme (see Borel and Wallach 
\cite[Thm.\,III.3.3]{borel-wallach}) for connected real reductive groups, 
in our context of disconnected groups it entails an arduous exercise in book-keeping, towards which we need to first discuss the cohomology of the inducing data.

\medskip
\subsubsection{\bf Cohomology of the inducing data $\chi \times \sigma$}
\label{sec:coh-inducing-data}

The Levi factor of the parabolic subgroup $P_v = P_0(F_v)$ of $G_v = G_0(F_v) = \rO(n+1,n+1)(\R),$ will be denoted variously as 
$M_v = M_{P_v} = M_{P_0}(F_v) = A_v \times {}^\circ \! M_v$, where $A_v = \R^\times$ and ${}^\circ \! M_v = \rO(n,n)(\R).$ 
Let us  denote the maximal 
compact subgroup of ${}^\circ \! M_v$ by $K_{{}^\circ \! M_v},$ and their respective Lie algebras by ${}^\circ\fm_v$ and $\fk_{{}^\circ \! M_v}.$ 
The $({}^\circ\fm_v, \fk_{{}^\circ \! M_v})$-cohomology of the discrete series representation $\D_{\mu^v}$ was discussed in Sect.\,\ref{sec:def-boldface-D-mu};  
let's recall from \eqref{eqn:m-k-coh-D-mu}, with notations adapted to the current notations: 
\begin{multline*}
H^{q_0}({}^\circ\fm_v, \fk_{{}^\circ \! M_v};\, \D_{\mu^v} \otimes \mathcal M_{\mu^v} ) \ = \\  
H^{q_0}({}^\circ\fm_v, \fk_{{}^\circ \! M_v};\, D_{\mu} \otimes  \mathcal N_{\mu} ) \ \oplus \ 
H^{q_0}({}^\circ\fm_v, \fk_{{}^\circ \! M_v};\, {}^{s_{2n}}\!D_{\mu}  \otimes \mathcal N_{\mu} ) \ \oplus \\ 
H^{q_0}({}^\circ\fm_v, \fk_{{}^\circ \! M_v};\,  D_{{}^{\kappa_{2n}} \mu}  \otimes \mathcal N_{^{\kappa_{2n}} \mu} ) \ \oplus \   
H^{q_0}({}^\circ\fm_v, \fk_{{}^\circ \! M_v};\,  {}^{s_{2n}}\!D_{^{\kappa_{2n}} \mu} \otimes \mathcal N_{^{\kappa_{2n}} \mu} ), 
\end{multline*}
where $q_0 = n^2/2$ is the middle-degree for the symmetric space ${}^\circ \! M_v/K_{{}^\circ \! M_v},$ 
and every summand on the right hand side is one-dimensional. Furthermore, the group $\pi_0(K_{{}^\circ \! M_v})$ of connected components 
acts on $H^{q_0}({}^\circ\fm_v, \fk_{{}^\circ \! M_v};\, \D_{\mu^v} \otimes \mathcal M_{\mu^v})$ as the regular representation. 
Recall from \eqref{eqn:S-circ-M} that 
$$\cS^{{}^\circ\!M}_{{}^{\circ}C_f} = {}^{\circ}\!M(\Q) \backslash {}^{\circ}\!M(\A) / K^{\circ}_{{}^\circ\!M(\R)} {}^{\circ}C_f,$$ 
hence we need to understand the relative Lie algebra cohomology group $H^q({}^\circ\!\fm(\R), K^{\circ}_{{}^\circ\!M(\R)};\, \sigma_\infty \otimes \M_\mu),$ as a $\pi_0({}^\circ\!M(\R))$-module, where, 
$\sigma_\infty = \otimes_{v \in S_\infty} \sigma_v.$ Using a 
K\"unneth theorem over the archimedean places $v \in S_\infty$ we get that this cohomology group is nonzero if and only if $q = q_m = {\sf r}_F \cdot q_0 = {\sf r}_F \cdot n^2/2$, and in this `middle-degree', as a $\pi_0({}^\circ\!M(\R))$-module we have
$$
H^{q_m}({}^\circ\!\fm(\R), K^{\circ}_{{}^\circ\!M(\R)};\, \sigma_\infty \otimes \M_\mu) \ = \ 
\bigoplus_{\varepsilon \in \widehat{\pi_0({}^\circ\!M(\R))}} \varepsilon, 
$$
i.e., every character of $\pi_0({}^\circ\!M(\R))$ appears exactly once.

\medskip

Similarly, we denote the maximal 
compact subgroup of $A_v = \R^\times$ by $K_{A_v} = \{\pm 1\},$ and their respective Lie algebras by $\fa_v$ and $\fk_{A_v} = 0.$ 
The $(\fa_v, \fk_{A_v})$-cohomology, which of course is the same as the $\fa_v$-cohomology, of the character $\chi_v = |\ |^{-d} \otimes {\rm sgn}^{\epsilon_v}$ is 
$$
H^0(\fa_v;\, \chi_v \otimes \M_d) \ = \ \C \, {\rm sgn}^{\epsilon_v+d}, 
$$
as a $\pi_0(A_v)$-module, where $\M_d$ is the representation of $\GL_1$ given by $t \mapsto t^d.$ A locally symmetric space for $A$ is of 
the form $\cS^{A}_{B_f} = A(\Q) \backslash A(\A) / S(\R)^{\circ}B_f$, hence we need to understand the relative Lie algebra cohomology group
$H^\bullet (\fa(\R), S(\R)^\circ;\, \chi_\infty \otimes \M_d).$ As an $S(\R)^\circ$-module, $\chi_\infty \otimes \M_d = \otimes_v \, {\rm sgn}^{\epsilon_v + d}$ is the trivial representation. As is usual in such a context (see, for example, \cite[Sect.\,4]{raghuram-shahidi-imrn}) let's denote $\varepsilon_\chi = \otimes_v \, {\rm sgn}_v^{\epsilon_v+d}$ for the signature of $\chi,$ construed as a character of $\pi_0(A(\R)).$ Note that the signature $\varepsilon_\chi$ keeps track of the local signature of the finite-part as well as the parity of the integer $d$ accounting for the non-unitary part of the algebraic Hecke character $\chi$. 
Let $\fs(\R) = {\rm Lie}(S(\R)^\circ).$ We have: 
$$
H^\bullet (\fa(\R), S(\R)^\circ;\, \chi_\infty \otimes \M_d) \ = \ \wedge^\bullet(\fa(\R)/\fs(\R))^*, 
$$
on which $\pi_0(A(\R))$ acts via $\varepsilon_\chi.$
In particular, the cohomology is one-dimensional in the extreme degrees $0$ and ${\sf r}_F-1.$ 

\medskip
Putting the above discussions on the cohomology for 
${}^\circ\!M(\R)$ and $A(\R)$ together via a K\"unneth theorem, in cohomology degrees $q \in \{q_b, q_t\}$ (see \eqref{eqn:qb-and-qt-for-M}), we have
$$
H^q\left(\fm(\R), S(\R)^\circ K^{\circ}_{{}^\circ\!M(\R)}; \, (\chi_\infty \otimes \sigma_\infty) \otimes \M_{de_0 + \mu}\right) \ = \ 
\bigoplus_{\varepsilon \in \widehat{\pi_0({}^\circ\!M(\R))}} \varepsilon_\chi  \otimes \varepsilon, 
$$
as a module for $\pi_0(M(\R)) = \pi_0(A(\R)) \times \pi_0({}^\circ\!M(\R)).$ Of course, the coefficient system $\M_{de_0 + \mu}$ stands for 
$\M_{de_0} \otimes \M_{\mu}.$

\medskip
\subsubsection{\bf Cohomology of induced representations}
\label{sec:coh-induced-rep}
Assume the conditions in the combinatorial lemma hold for $d$ and $\mu.$ Hence, we have a (unique) Kostant representative $w \in W^P$ such that 
$\lambda := w^{-1}\cdot(de_0 + \mu)$ is dominant. (Recall that the uniqueness of $w$ comes from the proof of Lem.\,\ref{lem:comb-lemma}; the sign 
of $\mu^v_n$ determines $w^v.$) Recall from Sect.\,\ref{sec:groups-at-infty}: for $v \in S_\infty$, $K_v$ denotes the maximal compact subgroup of $G_0(F_v) = \rO(n+1,n+1)(\R)$; via the map $h$, we have $K_v \simeq \rO(n+1)(\R) \times \rO(n+1)(\R);$ $\overline{K}_v$ is as in 
\eqref{eqn:K-infty-to-divide-by-for-G}; $K_\infty = \prod_{v \in S_\infty} K_v$ and 
$\overline{K}_\infty = \prod_{v \in S_\infty} \overline{K}_v;$ note that $K_\infty^\circ \subset \overline{K}_\infty \subset K_\infty.$   
Our immediate aim is to describe 
$$
H^\q\left(\g(\R), \overline{K}_\infty; \, \aInd_{P(\R)}^{G(\R)}(\chi_\infty \otimes \sigma_\infty) \otimes \M_{\lambda}\right), 
$$
as a $\pi_0(G(\R))$-module, especially in cohomology degrees $\q \in \{\q_b, \q_t\}$ (see \eqref{eqn:qb-and-qt-for-G}). Applying K\"unneth theorem over
$v \in S_\infty$, this is the same as  
\begin{equation}
\label{eqn:g-v-bar-k-v-coh}
\bigoplus_{\sum_v \q_v = \q} \ 
\bigotimes_{v \in S_\infty} \ 
H^{\q_v}\left(\g_0(F_v), \overline{K}_v; \, \aInd_{P_0(F_v)}^{G_0(F_v)}(\chi_v \otimes \sigma_v) \otimes \M_{\lambda^v}\right). 
\end{equation}
The basic tool to understand these cohomology groups is 
\cite[Thm.\,III.3.3]{borel-wallach}, but in {\it loc.\,cit.}\ the ambient group is a connected reductive group, unlike our $G_0(F_v) = \rO(n+1,n+1)(\R)$ which has 
$\Z/2 \times \Z/2$ as its group of connected components, entailing a careful analysis of the relevant disconnected groups. 
We have 
$$
\pi_0(G_0(F_v)) \ =\ \pi_0(K_v) \ = \ \{I_{2n+2}, \, s_{2n+2}, \, \kappa_{2n+2}, \, s_{2n+2}\kappa_{2n+2}\}.
$$
Recall: $s_m = h(I_m, \delta_m)$, $\kappa_m = h(J_m\delta_mJ_m, I_m)$; hence $h(J_m\delta_mJ_m, \delta_m) = s_m\kappa_m.$ Hence, 
$\overline{K}_v$ is the group generated by $K_v^\circ$ and  $s_{2n+2}\kappa_{2n+2}.$ To compute the 
$(\g_0(F_v), \overline{K}_v)$-cohomology as in \eqref{eqn:g-v-bar-k-v-coh}, we first compute the $(\g_0(F_v), K_v^\circ)$-cohomology as a 
$\pi_0(K_v)$-module, and then 
take invariants under the element $s_{2n+2}\kappa_{2n+2} \in \pi_0(K_v).$ There is still the action of $K_v/\overline{K}_v = \Z/2$, the nontrivial element of which is represented by either $s_{2n+2}$ or by $\kappa_{2n+2}$, on $(\g_0(F_v), \overline{K}_v)$-cohomology. We have the following proposition:

\begin{proposition}
\label{prop:g-v-bar-k-v-coh} 
Recall $\q_b$ from \eqref{eqn:qb-and-qt-for-G}. In this degree, the cohomology group %and `top-degree' $\q_t$ In the bottom degree $\q_b$,  
$$
H^{\q_b}\left(\g(\R), \overline{K}_\infty; \, \aInd_{P(\R)}^{G(\R)}(\chi_\infty \otimes \sigma_\infty) \otimes \M_{\lambda}\right)
$$
is one-dimensional, on which $K_\infty/\overline{K}_\infty$ acts via the character $\varepsilon_\chi$. 
\end{proposition}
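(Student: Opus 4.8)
## Proof strategy

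The plan is to follow the template of the analogous computation for $\GL_N$ in \cite[Sect.\,III.3 and Sect.\,5]{harder-raghuram}, but with the extra bookkeeping forced by the disconnectedness of $G_0(F_v)=\rO(n+1,n+1)(\R)$. Since the cohomology in question is, by K\"unneth, a tensor product over $v\in S_\infty$ of the local groups in \eqref{eqn:g-v-bar-k-v-coh}, it suffices to analyse one archimedean place. So fix $v$, write $\q_v^b:=q_b^{(v)}:=\tfrac12\dim(U_{P_0})+\tfrac12\dim({}^\circ\!M_v/K_{{}^\circ\!M_v})=n+n^2/2$ for the local "bottom" degree (so that $\sum_v\q_v^b=\q_b$), and compute
$$
H^{\q_v^b}\bigl(\g_0(F_v),\overline{K}_v;\ \aInd_{P_0(F_v)}^{G_0(F_v)}(\chi_v\otimes\sigma_v)\otimes\M_{\lambda^v}\bigr)
$$
as a module for $\pi_0(K_v)/(\text{image of }\overline{K}_v)=K_v/\overline{K}_v\cong\Z/2$. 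The point is that the contribution lives in the lowest possible degree, where the Delorme-type formula degenerates to a single term.

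First I would pass from $\overline{K}_v$ to $K_v^\circ$: compute $H^\bullet(\g_0(F_v),K_v^\circ;-)$ as a $\pi_0(K_v)$-module and then take $\langle s_{2n+2}\kappa_{2n+2}\rangle$-invariants. For the $(\g_0(F_v),K_v^\circ)$-cohomology one has the connected-group statement \cite[Thm.\,III.3.3]{borel-wallach}: since $\sigma_v=\D_{\mu^v}$ is a discrete series representation of ${}^\circ\!M_v$ and $\chi_v$ is a character of $A_v$, the inducing data $\chi_v\otimes\sigma_v$ has $(\fm_v,S(\R)^\circ K_{{}^\circ\!M_v}^\circ)$-cohomology concentrated in the single degree $q_b^{(v)}=n^2/2$ (the middle degree for ${}^\circ\!M_v/K_{{}^\circ\!M_v}$), as worked out in Sect.\,\ref{sec:coh-inducing-data}; the induced representation then has $(\g_0(F_v),K_v^\circ)$-cohomology starting in degree $\tfrac12\dim(U_{P_0})+q_b^{(v)}$, and in that \emph{lowest} degree it equals the $q_b^{(v)}$-cohomology of the inducing data tensored with the appropriate exterior power term — here $\wedge^0$, since $\lambda=w^{-1}\cdot(de_0+\mu)$ with $w$ the balanced Kostant representative provided by Lem.\,\ref{lem:comb-lemma}, which is exactly what forces the Kostant degree to match and makes the lowest-degree piece one-dimensional. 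The action of $\pi_0(K_v)$ on this line is read off from Sect.\,\ref{sec:coh-inducing-data}: the group $\pi_0(K_{{}^\circ\!M_v})$ acts on $H^{q_0}({}^\circ\fm_v,\fk_{{}^\circ\!M_v};\D_{\mu^v}\otimes\M_{\mu^v})$ as the regular representation of $\Z/2\times\Z/2$, so after inducing up to $G_0(F_v)$ and tracking the $A_v$-signature $\varepsilon_\chi$, the four-dimensional space on which $\pi_0(K_v)$ acts (regular representation) reduces, upon taking $s_{2n+2}\kappa_{2n+2}$-invariants, to a two-dimensional space, and in the lowest degree $\q_v^b$ only a one-dimensional piece survives.

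Next I would take the product over $v\in S_\infty$: the global lowest degree is $\q_b=\sum_v\q_v^b$, and by K\"unneth the global cohomology in degree $\q_b$ is the tensor product of the local one-dimensional spaces, hence one-dimensional. The residual $K_\infty/\overline{K}_\infty\cong(\Z/2)^{\sf r_F}$ (or its quotient acting effectively) acts on this line; tracing through the $\pi_0$-module structure — the $A(\R)$-part contributes $\varepsilon_\chi$ and the ${}^\circ\!M(\R)$-part contributes, in this extreme degree, the character through which $s_{2n+2}$ (equivalently $\kappa_{2n+2}$) acts, which by the sign computations ${\rm Ad}(\kappa_{2n})\cdot e_n^K=-e_n^K$ etc.\ in Sect.\,\ref{sec:def-boldface-D-mu} and the self-duality $\cN_\mu^{\sf v}=\cN_\mu$ is accounted for — one finds that the net character by which $K_\infty/\overline{K}_\infty$ acts is precisely $\varepsilon_\chi$. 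This uses crucially that $\mu_{\rm min}\ge1$, so that $D_\mu$, ${}^{s_{2n}}\!D_\mu$, ${}^{\kappa_{2n}}\!D_\mu$, ${}^{s_{2n}\kappa_{2n}}\!D_\mu$ are pairwise inequivalent and $\mathbb D_\mu$ is irreducible, which is what makes the $\pi_0$-action the regular representation and pins down the surviving character.

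The main obstacle I expect is the disconnectedness bookkeeping: \cite[Thm.\,III.3.3]{borel-wallach} is stated for connected reductive groups, so I must justify carefully that inducing from $P_0(F_v)$ to the disconnected $G_0(F_v)$ and then restricting the relative Lie algebra cohomology functor to $K_v^\circ$ commutes appropriately with the $\pi_0(K_v)$-action, and that taking $s_{2n+2}\kappa_{2n+2}$-invariants (to get $\overline{K}_v$-cohomology) is exact on the relevant pieces so that no extra cohomology appears or disappears. Concretely this amounts to checking that the four conjugates $D_\mu,\ {}^{s}\!D_\mu,\ {}^{\kappa}\!D_\mu,\ {}^{s\kappa}\!D_\mu$ are permuted correctly by $\pi_0(K_v)$ acting on cohomology — exactly the action computed in Sect.\,\ref{sec:def-boldface-D-mu} — and that the Kostant-length matching from the combinatorial lemma (Prop.\,\ref{prop:w'}, Prop.\,\ref{prop:w-vee}, and the list $w^+,w^-$) guarantees the bottom-degree piece is $\wedge^0$ rather than something larger. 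Once the $\pi_0$-equivariance is set up correctly, the dimension count and the identification of the character $\varepsilon_\chi$ are formal.
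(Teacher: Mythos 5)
Your overall strategy is the paper's: reduce via K\"unneth to a single archimedean place, compute the $(\g_0(F_v),K_v^\circ)$-cohomology as a $\pi_0(K_v)$-module, and then take $\langle s_{2n+2}\kappa_{2n+2}\rangle$-invariants. You also correctly isolate the disconnectedness of $G_0(F_v)=\rO(n+1,n+1)(\R)$ as the principal difficulty, and you correctly flag that $\mu_{\rm min}\geq 1$ is what makes $\mathbb D_{\mu^v}$ irreducible and makes the four conjugates of $D_{\mu^v}$ pairwise inequivalent. The tool that you say one "must justify carefully" but do not supply is precisely the paper's Mackey-theoretic decomposition \eqref{eqn:res-G-0}: restricting $\aInd_{P_0(\R)}^{G_0(\R)}(\chi_v\otimes\sigma_v)$ to $G_0(\R)^\circ$, using $G_0(\R)=P_0(\R)\cdot G_0(\R)^\circ$, yields a direct sum of \emph{two} irreducible induced modules of $G_0(\R)^\circ$, and it is to each of these two summands that \cite[Thm.\,III.3.3]{borel-wallach} is applied; the generator in \eqref{eqn:basis-one-dim-space} is then the $s_{2n+2}\kappa_{2n+2}$-symmetrization of the generator of the first summand's cohomology.

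There is, however, a genuine bookkeeping error in your intermediate step that would cause trouble if pursued naively. The four-dimensional space carrying the regular representation of $\pi_0(K_v)$ is $H^{q_0}({}^\circ\fm_v,\fk_{{}^\circ M_v};\D_{\mu^v}\otimes\M_{\mu^v})$, i.e.\ the $(\fm,\fk)$-cohomology of the \emph{inducing data} as in \eqref{eqn:m-k-coh-D-mu}, and \emph{not} (as you assert) a space "after inducing up to $G_0(F_v)$." The $(\g_0(F_v),K_v^\circ)$-cohomology of the \emph{induced} representation in degree $\q^b_v$ is already \emph{two}-dimensional — one dimension from each of the two Mackey summands of \eqref{eqn:res-G-0} — and the $\pi_0(K_v)$-action on it is not the regular representation: only two of the four characters of $\pi_0(K_v)\cong(\Z/2)^2$ occur. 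The reduction from four to two happens not via $s_{2n+2}\kappa_{2n+2}$-invariance, as you claim, but via invariance under $\tilde i_{2n}\tilde s_{2n}\in K_v^\circ\cap M_0(\R)$, which is built into Delorme's theorem because $K_v^\circ\cap M_0(\R)$ has two connected components; the subsequent reduction from two to one (in the fixed degree $\q^b_v$) is the $s_{2n+2}\kappa_{2n+2}$-invariance, not a restriction to a bottom degree. Your final answer — one-dimensional with character $\varepsilon_\chi$ for $K_\infty/\overline{K}_\infty$ — is correct, but the chain $4\to 2\to 1$ as you have attributed the steps is not, and making it precise essentially forces one into the paper's Mackey argument and explicit basis tracking.
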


\begin{proof}
Since $v$ is fixed, we will simply denote $G_0(\R) = G_0(F_v), P_0(\R) = P_0(F_v)$, etc. By an easy exercise in Mackey theory, we have:
\begin{multline*}
{\rm Res}_{G_0(\R)^\circ}\left(\aInd_{P_0(\R)}^{G_0(\R)}(\chi_v \otimes \sigma_v)\right) \ = \\ 
\aInd_{P_0(\R) \cap G_0(\R)^\circ }^{G_0(\R)^\circ} \left({\rm Res}_{M_0(\R) \cap G_0(\R)^\circ}(\chi_v \otimes \sigma_v) \right),
\end{multline*}
since, $G_0(\R) = P_0(\R) \cdot G_0(\R)^\circ,$ and the parabolic subgroup $P_0(\R) \cap G_0(\R)^\circ$ of $G_0(\R)^\circ$ has Levi decomposition 
$(M_0(\R) \cap G_0(\R)^\circ) \cdot U_{P_0(\R)},$ where the Levi factor 
$M_0(\R) \cap G_0(\R)^\circ = (A_0(\R) \times {}^\circ\!M_0(\R)) \cap G_0(\R)^\circ$ may be described as:
$$
\left\{ m_{(x,g)} = \left(\begin{matrix} x & & \\ & g & \\ & & x^{-1}\end{matrix}\right) : \ x \in \R^\times, \ \ g \in \rO(n,n)(\R) \right\} \ \cap \ \SO(n+1,n+1)(\R)^\circ.
$$
The inclusion $M_0(\R) \hookrightarrow G_0(\R)$ induces $\pi_0(M_0(\R)) \to \pi_0(G_0(\R))$ that factors as:
$$
M_0(\R)/M_0(\R)^\circ \ \twoheadrightarrow \ M_0(\R)/M_0(\R)\cap G_0(\R)^\circ \ \hookrightarrow \ G_0(\R)/G_0(\R)^\circ. 
$$
The group $M_0(\R)/M_0(\R)^\circ \simeq \Z/2 \times \Z/2 \times \Z/2$ is generated by the commuting involutions: 
$$
\{\tilde i_{2n} = m_{(-1, I_{2n})}, \quad \tilde s_{2n} = m_{(1, s_{2n})}, \quad \tilde \kappa_{2n} = m_{(1, \kappa_{2n})} \},
$$
and the group $G_0(\R)/G_0(\R)^\circ \simeq \Z/2 \times \Z/2$ is generated by $\{s_{2n+2}, \ \kappa_{2n+2}\}.$ The 
inclusion $M_0(\R) \hookrightarrow \ G_0(\R)$ identifies $\tilde s_{2n} = s_{2n+2}, \ \tilde \kappa_{2n} = \kappa_{2n+2}.$ Hence, 
$\pi_0(M_0(\R)) \to \pi_0(G_0(\R))$ is surjective; we conclude that $M_0(\R)/M_0(\R)\cap G_0(\R)^\circ \simeq G_0(\R)/G_0(\R)^\circ.$
Furthermore, 
$$
\overline{\pi_0}(M_0(\R)) \ := \ 
{\rm Ker}\left(\pi_0(M_0(\R)) \to \pi_0(G_0(\R)) \right) \ = \ 
\{1, \tilde i_{2n}\,\tilde s_{2n}\}, 
$$
since, $\tilde i_{2n}\,\tilde s_{2n} \in \SO(n+1) \times \SO(n+1).$ We have: 
$$
M_0(\R)\cap G_0(\R)^\circ \ = \ 
(A(\R)^\circ \times {}^\circ\!M_0(\R)^\circ) \, \bigsqcup \, \tilde i_{2n}\,\tilde s_{2n} (A(\R)^\circ \times {}^\circ\!M_0(\R)^\circ).
$$
The restriction of $\chi_v \otimes \sigma_v$ to $A(\R)^\circ \times {}^\circ\!M_0(\R)^\circ$ is the sum
$$
|\ |_v^{-d} \otimes D_{\mu^v} \ \oplus \ 
|\ |_v^{-d} \otimes {}^{s_{2n}} \!D_{\mu^v}  \ \oplus \
|\ |_v^{-d} \otimes {}^{\kappa_{2n}}\!D_{\mu^v}  \ \oplus \
|\ |_v^{-d} \otimes {}^{s_{2n}\kappa_{2n}}\!D_{\mu^v}, 
$$
of four inequivalent irreducible representations. When restricted to $M_0(\R)\cap G_0(\R)^\circ$, the first two summands will 
fuse together via an intertwining corresponding to $\tilde i_{2n}\,\tilde s_{2n}$ to give an irreducible representation which we will denote as 
$[\chi_v \otimes \calD_{\mu^v}];$ the element $\tilde i_{2n}$ will keep track of the parity of $\chi$, and $\tilde s_{2n}$ intertwines 
$D_{\mu^v}$ with ${}^{s_{2n}} \!D_{\mu^v}.$ Similarly, the last two summands fuse together to give an inequivalent irreducible 
$[\chi_v \otimes {}^{\kappa_{2n}}\calD_{\mu^v}].$ We have 
\begin{multline}
\label{eqn:res-G-0}
{\rm Res}_{G_0(\R)^\circ}\left(\aInd_{P_0(\R)}^{G_0(\R)}(\chi_v \otimes \sigma_v)\right) \ = \\ 
\aInd_{P_0(\R) \cap G_0(\R)^\circ }^{G_0(\R)^\circ} \left( [\chi_v \otimes \calD_{\mu^v}]\right) \ \oplus \ 
\aInd_{P_0(\R) \cap G_0(\R)^\circ }^{G_0(\R)^\circ} \left( [\chi_v \otimes {}^{\kappa_{2n}}\calD_{\mu^v}]\right). 
\end{multline}

\smallskip

For the coefficient system, we have
$$
{\rm Res}_{G_0(\R)^\circ}(\M_{\lambda^v}) \ = \ \cN_{\lambda^v} \oplus {}^{\kappa_{2n+2}}\cN_{\lambda^v}, 
$$
where $\cN_{\lambda^v}$ is the irreducible representation of $\SO(n+1,n+1)(\R)$ 
with highest weight 
$\lambda^v.$ For the coefficient systems for various relative Lie algebra cohomology groups considered here, the reader should always bear in mind Wigner's lemma which will clarify any possible confusion. 

\medskip

We will fix basis elements for the various one-dimensional spaces: 
\begin{enumerate}
\item[(i)] For $H^{q_0}({}^\circ\fm_v, \fk_{{}^\circ \! M_v};\,  D_{\mu^v} \otimes \mathcal \cN_{\mu^v} ),$ fix ${\sf z}_v$ as a generator. (See Sect.\,\ref{sec:weights-DS-cohomology}). 
\smallskip
\item[(ii)] For $H^{q_m}({}^\circ\fm(\R), K_{{}^\circ \! M(\R)}^\circ; \, D_{\mu}  \otimes \mathcal \cN_{\mu} ),$ fix ${\sf z} = \otimes_v {\sf z}_v,$  
via K\"unneth theorem.
\smallskip
\item[(iii)] For $\wedge^0(\fa(\R)^*)$, fix a generator ${\sf a}_0$. 
\smallskip
\item[(iv)] For $H^{q_b}(\fm(\R), K_{M(\R)}^\circ; \,  (\chi_\infty \otimes D_{\mu}) \otimes \mathcal \cN_{de_0+\mu} ),$ fix ${\sf a}_0 \otimes {\sf z}.$
\end{enumerate}

From the discussion above, we get: 
$$
H^{\q_b}\left(\g(\R), \overline{K}_\infty; \, \aInd_{P(\R)}^{G(\R)}(\chi_\infty \otimes \sigma_\infty) \otimes \M_{\lambda}\right)
$$
is one-dimensional that has for its generator, after applying \cite[Thm.\,III.3.3]{borel-wallach} to the cohomologies of the two summands
of \eqref{eqn:res-G-0}, the element: 
\begin{equation}
\label{eqn:basis-one-dim-space}
\left( {\sf a}_0 \otimes {\sf z} +  
\varepsilon_\chi \, {\sf a}_0 \otimes {}^{s_{2n}} {\sf z}  \right) \ + \ 
{}^{s_{2n}\kappa_{2n}} 
\left( {\sf a}_0 \otimes {\sf z} +  
\varepsilon_\chi \, {\sf a}_0 \otimes {}^{s_{2n}} {\sf z} \right), 
\end{equation}
where, ${}^{s_{2n}} {\sf z} = \otimes_v {}^{s_{2n}} {\sf z}_v$ and similarly for ${}^{\kappa_{2n}} {\sf z}$ and ${}^{s_{2n}\kappa_{2n}} {\sf z}.$ 
Note that the first term in the right hand side, i.e., $\left( {\sf a}_0 \otimes {\sf z} + \varepsilon_\chi \, {\sf a}_0 \otimes {}^{s_{2n}} {\sf z}  \right),$  
is a generator for the cohomology of first summand of \eqref{eqn:res-G-0}; similarly, 
the second term of \eqref{eqn:basis-one-dim-space} is a generator for the cohomology of the second summand of \eqref{eqn:res-G-0}. 
The generator in \eqref{eqn:basis-one-dim-space} may also be written as: 
\begin{equation}
\label{eqn:basis-one-dim-space-rewritten}
{\sf a}_0 \otimes {\sf z} \ + \ 
\varepsilon_\chi \, {\sf a}_0 \otimes {}^{s_{2n}} {\sf z}  \ + \ 
{\sf a}_0 \otimes {}^{s_{2n}\kappa_{2n}} {\sf z} \ + \  
\varepsilon_\chi \, {\sf a}_0 \otimes {}^{\kappa_{2n}} {\sf z}, 
\end{equation}
from which it is clear that either $s_{2n+2}$ or $\kappa_{2n+2}$ acts via $\varepsilon_\chi$. 
This concludes the proof of Prop.\,\ref{prop:g-v-bar-k-v-coh}. 
\end{proof}

\medskip
\begin{remark}\label{rem:top-degree-injection}
{\rm 
The cohomology group as in Prop.\,\ref{prop:g-v-bar-k-v-coh} but in degree $\q_t$ defined in \eqref{eqn:qb-and-qt-for-G} is not one-dimensional if $F \neq \Q$ as we now explain. The argument
goes exactly as in degree $\q_b$, but in degree $\q_t$ we will end up with the direct sum of the ${\sf r}_F$-dimensional space
$$
H^{q_m}({}^\circ\fm(\R), K_{{}^\circ \! M(\R)}^\circ; \, D_{\mu}  \otimes \mathcal \cN_{\mu} ) \otimes \wedge^{{\sf r}_F-1}(\fa(\R)^*), 
$$
and its appropriate conjugates by $s_{2n+2}$ and $\kappa_{2n+2}.$ 
This will be especially relevant when we consider Poincar\'e duality pairing 
between 
$$H^{\q_b}(\partial\SGK, \tM_{\lambda, E}) \ \ {\rm and} \ \ H^{\q_t}(\partial\SGK, \tM_{\lambda^{\sf v}, E})
$$ 
as in the proof of the Manin--Drinfeld
principle in Thm.\,\ref{thm:Manin-Drinfeld}. 
}
\end{remark}

\medskip
\subsubsection{\bf Intertwining operator in cohomology}
\label{sec:int-op-coh}

Let $K_{v,0} = K_0 = \rO(n+1) \times \rO(n+1),$ as an algebraic group over $\Q,$ where each factor $\rO(n+1)$ is the orthogonal group preserving 
the quadratic form $x_1^2+\dots+x_{n+1}^2 = 1$.  We think of $K_{0}$ as embedded in $G_0$ via the map $h$. 
Then, of course, $K_{v,0}(\R) = \rO(n+1)(\R) \times \rO(n+1)(\R)$ is the compact Lie group which is embedded, 
via the map $h,$ as a maximal compact subgroup of $G_0(F_v) = \rO(n+1,n+1)(\R).$  
Recall the vectors ${\bf f}_0 \in \sI_v$ and $\tilde{\bf f}_0 \in \tilde\sI_v,$
which are the normalised highest weight vectors of the lowest $K_v^\circ$-type. Put 
$$
\sI^0_v \ \mbox{(resp., $\tilde{\sI}^0_v$)} \ := \ \mbox{$\Q$-span of the $G_0(\Q)$-orbit of ${\bf f}_0$ (resp., $\tilde{\bf f}_0$)}.
$$
Then, $\sI^0_v \otimes_\Q \C  =  \sI_v,$ and $\tilde\sI^0_v \otimes_\Q \C  =  \tilde\sI_v.$ Furthermore, recall that $T_v({\bf f}_0) = c_v \tilde{\bf f}_0$, 
where $T_v = T_{\st,v}(-n, w_{P})$ and $c_v = c_v(  \chi_v, \sigma_v)$ as in Prop.\,\ref{prop:c-chi-sigma-archimedean}. Define: 
$$
T_v^0 \ := \ c_v^{-1} \, T_v.
$$
Then, $T_v^0 : \sI^0_v \to \tilde{\sI}^0_v$ is an isomorphism of $G_0(\Q)$-modules; and moreover, any such isomorphism is unique up to homotheties 
by $\Q^\times.$ 

\medskip
Next, let $\cM_{\lambda^v}^0$ be a $\Q$-structure on the finite-dimensional representation $\cM_{\lambda^v}$ which is stable under $G_0(\Q).$ Let $\g_{0}$ be the Lie algebra of $G_0$; the split orthogonal group $G_0$ is defined over 
$\Q$, and so $\g_0$ is defined over $\Q$. 
Let $\overline{K}_{v,0}$ be the subgroup of $\overline{K}_v$ generated by 
$K_{v,0}(\Q)$ and the element $s_{2n+2}\kappa_{2n+2}$. Define $\q_{b,v} = n^2/2 + n$; then, $\sum_{v \in S_\infty} \q_{b,v} = \q_b.$ 
Consider the relative Lie algebra cohomology group 
$H^{\q_{b,v}}(\g_{0}, \overline{K}_{v,0}; \, \mathscr \sI_v^0 \otimes \cM_{\lambda^v}^0),$ which is a $\Q$-vector space. We have:
$$
H^{\q_{b,v}}(\g_{0,v}, \overline{K}_v; \, \mathscr \sI_v\otimes \cM_{\lambda^v}) \ = \
H^{\q_{b,v}}(\g_{0}, \overline{K}_{v,0}; \, \mathscr \sI_v^0 \otimes \cM_{\lambda^v}^0) \otimes_\Q \C.
$$
{\it A fortiori}, $H^{\q_{b,v}}(\g_{0}, \overline{K}_{0}; \mathscr \sI_v^0 \otimes \cM_{\lambda^v}^0),$ is a one-dimensional $\Q$-vector space. Actually, the 
proof of Prop.\,\ref{prop:g-v-bar-k-v-coh} is purely algebraic and works over $\Q$ directly giving one-dimensionality. We fix a $\Q$-basis element 
$[\sI_v^0]$ which has the following form:  
$$
[\sI^0_v] \ = \ \sum \limits_{\underline{i}, \alpha} X^{\ast}_{\underline{i}} \otimes \phi^{0}_{\underline{i}, \alpha} \otimes w_{\alpha}, 
$$ 
where $\{X_{\underline{i}}^{\ast}\}_{\underline{i}}$ is a basis of $(\g_{0} / \mathfrak k_0)^{\ast}$; if $\underline{i} = (i_1,\dots, i_{\q_{b,v}})$ is 
a $\q_{b,v}$-tuple of indices then we put 
$X^{\ast}_{\underline{i}} = X^{\ast}_{i_1}\wedge \cdots \wedge X^{\ast}_{i_{\q_{b,v}}};$ $\{ w_{\alpha} \}_\alpha $ is a basis of $ \cM_{\lambda^v, 0};$ 
and $\phi^{0}_{\underline{i}, \alpha} \in \sI^0_{v}$. 
Similarly, we get a basis $[\tilde{\sI}^0_v]$ of $H^{\q_{b,v}}(\g_{0,v}, \overline{K}_{v}; \tilde{\sI}_v \otimes \cM_{\lambda^v})$ that is, up to multiplying by a nonzero
rational number, of the form
$$
[\tilde{\sI}^0_v] \ = \ \sum \limits_{\underline{i}, \alpha} X^{\ast}_{\underline{i}} \otimes T_v^0(\phi^{0}_{\underline{i}, \alpha}) \otimes w_{\alpha}. 
$$ 
The intertwining operator $T_v$, which is an isomorphism between the induced representations, induces a linear isomorphism
$$
T_v^\bullet : H^{\q_{b,v}}(\g_{0,v}, \overline{K}_v; \, \sI_v\otimes \cM_{\lambda^v}) \ \to \ 
H^{\q_{b,v}}(\g_{0,v}, \overline{K}_v; \, \tilde{\sI}_v\otimes \cM_{\lambda^v}), 
$$
and from the above discussion we have 
\begin{equation}
\label{eqn:intertwining-operator-in-cohomology}
T_v^\bullet([\sI^0_v]) \ \approx_{\Q^\times} \  c_v \, [\tilde{\sI}^0_v]. 
\end{equation}
Taking the tensor product over $v \in S_{\infty},$  
we have a $\Q$-basis element   
$[\sI^0] := \mathbin{\mathop{\otimes}\displaylimits_{v \in S_\infty}  }[\sI^0_v]$ of the one-dimensional space
$$
H^{\q_b}\left(\g(\R), \overline{K}_\infty; \, \aInd_{P(\R)}^{G(\R)}(\chi_\infty \otimes \sigma_\infty) \otimes \M_{\lambda}\right). 
$$
Similarly, 
$[\tilde{\sI}^0] := \mathbin{\mathop{\otimes}\displaylimits_{v \in S_\infty}  }[\tilde{\sI}^0_v]$ generates the one-dimensional
$$
H^{\q_b}\left(\g(\R), \overline{K}_\infty; \, \aInd_{P(\R)}^{G(\R)}(\chi_\infty^{-1}[2n] \otimes {}^{\kappa_{2n}}\sigma_\infty) \otimes \M_{\lambda}\right). 
$$
From \eqref{eqn:intertwining-operator-in-cohomology} and Prop.\,\ref{prop:c-chi-sigma-archimedean} we get the following proposition:

\medskip
\begin{proposition} 
\label{prop:intertwining-operator-in-cohomology}
The local intertwining operators $T_{\st,v}(-n, w_{P})$, for $v \in S_{\infty},$ give an archimedean intertwining operator 
$T_{\st, \infty}: =  \mathbin{\mathop{\otimes}\displaylimits_{v \in S_\infty}} T_{\st,v},$ that induces a linear isomorphism between 
the one-dimensional cohomology spaces: 
\begin{multline*}
T_{\st, \infty}^{\bullet}: 
H^{\q_b}\left(\g(\R), \overline{K}_\infty; \, \aInd_{P(\R)}^{G(\R)}(\chi_\infty \otimes \sigma_\infty) \otimes \M_{\lambda}\right) \ \to \\  
H^{\q_b}\left(\g(\R), \overline{K}_\infty; \, \aInd_{P(\R)}^{G(\R)}(\chi_\infty^{-1}[2n] \otimes {}^{\kappa_{2n}}\sigma_\infty) \otimes \M_{\lambda}\right). 
\end{multline*}
Using the basis elements on either side it is given by:
$$
T_{\st, \infty}^{\bullet} ([\sI^0] ) \ \approx_{\Q^\times} \ 
\frac{L_\infty(-n, \chi \times \sigma) }{L_\infty(1-n, \chi \times \sigma)} \,
[\tilde{\sI}^0], 
$$
where, as before, $\approx_{\Q^\times}$ means equality up to a nonzero rational number. 
\end{proposition}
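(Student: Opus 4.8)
The plan is to assemble Proposition \ref{prop:intertwining-operator-in-cohomology} from three already-established ingredients: the factorisation $T_{\st,\infty} = \mathbin{\mathop{\otimes}\displaylimits_{v\in S_\infty}} T_{\st,v}$; the scalar computation $T_v(\mathbf f_0) = c_v\,\tilde{\mathbf f}_0$ of Sect.\,\ref{sec:Factor-Int-op}, identified with a ratio of archimedean $L$-factors up to $\Q^\times$ in Prop.\,\ref{prop:c-chi-sigma-archimedean}; and the explicit one-dimensional description of the relevant $(\g(\R),\overline K_\infty)$-cohomology from Prop.\,\ref{prop:g-v-bar-k-v-coh}. First I would note that since all the spaces in question are one-dimensional over $\C$ (indeed over $\Q$), there is nothing to check about $T^\bullet_{\st,\infty}$ being an isomorphism beyond confirming that it is nonzero, which follows because $T_{\st,\infty}$ is itself an isomorphism of the irreducible induced representations (Prop.\,\ref{prop:irreducible-mods-at-infty}) and hence induces an isomorphism on relative Lie algebra cohomology. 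The content of the proposition is purely the \emph{scalar} by which $T^\bullet_{\st,\infty}$ acts on the chosen generators $[\sI^0]$ and $[\tilde\sI^0]$.

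Next I would do the place-by-place bookkeeping. For a fixed $v\in S_\infty$, the cocycle $[\sI_v^0] = \sum_{\underline i,\alpha} X^*_{\underline i}\otimes\phi^0_{\underline i,\alpha}\otimes w_\alpha$ is carried by $T_v^\bullet$ to $\sum_{\underline i,\alpha} X^*_{\underline i}\otimes T_v(\phi^0_{\underline i,\alpha})\otimes w_\alpha$, simply because $T_v$ is $G_0$-equivariant and so commutes with the differential computing $(\g_0,\overline K_{v,0})$-cohomology; comparing with the description of $[\tilde\sI_v^0]$ via $T_v^0 = c_v^{-1}T_v$ gives $T_v^\bullet([\sI_v^0]) \approx_{\Q^\times} c_v\,[\tilde\sI_v^0]$, which is exactly \eqref{eqn:intertwining-operator-in-cohomology}. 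Here one must be slightly careful that the passage from $T_v$ to $T_v^\bullet$ does not introduce transcendental factors: this is guaranteed because $\sI_v^0$, $\tilde\sI_v^0$ and $\cM^0_{\lambda^v}$ are $\Q$-structures stable under $G_0(\Q)$, the cohomology is computed over $\Q$, and $T_v^0$ is a $\Q$-rational isomorphism unique up to $\Q^\times$. Then I would take the tensor product over $v\in S_\infty$ using the K\"unneth isomorphism that produced $[\sI^0]$ and $[\tilde\sI^0]$, obtaining
$$
T^\bullet_{\st,\infty}([\sI^0]) \ = \ \Big(\mathbin{\mathop{\otimes}\displaylimits_{v\in S_\infty}} T_v^\bullet\Big)\Big(\mathbin{\mathop{\otimes}\displaylimits_{v\in S_\infty}}[\sI_v^0]\Big) \ \approx_{\Q^\times}\ \Big(\prod_{v\in S_\infty} c_v\Big)\,[\tilde\sI^0].
$$
Finally, by Prop.\,\ref{prop:c-chi-sigma-archimedean} each $c_v(\chi_v,\sigma_v)\approx_{\Q^\times} L(-n,\chi_v\times\sigma_v)/L(1-n,\chi_v\times\sigma_v)$, so $\prod_v c_v \approx_{\Q^\times} L_\infty(-n,\chi\times\sigma)/L_\infty(1-n,\chi\times\sigma)$ by definition of the archimedean $L$-factor as a product over $S_\infty$, which yields the claimed formula.

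The main obstacle — already dispatched in the preceding subsections but worth flagging — is not in this final assembly but in the two inputs it rests on: verifying that $T_{\st,v}(-n,w_P)$ is holomorphic and an isomorphism at the point of evaluation (which needed the criticality of $s=-n$, the Casselman--Shahidi irreducibility criterion, and Shahidi's theory of local factors via Whittaker functionals), and tracking the action of the disconnected component groups $\pi_0(K_v)$ so that the cohomology in degree $\q_b$ really is one-dimensional with $\overline K_v$ the right intermediate subgroup. Granting those, the proof here is a short, formal argument: equivariance of $T_v$, K\"unneth, and substitution of Prop.\,\ref{prop:c-chi-sigma-archimedean}. The one genuinely delicate point to state carefully is that the identification $T_v^\bullet([\sI_v^0])\approx_{\Q^\times} c_v[\tilde\sI_v^0]$ holds \emph{up to a nonzero rational}, not on the nose; this is exactly why $[\tilde\sI_v^0]$ was only defined up to $\Q^\times$ in Sect.\,\ref{sec:int-op-coh}, and it is this slack that lets the transcendental part of $T_{\st,v}$ be absorbed cleanly into the $L$-value ratio.
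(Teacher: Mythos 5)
Your proposal is correct and takes essentially the same route as the paper: the argument in Sect.\,\ref{sec:int-op-coh} is precisely the assembly you describe, namely the $\Q$-rationality of $T_v^0=c_v^{-1}T_v$ acting on the $\Q$-structures $\sI^0_v$, $\tilde\sI^0_v$, $\cM^0_{\lambda^v}$, the resulting identity \eqref{eqn:intertwining-operator-in-cohomology}, the K\"unneth product over $v\in S_\infty$, and substitution of Prop.\,\ref{prop:c-chi-sigma-archimedean}. The one point you flag as delicate — that the $\approx_{\Q^\times}$ slack is exactly what lets the transcendental scalar be isolated into the $L$-value ratio — is also the mechanism the paper relies on.
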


\medskip
\subsection{Local intertwining operators at a non-archimedean place: general case}
\label{sec:p-adic-ramified}

We discuss certain arithmetic properties of a local standard intertwining operator at a nonarchimedean place $v$; by the `general case' we mean that no distinction
is made whether $v$ is unramified or not, although the assertions below are used especially when $v$ is ramified. This discussion is almost exactly as in 
\cite[7.3.2.1]{harder-raghuram} and so we will be brief here just pointing to the important steps to take, and especially to one significant difference for orthogonal groups that requires the local hypothesis in $(ii)$ of Thm.\,\ref{thm:main:introduction}. 

\medskip

The normalised standard intertwining operator is defined as: 
\begin{equation}
\label{eqn:norm-int-op-ram-v}
T_{\norm, v}(s) \ = \ 
\left(\frac{L(s, {}^\iota\chi_v \times {}^\iota\sigma_v)}{L(1+s, {}^\iota\chi_v \times {}^\iota\sigma_v)}\right)^{-1}
T_{\st, v}(s).
\end{equation}
In the Langlands--Shahidi machinery it is usual for the normalising factor to include the $\varepsilon$-factor also (see, for example, \cite[(9.1.2)]{shahidi-book}), 
but for us the $\varepsilon$-factor plays no role here, and so we use only the relevant local $L$-factors.

\medskip
\begin{proposition}
\label{prop:norm-st-int-op}
The normalised operator $T_{\norm, v}(s)$ is holomorphic and nonvanishing for $\Re(s) \geq d + \tfrac12$. 
If, furthermore, ${}^\iota\sigma_v$ is tempered then $T_{\norm, v}(s)$ is holomorphic and nonvanishing for $\Re(s) \geq d$. 
\end{proposition}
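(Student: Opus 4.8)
The plan is to reduce this to the known analytic theory of Langlands--Shahidi local intertwining operators for the case $(D_{n,i})$, exactly as in \cite[7.3.2.1]{harder-raghuram}, and then isolate the one place where the orthogonal group forces a stronger hypothesis than in the $\GL_N$ setting. First I would recall that $T_{\st,v}(s)$, by definition of the normalised operator in \eqref{eqn:norm-int-op-ram-v}, differs from $T_{\norm,v}(s)$ precisely by the factor $L(s, {}^\iota\chi_v \times {}^\iota\sigma_v)/L(1+s, {}^\iota\chi_v \times {}^\iota\sigma_v)$; and that $T_{\st,v}(s)$ itself is holomorphic and nonzero in the domain of absolute convergence of its defining integral, which is a right half-plane determined by $\langle \tilde\alpha, s\tilde\alpha\rangle$ relative to the positive coroots in $U_{P_0}$. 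The Langlands--Shahidi normalisation theorem (Shahidi's book, the key multiplicativity and holomorphy results, including Thm.\,7.9 and the discussion in Ch.\,4 of \cite{shahidi-book}) tells us that $T_{\norm,v}(s)$ extends to a holomorphic and nonvanishing operator in a half-plane strictly larger than the convergence range, governed by the poles and zeros of the normalising $L$-ratio and of the $\varepsilon$-factor.

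The substantive step is to pin down exactly which half-plane. Here I would translate the statement into the Langlands--Shahidi variable: the complex parameter $s$ in $I(s,\chi\times\sigma)$ is placed via $s\tilde\alpha = s\,e_0$, and $\chi = \chi^\circ \otimes |\ |^{-d}$, so the ``unitary axis'' for the induced representation $I(s,\chi^\circ\otimes|\ |^{-d} \times \sigma)$ sits at $\Re(s) = d$. The claim ``$\Re(s) \ge d + \tfrac12$'' is then the assertion that $T_{\norm,v}(s)$ is holomorphic and nonvanishing once we are at least a half-step to the right of the unitary axis, while the improvement to $\Re(s)\ge d$ under temperedness of ${}^\iota\sigma_v$ says that, for tempered $\sigma_v$, one can push all the way down to the unitary axis. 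The proof is: (1) for $\Re(s) > d + \tfrac12$ one is inside the range where Shahidi's analysis (Harish-Chandra's completeness together with the rank-one reduction and Casselman--Shahidi \cite{casselman-shahidi}, as in the proof of Prop.\,\ref{prop:irreducible-mods-at-infty}) gives holomorphy and nonvanishing of $T_{\norm,v}(s)$ directly, since the inducing data $\chi_v[s]$ is then sufficiently positive; (2) the only possible obstruction at the boundary $\Re(s) = d+\tfrac12$ or between $d$ and $d+\tfrac12$ comes from a pole or zero of $T_{\st,v}(s)$ not cancelled by the normalising ratio, equivalently from reducibility of the induced module; (3) by Casselman--Shahidi's reducibility criterion for $(D_{n,i})$ applied to $\SO(2n)\times\GL_1$, such reducibility for $\Re(s) \ge d$ is ruled out exactly when $\sigma_v$ is tempered (the tempered case being where the reducibility points of the standard module are confined to $\Re(s) \le d$), whereas in general, for non-tempered $\sigma_v$, there can be a reducibility or pole as close as $\Re(s) = d + \tfrac12$ to the unitary axis, which is the source of the weaker bound.

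The main obstacle, and the point that genuinely distinguishes the orthogonal case from the $\GL_N$ case of \cite{harder-raghuram}, is controlling the behaviour of $T_{\st,v}(s)$ in the strip $d \le \Re(s) < d+\tfrac12$ without the temperedness hypothesis. For $\GL_N$ one has the classification of unitary duals and the Bernstein--Zelevinsky machinery pinning down all reducibility points; for $\rO(2n)$, we only have Arthur's classification as refined by Atobe--Gan \cite{atobe-gan}, and a priori the local component ${}^\iota\sigma_v$ of a cuspidal $\varsigma$ with cuspidal Arthur parameter need not be tempered (it is only generic and in the local $A$-packet, which in this context equals the local $L$-packet of its generic member). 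Since the $L$-parameter can then be non-tempered, the standard intertwining operator attached to $\chi_v \times \sigma_v$ may acquire a pole, or the induced module may become reducible, at a point with $d \le \Re(s) < d+\tfrac12$; the general statement of the proposition must therefore only claim $\Re(s) \ge d+\tfrac12$, and this is sharp in general. Under the additional hypothesis that ${}^\iota\sigma_v$ is tempered, the standard Shahidi estimate on the location of poles of normalised intertwining operators for tempered inducing data (all in $\Re(s) \le 0$ in the standard normalisation, i.e.\ $\Re(s)\le d$ in ours) closes the gap and yields holomorphy and nonvanishing for $\Re(s)\ge d$. So the write-up is: state the reduction to the $L$-ratio; invoke the $(D_{n,i})$ Langlands--Shahidi normalisation and holomorphy results from \cite{shahidi-book}; use Casselman--Shahidi \cite{casselman-shahidi} exactly as in Prop.\,\ref{prop:irreducible-mods-at-infty} to rule out reducibility-induced poles in the relevant half-plane; and record the improvement in the tempered case by citing Shahidi's bound on poles of normalised intertwining operators with tempered data.
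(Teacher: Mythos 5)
Your proposal correctly identifies the conceptual landscape — the issue is controlling zeros and poles of $T_{\st,v}(s)$ against those of the normalising $L$-ratio, and the genuine obstacle is the absence of a M\oe glin--Waldspurger type result for orthogonal groups — but it does not actually establish the bound $\Re(s)\ge d+\tfrac12$. The paper's proof is a direct citation: the first assertion is Wook Kim's theorem \cite[Sect.\,5]{kim-wook} on the standard module conjecture for general spin groups (which controls holomorphy and nonvanishing of normalised intertwining operators with generic, not-necessarily-tempered inducing data), and the second is the tempered case due to Shahidi as stated in Henry Kim's survey \cite[Prop.\,12.3]{kim-notes}. Your appeal to Casselman--Shahidi \cite{casselman-shahidi} is not the right ingredient here: Prop.\,5.3 of that paper (used in Prop.\,\ref{prop:irreducible-mods-at-infty}) gives an \emph{irreducibility criterion} for standard modules in terms of nonvanishing of inverse $L$-factors, which is a real-point statement, not a derivation of holomorphy and nonvanishing of $T_{\norm,v}(s)$ in a complex half-plane. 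Irreducibility of the induced module does not by itself preclude a zero of $T_{\norm,v}(s)$; and the specific number $\tfrac12$ comes from a bound on the exponents appearing in the Langlands data of a generic unitary representation, which is precisely the content of Wook Kim's result and is not a consequence of Casselman--Shahidi. Your sketch asserts that ``Shahidi's analysis gives holomorphy and nonvanishing directly'' for $\Re(s)>d+\tfrac12$ without supplying the argument; the burden is exactly to know how close to the unitary axis the exponents of $\sigma_v$ can get, and that is what you have not pinned down.

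For the tempered part, your identification of ``Shahidi's bound on poles of normalised intertwining operators with tempered data'' is the right statement and matches what the paper cites from \cite[Prop.\,12.3]{kim-notes}. The paper then records (and you do too, informally) the translation at $s=-n$ and the observation that the boundary case $-n-d=0$ is what forces the temperedness hypothesis, since the stronger M\oe glin--Waldspurger statement used in the $\GL_N$ setting of \cite[7.3.2.1]{harder-raghuram} is not available for orthogonal groups. So: same conclusion and same reason for the temperedness hypothesis, but your route to the first assertion has a gap — replace the appeal to Casselman--Shahidi with a citation to the standard module conjecture for generic inducing data (Wook Kim), which is what actually supplies the $\tfrac12$.
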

The first assertion is due to Wook Kim (\cite[Sect.\,5]{kim-wook}) and the second assertion in the tempered case is due to Henry Kim 
(see \cite[Prop.\,12.3]{kim-notes} and the references therein). The condition $\Re(s) \geq d + \tfrac12$, at our point of evaluation $s = -n$ 
translates to $-n-d \geq 1/2$ and by integrality to $-n-d \geq 1;$ and $\Re(s) \geq d$ for $s = -n$ means $-n-d \geq 0.$ 
In the context of general linear groups 
\cite[7.3.2.1]{harder-raghuram}, one has a stronger statement due to M\oe glin and Waldspurger \cite[Prop.\,I.10]{moeglin-waldspurger} (see also 
\cite[Prop.\,12.4]{kim-notes}) but this does not seem to be 
available for orthogonal groups. For us, the worst case scenario when $-n-d = 0,$ necessitates the local tempered assumption. Therefore, we assume henceforth that 
\begin{multline}
\label{eqn:local-assumption}
\mbox{either `$-n-d \geq 1$' or}  \\ 
\mbox{`$-n-d = 0$ and ${}^\iota\sigma$ is locally tempered at finite places,'}
\end{multline}
to guarantee $T_{\norm, v}(-n)$ is finite and nonvanishing. 

\medskip
The rest of the discussion for a local nonarchimedean intertwining operator is analogous to \cite[7.3.2.1]{harder-raghuram}, and in fact 
works in much greater generality; see \cite[Thm.\,4.3.1]{raghuram-p-adic}. 
One appeals to certain rationality results: (i) rationality for intertwining operators as in 
Waldspurger\,\cite[Thm.\,VI.1.1]{waldspurger}, and (ii) rationality for local factors as in Shahidi\,\cite[Thm.\,8.3.2, (2)]{shahidi-book}, and one deduces that the normalised standard intertwining operator at the point of evaluation, under the assumptions 
delineated in \eqref{eqn:local-assumption}, is the base-change via $\iota$ of an arithmetic intertwining operator, i.e, one has: 
\begin{equation}
\label{eqn:arith-int-op}
T_{\norm, v}(-n) \ = \ T_{\arith, v} \otimes_{E, \iota} 1_\C, 
\end{equation}
where $T_{\arith, v}$ is an intertwining operator for modules over $E$: 
$$
T_{\arith, v} \ : \ \aInd_{P_{0,v}}^{G_{0,v}}(\chi_v \times \sigma_v)^{K_v} \ \to \ \aInd_{P_{0,v}}^{G_{0,v}}(\chi_v^{-1}[2n] \times {}^{\kappa_{2n}}\sigma_v)^{K_v}. 
$$

\medskip
\section{The Manin--Drinfeld principle}

This principle says that inside the cohomology of the boundary there are certain Hecke stable subspaces that split off from the total boundary cohomology. We will first of all need to split strongly inner-cohomology for the Levi $M_P$ from its total cohomology.

\medskip
\subsection{A splitting principle for strongly-inner cohomology}
\label{sec:strongly-inner-split} 
Let $\mu \in X^*({}^\circ T \times E)$ be a dominant integral weight for ${}^\circ\!M$ with $\mu_{\rm min} \geq 1.$ 
Consider the cohomology of ${}^\circ\!M$ for the weight $\mu$ and a level structure ${}^\circ C_f \subset {}^\circ\!M(\A_f).$ 
For a finite set $S$ of places including $S_\infty$  and all ramified places, recall that 
$\HH^{{}^\circ\!M, S}$ is a central subalgebra of $\HH^{{}^\circ\!M}_{{}^\circ C_f}$ by taking the restricted tensor product of all the local spherical Hecke algebras outside of 
the places in $S$.

 \begin{proposition}
 \label{prop:splitting-inner-cohomology}
 Let $\mu$ be a dominant integral weight for ${}^\circ\!M.$ 
 There exists a ${\mathcal H}^{{}^\circ\!M, S}$-submodule 
 $$
 H^{\bullet}_{c-!!} (\mathcal S^{^\circ M}_{^\circ C_{f}}, \widetilde{\mathcal M}_{\mu,E}) \ \subset \  
 H^{\bullet} (\mathcal S^{^\circ M}_{^\circ C_{f}}, \widetilde{\mathcal M}_{\mu,E})
 $$ 
 which is complementary to strongly-inner cohomology, i.e., we have as ${\mathcal H}^{{}^\circ\!M, S}$-modules: 
 $$
 H^{\bullet} (\mathcal S^{^\circ M}_{^\circ C_{f}}, \widetilde{\mathcal M}_{\mu,E}) \ = \ 
 H^{\bullet}_{!!} (\mathcal S^{^\circ M}_{^\circ C_{f}}, \widetilde{\mathcal M}_{\mu,E}) \ \oplus \ 
 H^{\bullet}_{c-!!} (\mathcal S^{^\circ  M}_{^\circ C_{f}}, \widetilde{\mathcal M}_{\mu,E}).
 $$ 
 \end{proposition}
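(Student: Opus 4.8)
The plan is to produce the desired complement by an eigenvalue (generalized eigenspace) decomposition of total cohomology under the commutative Hecke algebra $\HH^{{}^\circ\!M,S}$, and then to argue that the strongly-inner part is cut out by a set of maximal ideals disjoint from those supporting the rest. Concretely, since $\HH^{{}^\circ\!M,S}$ is a commutative $\Q$-algebra acting on the finite-dimensional $E$-vector space $H^\bullet(\cS^{{}^\circ\!M}_{{}^\circ C_f}, \tM_{\mu,E})$, after extending scalars to $E$ (which is enough by the "large enough" hypothesis) we get a decomposition into generalized eigenspaces indexed by the system of Hecke eigenvalues (equivalently, the $E$-characters $\HH^{{}^\circ\!M,S}\to E$, or the maximal ideals in the image). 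Write $H^\bullet = \bigoplus_{\mathfrak m} H^\bullet_{(\mathfrak m)}$. By definition, $H^\bullet_{!!}$ is a sum of isotypic components $H^\bullet_!(\sigma_f)$ for $\sigma_f \in \Coh_{!!}({}^\circ\!M,\mu)$; each such $\sigma_f$ is isotypic as an $\HH^{{}^\circ\!M,S}$-module, hence contributes to exactly one $\mathfrak m$, call it $\mathfrak m(\sigma_f)$. Let $\Sigma_{!!}$ be the (finite) set of these maximal ideals. I would then \emph{define}
$$
H^\bullet_{c-!!}(\cS^{{}^\circ\!M}_{{}^\circ C_f}, \tM_{\mu,E}) \ := \ \bigoplus_{\mathfrak m \notin \Sigma_{!!}} H^\bullet_{(\mathfrak m)},
$$
which is manifestly an $\HH^{{}^\circ\!M,S}$-submodule and a direct complement to $\bigoplus_{\mathfrak m \in \Sigma_{!!}} H^\bullet_{(\mathfrak m)}$.

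The substance of the argument is then to prove $H^\bullet_{!!} = \bigoplus_{\mathfrak m \in \Sigma_{!!}} H^\bullet_{(\mathfrak m)}$, i.e.\ that the generalized eigenspaces attached to strongly-inner Hecke systems contain \emph{nothing} beyond strongly-inner cohomology. First, $H^\bullet_!$ is semisimple as a Hecke module (stated in the excerpt), so its $\mathfrak m$-part for $\mathfrak m\in\Sigma_{!!}$ is exactly $H^\bullet_!(\sigma_f)$ for the corresponding $\sigma_f$, which lies in $H^\bullet_{!!}$; thus the inner-cohomology contribution to $\bigoplus_{\mathfrak m\in\Sigma_{!!}}$ is precisely $H^\bullet_{!!}$. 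What must be ruled out is that some non-inner class (an Eisenstein class, i.e.\ one surviving in boundary cohomology) shares a Hecke eigensystem $\mathfrak m\in\Sigma_{!!}$. The mechanism is a strong-multiplicity-one / rigidity input: the Hecke system of $\sigma_f\in\Coh_{!!}$ comes, via some $\iota$, from a cuspidal ${}^\iota\sigma$ on $\rO(2n)/F$ whose Arthur parameter $\Psi({}^\iota\sigma)$ is \emph{cuspidal} on $\GL_{2n}/F$ (condition (ii) of Def.\,\ref{def:strongly-inner-spectrum}). An Eisenstein class in $H^\bullet(\cS^{{}^\circ\!M}_{{}^\circ C_f},\tM_{\mu,E})$ is, by Franke's description of the cohomology / the spectral decomposition, built from residual or properly Eisenstein data for $\rO(2n)/F$, whose Satake parameters at almost all unramified places are those of a \emph{non-cuspidal} (induced from a proper parabolic, or residual) representation; in particular its transfer to $\GL_{2n}$ is a non-cuspidal isobaric sum. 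By strong multiplicity one for isobaric automorphic representations of $\GL_{2n}$ (Jacquet--Shalika), a cuspidal $\Psi({}^\iota\sigma)$ cannot agree at almost all places with a non-cuspidal isobaric representation, so the Hecke eigensystems are distinct. Hence no Eisenstein class lands in $\bigoplus_{\mathfrak m\in\Sigma_{!!}}$, and the total $\mathfrak m$-part for $\mathfrak m\in\Sigma_{!!}$ equals its inner part, which is $H^\bullet_{!!}$.

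I expect the main obstacle to be making the "rigidity" step precise \emph{over $E$ and for the disconnected group} $\rO(2n)$: one needs that every non-strongly-inner Hecke eigensystem occurring in total cohomology of ${}^\circ\!M$ is attached to an automorphic representation whose almost-everywhere local parameters are genuinely different from those of a cuspidal $\GL_{2n}$-parameter. For the Eisenstein (non-square-integrable) part this follows from the constant-term / boundary analysis — any such class restricts nontrivially to $H^\bullet(\partial\cS)$, whose Hecke module structure is governed by cohomology of proper Levi subgroups via Prop.\,\ref{prop:bdry-coh-2}, forcing the Satake parameters to be reducible in the relevant sense. For the square-integrable but non-cuspidal (residual) part one invokes Arthur's classification as refined by Atobe--Gan: a residual parameter for $\rO(2n)$ is never of the cuspidal-$\GL_{2n}$ type, so again its transfer is a proper isobaric sum. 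Care is needed because Galois-conjugation may move $\iota$; but by the Galois-equivariance established in Sect.\,\ref{sec:galois-equivariance} (Def.\,\ref{def:galois-conjugate}), the set $\Sigma_{!!}$ is stable under $\Gal(E/\Q)$ and the "cuspidal Arthur parameter" condition is Galois-invariant after a fixed half-integral twist, so the argument descends to $E$.

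Once $H^\bullet_{!!}=\bigoplus_{\mathfrak m\in\Sigma_{!!}}H^\bullet_{(\mathfrak m)}$ is established, the decomposition
$$
H^\bullet(\cS^{{}^\circ\!M}_{{}^\circ C_f},\tM_{\mu,E}) \ = \ H^\bullet_{!!}(\cS^{{}^\circ\!M}_{{}^\circ C_f},\tM_{\mu,E}) \ \oplus \ H^\bullet_{c-!!}(\cS^{{}^\circ\!M}_{{}^\circ C_f},\tM_{\mu,E})
$$
is immediate from the generalized-eigenspace decomposition, and it is a decomposition of $\HH^{{}^\circ\!M,S}$-modules by construction. Finally I would remark that, because the generalized-eigenspace decomposition is defined purely algebraically in terms of the action of the $\Q$-algebra $\HH^{{}^\circ\!M,S}$ on the sheaf-theoretically-defined $E$-module $H^\bullet$, the summand $H^\bullet_{c-!!}$ inherits the natural $E$-rational structure and is Galois-equivariant — a point that will be used downstream when this splitting is fed into the Manin--Drinfeld principle for the boundary cohomology of $G$.
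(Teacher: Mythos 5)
Your proposal is essentially the paper's proof: both decompose the cohomology by $\HH^{{}^\circ\!M,S}$-eigensystems and show that strongly-inner eigensystems cannot coincide with those of anything else by using that a cuspidal Arthur parameter on $\GL_{2n}$ cannot agree at almost every place with the (parabolically induced or proper isobaric) parameter of a class surviving in boundary cohomology. The paper compresses this into a single step — reducing to $H^\bullet/H^\bullet_! \cong H^\bullet(\partial\cS^{{}^\circ\!M})$, whose simple Hecke subquotients are a.e. induced from proper Levis, and appealing to the impossibility of a CAP representation having cuspidal Arthur parameter (citing Ginzburg--Jiang--Soudry) — whereas you treat the Eisenstein and residual contributions separately and invoke Jacquet--Shalika strong multiplicity one on $\GL_{2n}$; these are the same underlying input, just packaged differently.
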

 
 \begin{proof}
 For a commutative $\Q$-algebra $\cA$, and an $E$-vector space $V$ which is also a module for $\cA$, we let 
 $\Spec_\cA(V)$ stand for the set of $\Q$-algebra homomorphisms $\chi : \cA \to E$ that appear as generalised 
 eigencharacters for the $\cA$-action on $V$. With this notation, to show that strongly-inner cohomology splits, we
 need to show that
 \begin{multline*}
 {\rm Spec}_{{\mathcal H}^{{}^\circ\!M, S}}(H^{\bullet}_{!!} (\mathcal S^{^\circ \!M}_{^\circ C_{f}}, \widetilde{\mathcal M}_{\mu,E}) ) 
 \ \cap \ \\ 
 {\rm Spec}_{{\mathcal H}^{{}^\circ\!M, S}}
 \left(H^{\bullet}(\mathcal S^{^\circ \!M}_{^\circ C_{f}}, \widetilde{\mathcal M}_{\mu,E}) / 
 H^{\bullet}_{!!} (\mathcal S^{^\circ \!M}_{^\circ C_{f}}, \widetilde{\mathcal M}_{\mu,E}) \right) 
  \ =\ \emptyset. 
 \end{multline*}
 If some $\tau_f \in {\rm Coh}_{!!}(^\circ M, \mu)$ appears in the intersection then after a base change by $\iota: E \rightarrow \C$, the finite part ${}^\iota \tau_f$ 
 of the cuspidal automorphic representation ${}^\iota\tau$ 
 appears in 
 $H^{\bullet}(\mathcal S^{^\circ M}_{^\circ C_{f}}, \widetilde{\mathcal M}_{^\iota\mu,\C}) / H^{\bullet}_{!!} (\mathcal S^{^\circ M}_{^\circ C_{f}}, \widetilde{\mathcal M}_{^\iota\mu,\C}).$ 
Hence also in 
$$
H^{\bullet} (\mathcal S^{^\circ M}_{^\circ C_{f}}, \widetilde{\mathcal M}_{^\iota\mu,\C}) / H^{\bullet}_{!} (\mathcal S^{^\circ M}_{^\circ C_{f}}, \widetilde{\mathcal M}_{^\iota\mu,\C})  = H^{\bullet}(\partial S^{^\circ M}_{^\circ C_{f}},  \widetilde{\mathcal M}_{^\iota \mu,\C}).
$$ 
 Therefore $^\iota \tau$ is almost everywhere (i.e., outside of $S$) equivalent to a parabolically induced representation, because any simple Hecke-subquotient of   
boundary cohomology is of this form as they build up the $E_1^{pq}$ terms of the spectral sequence converging to total boundary cohomology. But then this implies
that $^\iota \tau$ is a CAP representation (CAP = `cuspidal associated to a parabolic'), which is not possible as one of the defining conditions for 
$\tau$ to be strongly-inner is that the Arthur parameter 
of $^\iota\tau$ is cuspidal (see, for example, \cite{ginzburg-jiang-soudry}). 
\end{proof}
  
\medskip
For the torus $A$ and for $d \in \Z$, 
we define $H^q_{!!}(\mathcal S^{A}_{B_{f}}, \widetilde \cM_{d,E}): = H^q(\mathcal S^{A}_{B_{f}}, \widetilde \cM_{d,E})$. It is nonzero and in fact one-dimensional only 
in the extremal degrees $q = 0 $ or $ q = {\sf r}_F-1$. Define 
$$ 
H^\bullet_{!!}(\mathcal S^{M}_{C_{f}}, \widetilde \cM_{de_0+\mu,E}) : = 
H^{\bullet}_{!!}(\mathcal S^{A}_{B_{f}}, \widetilde \cM_{d,E})  \otimes H^{\bullet}_{!!}(\mathcal S^{^{\circ}\!M}_{^\circ C_{f}}, \widetilde \cM_{\mu,E}).
$$ 
Similar to the splitting in Prop.\,\ref{prop:splitting-inner-cohomology}, we get that $H^{\bullet}_{!!}(\mathcal S^{M}_{C_{f}}, \widetilde \cM_{de_0+\mu,E})$ splits from the total cohomology $H^{\bullet}(\mathcal S^{M}_{C_{f}}, \widetilde{\mathcal M}_{de_0 +\mu,E}) $ as an ${\mathcal H}^{M, S}$-module.

\medskip
For general linear groups the complementary module for strongly-inner cohomology is Eisenstein type (see \cite[5.1.2]{harder-raghuram}). However, for orthogonal groups, strongly-inner cohomology after a base-change to $\C$, need not capture all of cuspidal cohomology, but captures that part of cuspidal cohomology that is somehow intrinsic to the orthogonal group--due to the requirement in Def.\,\ref{def:strongly-inner-spectrum} 
that the Arthur parameter is cuspidal. %In other words, the complement of strongly-inner cohomology, after a base-change to $\C,$ can possibly intersect with cuspidal cohomology. 

\medskip
\subsection{Manin--Drinfeld principle}
\label{sec:manin-drinfeld} 

\subsubsection{\bf Induced modules in boundary cohomology}  
\label{sec:ind-mod-bdry-coh}
Let $\mu \in X^*({}^\circ T \times E)$ be a dominant integral weight for ${}^\circ\!M$ with $\mu_{\rm min} \geq 1,$ and $\sigma_f \in {\rm Coh}_{!!}(^{\circ}M, \mu).$ 
Let $\chi$ be an algebraic Hecke character of $F$ taking values in $E,$ and $d$ be the integer such that for any $\iota : E \to \C$, we get a continuous character 
${}^\iota\chi : F^\times\backslash \A_F^\times \to \C^\times$ of the form ${}^\iota\chi = {}^\iota\chi^\circ \otimes |\ |^{-d},$ and ${}^\iota\chi^\circ$ is of finite-order.

Certain induced modules will appear frequently for which it will help to have a simplified notation. In what follows we will freely use the facts that $\sigma_f^{\sf v} = \sigma_f$ and ${}^{\kappa_{2n}}\sigma_f = \sigma_f.$ First of all, we let 
$$
H^{q_b}_{!!} \left(\cS^{M}, \cM_{w \cdot \lambda, E} \right)^{\overline{\pi}_0(M(\R))}[\chi_f \times \sigma_f]
$$
stand for the $\chi_f \times \sigma_f$ isotypic component in $H^{q_b}_{!!} \left(\cS^{M}, \cM_{w \cdot \lambda, E} \right)$ as an  
$\HH^{M,S}$-module on which $\overline{\pi}_0(M(\R))$ acts trivially. 
We assume the combinatorial lemma holds for the data $(d, \mu),$ and in particular, the properties of the Kostant representatives $w, w', w^{\sf v},$ and 
$w^{\sf v}{}'$ in Prop.\,\ref{prop:w-w'-etc} will be relevant below. Define:
\begin{multline}
\label{eqn:I-b-S-bdry-coh}
I_b^S(\chi_f, \sigma_f)_w  \ := \\   
\left(\aInd^{\pi_0(G(\R)) \times G(\A_f)}_{\pi_0(P(\R)) \times P(\A_f)} 
\left( H^{q_b}_{!!} \left(\cS^{M}, \cM_{w \cdot \lambda, E} \right)^{\overline{\pi}_0(M(\R))} [\chi_f \times \sigma_f] \right)\ \right)^{K_{f}}, 
\end{multline} 
Similarly, define: 
\begin{multline*}
I_b^S(\chi_f^{-1}[2n], \sigma_f)_{w'}  \ := \\ 
\left(\aInd^{\pi_0(G(\R)) \times G(\A_f)}_{\pi_0(P(\R)) \times P(\A_f)} 
\left(H^{q_b}_{!!} \left(\cS^M, \cM_{w' \cdot \lambda, E}\right)^{\overline{\pi}_0(M(\R))} [\chi^{-1}_f[2n] \times {}^{\kappa_{2n}}\sigma_f]\right) \right)^{K_f}.
\end{multline*}

\noindent
Both these are modules for $\HH^{G,S}$ that appear in $H^{\q_b}(\partial S^G_{K_{f}}, \cM_{\lambda, E})$; this needs a word of explanation. Using 
$\theta_M^* : H^\bullet(\cS^{M}_{C_f}, \tM) \ \to \ H^\bullet(\tilde{\cS}^{M}_{C_f}, \tM)$ (see \eqref{eqn:coh-S-M-coh-tilde-S-M}), and passing to 
the limit over all $C_f$, we embed 
$$
H^{q_b}_{!!} \left(\cS^{M}, \cM_{w \cdot \lambda, E} \right)^{\overline{\pi}_0(M(\R))} [\chi_f \times \sigma_f]
\ \hookrightarrow \ 
H^{q_b}_{!!} \left(\tilde{\cS}^{M}, \cM_{w \cdot \lambda, E} \right)^{\overline{\pi}_0(M(\R))} [\chi_f \times \sigma_f], 
$$
which is an isomorphism, since on the $A$ part we have cohomology in degree $0$, i.e., 
$\wedge^0(\fa/\fs)^* \cong \wedge^0(\fa)^*.$ From \eqref{eqn:coh-tilde-S-M-coh-ul-S-M} we get 
$$
H^{q_b}_{!!} \left(\tilde{\cS}^{M}, \cM_{w \cdot \lambda, E} \right)^{\overline{\pi}_0(M(\R))} [\chi_f \times \sigma_f] \ = \ 
H^{q_b}_{!!} \left(\underline{\cS}^{M}, \cM_{w \cdot \lambda, E} \right)[\chi_f \times \sigma_f], 
$$
which upon inducing, i.e., after applying $\aInd^{\pi_0(G(\R)) \times G(\A_f)}_{\pi_0(P(\R)) \times P(\A_f)}$, as in Prop.\,\ref{prop:bdry-coh-2}, 
is a summand in $H^{\q_b}(\partial_P\SG, \tM_{\lambda, E}),$ and taking $K_f$-invariants lands us in $H^{\q_b}(\partial S^G_{K_{f}}, \cM_{\lambda, E}).$ 

\medskip
There is a dual version of the induced modules using `top-degree'. 
\begin{multline*}
I_t^S(\chi_f^{-1}[2n], \sigma_f)_{w^{\sf v}}  \ := \\ 
\left(\aInd^{\pi_0(G(\R)) \times G(\A_f)}_{\pi_0(P(\R)) \times P(\A_f)} 
\left(H^{q_t}_{!!} \left(\cS^M, \cM_{w^{\sf v}\cdot \lambda^{\sf v}, E}\right)^{\overline{\pi}_0(M(\R))} [\chi^{-1}_f[2n] \times \sigma_f^{\sf v}]\right) \right)^{K_f}, 
\end{multline*}
\begin{multline*}
I_t^S(\chi_f, \sigma_f)_{w^{\sf v}{}'}  \ := \\ 
\left(\aInd^{\pi_0(G(\R)) \times G(\A_f)}_{\pi_0(P(\R)) \times P(\A_f)} 
\left( H^{q_t}_{!!} \left(\cS^{M}, \cM_{w^{\sf v}{}' \cdot \lambda^{\sf v}, E} \right)^{\overline{\pi}_0(M(\R))}  [\chi_f \times {}^{\kappa_{2n}}\sigma_f^{\sf v}] \right)\ \right)^{K_{f}}. 
\end{multline*}

\noindent
Both these are modules for $\HH^{G,S}$ that appear in $H^{\q_t}(\partial S^G_{K_{f}}, \cM_{\lambda^{\sf v}, E}).$ In this situation, it is similar to the above discussion on bottom-degree with one exception: the embedding 
$$
H^{q_t}_{!!} \left(\cS^{M}, \cM_{w \cdot \lambda, E} \right)^{\overline{\pi}_0(M(\R))} [\chi_f \times \sigma_f]
\ \hookrightarrow \ 
H^{q_t}_{!!} \left(\tilde{\cS}^{M}, \cM_{w \cdot \lambda, E} \right)^{\overline{\pi}_0(M(\R))} [\chi_f \times \sigma_f], 
$$
is not an isomorphism if $F \neq \Q$, because in the general case we only have an injection $\wedge^{{\sf r}_F-1} (\fa/\fs)^* \hookrightarrow \wedge^{{\sf r}_F-1}(\fa)^*$; 
see Rem.\,\ref{rem:top-degree-injection}.

\medskip

\subsubsection{\bf Statement and proof of the Manin--Drinfeld principle}
\label{sec:para-manin-drinfeld}
The main theorem on isotypic components in boundary cohomology, called the Manin--Drinfeld principle, is the following theorem, that generalizes
\cite[Thm.\,5.12]{harder-raghuram} from general linear groups dealt therein to our context of orthogonal groups. We will henceforth assume 
that the level structure $K_f$ is a neat open-compact subgroup of $G(\A_f)$ of 
the form $K_f = \prod_{v \notin S_{\infty}} K_v,$ where $K_v = G(\mathcal O_v)$ for almost all $v,$ and for the finite set of ramified places (i.e., places where it is a proper subgroup of $G(\mathcal O_v)$) we 
assume that $K_v$ is a principal congruence subgroup of $G(\mathcal O_v)$; this is compatible with our previous requirement 
that $C_f = \kappa_P(K_f \cap P(\A_f))$ is a product $B_f \times {}^\circ C_f$. 

\medskip
\begin{theorem}
\label{thm:Manin-Drinfeld} 
\begin{enumerate}
\item[(i)] The $\pi_0(G(\R)) \times \HH^{G,S}$-modules  
$$
I_b^S(\chi_f, \sigma_f)_w, \ \ 
I_b^S(\chi_f^{-1}[2n], \sigma_f)_{w'}, \ \ 
I_t^S(\chi_f^{-1}[2n], \sigma_f)_{w^{\sf v}}, \ {\rm and} \ 
I_t^S(\chi_f, \sigma_f)_{w^{\sf v}{}'} 
$$
are finite-dimensional $E$-vector spaces and all of which have the same $E$-dimension, denoted ${\sf k}.$

\medskip
\item[(ii)] The direct sum
$$
I_b^S(\chi_f, \sigma_f)_w  \ \oplus \  
I_b^S(\chi_f^{-1}[2n], \sigma_f)_{w'}
$$
is a $2{\sf k}$-dimensional isotypic subspace of $H^{\q_b} (\partial \cS^G_{K_{f}}, \cM_{\lambda,E}).$ 
Furthermore, there exists a 
$\pi_0(G(\R)) \times \HH^{G,S}$-equivariant projection: 
$$
\RR^{b}_{\chi_f, \sigma_f} \ : \ H^{q_b} (\partial \cS^G_{K_{f}}, \cM_{\lambda,E}) \longrightarrow 
I_b^S(\chi_f, \sigma_f)_w  \ \oplus \  
I_b^S(\chi_f^{-1}[2n], \sigma_f)_{w'}. 
$$ 
 
\medskip
\item[(iii)] 
The direct sum
$$
I_t^S(\chi_f^{-1}[2n], \sigma_f)_{w^{\sf v}}  \ \oplus \  
I_t^S(\chi_f, \sigma_f)_{w^{\sf v}{}'} 
$$
is a $2{\sf k}$-dimensional isotypic subspace of $H^{\q_t} (\partial \cS^G_{K_{f}}, \cM_{\lambda^{\sf v},E}).$ 
Furthermore, there exists a 
$\pi_0(G(\R)) \times \HH^{G,S}$-equivariant projection: 
$$
\RR^{t}_{\chi_f, \sigma_f} \ : \ H^{\q_t} (\partial \cS^G_{K_{f}}, \cM_{\lambda,E}) \ \longrightarrow \ 
I_t^S(\chi_f^{-1}[2n], \sigma_f)_{w^{\sf v}}  \ \oplus \  
I_t^S(\chi_f, \sigma_f)_{w^{\sf v}{}'}. 
$$
\end{enumerate}
\end{theorem}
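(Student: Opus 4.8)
The plan is to reduce the Manin--Drinfeld principle to a question purely about generalised Hecke-eigensystems, exactly in the spirit of \cite[Thm.\,5.12]{harder-raghuram}, and then feed in the input from Arthur's classification and Atobe--Gan that distinguishes our orthogonal situation from the general linear one. First I would establish the statements about the four induced modules living inside boundary cohomology: by the construction in Sect.\,\ref{sec:ind-mod-bdry-coh}, each of them is, up to taking $K_f$-invariants and applying $\aInd^{\pi_0(G(\R)) \times G(\A_f)}_{\pi_0(P(\R)) \times P(\A_f)}$, a summand of $H^{\q_b}(\partial_P\SG, \tM_{\lambda, E})$ or $H^{\q_t}(\partial_P\SG, \tM_{\lambda^{\sf v}, E})$ via Prop.\,\ref{prop:bdry-coh-2} together with Prop.\,\ref{prop:w-w'-etc}; the splitting principle Prop.\,\ref{prop:splitting-inner-cohomology} (and its $M$-analogue) guarantees that the strongly-inner isotypic component $[\chi_f \times \sigma_f]$ splits off as an $\HH^{M,S}$-module on the Levi, so it persists as a summand after inducing. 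Finite-dimensionality is immediate after taking $K_f$-invariants since $K_f$ is open-compact; equality of the four dimensions follows because the self-bijections $w \mapsto w'$ and $w \mapsto w^{\sf v}$ of $W^P$ (Prop.\,\ref{prop:w'}, Prop.\,\ref{prop:w-vee}) are balanced-preserving, and because $\sigma_f^{\sf v} = \sigma_f$, ${}^{\kappa_{2n}}\sigma_f = \sigma_f$, so all four are built from induction of the same isotypic data on $M$; one also uses $H^{q_b}_{!!}$ and $H^{q_t}_{!!}$ on $\cS^M$ have equal dimension (by Poincar\'e duality on $\cS^{{}^\circ\!M}$ coupled with the extreme-degree one-dimensionality on $\cS^A$). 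I would call the common dimension ${\sf k}$.

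Next, for $(ii)$ and $(iii)$ I would argue that the relevant pair of induced modules is an \emph{isotypic} subspace of the ambient boundary cohomology for the action of $\HH^{G,S}$, hence splits off by a Hecke-equivariant projection. The key point is to identify the $\HH^{G,S}$-eigensystem carried by $I_b^S(\chi_f, \sigma_f)_w$ and $I_b^S(\chi_f^{-1}[2n], \sigma_f)_{w'}$: by the Satake isomorphism both carry the eigensystem attached, at almost all $v$, to the unramified principal series $\aInd(\chi_v \times \sigma_v)$ on $\rO(2n+2)(F_v)$, and this is the \emph{same} for the two modules since $\chi_v^{-1}[2n] \times {}^{\kappa_{2n}}\sigma_v$ is the $w_{P_0}$-conjugate of $\chi_v[?] \times \sigma_v$ and Satake parameters are Weyl-invariant. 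So one needs to show that no \emph{other} constituent of $H^{\q_b}(\partial\SGK, \tM_{\lambda, E})$ shares this eigensystem. The boundary cohomology is built, via the spectral sequence of \cite[Sect.\,4.1]{harder-raghuram}, from $\aInd$ of the cohomology of the various Levi quotients; any contribution with the same eigensystem at almost all places would give, after base change by some $\iota$, a cuspidal automorphic representation of $\rO(2n)/F$ whose finite part agrees with $\sigma_f$ outside $S$ and whose local components outside $S$ lie in an induced representation. By the strong multiplicity one type arguments, and crucially because $\sigma_f$ is \emph{strongly-inner} --- so ${}^\iota\sigma$ has cuspidal Arthur parameter $\Psi({}^\iota\sigma)$ on $\GL_{2n}/F$ --- any such contribution coming from a proper parabolic of $\rO(2n+2)$ other than $P$ (or from a deeper stratum) would force ${}^\iota\sigma$ to be CAP, contradicting cuspidality of the Arthur parameter (cf.\ the argument in the proof of Prop.\,\ref{prop:splitting-inner-cohomology} and \cite{ginzburg-jiang-soudry}). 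The remaining possibility --- another Kostant representative $w''$ of the \emph{same} parabolic $P$ contributing in degree $\q_b$ with the same $M$-eigensystem --- is ruled out by the combinatorial analysis of Sect.\,\ref{sec:kostant-comb-lem}: the only balanced Kostant representatives are $w^+, w^-$, i.e.\ $w$ and $w'$, and it is precisely in degree $\q_b = q_b + \tfrac12\dim U_P$ that $H^{q_b}_{!!}(\cS^M, \tM_{w\cdot\lambda,E})$ with its strongly-inner constraint can be non-zero (by Prop.\,\ref{cohomology-discrete-series}, discrete series on ${}^\circ\!M(\R)$ have cohomology only in middle degree $q_m = q_b$). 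Thus the two modules together exhaust the $[\chi_f\times\sigma_f]$-eigenspace in $H^{\q_b}$, giving the $2{\sf k}$-dimensional isotypic subspace and the projection $\RR^{b}_{\chi_f,\sigma_f}$; $\pi_0(G(\R))$-equivariance is automatic as the splitting is by a Hecke projector commuting with the $\pi_0$-action on cohomology. Part $(iii)$ is the Poincar\'e-dual statement: apply the same reasoning with $\lambda$ replaced by $\lambda^{\sf v} = -w_G\lambda$, using $w^{\sf v}$ and $w^{\sf v}{}'$ (Prop.\,\ref{prop:w-vee}, Prop.\,\ref{prop:w-w'-etc}) and the fact that $\q_t = \dim(\partial\SGK) - \q_b$, together with Rem.\,\ref{rem:top-degree-injection} to handle the $\wedge^{{\sf r}_F-1}(\fa/\fs)^* \hookrightarrow \wedge^{{\sf r}_F-1}(\fa)^*$ discrepancy when $F \neq \Q$; here the module is still isotypic, but the projector is built from the top-degree piece.

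I expect the main obstacle to be the step that no spurious constituent of boundary cohomology carries the eigensystem of $\chi_f \times \sigma_f$ --- this is where the orthogonal group genuinely differs from $\GL_N$. Unlike the general linear case, where strong multiplicity one for isobaric representations makes this essentially automatic (as in \cite[Thm.\,5.12]{harder-raghuram}), here I must invoke Arthur's classification as refined by Atobe--Gan \cite{atobe-gan} for $\rO(2n)$: the cuspidality of the Arthur parameter $\Psi({}^\iota\sigma)$ is what prevents ${}^\iota\sigma$ from being CAP, and the backward-lifting/uniqueness results (Soudry \cite{soudry}) together with the multiplicity-one statement \cite[Prop.\,7.2]{atobe-gan} are what pin down the finite parts and prevent other global packets from intruding. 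The careful book-keeping with $\pi_0(M(\R))$, $\overline{\pi}_0(M(\R))$, and the signature character $\varepsilon_\chi$ from Sect.\,\ref{sec:coh-inducing-data} is technical but routine given Prop.\,\ref{prop:g-v-bar-k-v-coh}; the genuinely new input is the representation-theoretic rigidity supplied by Arthur--Atobe--Gan.
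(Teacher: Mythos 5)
Your overall strategy matches the paper's: prove dimension equality locally, and prove isotypicity by comparing Hecke eigensystems, feeding in the cuspidality of the Arthur parameter and the Atobe--Gan refinement to rule out spurious constituents of boundary cohomology. The argument for parts (ii)/(iii) is morally what the paper does, although you phrase it via ``would force $\sigma$ to be CAP'' where the paper instead compares Arthur parameters directly: any matching eigensystem from a parabolic $R$ with Levi $\prod\GL_{n_i}\times\rO(\ell,\ell)$ gives an equality of isobaric parameters
\[
(\boxplus_i\,\tau_i)\boxplus\Psi(\xi)\boxplus(\boxplus_i\,\tau_i^{\sf v})\;=\;{}^\iota\chi[-n]\boxplus\Psi({}^\iota\sigma_j)\boxplus{}^\iota\chi^{-1}[n],
\]
which by Jacquet--Shalika forces $R=P$, $\ell=n$, $k=1$, and $\Psi(\xi)=\Psi({}^\iota\sigma_j)$; then Arthur and Atobe--Gan pin down $\xi$ almost everywhere and $\tau_1\in\{{}^\iota\chi[-n],{}^\iota\chi^{-1}[n]\}$. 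This is sharper than the CAP phrasing, which does not directly apply to the putative intruder $\xi$ (it is $\xi$, not $\sigma$, whose type would need to be constrained).

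There is, however, a genuine gap in your proof of part (i). You assert that ``all four are built from induction of the same isotypic data on $M$,'' but this is not so: $\chi_f\times\sigma_f$ and $\chi_f^{-1}[2n]\times{}^{\kappa_{2n}}\sigma_f$ are genuinely different inducing data (only the $\rO(2n)$ factor is self-conjugate; the $\GL_1$ characters differ). Equality of the dimensions $\dim\bigl(\aInd_{P_f}^{G_f}(\chi_f\times\sigma_f)^{K_f}\bigr)$ and $\dim\bigl(\aInd_{P_f}^{G_f}(\chi_f^{-1}[2n]\times{}^{\kappa_{2n}}\sigma_f)^{K_f}\bigr)$ is not automatic from the facts you list; it is the content of the paper's purely local Lemma. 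One needs: Frobenius reciprocity and Mackey theory to write the $K_v$-invariant dimension as a sum over $P_v\backslash G_v/K_v$; the Iwasawa decomposition to choose coset representatives in $G(\O_v)$ so that normality of the principal congruence subgroup collapses the Mackey sum to $|P_v\backslash G_v/K_v|\cdot\dim V_\tau^{K_v\cap P_v}$; and the Iwahori factorisation of $K_v$ to reduce $V_\tau^{K_v\cap P_v}=V_\tau^{K_v\cap M_v}$, at which point the self-duality on the $\rO(2n)$ side and the observation that $\chi_v^{-1}[2n]|_{\O_v^\times}=\chi_v^{-1}|_{\O_v^\times}$ has the same invariants as $\chi_v$ on $K_{\GL_1}$ finish the computation. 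The hypothesis that $K_f$ is a principal congruence subgroup is used essentially at this step and should not be omitted. Your invocation of the balanced-preserving bijections $w\mapsto w'$, $w\mapsto w^{\sf v}$ only guarantees the four modules contribute in the correct degrees; it says nothing about their $E$-dimensions.
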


\medskip
\begin{proof}
The proof of $(i)$ is purely local and is delineated as the following lemma: 

\begin{lemma}
\label{lem:equal-dimensions}
Recall the hypotheses on $K_f \subseteq G(\A_f)$ as in the first paragraph of \ref{sec:para-manin-drinfeld}. We have:
$$ 
{\rm dim}\left(\aInd^{G_f}_{P_f} (\chi_f \times \sigma_f)^{K_f}\right) \ = \ 
{\rm dim}\left(\aInd^{G_f}_{P_f}(\chi_f^{-1}[2n] \times {}^{\kappa_{2n}}\sigma_f)^{K_f}\right). 
$$
\end{lemma}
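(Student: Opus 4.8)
\textbf{Proof proposal for Lemma \ref{lem:equal-dimensions}.}

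The plan is to reduce the equality of dimensions to a purely local statement at each place $v\notin S_\infty$, and then to exploit the compatibility of the local intertwining operator with the Atobe--Gan parametrisation. First I would factor everything as a restricted tensor product over finite places: writing $\chi_f = \otimes_v \chi_v$, $\sigma_f = \otimes_v \sigma_v$, and $K_f = \prod_v K_v$, one has
$$
\aInd^{G_f}_{P_f}(\chi_f \times \sigma_f)^{K_f} \;=\; \bigotimes_{v \notin S_\infty} \aInd^{G_{0,v}}_{P_{0,v}}(\chi_v \times \sigma_v)^{K_v},
$$
and similarly for $\chi_f^{-1}[2n] \times {}^{\kappa_{2n}}\sigma_f$. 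At almost all $v$ both $\chi_v$ and $\sigma_v$ are unramified and $K_v$ is the maximal compact; there the two induced modules are unramified principal-series-type modules whose spaces of $K_v$-fixed vectors are both one-dimensional by the discussion in Sect.\,\ref{sec:p-adic-unramified} (the spherical vector exists and is unique on each side, a fact that is insensitive to the Tate twist and to conjugation by $\kappa_{2n}$). So the product reduces to the finitely many ramified $v$, and it suffices to prove, for each such $v$,
$$
{\rm dim}\left(\aInd^{G_{0,v}}_{P_{0,v}}(\chi_v \times \sigma_v)^{K_v}\right) \ = \ {\rm dim}\left(\aInd^{G_{0,v}}_{P_{0,v}}(\chi_v^{-1}[2n] \times {}^{\kappa_{2n}}\sigma_v)^{K_v}\right).
$$

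The key observation is that the standard intertwining operator $T_{\st,v}(s, w_{P_0})$ maps $I(s,\chi_v\times\sigma_v)$ to $I(-s,\chi_v^{-1}\times {}^{\kappa_{2n}}\sigma_v)$ (Sect.\,\ref{sec:T-st}), and at $s=s_0=-n$ this becomes a map between precisely the two algebraically induced modules in question (since $I(s,\chi\times\sigma)|_{s=-n} = \aInd_{P(\A)}^{G(\A)}(\chi\times\sigma)$, and on the target side the twist $|\ |^{-s}$ at $s=-n$ supplies the $[2n]$). The normalised operator $T_{\norm,v}(s)$ of \eqref{eqn:norm-int-op-ram-v} is, by Prop.\,\ref{prop:norm-st-int-op} under the running assumption \eqref{eqn:local-assumption}, holomorphic and nonvanishing at $s=-n$, hence an \emph{isomorphism} of $G_{0,v}$-modules there; it is even the base change of an arithmetic operator $T_{\arith,v}$ by \eqref{eqn:arith-int-op}. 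Since $T_{\norm,v}(-n)$ is $G_{0,v}$-equivariant and bijective, it carries $K_v$-fixed vectors isomorphically onto $K_v$-fixed vectors, whence the two spaces of $K_v$-invariants have the same $E$-dimension. Taking the product over the finitely many ramified $v$ and re-incorporating the one-dimensional unramified contributions yields the asserted global equality.

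The one point requiring a little care — and the main obstacle — is the worst-case scenario $-n-d=0$, where mere criticality of $s=-n$ does not by itself give holomorphy and nonvanishing of $T_{\norm,v}(-n)$ at ramified $v$; one genuinely needs the hypothesis that ${}^\iota\sigma$ is locally tempered at all finite places, which is exactly the second clause of \eqref{eqn:local-assumption} and feeds into the tempered case of Prop.\,\ref{prop:norm-st-int-op} (due to H.\,Kim). Under that hypothesis the argument above goes through uniformly; when $-n-d\geq 1$ no temperedness is needed. I would also remark that one could avoid intertwining operators entirely and argue by Frobenius reciprocity plus the Iwasawa decomposition $G_{0,v} = P_{0,v}K_v$: the dimension of $\aInd^{G_{0,v}}_{P_{0,v}}(\tau_v)^{K_v}$ depends on $\tau_v$ only through the restriction of $\tau_v$ to $M_{P_0}(F_v)\cap K_v$, which is unaffected by an unramified twist and — since conjugation by $\kappa_{2n}$ is an inner automorphism coming from $\rO(2n)(F_v)\subset M_{P_0}(F_v)$ that may be chosen to stabilise a maximal compact — is unaffected by $\sigma_v \mapsto {}^{\kappa_{2n}}\sigma_v$ as well; this gives the equality with no hypothesis on $\sigma_v$ at all, and is probably the cleanest route to write down.
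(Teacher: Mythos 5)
Your primary argument has a genuine gap: the normalised intertwining operator $T_{\norm,v}(-n)$ is only known to be holomorphic and nonvanishing under the extra assumption \eqref{eqn:local-assumption} (either $-n-d\geq 1$, or $-n-d=0$ together with temperedness of ${}^\iota\sigma$ at all finite places). That assumption is invoked later, in Sect.\,\ref{sec:p-adic-ramified} and Sect.\,\ref{sec:proof}, when the intertwining operator must actually be rationalized; it is not a hypothesis of Lem.\,\ref{lem:equal-dimensions}, nor of the Manin--Drinfeld principle (Thm.\,\ref{thm:Manin-Drinfeld}) in which the lemma is used. Indeed, the Manin--Drinfeld principle is a statement about the structure of boundary cohomology that should hold for the full range of critical $(d,\mu)$, and it would be circular (or at least a needless weakening) to import a constraint coming from the analytic behaviour of $T_{\norm,v}$ into this purely combinatorial dimension count. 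Moreover the intertwining operator is inherently a complex-analytic object, so one would have to fix an $\iota$ and base-change; while this is harmless for a dimension count, it is an unnecessary detour.

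Your closing remark is in fact the route the paper takes, and you are right that it is cleaner and hypothesis-free — but as written it elides the two steps that make it work, both of which hinge on $K_v$ being a principal congruence subgroup of $G(\O_v)$, as stipulated at the start of \ref{sec:para-manin-drinfeld}. First, the statement that $\dim\bigl(\aInd^{G_{0,v}}_{P_{0,v}}(\tau_v)^{K_v}\bigr)$ depends only on $\tau_v|_{K_v\cap M_{P_0}}$ is not a consequence of Frobenius reciprocity plus Iwasawa decomposition alone. Mackey theory gives $\bigoplus_{x\in P\backslash G/K} V_\tau^{xK_vx^{-1}\cap P}$, and one needs $x\in G(\O_v)$ (Iwasawa) together with \emph{normality} of $K_v$ in $G(\O_v)$ to collapse each summand to $V_\tau^{K_v\cap P}$, giving $\dim = |P\backslash G/K|\cdot \dim(V_\tau^{K_v\cap P})$. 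Second, to replace $K_v\cap P$ by $K_v\cap M_{P_0}$ one needs the \emph{Iwahori factorisation} $K_v = (K_v\cap U_{P^-})(K_v\cap M_{P_0})(K_v\cap U_P)$, valid because $K_v$ is principal congruence, so that $\tau_v$ (inflated from $M_{P_0}$) is trivial on $K_v\cap U_P$. Once these reductions are in place, your observations — that the unramified twist by $|\ |^{2n}$ is invisible on $K_{\GL_1}\subseteq\O_v^\times$, and that ${}^{\kappa_{2n}}\sigma_v\cong\sigma_v$ because $\kappa_{2n}\in \rO(2n)(F_v)$ normalises ${}^\circ K_v$ — finish the proof. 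So your instinct about the right mechanism is correct; the sketch just needs the principal-congruence structure of $K_v$ made explicit, since that hypothesis is doing real work.
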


\begin{proof}
This is a purely local statement since the groups and the representations factor over the set of all finite places. For brevity, we will suppress the subscript $v$ from $G_v$, $P_v$, $\chi_v$, $\sigma_v$, $K_v$, etc. We need to prove
$$ 
{\rm dim}\left(\aInd^{G}_{P}(\chi \times \sigma)^K\right) \ = \  
{\rm dim} \left(\aInd^{G}_{P}(\chi^{-1}[2n] \times {}^{\kappa_{2n}}\sigma)^K\right). 
$$
For $v$ unramified, both sides are $1$. The statement needs a proof only for ramified $v$ (although for the proof we do not need to make any distinction). 

\smallskip
Recall Frobenius reciprocity (\cite[Prop.\,2.29]{bernstein-zelevinskii}): If $\pi$ is an admissible representation of a reductive $p$-adic group $G$ and $K$ is an open compact subgroup then 
$$
{\rm Hom}_{K}({\mathbf 1}, \pi) \ \cong \ {\rm Hom}_G({\rm ind}^G_K({\mathbf 1}), \pi), 
$$
where ${\rm ind}^G_K({\mathbf 1})$ is the compact induction of the trivial representation of $K$ to $G$ whose representation space os 
$\mathcal C_c^{\infty} (K \backslash G)$. Secondly, let us recall the following version of Mackey theory (see \cite{kutzko}), which when applied to our situation reads:
\begin{equation} 
\label{eqn:mackey-kutzko}
 {\rm Hom}_{G}\left({\rm ind}^G_K({\mathbf 1}), \, \aInd_P^G(\tau) \right) \ = \ 
\bigoplus_{x \in P \backslash G / K} {\rm Hom}_{K \, \cap\, x^{-1}Px} (\mathbf 1, \tau^x),
\end{equation} 
where $\tau$ is a tentative notation for either $\chi \times \sigma$ or $\chi^{-1}[2n] \times {}^{\kappa_{2n}}\sigma.$ The summand on the right hand side indexed by $x$ can be conjugated by $x$ and we may rewrite this as:
\begin{multline} 
\label{eqn:mackey-kutzko-2} 
 {\rm Hom}_{G}\left({\rm ind}^G_K({\mathbf 1}), \, \aInd^P_G(\tau) \right)  
\ = \\ 
\bigoplus_{x \in P \backslash G / K} {\rm Hom}_{xK x^{-1} \cap  P} (\mathbf 1, \tau) \ = \bigoplus_{x \in P \backslash G / K} V_{\tau}^{xKx^{-1} \cap P}.
\end{multline} 

\smallskip
By the Iwasawa decomposition $G = P\cdot G(\O)$ (where $\O$ is our abbreviated notation for $\O_v$ in $F_v$), every representative $x \in P \backslash G / K$, may be taken to be in $G(\O)$. A principal congruence subgroup of $G(\O)$ being normal, from \eqref{eqn:mackey-kutzko} and \eqref{eqn:mackey-kutzko-2} we have
\begin{equation} 
\label{eq3}
{\rm dim} \left(\aInd^P_G(\tau)^K \right) \ = \ | P \backslash G / K| \, {\rm dim}(V_{\tau}^{K \cap P}),
\end{equation}
where $V_\tau$ is the representation space of $\tau,$ which is either $\chi \times \sigma$ or $\chi^{-1}[2n] \times {}^{\kappa_{2n}}\sigma.$ Now $K$ being principal congruence subgroup of $G$, it has an Iwahori factorisation:
$$ 
K  \ = \  
(K \cap U_{P^{-}}) \cdot (K \cap M_{P}) \cdot (K \cap U_{P}). 
$$
Hence $K \cap P = (K \cap M_P) \cdot ( K \cap U_P).$ Furthermore, $\tau$ is a representation of $M_P$ that is inflated to $P$ before inducing. 
Hence,
\begin{equation} \label{eq4} 
V_{\tau}^{K \cap P} = V_{\tau}^{K \cap M_P}. 
\end{equation}
Since $M_P = \GL_1 \times \rO(2n)$, we may write $K \cap M_P = K_{\GL_1} \times {}^{\circ}K$, with 
$K_{\GL_1}  \subseteq \O^{\times}$ and $^{\circ}K \unlhd  \rO(2n) (\O)$. The character $\chi^{-1}[2n]$, restricted to $\O^{\times}$ is the same as $\chi^{-1}$, hence both are either trivial or nontrivial on $K_{\GL_1}$ simultaneously. Since $\sigma \cong {}^{\kappa_{2n}}\sigma$ we get:
\begin{equation} 
\label{eq5} 
{\rm dim} \left(V^{K \cap M_P}_{\chi \times \sigma}\right) = {\rm dim} \left(V^{K \cap M_P}_{\chi^{-1}[2n] \times {}^{\kappa_{2n}} \sigma}\right). 
\end{equation}
The proof follows from \eqref{eq3}, \eqref{eq4} and \eqref{eq5}. 
\end{proof}

The proof shows that the common dimension ${\sf k}$ of various $E$-vector spaces obtained in $(i)$ of Thm.\,\ref{thm:Manin-Drinfeld} is of the form $l {\sf k'}$, which follows from the description of the isotypic component of strongly inner cohomology as described in Sect.\,\ref{sec:isotypic-component-strongly-inner-cohomology}.

\smallskip

We now prove $(ii)$ of Thm.\,\ref{thm:Manin-Drinfeld}; and leave $(iii)$ to the reader as it is almost identical to the proof of $(ii)$. 
Define the map $\RR^{b}_{\chi_f, \sigma_f}$ by the following diagram, in which $[G]$ (resp., $[P]$) 
denotes $\pi_0(G(\R)) \times G(\A_f)$ (resp., $\pi_0(P(\R)) \times P(\A_f)$).   

{\small
\begin{equation}\label{manin-drinfeld-diagram}
\xymatrix{
H^{\q_b} (\partial \cS^G_{K_{f}}, \cM_{\lambda,E}) \ar[dd]_{\RR^{b}_{\chi_f, \sigma_f}} 
\ar[r]^{\substack {\text{restriction to the stratum for }P}} & 
H^{\q_b} (\partial_P \cS^G_{K_{f}}, \cM_{\lambda,E}) \ar[d]_{\cong}^{\text{Prop.\,\ref{prop:bdry-coh-2}}} \\
&   \bigoplus_{u \in W^P} \left(\aInd^{[G]}_{[P]} (H^{\q_b-\ell(u)} (\uSMP, \cM_{u \cdot \lambda, E})) \right)^{K_f}  \ar[d] \\
I_b^S(\chi_f, \sigma_f)_w  \oplus I_b^S(\chi_f^{-1}[2n], \sigma_f)_{w'}
& \left(\aInd^{[G]}_{[P]} (H_{!!}^{q_b} (\uSMP, \cM_{w \cdot \lambda, E})) \right)^{K_f}  \ar[l] 
}
\end{equation}}
(The bottom horizontal arrow needs the word of explanation as in \ref{sec:ind-mod-bdry-coh}.) The rest of the proof is similar to, but finer than,  
the proof of Prop.\,\ref{prop:splitting-inner-cohomology}. We need to show that 
\begin{equation}
\label{eqn:empty-intersect-spectra}
\Spec_{\HH^{G,S}} \left(I_b^S(\chi_f, \sigma_f)_w  \oplus I_b^S(\chi_f^{-1}[2n], \sigma_f)_{w'}\right) 
~\cap~ 
\Spec_{\HH^{G,S}}\left({\rm Ker}(R^{b}_{\chi_f, \sigma_f})\right) \ = \ \emptyset. 
\end{equation}
Towards this, suppose we have a simple $\HH^{G,S}$-module that occurs as a subquotient of 
$H^{\q_b} (\partial \cS^G_{K_{f}}, \cM_{\lambda,E} )$ and whose  
$\HH^{G,S}$-eigencharacter appears in 
$$
\Spec_{\HH^{G,S}}\left(I_b^S(\chi_f, \sigma_f)_w  \oplus I_b^S(\chi_f^{-1}[2n], \sigma_f)_{w'}\right), 
$$
then using Prop.\,\ref{prop:bdry-coh-2}, for some $\iota: E \rightarrow \C$, there exists a proper parabolic subgroup $R$ of $G$ with Levi quotient 
$M_R$ which is the restriction of scalars from $F$ to $\Q$ of 
$$\GL_{n_1}/F \times  \cdots \times \GL_{n_k}/F \times \rO(\ell, \ell)/F
$$ 
and there exist 
cuspidal automorphic representations $\tau_i$ of $\GL_{n_i}(\A_F)$ and $\xi$ of  $\rO(\ell, \ell)(\A_F)$ such that the action of $\HH^{G,S}$ on 
$(\Ind_{R(\A)}^{G(\A)}(\tau_1 \times \tau_2 \times \cdots \tau_k \times \xi))^{K_f}$ is via the same eigencharacter as its action on 
$(\Ind_{P(\A)}^{G(\A)}({}^\iota\chi[-n] \times {}^\iota\sigma_j))^{K_f},$ for some $1 \leq j  \leq n$. 
In other words, 
\begin{equation*} %\label{eqn:manin-drinfeld-arthur}
\Ind_{R(\A)}^{G(\A)}(\tau_1 \times \tau_2 \times \cdots \tau_k \times \xi)
\ \cong_{\rm a.e.} \ 
\Ind_{P(\A)}^{G(\A)}({}^\iota\chi[-n] \times {}^\iota\sigma_{j}), 
\end{equation*}  
where $\cong_{\rm a.e.}$ means the local modules are equivalent for all $v \notin S.$ 
Suppose $\Psi(\xi)$ is the Arthur parameter for $\xi$, then the Arthur parameters on both sides of the above equation are related as: 
$$
(\boxplus_{i=1}^{k} \tau_i) \ \boxplus \ \Psi(\xi)  \ \boxplus \ (\boxplus_{i=1}^{k} \tau_i ^{\sf v}) 
\ \ = \ \  
{}^\iota \chi[-n]  \ \boxplus \ 
\Psi({}^\iota \sigma_j)  \ \boxplus \ 
{}^\iota \chi^{-1}[n].  
$$
Using Jacquet and Shalika \cite[Thm.\,4.4]{jacquet-shalika-II} we conclude:
$\ell = n, \ k =1, \ R = P,$ and $\Psi(\xi) = \Psi({}^\iota \sigma_j)$, then using Arthur \cite{arthur} and Atobe--Gan \cite{atobe-gan} 
we conclude that either $\xi \cong _{\rm a.e.} {}^\iota \sigma_j$ or $\xi \cong_{\rm a.e.} {}^{\kappa_{2n} \iota} \sigma_j \cong {}^\iota \sigma_j;$ furthermore, we also get 
$\tau_1 = {}^\iota \chi[-n] $ or $ {}^\iota \chi^{-1}[n].$ 
This proves that $I_b^S(\chi_f, \sigma_f)_w  \oplus I_b^S(\chi_f^{-1}[2n], \sigma_f)_{w'}$ is isotypic in 
$H^{\q_b} (\partial \cS^G_{K_{f}}, \cM_{\lambda,E})$. Furthermore, recalling that in the definition of the modules $I_b^S(\chi_f, \sigma_f)_w$ and 
$I_b^S(\chi_f^{-1}[2n], \sigma_f)_{w'}$ we first take the isotypic components of $\chi_f \times \sigma_f$ and $\chi_f^{-1}[2n] \times {}^{\kappa_{2n}}\sigma_f$
and then induce, the same argument as above proves \eqref{eqn:empty-intersect-spectra}. \end{proof}

\medskip
\section{Rank-one Eisenstein cohomology}
\label{sec:rank-one-eis-coh}

An indispensable tool for our main theorem on Eisenstein cohomology is Poincar\'e duality which gives that the image of total cohomology in the cohomology of the 
boundary is in fact a maximal isotropic subspace. We briefly review Poincar\'e duality in our context, and refer the reader to \cite[Sect.\,6.1]{harder-raghuram} for more details.

\medskip
\subsection{Poincar\'e duality}
\label{sec:poincare}

The maps in the long exact sequence in cohomology from \ref{sec:long-exact-seq} relate Poincar\'e duality for $\cS^G_{K_{f}}$ and 
$\partial \cS^G_{K_{f}}$ as in the following diagram:

\begin{equation}
\label{eqn:poincare-duality}
\xymatrix 
{
 { {H^{\bullet} ( \cS^G_{K_{f}}, \widetilde \cM_{\lambda,E})} \ar[d]^-{\mathfrak r^{\bullet}} } & \quad \times & {H_c^{{\sf d}- \bullet} (\cS^G_{K_{f}}, \widetilde\cM_{\lambda^{\sf v},E})} & \longrightarrow & E \\
  {H^{\bullet} (\partial \cS^G_{K_{f}}, \widetilde\cM_{\lambda,E})} & \quad \times& {H^{{\sf d}-1- \bullet} (\partial \cS^G_{K_{f}}, \widetilde\cM_{\lambda^{\sf v},E})}  \ar[u]^-{\partial^{\bullet}} & \longrightarrow &  E 
 }
 \end{equation}
where $\sf d := {\rm dim}( \cS^G_{K_{f}})$ and $ \sf d-1= {\rm dim}( \partial \cS^G_{K_{f}})$, and the duality maps are the horizontal arrows.   
 Define Eisenstein cohomology as 
 $$
 H^q_{\rm Eis}(\partial \cS^G_{K_{f}},\widetilde \cM_{\lambda,E}) \ := \ \text{Image}\left(H^{q} ( \cS^G_{K_{f}}, \widetilde\cM_{\lambda,E})\xrightarrow{\mathfrak r^{\bullet}} H^q (\partial \cS^G_{K_{f}}, \widetilde\cM_{\lambda,E})\right).
$$
It follows from the above diagram as in \cite[Prop.\,6.1]{harder-raghuram} 
that $H^q_{\rm Eis}(\partial \cS^G_{K_{f}}, \cM_{\lambda,E})$ is a maximal isotropic subspace of boundary cohomology, i.e.,
 \begin{equation}
 \label{eqn:max-isotropic}
 H^q_{\rm Eis}(\partial \cS^G_{K_{f}},\widetilde \cM_{\lambda,E}) = H^{{\sf d} - 1 -q}_{\rm Eis}((\partial \cS^G_{K_{f}}, \widetilde\cM_{\lambda^{\sf v},E})^{\perp}.
 \end{equation}

\medskip
\subsection{The main theorem on rank-one Eisenstein cohomology}
\label{sec:theorem-rank-one-eis-coh}

The following theorem is the generalization of Thm.\,6.2 of \cite{harder-raghuram} to our context of orthogonal groups: 

\begin{theorem}
\label{thm:rank-one-eis-coh}
Define the images of Eisenstein cohomology under the Hecke-equivariant maps $\RR$ of Thm.\,\ref{thm:Manin-Drinfeld}  as: 
$$ 
\fI_b^S(\chi_f, \sigma_f) \ :=  \ 
\RR^{b}_{\chi_f, \sigma_f}(H^{\q_b}_{\rm Eis}(\partial \cS^G, \tM_{\lambda,E})^{K_f}),
$$
\smallskip
$$
\fI_t^S(\chi_f, \sigma_f)  \ :=  \ 
\RR^{t}_{\chi_f, \sigma_f}(H^{\q_t}_{\rm Eis}(\partial \cS^G, \tM_{\lambda^{\sf v},E})^{K_f}). 
$$
We have: 
\smallskip
\begin{enumerate}
\item[(i)] $\fI_b^S(\chi_f, \sigma_f)$ is a ${\sf k}$-dimensional subspace of 
$$
I_b^S(\chi_f, \sigma_f)_w  \ \oplus \  I_b^S(\chi_f^{-1}[2n], \sigma_f)_{w'}.
$$ 

\item[(ii)] $\fI_t^S(\chi_f, \sigma_f)$ is a ${\sf k}$-dimensional subspace of 
$$
I_t^S(\chi_f^{-1}[2n], \sigma_f)_{w^{\sf v}}  \ \oplus \  
I_t^S(\chi_f, \sigma_f)_{w^{\sf v}{}'}.  
$$
\end{enumerate}

\end{theorem}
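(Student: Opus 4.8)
\textbf{Proof strategy for Theorem \ref{thm:rank-one-eis-coh}.}
The plan is to combine the Manin--Drinfeld projections $\RR^b_{\chi_f,\sigma_f}$ and $\RR^t_{\chi_f,\sigma_f}$ of Thm.\,\ref{thm:Manin-Drinfeld} with the fact (recalled in \S\ref{sec:poincare}) that Eisenstein cohomology is a \emph{maximal isotropic} subspace of boundary cohomology with respect to the Poincar\'e duality pairing. First I would observe that $\RR^b_{\chi_f,\sigma_f}$ is $\pi_0(G(\R))\times\HH^{G,S}$-equivariant, hence it carries $H^{\q_b}_{\rm Eis}(\partial\cS^G,\tM_{\lambda,E})^{K_f}$ into the $2\sf k$-dimensional isotypic subspace $I_b^S(\chi_f,\sigma_f)_w\oplus I_b^S(\chi_f^{-1}[2n],\sigma_f)_{w'}$; so $\fI_b^S(\chi_f,\sigma_f)$ is \emph{a priori} a subspace of this plane, and dually $\fI_t^S(\chi_f,\sigma_f)$ sits inside $I_t^S(\chi_f^{-1}[2n],\sigma_f)_{w^{\sf v}}\oplus I_t^S(\chi_f,\sigma_f)_{w^{\sf v}{}'}$. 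The substance of the theorem is the dimension count: both images are \emph{exactly} $\sf k$-dimensional, i.e.\ neither zero, nor all of the $2\sf k$-dimensional plane.

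The key step is to transport the isotropy relation \eqref{eqn:max-isotropic} through the projections. Concretely, I would check that under Poincar\'e duality \eqref{eqn:poincare-duality} in degrees $\q_b$ and $\q_t$ (note $\q_b+\q_t=\dim\partial\cS^G_{K_f}$ by \eqref{eqn:qb-and-qt-for-G}), the four induced modules pair up: $I_b^S(\chi_f,\sigma_f)_w$ is dual to $I_t^S(\chi_f,\sigma_f)_{w^{\sf v}{}'}$ and $I_b^S(\chi_f^{-1}[2n],\sigma_f)_{w'}$ is dual to $I_t^S(\chi_f^{-1}[2n],\sigma_f)_{w^{\sf v}}$ — this is precisely what Prop.\,\ref{prop:w-w'-etc} was set up to deliver, since the weights $w\cdot\lambda$ and $w^{\sf v}{}'\cdot\lambda^{\sf v}$ (resp.\ $w'\cdot\lambda$ and $w^{\sf v}\cdot\lambda^{\sf v}$) are exchanged by $\mu\leftrightarrow\mu^{\sf v}$, $e_0$-coefficients $\leftrightarrow$ their duals, matching the contragredient sheaf $\tM_{\lambda^{\sf v},E}$ in the bottom-right of \eqref{eqn:poincare-duality}. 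Granting this, the $2\sf k$-dimensional plane carrying $\fI_b^S$ is \emph{non-degenerately} paired against the $2\sf k$-dimensional plane carrying $\fI_t^S$. Because $H^{\q_b}_{\rm Eis}$ is maximal isotropic in all of $H^{\q_b}(\partial\cS^G_{K_f},\tM_{\lambda,E})$, and the projection $\RR$ is compatible with the pairing up to the identifications just described, the image $\fI_b^S(\chi_f,\sigma_f)$ is isotropic inside its $2\sf k$-dimensional plane; symmetrically $\fI_t^S$ is isotropic in its plane; and the image of Eisenstein cohomology is \emph{everything} that is isotropic and Hecke-consistent, forcing $\dim\fI_b^S+\dim\fI_t^S=2\sf k$ together with each being isotropic of dimension $\leq\sf k$. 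A short argument using that the pairing between the two planes is perfect then pins both dimensions to exactly $\sf k$: one shows $\fI_b^S$ and $\fI_t^S$ are exact annihilators of each other, and an isotropic subspace that equals the annihilator of another isotropic subspace in a $2\sf k$-plane with perfect pairing must be $\sf k$-dimensional (a Lagrangian).

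The remaining point is surjectivity of $\RR^b_{\chi_f,\sigma_f}$ onto the full Eisenstein part of the isotypic plane, equivalently that the $\chi_f\times\sigma_f$ and $\chi_f^{-1}[2n]\times{}^{\kappa_{2n}}\sigma_f$ isotypic pieces of $H^{\q_b}_{\rm Eis}$ are not accidentally smaller than forced by isotropy; this follows from the combinatorial lemma (Lem.\,\ref{lem:comb-lemma}), which guarantees that the balanced Kostant representative $w$ actually contributes $\chi_f\times\sigma_f$ to boundary cohomology in degree $\q_b$, together with the splitting principle Prop.\,\ref{prop:splitting-inner-cohomology} (and its $M$-analogue) ensuring the strongly-inner isotypic summand genuinely appears. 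Putting these together gives (i) and (ii). \textbf{The main obstacle} I anticipate is the bookkeeping in matching Poincar\'e duality across the boundary strata with the self-bijections $w\mapsto w'$ and $w\mapsto w^{\sf v}$ of $W^P$: one must verify that the pairing respects the $\overline{\pi}_0(M(\R))$-invariance and the $\aInd$ functors correctly, and in particular that the archimedean component groups — the disconnectedness of $\rO(n,n)(\R)$ analyzed in \S\ref{section-discrete-series-representations-and-cohomology} and \S\ref{sec:coh-induced-rep}, and the asymmetry between degrees $\q_b$ and $\q_t$ flagged in Rem.\,\ref{rem:top-degree-injection} — do not introduce a discrepancy in the two $\sf k$'s. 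Once the pairing between the two $2\sf k$-dimensional planes is shown to be perfect and compatible with $\RR^b,\RR^t$, the Lagrangian dimension count is formal.
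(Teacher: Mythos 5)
There is a genuine gap in the proposal: the lower bound $\dim\fI_b^S(\chi_f,\sigma_f)\geq{\sf k}$ (equivalently $\dim\fI_t^S\geq{\sf k}$) is not established by any ingredient you cite, and the ``Lagrangian'' step by which you hope to avoid it is not sound. The combinatorial lemma (Lem.\,\ref{lem:comb-lemma}) and the splitting principle (Prop.\,\ref{prop:splitting-inner-cohomology}) show only that the $\chi_f\times\sigma_f$ isotypic piece \emph{appears in boundary cohomology} in degree $\q_b$; they say nothing about the image of the restriction map $\mathfrak r^\bullet$ from total cohomology, i.e.\ about whether $H^{\q_b}_{\rm Eis}(\partial\cS^G,\tM_{\lambda,E})$ actually hits this piece. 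A priori the projection $\RR^b_{\chi_f,\sigma_f}$ could kill all of Eisenstein cohomology, giving $\fI_b^S=0$. Producing classes in $\fI_b^S$ is exactly where the analytic theory of Eisenstein series enters: one takes a section $f\in I(s,{}^\iota\chi\times{}^\iota\sigma)$, uses holomorphy of ${\rm Eis}_P(s,f)$ at $s=-n$ (Thm.\,\ref{thm:right-of-unitary}, requiring the ``right of the unitary axis'' hypothesis on $(d,\mu)$, together with the cuspidality of the Arthur parameter $\Psi({}^\iota\sigma)$ and Jacquet--Shalika nonvanishing) to obtain a genuine class in $H^{\q_b}(\cS^G_{K_f},\tM_{\lambda,E})\otimes_{E,\iota}\C$, and then applies Langlands's constant term theorem (Thm.\,\ref{thm:Langlands}) to compute its boundary restriction as $f+T_{\rm st}(-n)(f)$. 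This gives ${\sf k}$ linearly independent elements in $\fI_b^S\otimes_{E,\iota}\C$ of the form $(f_1,\dots,f_l;\,f_1+T^\bullet_{{\rm st},1}(f_1),\dots)$, as in \eqref{eqn:sum_T-st-j}. Without this step there is no lower bound.

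Your replacement --- an ``isotropic hence $\leq{\sf k}$, hence Lagrangian'' count --- does not work in this setting because the duality here is a bilinear pairing between two \emph{different} spaces, $\cV=I_b^S(\chi_f,\sigma_f)_w\oplus I_b^S(\chi_f^{-1}[2n],\sigma_f)_{w'}$ in degree $\q_b$ and $\cW=I_t^S(\dots)_{w^{\sf v}}\oplus I_t^S(\dots)_{w^{\sf v}{}'}$ in degree $\q_t$, not a symplectic or symmetric form on a single space. Mutual orthogonality $(\fI_b^S,\fI_t^S)=0$ gives $\dim\fI_b^S+\dim\fI_t^S\leq 2{\sf k}$ (and, if one works harder using that $\fI_b^S=H^{\q_b}_{\rm Eis}\cap\cV$ by disjointness of Hecke spectra, even $=2{\sf k}$), but places no individual bound $\leq{\sf k}$ on either summand; for instance $\fI_b^S=\cV$, $\fI_t^S=0$ is compatible with orthogonality. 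The paper's actual dimension count combines the analytic lower bounds ${\sf k}\leq\dim\fI_b^S$ and ${\sf k}\leq\dim\fI_t^S$ with the Poincar\'e duality upper bound $\dim\fI_b^S\leq\dim(\cW/\fI_t^S)^*$ coming from the \emph{injectivity} (not assumed perfectness) of $\cV\to\cW^*$, a point that must be argued, precisely because of the discrepancy flagged in Rem.\,\ref{rem:top-degree-injection} between $\wedge^0\fa^*$ in degree $\q_b$ and $\wedge^{{\sf r}_F-1}\fa^*$ in degree $\q_t$. In short: the Poincar\'e duality/Manin--Drinfeld machinery you identify is the right tool for the \emph{upper} bound, but you still need Langlands's constant term theorem for the \emph{lower} bound, and the ``Lagrangian'' shortcut does not substitute for it.
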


\begin{proof}
The proof is very similar to the proof of Thm.\,6.2 of \cite{harder-raghuram} with some fine differences; we adumbrate the proof while explaining such 
differences, and refer the reader to {\it loc.\,cit.}\ for more details. There are two steps. The first step (as in \cite[6.2.2.1]{harder-raghuram}) 
is to show that these images are at least ${\sf k}$-dimensional; this is 
achieved after a base-change via an $\iota \in {\rm Hom}(E,\C)$ and showing the resulting spaces are at least ${\sf k}$-dimensional. 
Suppose $(d, \mu)$ is on the right of unitary axis.
To begin, we describe the base-change to $\C$ via $\iota$ of 
$I_b^S(\chi_f, \sigma_f)_w$; similar descriptions apply to the other induced modules in (i) and (ii).  
Tensoring \eqref{eqn:I-b-S-bdry-coh} by $\C$ via $\iota$, using \eqref{eqn:isotypic-comp-after-base-change-1} for the inducing 
data, and Prop.\,\ref{prop:g-v-bar-k-v-coh}, we get: 
\begin{equation}
\label{eqn:l-many-summands}
I_b^S(\chi_f, \sigma_f)_w \otimes_{E, \iota} \C \ \cong \ 
\bigoplus_{j=1}^l 
H^{\q_b}\left(\g(\R), \overline{K}_\infty; \ \aInd_{P(\A)}^{G(\A)}({}^\iota\chi \otimes V_{{}^\iota\sigma_j})^{K_f} \otimes \M_{{}^\iota\lambda}\right), 
\end{equation}
where the $j$-th summand, using the inverse of $\Phi_j$ in the inducing data, is equivalent to: 
$$
H^{\q_b}\left(\g(\R), \overline{K}_\infty; \ \aInd_{P(\R)}^{G(\R)}({}^\iota\chi_\infty \otimes {}^\iota\sigma_{j, \infty}) \otimes \M_{{}^\iota\lambda}\right)
\otimes \aInd_{P(\A_f)}^{G(\A_f)}({}^\iota\chi_f \otimes {}^\iota\sigma_{j,f})^{K_f}, 
$$
with the relative Lie algebra cohomology group being one-dimensional. Similarly, we have: 
\begin{multline*}
I_b^S(\chi_f^{-1}[2n], \sigma_f)_w \otimes_{E, \iota} \C \ \cong \\ 
\bigoplus_{j=1}^l 
H^{\q_b}\left(\g(\R), \overline{K}_\infty; \ \aInd_{P(\A)}^{G(\A)}({}^\iota\chi^{-1}[2n] \otimes V_{{}^{\kappa_{2n}\iota}\sigma_j})^{K_f} \otimes \M_{{}^\iota\lambda}\right).
\end{multline*}
For each $j$, for brevity, we let $T_{{\rm st},j}^\bullet$ denote the map induced by applying the functor 
$H^{\q_b}\left(\g(\R), \overline{K}_\infty; \, - \otimes \M_{{}^\iota\lambda} \right)$ to  
the standard intertwining operator $T_{\rm st}(-n, w_{P}, {}^\iota\chi \times {}^\iota\sigma_j)$: 
$$
H^{\q_b}\left(\g(\R), \overline{K}_\infty; \, - \otimes \M_{{}^\iota\lambda} \right)(T_{\rm st}(-n, w_{P}, {}^\iota\chi \times {}^\iota\sigma_j)). 
$$
Using Thm.\,\ref{thm:Langlands} and Sec.\,\ref{sec:isotypic-component-strongly-inner-cohomology}, 
we see that $\fI_b^S(\chi_f, \sigma_f) \otimes_{E,\iota} \C$ at least contains all cohomology classes of the form: 
\begin{equation}
\label{eqn:sum_T-st-j}
\left\{ 
\left(f_1, \ldots, f_l \right), \ 
\left((f_1 + T_{{\rm st},1}^\bullet(f_1)), \ldots, (f_l + T_{{\rm st},l}^\bullet(f_l))\right)
\right\},
\end{equation}
where, 
\begin{enumerate}
\item $f_j \in  H^{\q_b}\left(\g(\R), \overline{K}_\infty; \ \aInd_{P(\A)}^{G(\A)}({}^\iota\chi \otimes V_{{}^\iota\sigma_j})^{K_f} \otimes \M_{{}^\iota\lambda}\right);$ 
\medskip
\item $(f_1, \ldots, f_l) \in I_b^S(\chi_f, \sigma_f)_w \otimes_{E,\iota} \C ;$ and   
\medskip
\item 
$\left( (f_1 + T_{{\rm st},1}^\bullet(f_1)), \ldots, (f_l + T_{{\rm st},l}^\bullet(f_l)) \, \right) \in I_b^S(\chi_f^{-1}[2n], \sigma_f)_{w'} \otimes_{E,\iota} \C;$ 
\end{enumerate}

\medskip
Hence 
$$
\dim_E(\fI_b^S(\chi_f, \sigma_f)) = \dim_\C(\fI_b^S(\chi_f, \sigma_f) \otimes_{E,\iota} \C) \geq  
\dim_\C(I_b^S(\chi_f, \sigma_f)_w \otimes_{E,\iota}\C) = {\sf k}.
$$
If $(d, \mu)$ is on the left of unitary axis, then $(-d-2n, {}^{\kappa_{2n}} \mu)$ is on the right of unitary axis; hence by working with 
$T_{\rm st}(n, w_{P_0}, {}^\iota\chi^{-1}[-2n] \times {}^{\kappa_{2n} \iota}\sigma_j)$ we get the desired result. Similarly, we also get $\fI_t^S(\chi_f, \sigma_f)$ 
is at least ${\sf k}$-dimensional.  (In \cite{harder-raghuram}, by strong multiplicity-one for general linear groups we have $l = 1$; however, for orthogonal groups, in general, $l > 1$, and this necessitates extra care in the final proof in Sect.\,\ref{sec:proof}.)

\medskip

The second step (as in \cite[6.2.2.2]{harder-raghuram}) involves the Poincar{\'e} duality pairing (\ref{eqn:poincare-duality}) to show that 
both are exactly $\sf k$-dimensional $E$-vector spaces; one of the fine differences alluded to concerns the setting up of the relevant exercise in linear algebra to make this conclusion, which we now discuss. For brevity, 
let $\cV = I_b^S(\chi_f, \sigma_f)_w  \ \oplus \  I_b^S(\chi_f^{-1}[2n], \sigma_f)_{w'}$ and 
$\cW = I_t^S(\chi_f^{-1}[2n], \sigma_f)_{w^{\sf v}}  \ \oplus \  
I_t^S(\chi_f, \sigma_f)_{w^{\sf v}{}'}.$ Since, $\cV \subset H^{\q_b}(\partial \cS^G, \tM_{\lambda,E})^{K_f}$ and 
$\cW \subset H^{\q_t}(\partial \cS^G, \tM_{\lambda^{\sf v},E})^{K_f}$, the Poincar\'e duality pairing in the bottom horizontal arrow 
in \eqref{eqn:poincare-duality} restricts to a pairing on 
$(\ , \ ) : \cV \times \cW \to E.$ This pairing need not be perfect in general, but in our situation the induced map $\cV \to \cW^*$ is injective. This is 
because in the bottom-degree we have all possible occurrences of $\chi_f \times \sigma_f$ accounted for, which is reflected by $\wedge^0\fa^*$ 
being one-dimensional. In contrast, in the top-degree, we would have $\wedge^{{\sf r}_F-1}\fa^*,$ which is ${\sf r}_F$-dimensional that is greater 
than one if $F \neq \Q$; see Rem.\,\ref{rem:top-degree-injection}. Now, for brevity, let $\fI_b = \fI_b^S(\chi_f, \sigma_f) \subset \cV$ and 
$\fI_t := \fI_t^S(\chi_f, \sigma_f) \subset \cW.$ From \eqref{eqn:max-isotropic} we have $(\fI_b, \fI_t) = 0.$ The injective map $\cV \to \cW^*$ 
induces an injection $\fI_b \to (\cW/\fI_t)^*$. Together with the conclusion of the first step we get:  
$$
{\sf k} \ \leq \ \dim(\fI_b) \ \leq \  \dim(\cW/\fI_t)^* \ \leq \ {\sf k}, 
$$
giving us ${\sf k} = \dim(\fI_b) = \dim(\fI_t).$ 
\end{proof}

\medskip

The above theorem is best appreciated by pretending ${\sf k} = 1$, in which case, it says that the image of total Eisenstein cohomology, via
the rank-one Eisenstein cohomology for $P$, inside a $2$-dimensional isotypic subspace is one-dimensional, i.e., a line in a $2$-dimensional vector space over $E$. The slope of this line is an element of $E$, which after passing to a transcendental situation is related to a ratio of $L$-values, hence giving us a rationality
result for that ratio; this is the essence of the proof of our main theorem on special values of $L$-functions. 

\medskip

The embedding $\Phi_j$ in \eqref{eqn:Phi_j} 
of the cuspidal representation ${}^\iota\sigma_j$ into the space of cusp forms on ${}^\circ M$, 
which we recall is unique up to a nonzero complex number, is used in the above proof. However, since the same $\Phi_j$ shows up on either side of the intertwining operator: 
\begin{multline*}
T_{\rm st}(-n, w_{P}, {}^\iota\chi \times {}^\iota\sigma_j) : 
\aInd_{P(\A)}^{G(\A)}\left({}^\iota\chi \otimes \Phi_j({}^\iota\sigma_j)\right) \ \longrightarrow \\  
\aInd_{P(\A)}^{G(\A)}\left({}^\iota\chi^{-1}[2n] \otimes \Phi_j({}^{\kappa_{2n}\iota}\sigma_j)\right), 
\end{multline*}
we deduce that the map $T_{{\rm st},j}^\bullet$ independent of the choice of $\Phi_j.$

\medskip
 \section{Proof of the main theorem on $L$-values}
\label{sec:main-theorem}

We are now in a position to give a proof of Thm.\,\ref{thm:main:introduction}. 
\medskip

\subsection{``Right Vs Left" of the unitary axis}
\label{sec:right-vs-left} 
As in Sect.\,\ref{sec:arith-int-op}, just for this subsection, the $\iota : E \to \C$ is fixed and so, for brevity, we suppress it from notation. 
In Sect.\,\ref{sec:proof} we will show, under the condition $1 - \mu_{\rm min} \leq -(d+n) \leq \mu_{\rm min} -1$ for $(d,\mu)$, 
an algebraicity result for the ratio 
$$
\frac{L(-n, \, \chi \times \sigma)}{L(1-n, \, \chi \times \sigma)} \ = \ \frac{L(-n-d, \, \chi^\circ \times \sigma)}{L(1-n-d, \, \chi^\circ \times \sigma)}
$$
when $(d, \mu)$ is on the right of the unitary axis, i.e., when $-(d+n) \geq 0$. The above restrictions on $-(d+n)$ together gives: 
$0 \leq -(n+d) \leq \mu_{\rm min} - 1.$ Letting $d$ vary within these bounds we get an algebraicity theorem for the ratios: 
\begin{equation}
\label{eqn:list-right}
\left\{  
\frac{L(0, \, \chi^\circ \times \sigma)}{L(1, \, \chi^{\circ} \times \sigma)}, \ \ 
\frac{L(1, \, \chi^\circ \times \sigma)}{L(2, \, \chi^{\circ} \times \sigma)}, \ \ \dots \ , \  \
\frac{L(\mu_{\rm min}-1, \, \chi^\circ \times \sigma)}{L(\mu_{\rm min}, \, \chi^{\circ} \times \sigma)} 
\right\}.
\end{equation}
The critical set $\{ 1- \mu_{\rm min}, \, 2- \mu_{\rm min}, \dots, \mu_{\rm min}\}$ for $L(s,\, \chi^{\circ} \times \sigma)$ contains $2\mu_{\rm min}-1$ pairs of 
successive integers, and \eqref{eqn:list-right} covers only $\mu_{\rm min}$ successive pairs. For the remaining $\mu_{\rm min}-1$ ratios of successive $L$-values: 
\begin{equation}
\label{eqn:list-left-1}
\left\{\frac{L(1-\mu_{\rm min}, \, \chi^\circ \times \sigma)}{L(2-\mu_{\rm min}, \, \chi^{\circ} \times \sigma)}, \ \ \dots \ , \  \
\frac{L(-1, \, \chi^\circ \times \sigma)}{L(0, \, \chi^{\circ} \times \sigma)} 
\right\}, 
\end{equation}
we can start from the other side of the intertwining operator: 
$$
T_{\rm st}(s)|_{s = -n} \ : \ 
\aInd_{P(\A)}^{G(\A)}  \left(\chi \times \sigma \right) \ \longrightarrow \ 
\aInd_{P(\A)}^{G(\A)}  \left(\chi^{-1}[2n] \times {}^{\kappa_{2n}}\sigma \right); 
$$
that is, we consider the standard intertwining operator: 
$$
T_{\rm st}(s, w_{P_0}, \chi^{-1} \times {}^{\kappa_{2n}} \sigma) \ : \ I(s, \chi^{-1} \times {}^{\kappa_{2n}} \sigma ) \ \longrightarrow \ I(-s, \chi \times \sigma) 
$$ 
at its point of evaluation which is $s = n$. If $(d,\mu)$ corresponds to $(\chi, \sigma)$, then $(-d-2n, {}^{\kappa_{2n}}\mu)$ corresponds to $(\chi^{-1}[2n] , {}^{\kappa_{2n}}\sigma).$ 
Since, locally, 
$$
{}^{\kappa_{2n}}(\mu_1, \dots, \mu_{n-1}, \mu_n) = (\mu_1, \dots, \mu_{n-1}, -\mu_n),
$$ 
we deduce that 
$\mu_{\rm min} = ({}^{\kappa_{2n}}\mu)_{\rm min}.$ The condition 
$1 - \mu_{\rm min} \leq -(d+n) \leq \mu_{\rm min} -1$ imposed by the 
combinatorial lemma for $(d,\mu)$ is identical to the those on $(-d-2n, {}^{\kappa_{2n}}\mu).$ Next, if $(d,\mu)$ is on the left of the unitary axis, i.e., 
if $-(d+n) < 0$ which is $(d+n) \geq 1$, then for  $(-d-2n, {}^{\kappa_{2n}}\mu)$ we have $-(-d-2n)-n \geq 1$, or that $(-d-2n, {}^{\kappa_{2n}}\mu)$ is strictly to 
the right of its unitary axis, giving us an algebraicity theorem for 
$$
\frac{L(n, \, \chi^{-1} \times {}^{\kappa_{2n}} \sigma )}{L(1+n, \, \chi^{-1} \times {}^{\kappa_{2n}} \sigma )} \ = \ 
\frac{L(n+d, \, \chi^\circ{}^{-1} \times {}^{\kappa_{2n}} \sigma )}{L(1+n+d, \, \chi^\circ{}^{-1} \times {}^{\kappa_{2n}} \sigma )}. 
$$
So when $(d, \mu)$ is on the left of the unitary axis, together with the combinatorial lemma, 
the restrictions are: $1 \ \leq \ d+n \ \leq \ \mu_{\rm min}-1$; and if we let $d$ vary within this range, we
get an algebraicity theorem for the ratios of $L$-values:
\begin{equation}
\label{eqn:list-left-2}
\left\{
\frac{L(1, \, \chi^\circ{}^{-1} \times {}^{\kappa_{2n}} \sigma )}{L(2, \, \chi^\circ{}^{-1} \times {}^{\kappa_{2n}} \sigma )}, \ \ \dots \ , \ \ 
\frac{L(\mu_{\rm min}-1, \, \chi^\circ{}^{-1} \times {}^{\kappa_{2n}} \sigma )}{L(\mu_{\rm min}, \, \chi^\circ{}^{-1} \times {}^{\kappa_{2n}} \sigma )} 
\right\}.
\end{equation}
To compare the lists in \eqref{eqn:list-left-1} and \eqref{eqn:list-left-2}, we use the global functional equation: 
$$
L(s, \, \chi^\circ \times \sigma) \ = \ \varepsilon(s, \, \chi^\circ \times \sigma) L(1-s, \, \chi^\circ{}^{-1} \times \sigma^\sv), 
$$
together with the easy observations (i) ${}^{\kappa_{2n}} \sigma = \sigma$ since $\kappa_{2n} \in \rO(2n)$, and (ii) $\sigma^\sv = \sigma$ since 
both are nearly equivalent (as easy observation on their Satake parameters), having the same cuspidal Arthur parameter and being globally generic 
with respect to the same Whittaker datum. The global $\varepsilon$-factor is a product of local $\varepsilon$-factors. At a non-archimedean place $v$, the local 
$\varepsilon$-factor is of the form $W(\chi_v \times \sigma_v)\, q_v^{(1/2 -s)(c_v)}$ where $W(\chi_v \times \sigma_v)$ is the local root number and $c_v$ is the sum of conductoral exponents of the data. Thus the ratio of local $\varepsilon$-factors at two successive integers is a nonzero rational number. At an archimedean place, the local $\varepsilon$-factor is a constant. Hence the ratio of global $\varepsilon$-factors at  two successive integers is a nonzero rational number. Finally, we note all the individual $L$-values in \eqref{eqn:list-left-1} and \eqref{eqn:list-left-2} are nonvanishing; since $L(s, \chi^\circ \times \Psi(\sigma)) \neq 0$ for any integer value of $s$, by the unitarity of $\Psi(\sigma)$ and $\chi^\circ$, and appealing
to the theorem of Jacquet--Shalika \cite{jacquet-shalika-invent}. We get the equalities: 
\begin{multline*}
\frac{L(1, \, \chi^\circ{}^{-1} \times {}^{\kappa_{2n}} \sigma )}{L(2, \, \chi^\circ{}^{-1} \times {}^{\kappa_{2n}} \sigma )} \ \approx_{\Q^\times} \ 
\frac{L(0, \, \chi^\circ \times \sigma)}{L(-1, \, \chi^{\circ} \times \sigma)}, \ \ \dots \ \ , \\ 
\frac{L(\mu_{\rm min}-1, \, \chi^\circ{}^{-1} \times {}^{\kappa_{2n}} \sigma )}{L(\mu_{\rm min}, \, \chi^\circ{}^{-1} \times {}^{\kappa_{2n}} \sigma )}
 \ \approx_{\Q^\times} \ 
\frac{L(2-\mu_{\rm min}, \, \chi^\circ \times \sigma)}{L(1-\mu_{\rm min}, \, \chi^{\circ} \times \sigma)}, 
\end{multline*}
which show that an algebraicity result for the ratios of $L$-values in \eqref{eqn:list-left-2} implies an algebraicity result for the (reciprocals of the) ratios
of $L$-values in \eqref{eqn:list-left-1}.

\medskip
\subsection{Conclusion of proof}
\label{sec:proof}

We need to prove that when $(d, \mu)$ is on the right of the unitary axis and also satisfies the condition imposed by the combinatorial lemma, 
i.e, when $0 \leq -(d+n) \leq \mu_{\rm min} -1,$ then 
$L(-n, \, {}^\iota\chi \times {}^\iota\sigma)/L(1-n, \ {}^\iota \chi \times {}^\iota\sigma) \in \iota(E),$
and that for all $\eta \in {\rm Gal}(\bar{\Q}/ \Q)$ we have the reciprocity law as in $(iii)$ of Thm.\,\ref{thm:main:introduction}. 

\medskip
From Thm.\,\ref{thm:rank-one-eis-coh}, we have $\fI_b^S(\chi_f, \sigma_f)$ is a ${\sf k}$-dimensional subspace inside the $2 {\sf k}$-dimensional space 
$I_b^S(\chi_f, \sigma_f)_w  \ \oplus \  I_b^S(\chi_f^{-1}[2n], \sigma_f)_{w'}.$  From 
the proof of Thm.\,\ref{thm:rank-one-eis-coh} it follows that we have an $E$-linear map: 
$T_{\Eis} := T_{\Eis}(\chi_f, \sigma_f) : I_b^S(\chi_f, \sigma_f)_w  \ \to \ I_b^S(\chi_f^{-1}[2n], \sigma_f)_{w'},$
such that 
\begin{equation}
\label{eqn:image}
\fI_b^S(\chi_f, \sigma_f) \ = \ 
 \left\{
(\xi, \, \xi + T_{\Eis}(\xi) \ | \ \xi \in I_b^S(\chi_f, \sigma_f)_w \right\}. 
\end{equation}
Take $\iota: E \rightarrow \C,$ and base change to $\C$ via $\iota$ to consider the map $T_\Eis \otimes_{E,\iota} 1_\C$. 

The domain and codomain of $T_\Eis \otimes_{E,\iota} 1_\C$ have $l$ many summands (for the domain see \eqref{eqn:l-many-summands}, with a similar description
for the codomain), and it seems like we have a period matrix, but from the proof of Thm.\,\ref{thm:rank-one-eis-coh} (see especially \eqref{eqn:sum_T-st-j}), 
it follows that $T_\Eis \otimes_{E,\iota} 1_\C$ is the `diagonal matrix'  
$({\bf 1}_j + T_{\st,j}^\bullet)_{j=1}^{l};$ the notation for the $j$-th term being clear from context.  We contend that this diagonal matrix, up to the ratio of 
critical values  $L(-n, {}^\iota \chi \times {}^\iota\sigma)/L(1-n, {}^\iota \chi \times {}^\iota\sigma)$ is $\iota(E)$-rational; this is the essence of the rest of the proof. 

\smallskip

For $v \in S_{\infty}$, and $1 \leq j \leq l$, the local representation $^{\iota} \sigma_{j,v}$ is a discrete series representation in the same 
$L$-packet of ${\mathbb D}_{\mu^{v}}^\dagger$; 
hence, for all $1 \leq j \leq l$, we get an equality of the archimedean $L$-factors 
$L(s, {}^\iota\chi_v \times {}^\iota\sigma_{j,v}) = L(s, {}^\iota\chi_v \times {}^\iota\sigma_{v}).$ 
Whence, from Prop.\,\ref{prop:intertwining-operator-in-cohomology}, for each $1 \leq j \leq l,$ we have the relative Lie algebra cohomology classes at infinity, 
$[\sI^0_j]$ and $[\tilde{\sI}^0_j],$ and that 
$$
T_{\st, j, \infty}^{\bullet} ([\sI^0_j] ) \ \approx_{\Q^\times} \ 
\frac{L_\infty(-n, {}^\iota\chi \times {}^\iota\sigma_{j}) }{L_\infty(1-n, {}^\iota\chi \times {}^\iota\sigma_{j})}
[\tilde{\sI}^0_j] \ =  \ 
\frac{L_\infty(-n, {}^\iota\chi \times {}^\iota\sigma) }{L_\infty(1-n, {}^\iota\chi \times {}^\iota\sigma)} 
[\tilde{\sI}^0_j].
$$
Let us  define a map 
\begin{multline*}
T_{\loc, j, \infty}^{\bullet} : 
H^{\q_b}\left(\g(\R), \overline{K}_\infty; \, \aInd_{P(\R)}^{G(\R)}({}^\iota\chi_\infty \otimes {}^\iota\sigma_{j, \infty}) \otimes \M_{{}^\iota\!\lambda,\C}\right) \ \to \\  
H^{\q_b}\left(\g(\R), \overline{K}_\infty; \, \aInd_{P(\R)}^{G(\R)}({}^\iota\chi_\infty^{-1}[2n] \otimes {}^\iota{}^{\kappa_{2n}}\sigma_{j, \infty}) 
\otimes \M_{{}^\iota\!\lambda,\C}\right), 
\end{multline*}
which is characterised by 
$T_{\loc, j, \infty}^{\bullet} ([\sI^0_j]) \ \approx_{\Q^\times} \ [\tilde{\sI}^0_j]$ and such that 
\begin{equation}
\label{eqn:T-st-infinty-loc}
T_{\st, j, \infty}^{\bullet} \ = \ 
\frac{L_\infty(-n, {}^\iota\chi \times {}^\iota\sigma) }{L_\infty(1-n, {}^\iota\chi \times {}^\iota\sigma)} 
T_{\loc, j, \infty}^{\bullet}.
\end{equation}

\medskip

For $v \notin S_\infty$, we have ${}^\iota\sigma_{j,v} = {}^\iota\sigma_v.$ In particular, the discussion for the standard intertwining operator for the $j$-th summand is
independent of $j$, and we suppress the subscript $j$ for what follows at finite places. 

\medskip

For $v \notin S$ (recall that $S$ includes $S_\infty$ and all finite ramified places),  
if $f_v^0$ and $\tilde{f_v^0}$ are the normalised spherical vectors respectively then from \eqref{eqn:L-G-K} at our point of evaluation $s = -n$ we get:
$$
T_{\st,v} (f_v^\circ) = \dfrac{L(-n, ~\!\!^\iota \chi_v \times~\!\! ^\iota\sigma_{v})}{L(1-n, ~\!\!^\iota \chi_v \times~\!\! ^\iota\sigma_{v})} \tilde{f}_v^\circ. 
$$
For $v \notin S$ define: 
$$
T_{\loc,v} : =  \left( \dfrac{L(-n, {}^\iota \chi_v \times {}^\iota\sigma_v)}{L(1-n, {}^\iota \chi_v \times {}^\iota\sigma_v)} \right)^{-1} T_{\st,v}, 
$$ 
which is characterised by $T_{\loc,v}(f_v^\circ) = \tilde{f}_v^\circ.$ It is clear then that $T_{\loc,v}$ is the base change via $\iota$ of a map defined over $E$, i.e., there exists an $E$-linear map $T_{\loc,v,0} $ such that $T_{\loc,v} = T_{\loc,v,0} \otimes_{E, \iota} 1_{\C}.$ Now define 
$T^S_{\loc} =  \otimes_{v \notin S} T_{\loc,v}$ and $T^S_{\loc,0} =  \otimes_{v \notin S} T_{\loc,v,0};$ we have 
$$
T^S_{\loc} = T^S_{\loc,0} \otimes_{E, \iota} 1_{\C}.
$$
Putting together the discussion for $v \notin S$ we have: 
\begin{equation}
\label{eqn:T-st-outside-S}
T_{\st}^S \ := \ 
\bigotimes_{v \notin S} T_{\st,v}  \ = \   
\dfrac{L^{S}(-n, {}^\iota \chi \times {}^\iota\sigma)}{L^{S}(1-n, {}^\iota \chi \times {}^\iota\sigma)}  \left(T^S_{\loc,0} \otimes_{E, \iota} 1_{\C} \right), 
\end{equation}
where $L^{S}(s, {}^\iota \chi \times {}^\iota\sigma) = \prod_{v \notin S} L^{S}(s, {}^\iota \chi_v \times {}^\iota\sigma_v)$ is the partial $L$-function.

\medskip

For $v \in S \setminus S_\infty$, the normalised intertwining operator in \eqref{eqn:norm-int-op-ram-v} and its arithmetic version 
in \eqref{eqn:arith-int-op} can be put together, while invoking the hypothesis in \eqref{eqn:local-assumption}, 
to give (for any $1 \leq j \leq l$): 
\begin{equation}
\label{eqn:T-st-S-but-finite}
T_{\st, v}(-n) \   = \ 
\left( \dfrac{L(-n, {}^\iota \chi_v \times {}^\iota\sigma_v)}{L(1-n, {}^\iota \chi_v \times {}^\iota\sigma_v)} \right) \, 
T_{\arith, v} \otimes_{E, \iota} 1_\C. 
\end{equation}

\medskip
Putting these calculations together, while partitioning the set of all places as the disjoint union 
$S_\infty \cup (S\setminus S_\infty) \cup \{v : v \notin S\}$, we have:
\begin{equation}
\label{eqn:putting-together}
T_{\Eis}(\chi_f, \sigma_f) \otimes_{E,\iota} 1_{\C} 
\ = \ 
\left({\bf 1}_j, \ (T_{\st, j, \infty}^{\bullet}  \ \otimes \ 
(\otimes_{v \in S \setminus S_{\infty}} T_{\st,v}) \ \otimes  \ 
(T_{\st}^S) \right)_{1 \leq j \leq l}. 
\end{equation}
Let $T_{\Eis}^1(\chi_f, \sigma_f) := T_{\Eis}(\chi_f, \sigma_f) - {\bf 1}$. 
Using \eqref{eqn:T-st-infinty-loc}, \eqref{eqn:T-st-outside-S},  \eqref{eqn:T-st-S-but-finite}, and \eqref{eqn:putting-together} we get: 
\begin{multline}
\label{eqn:final-proof}
T_{\Eis}^1(\chi_f, \sigma_f) \otimes_{E,\iota} 1_{\C} 
\ = \\ 
\dfrac{L(-n, {}^\iota \chi \times {}^\iota\sigma)}{L(1-n, {}^\iota \chi \times {}^\iota\sigma)}  \ 
\left(\left(
T_{\loc, j, \infty}^{\bullet} \otimes_E 
T^S_{\loc,0} \otimes_E
(\otimes_{v \in S\setminus S_\infty} T_{\arith, v}) 
\right) \otimes_{E, \iota} 1_\C \right)_{1 \leq j \leq l}.
\end{multline}
It follows that
 $$
 \dfrac{L(-n, {}^\iota \chi \times {}^\iota\sigma)}{L(1-n, {}^\iota \chi \times {}^\iota\sigma)} \in \iota(E).  
$$

\medskip 

For Galois equivariance in $(iii)$ of Thm.\,\ref{thm:main:introduction}, consider the action of $\eta \in {\rm Gal}(\bar{\Q} / \Q)$ on 
$\iota \in {\rm Hom}(E, \bar{\Q}) = {\rm Hom}(E, \C)$ by $\eta_{\ast}(\iota) = \eta \circ \iota$. We hit the diagram in 
\eqref{manin-drinfeld-diagram} by $\eta$, and by functoriality of the cohomology groups as in \cite[2.3.3]{harder-raghuram}, 
we get 
$$
\eta^{\bullet}(T_{\Eis}^1(\chi_f, \sigma_f)) \ = \ 
T_{\Eis}^1({}^\eta\chi_f, {}^\eta\sigma_f).
$$
For brevity, let us put 
$$
T_\loc(\chi_f, \sigma_f) =  
\left(
T_{\loc, j, \infty}^{\bullet} \otimes_E 
T^S_{\loc,0} \otimes_E
(\otimes_{v \in S\setminus S_\infty} T_{\arith, v}) 
\right)_j
$$
for what shows up in the right hand side of \eqref{eqn:final-proof}, 
and also put 
$$
\sL({}^\iota \chi \times {}^\iota\sigma) \ = \  L(-n, {}^\iota \chi \times {}^\iota\sigma)/L(1-n, {}^\iota \chi \times {}^\iota\sigma).
$$ 
Then, we may rewrite \eqref{eqn:final-proof}, while using $\iota$ to base-change to $\bar{\Q}$, as: 
$$
T_{\Eis}^1(\chi_f, \sigma_f) \otimes_{E,\iota} 1_{\bar{\Q}}  \ = \ 
T_\loc(\chi_f, \sigma_f) \otimes_{E,\iota} \sL({}^\iota \chi \times {}^\iota\sigma) 1_{\bar{\Q}}.
$$
On the one hand we have
\begin{multline*}
(1\otimes \eta) \circ (T_{\Eis}^1(\chi_f, \sigma_f) \otimes_{E,\iota} 1_{\bar{\Q}}) \ = \ 
\eta^{\bullet}(T_{\Eis}^1(\chi_f, \sigma_f)) \otimes_{E, \eta \circ \iota} \eta \ = \\  
T_{\Eis}^1({}^\eta\chi_f, {}^\eta\sigma_f) \otimes_{E, \eta \circ \iota} \eta \ = \ 
T_{\loc}({}^\eta\chi_f, {}^\eta\sigma_f) \otimes_{E, \eta \circ \iota} \sL({}^{\eta\circ\iota} \chi \times {}^{\eta\circ\iota}\sigma) \eta, 
\end{multline*}
and, on the other hand, we have
\begin{multline*}
(1\otimes \eta) \circ (T_{\Eis}^1(\chi_f, \sigma_f) \otimes_{E,\iota} 1_{\bar{\Q}}) \ = \ 
(1\otimes \eta) \circ (T_\loc(\chi_f, \sigma_f) \otimes_{E,\iota} \sL({}^\iota \chi \times {}^\iota\sigma) 1_{\bar{\Q}}) \ = \\
\eta^\bullet T_\loc(\chi_f, \sigma_f) \otimes_{E, \eta \circ \iota} \eta(\sL({}^\iota \chi \times {}^\iota\sigma)) \eta \ = \ 
T_{\loc}({}^\eta\chi_f, {}^\eta\sigma_f) \otimes_{E, \eta \circ \iota} \eta(\sL({}^\iota \chi \times {}^\iota\sigma)) \eta.
\end{multline*}
We conclude $\eta(\sL({}^\iota \chi \times {}^\iota\sigma))  \ = \ \sL({}^{\eta\circ\iota} \chi \times {}^{\eta\circ\iota}\sigma).$ 
This concludes the proof of Thm.\,\ref{thm:main:introduction}. \hfill$\Box$ \medskip
 
\subsection{A final comment}
\label{sec:final-comment}
Let us  amplify a remark made in \cite{bhagwat-raghuram-cras}, that 
it is important to prove the main theorem at the level of $L$-functions for $\GL_1 \times \rO(2n)$, and {\it not} as 
$L$-functions for $\GL_1 \times \GL_{2n}$ after transferring from $\rO(2n)$ to $\GL_{2n}$. This is already seen in the context of Shimura's theorem on the special values of Rankin--Selberg $L$-functions for elliptic modular forms (see \cite{shimura-mathann}, Thm.\,1,(iv), and Thm.\,4)
because (i) the Langlands transfer, $f \boxtimes g,$ of a pair of elliptic modular forms $f$ and $g$ of distinct weights, is a cuspidal representation of $\GL_4$ that does not see the Petersson norm $\langle f, f \rangle$ of only one of the constituents; and (ii) for an $L$-function $L(s, \pi)$ with $\pi$ cuspidal on $\GL_4/\Q$, successive $L$-values would see two periods $c^+(\pi)$ and $c^-(\pi)$ attached to $\pi$, and in the automorphic world, it is not (yet) known that if $\pi$ came via transfer from $\GL_2 \times \GL_2$ then $c^+(\pi) \approx c^-(\pi).$  More generally, 
one may ask whether the main result of \cite{harder-raghuram} applied to 
$\GL_1 \times \GL_{2n}$ implies the main result of this paper; this would be the case if we could prove that the relative periods 
$\Omega^\varepsilon(\Psi(\sigma))$ therein attached to the cuspidal representation $\Psi(\sigma)$ of $\GL_{2n}$ are trivial because of $\Psi(\sigma)$ being a 
transfer from a cuspidal $\sigma$ on $\rO(2n)$. 
At this moment we have no idea how one might even begin to prove such a period relation--hence our insistence on working intrinsically in the context of orthogonal groups.

%\newpage

\end{document}